\theoremstyle{plain}
\newtheorem{Thm}{\sc Theorem}[section]
\newtheorem{Theorem}[Thm]{\sc Theorem}
\newtheorem{Corollary}[Thm]{\sc Corollary}
\newtheorem*{Corollary*}{\sc Corollary}
\newtheorem{Proposition}[Thm]{\sc Proposition}
\newtheorem*{Proposition*}{\sc Proposition}
\newtheorem{Lemma}[Thm]{\sc Lemma}
\theoremstyle{definition}
\newtheorem{Definition}[Thm]{Definition}
\theoremstyle{remark}
\newtheorem{Remark}[Thm]{Remark}
\newtheorem{Example}[Thm]{Example}
\newtheorem*{Example*}{Example}
\newtheorem*{Remark*}{Remark}
\newcommand{\bB}{{\mathbf B}}
\newcommand{\A}{{\cal A}}
\newcommand{\B}{{\cal B}}
\newcommand{\C}{{\cal C}}
\newcommand{\D}{{\cal D}}
\newcommand{\E}{{\cal E}}
\newcommand{\F}{{\cal F}}
\newcommand{\G}{{\cal G}}
\renewcommand{\H}{{\cal H}}
\newcommand{\M}{{\cal M}}
\renewcommand{\O}{{\cal O}}
\newcommand{\T}{{\cal T}}
\newcommand{\X}{{\cal X}}
\newcommand{\Q}{{\cal Q}}
\newcommand{\fg}{{\mathfrak g}}
\renewcommand{\AA}{\mathbb{A}}
\newcommand{\BB}{\mathbb{B}}
\newcommand{\GG}{\mathbb{G}}
\newcommand{\HH}{\mathbb{H}}
\newcommand{\PP}{\mathbb{P}}
\newcommand{\QQ}{\mathbb{Q}}
\newcommand{\ZZ}{\mathbb{Z}}
\newcommand{\Gm}{\mathbb{G}_m}
\newcommand{\Ad}{\mathop{\rm Ad}}
\newcommand{\ch}{\mathop{\rm ch}}
\renewcommand{\char}{\mathop{\rm char}}
\newcommand{\Coh}{\mathop{{ Coh \, }}}
\newcommand{\coker}{\mathop{\rm coker}}
\newcommand{\enD}{\mathop{{\rm End}}}
\newcommand{\ext}{{\mathop{{\rm Ext}}}}
\newcommand{\GL}{\mathop{\rm GL}}
\newcommand{\Gr}{\mathop{\rm Gr}}
\newcommand{\gr}{\mathop{\rm gr}}
\newcommand{\Group}{\mathop{\rm Group}}
\newcommand{\Hom}{{\mathop{{\cal H}om}}}
\renewcommand{\hom}{{\mathop{\rm Hom}}}
\newcommand{\id}{\mathop{\rm Id}}
\newcommand{\Id}{\mathop{\rm Id}}
\newcommand{\im}{\mathop{\rm im}}
\newcommand{\Mat}{\mathop{\rm Mat}}
\newcommand{\Mod}{\mathop{\rm Mod}}
\newcommand{\Ob}{\mathop{\rm Ob}}
\newcommand{\Perv}{ {\mathop{\rm Perv}}}
\newcommand{\pic}{{\mathop{\rm Pic \,}}}
\newcommand{\Proj}{ {\mathop{\rm Proj}}}
\newcommand{\reg}{\mathop{\rm reg}}
\newcommand{\rk}{\mathop{\rm rk}}
\newcommand{\Sch}{\mathop{\rm Sch}}
\newcommand{\Sets}{\mathop{\rm Sets}}
\newcommand{\Spec}{\mathop{\rm Spec}}
\newcommand{\Supp}{\mathop{\rm Supp}}
\newcommand{\ti}{\tilde}
\newcommand{\Tr}{\mathop{\rm Tr}}
\def\fal#1{\tilde{#1}}
\def\gr#1{{\bf #1}}
\def\hox{H^0(\O_X(1))}
\def\izo{\simeq}
\newcommand{\ev}{\mathop{\rm ev}}
\def\lin{\mathop{\rm span}}
\begin{document}

\markboth{\rm M.\ Hauzer, A. Langer} {\rm Moduli spaces of
instantons}

\title{Moduli spaces of framed perverse  instantons on $\PP ^3$}
\author{Marcin Hauzer \footnote{This author tragically died on 27.01.2010.
}, Adrian Langer}
\date{\today}

\maketitle

{\sc Addresses of A.L.:}\\
1. Institute of Mathematics, Warsaw University, ul.\ Banacha 2,
02-097 Warszawa, Poland,\\ e-mail: {\tt
alan@mimuw.edu.pl}\\
2. Institute of Mathematics, Polish Academy of Sciences, ul.
\'Sniadeckich 8, 00--956 Warszawa, Poland

\begin{abstract}
We study moduli spaces of framed perverse instantons on $\PP ^3$.
As an open subset it contains the (set-theoretical) moduli space
of framed instantons studied by I. Frenkel and M. Jardim in
\cite{FJ}. We also construct a few counterexamples to earlier
conjectures and results concerning these moduli spaces.
\end{abstract}

\section*{Introduction}

A mathematical instanton  is a torsion free sheaf $E$ on $\PP ^3$
such that $H^1(E(-2))=H^2(E(-2))=0$ and there exists a line on
which $E$ is trivial. It is conjectured that the moduli space of
locally free instantons of rank $2$ is smooth and irreducible but
this is known only for very small values of the second Chern class
$c$ (see \cite{CTT} and \cite{KO} for proof of this conjecture for
$c\le 5$ and history of the problem).

Originally, instantons appeared in physics as anti-selfdual
connections on the $4$-dimensional sphere. Later, they were
connected by the ADHM construction to mathematical instantons on
$\PP ^3$ with some special properties. But it was Donaldson  who
realized that there is a bijection between physical instantons on
the $4$-sphere with framing at a point and vector bundles on a
plane framed along a line (see \cite{Do}). The correspondence can
be seen using Wards' construction and restricting vector bundles
from $\PP ^3$ to a fixed plane containing the line corresponding
to the point of the sphere. Using this interpretation Donaldson
was able to conclude that the moduli space of physical instantons
is smooth and irreducible.

In \cite{FJ} Frenkel and Jardim started to investigate the moduli
space of mathematical instantons framed along a line, hoping that
this moduli space is easier to handle than the moduli space of
instantons. Since an open subset of the moduli space  of framed
instantons is a principal bundle over the moduli space of
instantons, it is sufficient to consider the conjecture in the
framed case. In fact, Frenkel and Jardim conjectured that their
framed moduli space is smooth and irreducible even at non-locally
free framed instantons. We show that this conjecture is false (see
Subsection \ref{counter}). On the other hand, we also show that
the moduli space of locally free framed instantons is smooth for
low ranks and values of the second Chern class (see Corollary
\ref{smoothness-for-low-charge}). We also use Tyurin's idea to
show that in some case the restriction map embeds the moduli space
of instantons as a Lagrangian submanifold into the moduli space of
sheaves on a quartic in $\PP^3$.

One of the main aims of this paper is the study of moduli spaces
of perverse framed instantons on $\PP ^3$ (see Definition
\ref{perv-definition}). In particular, we use perverse instantons
to introduce partial compactifications of Gieseker and
Donaldson-Uhlenbeck type of the moduli space of framed instantons
and study the morphism between these moduli spaces. The picture
that we get is quite similar to the one known from the plane case
(see \cite{Na1}) or from the study of a similar morphism for
sheaves on surfaces (see, e.g., \cite[Remark 8.2.17]{HL2}).
However, this is the first case when a similar morphism is
described for moduli spaces of sheaves on a $3$-dimensional
variety.

In the $2$-dimensional case the Donaldson-Uhlenbeck
compactification has a stratification by products of moduli spaces
of locally free sheaves for smaller second Chern class and
symmetric powers of a plane $\AA^2$. In our case the situation is
quite similar but more complicated: we get a stratification by
products of moduli spaces of regular perverse instantons and
moduli spaces of perverse instantons of rank $0$.

Perverse instantons of rank $0$ are sheaves $E$ of pure dimension
$1$ on $\PP ^3$ such that $H^0(E(-2))=H^1(E(-2))=0$. The moduli
space of such sheaves (with fixed second Chern class) has a
similar type as a Chow variety: it is only set-theoretical and it
does not corepresent the moduli functor of such sheaves. But the
moduli space of perverse rank $0$ instantons is still a coarse
moduli space for some functor: it is the moduli space of modules
over some associative (but non-commutative) algebra. We show that
this moduli space contains an irreducible component whose
normalization is the symmetric power of $\AA^4$.

\medskip

The structure of the paper is as follows. In Section 1 we recall a
few known results including Nakajima's description of the moduli
space of framed torsion free sheaves on a plane and
Frenkel--Jardim's description of the (set-theoretical) moduli
space of framed instantons in terms of ADHM data. In Section 2 we
introduce perverse instantons and we sketch proof of
representability of the stack of framed perverse instantons on
$\PP ^3$ (in the plane case this theorem is due to Drinfeld; see
\cite{BFG}). Then in Section 3 we study the notion of stability of
ADHM data in terms of Geometric Invariant Theory. This is crucial
in Section 4 where we describe the Gieseker and
Donaldson--Uhlenbeck type compactifications of the moduli space of
framed instantons. In Section 5 we study the moduli space of
perverse instantons of rank $0$ relating them to the moduli space
of modules over a certain non-commutative algebra. In particular,
we show an example when this moduli space is reducible. In Section
6 we gather several examples and counterexamples to some
conjectures, e.g., to the Frenkel--Jardim conjecture on smoothness
and irreducibility of moduli space of torsion free framed
instantons or to their conjecture on weak instantons. In Section 7
we study an analogue of the hyper-K\"ahler structure on the moduli
space of perverse instantons and we relate our moduli spaces to
moduli spaces of framed modules of Huybrechts and Lehn. In Section
8 we give a very short sketch of deformation theory for stable
framed perverse instantons and we study smoothness of moduli
spaces of framed locally free instantons.

\section{Preliminaries}

In this section we introduce notation and collect a few known
results needed in later sections.

\subsection{Geometric Invariant Theory (GIT)} \label{GIT-section}

Let $G$ be a reductive group.  Let $X$ be an affine $k$-scheme
(possibly non-reduced or reducible) with a left $G$-action.  A
character $\chi : G\to \Gm$ gives a $G$-linearization of the
trivial line bundle $L:=(X\times \AA ^1\to X)$ via $g\cdot
(x,z)=(gx, \chi(g^{-1})z)$. So we can consider the corresponding
GIT quotient
$$X^{ss}(L)/G=\Proj (\bigoplus_{n\ge 0}H^0(X, L^n)^{G}).$$
It is equal to
$$X/\!\! /_{\chi} G =\Proj (\bigoplus_{n\ge 0}k[X]^{G, \chi^n})$$ and it is
projective over $X/G=\Spec (k[X]^G)$ (see \cite{Ki} for this
description). The corresponding map
$$X/\!\! /_{\chi} G \to X/G$$
can be identified with the map describing change of polarization
from $\chi$ to the trivial character $1: G\to \Gm$. The GIT
(semi)stable points of the $G$-action on $(X, L)$ given by $\chi$
are called \emph{$\chi$-(semi)stable}. Note that all points of $X$
are $1$-semistable, i.e., GIT semistable for the trivial character
$1$.

We say that $x$ is \emph{$\chi$-polystable} if $G\cdot (x,z)$ is
closed for $z\ne 0$. In particular, $X/\!\! /_{\chi} G$ is in
bijection with the set of $\chi$-polystable points and a
$\chi$-polystable point is $\chi$-stable if and only if its
stabiliser in $G$ is trivial.

\subsection{Torsion-free sheaves on $\PP ^2$ and ADHM data} \label{section:P^2}

Let $V$ and $W$ be $k$-vector spaces of dimensions $c$ and $r$,
respectively. Set
$$\gr{B}=\hom(V,V)\oplus\hom(V,V)\oplus\hom(W,V)\oplus\hom(V,W).$$
 An element of
${\gr{B}}$ is written as $({B_1},{B_2}, {i},{j})$.

The map $\mu: \gr{B}\to \hom (V,V)$ given by
$$\mu (B_1, B_2, i, j)=[B_{1}, B_{2}]+ i j$$
is called the \emph{moment map}.

We say that $( B_1,  B_2, i, j)\in \gr{B}$ satisfies the
\emph{ADHM equation} if $[ B_{1}, B_{2}]+ i j=0$, i.e., $( B_1,
B_2, i, j)\in \mu ^{-1}(0)$. An element of $ \gr{B}$ satisfying
the ADHM equation is called an \emph{ADHM datum}.

\begin{Definition}
We say that an ADHM datum is
\begin{enumerate}
\item \emph{stable}, if for every subspace $S\subsetneq V$ (note that we allow
$S=0$) such that $B_k(S)\subset S$ for $k=1,2$ we have $\im  i
\not\subset S$.
\item  \emph{costable},
if for every no non-zero subspace $S\subset V$ such that $
B_{k}(S)\subset S$ for $k=1,2$ we have $S\not \subset \ker  j$,
\item  \emph{regular}, if it is stable and costable.
\end{enumerate}
\end{Definition}

The group $G=\GL (V)$ acts on $\bB$ via
$$g\cdot (B_{1}, B_{2}, i, j)=(gB_{1}g^{-1}, gB_{2}g^{-1}, gi, jg^{-1}).$$
If we consider the adjoint action of $G$ on $\enD (V)$ then the
map $\mu$ is $G$-equivariant. In particular, $G$ acts on
$\ti\mu^{-1}(0)$, i.e., on the set of ADHM data satisfying the
ADHM equation. Let $\chi:G\to \Gm$ be the character given by the
determinant. We consider the $G$-action on the trivial line bundle
over $\gr{B}$ but with a non-trivial linearization given the
character $\chi$.

\begin{Lemma}

\begin{enumerate}
\item All $\chi$-semistable points of $\mu^{-1}(0)$ are
$\chi$-stable and they correspond to stable ADHM data.
\item All $\chi ^{-1}$-semistable points of $\mu^{-1}(0)$ are $\chi
^{-1}$-stable and they correspond to costable ADHM data.
\end{enumerate}
\end{Lemma}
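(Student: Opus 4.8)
The plan is to unwind the GIT (semi)stability condition for the $\GL(V)$-action on $\gr{B}$ linearized by $\chi=\det$ via the Hilbert–Mumford numerical criterion, and then match the resulting combinatorial condition with the notion of a stable ADHM datum. First I would fix an ADHM datum $x=(B_1,B_2,i,j)\in\mu^{-1}(0)$ and consider a one-parameter subgroup $\lambda:\Gm\to G=\GL(V)$. Diagonalizing, $\lambda$ corresponds to a weight decomposition $V=\bigoplus_{n\in\ZZ}V_n$, and the limit $\lim_{t\to 0}\lambda(t)\cdot x$ exists if and only if $B_1,B_2$ send $\bigoplus_{m\ge n}V_m$ into itself for all $n$, $i$ lands in $\bigoplus_{m\ge 0}V_m$, and $j$ kills $\bigoplus_{m>0}V_m$. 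When the limit exists, the Hilbert–Mumford weight $\mu^{\chi}(x,\lambda)$ is $-\sum_n n\dim V_n$ up to the standard sign convention, since the character $\chi$ on $\Gm$ via $\lambda$ is $t\mapsto t^{\sum_n n\dim V_n}$.

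The key step is then a reduction to two "extreme" types of one-parameter subgroups. Given any $\lambda$ for which the limit exists, I would argue that $\chi$-instability (existence of a destabilizing $\lambda$ with nonpositive weight) can be detected by a $\lambda$ with only two weights $0$ and $1$: the relevant subspace is $S=\bigoplus_{m\ge 1}V_m$ (or a suitable shift), which is $B_k$-invariant and contains $\im i$ is forced to fail, i.e. $\im i\subset S$, precisely when the naive weight computation gives nonpositivity. Concretely, if there is a proper $B_k$-invariant $S\subsetneq V$ with $\im i\subset S$, take $\lambda$ acting by weight $1$ on a complement of $S$ and weight $0$ on $S$; then $\lim_{t\to0}\lambda(t)\cdot x$ exists and $\mu^{\chi}(x,\lambda)=-\dim(V/S)<0$, so $x$ is not $\chi$-semistable. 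Conversely, if $x$ is not $\chi$-semistable, pick a destabilizing $\lambda$; then each nonzero $\bigoplus_{m\ge n}V_m$ (for $n$ large enough that this is proper) is $B_k$-invariant, and running over $n$ one finds some proper invariant subspace containing $\im i$, contradicting stability. This shows $\chi$-semistable $\iff$ stable. The statement that every $\chi$-semistable point is already $\chi$-stable follows because a stable ADHM datum has trivial stabilizer in $\GL(V)$: any $g$ in the stabilizer has each eigenspace $B_k$-invariant and, using $\im i\not\subset$ any proper invariant subspace together with irreducibility of the $B_k$-action on the subspace generated by $\im i$, one forces $g$ to be scalar, and the $\chi$-linearization then forces the scalar to be $1$ on the relevant locus — more precisely, polystability with closed orbit plus trivial stabilizer gives stability by the last sentence of Subsection~\ref{GIT-section}.

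Part (2) is entirely dual: replacing $\chi$ by $\chi^{-1}$ flips the sign in the Hilbert–Mumford weight, which interchanges the roles of "subspaces containing $\im i$" with "subspaces contained in $\ker j$", so the same argument — applied to the weight decomposition with the inequality reversed, or equivalently applied to the transposed ADHM datum $(B_2^{t},B_1^{t},j^{t},i^{t})$ on $V^{*}$ — yields $\chi^{-1}$-semistable $\iff$ costable, with all semistable points stable. The main obstacle I anticipate is the careful bookkeeping in the reduction from a general one-parameter subgroup to the two-step filtration: one must check that when several weights occur, summing the weight contributions and picking the "worst" graded piece really does produce a single proper $B_k$-invariant subspace witnessing failure of stability, and that edge cases ($S=0$, or $\lambda$ central) are handled correctly. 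The rest is the routine translation between filtrations and one-parameter subgroups of $\GL(V)$ together with the standard GIT dictionary recalled in Subsection~\ref{GIT-section}.
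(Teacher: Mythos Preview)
Your approach via the numerical Hilbert--Mumford criterion is essentially the same as the paper's (the paper proves the generalized version in Section~\ref{ADHM}, Lemma~\ref{lemat_z_nakajimy} and Proposition~\ref{rownowaznosc_stabilnosci}, routing through $\chi$-polystability rather than the numerical weight directly, but using the same weight-decomposition analysis and the same trivial-stabilizer argument). Two points need correction.

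First, a sign slip: with weight $1$ on a complement $S^\perp$ and weight $0$ on $S$, the limit $\lim_{t\to 0}\lambda(t)B_k\lambda(t)^{-1}$ does \emph{not} exist, since the block $S^\perp\to S$ of $B_k$ is multiplied by $t^{-1}$. You want weight $-1$ on $S^\perp$ (this is what the paper does); then the limit exists and $\det\lambda(t)^{-1}z=t^{\dim S^\perp}z\to 0$, exhibiting $\chi$-instability. Likewise, in the converse direction the relevant invariant subspace is simply $V_{\ge 0}=\bigoplus_{m\ge 0}V_m$, which contains $\im i$ and is proper because the destabilizing inequality forces some $V_m$ with $m<0$ to be nonzero; no ``running over $n$'' is needed.

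Second, your passage from $\chi$-semistable to $\chi$-stable is incomplete. Trivial stabilizer alone does not promote semistable to stable; you also need the orbit to be closed in the semistable locus. The missing step (which the paper supplies) is: since \emph{every} $\chi$-semistable point has been shown to be a stable ADHM datum and hence to have trivial stabilizer, all semistable orbits have the same maximal dimension; thus none lies properly in the closure of another, so all are closed, and then the last sentence of Subsection~\ref{GIT-section} gives $\chi$-stability. Also, the trivial-stabilizer argument is simpler than you sketch: if $g\cdot x=x$ then $S=\ker(g-\id)$ is $B_k$-invariant and contains $\im i$, so $S=V$ by stability and $g=\id$; no appeal to the linearization or to any irreducibility is needed.
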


We have the following well-known theorem (see \cite[Theorem 2.1,
Remark 2.2 and Lemma 3.25]{Na1}):

\begin{Theorem} \label{Donaldson-Nakajima}
The moduli space $\M (\PP ^2; r,c)$ of rank $r>0$ torsion free sheaves on
$\PP ^2$ with $c_2=c$, framed along a line $l_{\infty}$ is
isomorphic to the GIT quotient $\mu^{-1} (0)/\!\!/_{\chi} G$.
Moreover, orbits of regular ADHM data are in bijection with
locally free sheaves.
\end{Theorem}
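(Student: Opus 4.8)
The plan is to recall the ADHM construction for sheaves on $\PP^2$ and verify that it descends to an isomorphism of the GIT quotient with the framed moduli space. The key technical inputs are already available: Theorem \ref{Donaldson-Nakajima} is attributed to Nakajima's work, so the proof amounts to citing \cite[Theorem 2.1, Remark 2.2 and Lemma 3.25]{Na1} and indicating how the pieces fit together. First I would recall the classical ADHM correspondence: given an ADHM datum $(B_1,B_2,i,j)\in\mu^{-1}(0)$ one builds a monad
$$ V\otimes\O_{\PP^2}(-1)\xrightarrow{\ a\ } (V\oplus V\oplus W)\otimes\O_{\PP^2}\xrightarrow{\ b\ } V\otimes\O_{\PP^2}(1), $$
where the maps $a,b$ are built linearly from $B_1,B_2,i,j$ and the homogeneous coordinates, and the ADHM equation $[B_1,B_2]+ij=0$ is exactly the condition $ba=0$ making this a complex. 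Stability of the datum guarantees that $a$ is injective with locally free cokernel (injective as a bundle map on the fibre over $l_\infty$ it is even an isomorphism, giving the framing) and that $b$ is surjective; then $E:=\ker b/\im a$ is a rank $r$ torsion-free sheaf on $\PP^2$ with $c_2=c$, trivialized along $l_\infty$.

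Next I would explain the inverse construction: given a framed torsion-free sheaf $(E,\Phi)$ one recovers $V=H^1(E(-1))$ (or $H^1(E\otimes\Omega(1))$ after the appropriate twist), $W=k^r$ via the framing, and the operators $B_1,B_2,i,j$ from the cohomology of the Beilinson spectral sequence, the ADHM equation coming out of the structure of that spectral sequence. One checks that the two constructions are mutually inverse up to the $G=\GL(V)$-action arising from the choice of basis of the cohomology group $V$, so isomorphism classes of framed sheaves correspond bijectively to $G$-orbits of stable ADHM data in $\mu^{-1}(0)$. By Lemma \ref{Donaldson-Nakajima} (the lemma immediately preceding the theorem, identifying $\chi$-semistable $=\chi$-stable $=$ stable ADHM data), these orbits are precisely the points of $\mu^{-1}(0)/\!\!/_\chi G$; to promote the set-theoretic bijection to an isomorphism of schemes one invokes the universal family on $\mu^{-1}(0)$ (the monad construction works in families over any base) to get a morphism $\mu^{-1}(0)^{ss}\to\M(\PP^2;r,c)$, which is $G$-invariant and hence factors through the quotient, and one produces the inverse morphism from the universal framed sheaf, using that $\M(\PP^2;r,c)$ is a fine moduli space and that the relevant cohomology $R^1\pi_*(\E(-1))$ is locally free of rank $c$ so that the ADHM data are defined locally and glue after passing to the quotient.

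For the last sentence — that orbits of regular data correspond to locally free sheaves — I would argue that costability is exactly the condition needed to make the dual map $b^\vee$ injective as a bundle map, equivalently $b$ surjective with locally free kernel; combined with stability (which already handles $a$), regularity is equivalent to the monad having $E=\ker b/\im a$ locally free. Concretely, $E$ fails to be locally free precisely at points where the localized complex ceases to be exact in the middle in the bundle sense, and one translates this via the standard argument (e.g.\ the one in Okonek–Schneider–Spindler for the rank $2$ case, generalized) into the existence of a nonzero $B_k$-invariant subspace $S\subset\ker j$, i.e.\ failure of costability. The main obstacle is not any single hard estimate but rather the bookkeeping: getting the twists and the identification $V\cong H^1(E(-1))$ right, checking the monad maps are the claimed linear expressions, and verifying functoriality in families carefully enough that the bijection upgrades to a scheme isomorphism rather than merely a bijection on $k$-points. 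Since all of this is carried out in \cite{Na1}, the cleanest route is to state the correspondence precisely, indicate the monad, and refer to the cited results for the verification that it is an isomorphism of schemes and that regularity cuts out the locally free locus.
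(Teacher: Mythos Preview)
Your proposal is correct and matches the paper's approach: the paper does not prove this theorem but simply cites \cite[Theorem 2.1, Remark 2.2 and Lemma 3.25]{Na1} and then briefly recalls the monad construction from a stable ADHM datum, exactly as you do. One small slip: stability does not make $\coker a$ locally free (it is only torsion free in general); rather, stability is what ensures $b$ is surjective, while $a$ is always injective as a sheaf map, and it is costability that upgrades $a$ to a fiberwise bundle injection, whence the cohomology is locally free precisely in the regular case.
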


\begin{Definition}
A complex of locally free sheaves
$$\C=(0\to \C^{-1}\mathop{\to}^\alpha \C^0\mathop{\to}^{\beta} \C^1\to 0)$$
is called a \emph{monad} if $\alpha$ is injective and $\beta$ is
surjective (as maps of sheaves). In this case
$\H^0(\C)=\ker\beta/\im \alpha$ is called the \emph{cohomology} of
the monad $\C$.
\end{Definition}

Now let us briefly recall how to recover a torsion free sheaf from
a stable ADHM datum.

Let $( B_1,  B_2, i, j)\in \gr{B}$ be a stable ADHM datum. Denote
$\fal{W}=V\oplus V\oplus W$ and fix homogeneous coordinates
$[x_0,x_1,x_2]$ on $\PP^2$. Let us define maps $\alpha:
V\otimes\O_{\PP^2}(-1)\to  \fal{W}\otimes\O_{\PP^2}$ and $\beta :
\fal{W}\otimes\O_{\PP^2}\to V\otimes\O_{\PP^2}(1)$ by
\begin{equation}\alpha=\left(\begin{array}{c}
{B}_1x_0-1\otimes x_1\\
{B}_2x_0-1\otimes x_2\\
{jx_0}
\end{array}
\right)
\end{equation}
and
\begin{equation}\beta=\left(\begin{array}{ccc}
-{B}_2x_0+1\otimes x_2&{B}_1x_0-1\otimes x_1& ix_0
\end{array}
\right) .
\end{equation}
Then $(B_1,  B_2, i, j)$ gives rise to the complex
$$
V\otimes\O_{\PP^2}(-1)\mathop{\longrightarrow}^{\alpha}\fal{W}\otimes\O_{\PP^2}
\mathop{\longrightarrow}^{\beta}V\otimes\O_{\PP^2}(1).$$
This complex is a monad.  Injectivity of $\alpha$ follows from
injectivity on the line $x_0=0$ and surjectivity of $\beta$
follows from stability of the ADHM datum (see \cite[Lemma
2.7]{Na1}). We can recover a torsion free sheaf as the cohomology
of this monad.

\medskip
Let $\M _0^{\reg} (\PP ^2;r,c)$ be the moduli space of rank $r$ locally
free sheaves on $\PP ^2$ with $c_2=c$, framed along a line
$l_{\infty}$. By Theorem \ref{Donaldson-Nakajima} $\M _0^{\reg}
(\PP ^2, r,c)$ is isomorphic to the quotient of regular ADHM data by the
group $G$.

Let $\M _0(\PP ^2;r,c)$ denotes the affine quotient $\mu ^{-1} (0)/G$.
This space contains the moduli space $\M _0^{\reg} (\PP ^2;r,c)$ and it
can be considered as a partial Donaldson--Uhlenbeck
compactification. We have a natural set-theoretical decomposition
$$\M _0(\PP ^2;r,c)=\bigsqcup _{0\le d \le c} \M _0^{\reg} (\PP ^2;r,c-d)\times S^d (\AA ^2),$$
where $\AA ^2$ is considered as the completion of $l_{\infty}$ in
$\PP ^2$. Then the morphism
$$\M (\PP ^2;r,c)\simeq \mu ^{-1} (0)/\!\!/_{\chi} G\to \mu^{-1} (0)/ G \simeq \M _0(\PP ^2;r,c)$$
coming from the GIT  (see Subsection \ref{GIT-section}) can be
identified with the map
$$(E, \Phi)\to ((E^{**}, \Phi), \Supp ({E^{**}/E}))$$
(see \cite[Exercise 3.53]{Na1} and \cite[Theorem 1]{VV}). This
morphism is an analogue of the morphism from the Gieseker
compactification of the moduli space of  (semistable) locally free
sheaves on a surface by means of torsion free sheaves to its
Donaldson--Uhlenbeck compactification. In a very special case of
rank one this corresponds to the morphism from the Hilbert space
to the Chow space (the so called \emph{Hilbert--Chow morphism}).

\bigskip

In the rest of this section, to agree with the standard notation
we need to assume that the characteristic of the base field is
zero (or it is sufficiently large).

Let us define a symplectic form $\omega$ on $\bB$ by
$$\omega ( (B_{1}, B_{2}, i, j), (B_{1}', B_{2}', i', j')):=\Tr
(B_1B_2'-B_2B_1'+ij'-i'j).$$ We will use the same notation for the
form induced on the tangent bundle $T\bB$.  One can easily check
that $\mu$ is a \emph{momentum map}, i.e.,
\begin{enumerate}
\item $\mu$ is $G$-equivariant, i.e., $\mu (g\cdot x)=\Ad ^*_{g^{-1}}\mu (x)$,
\item $\langle d\mu_x(v), \xi\rangle =\omega (\xi_x, v)$ for any $x\in
\bB$, $v\in T_x\bB$ and $\xi \in \fg$ ($\xi_x$ denotes the image
of $\xi$ under the tangent of the orbit map of $x$).
\end{enumerate}
In particular, \cite[Lemma 3.2]{KLS} implies that the moduli space
of semistable sheaves is smooth (there are many others proofs of
this fact: a sheaf-theoretic proof is trivial but we mention the
above proof since another argument using ADHM data given in
\cite[Lemma 3.2]{Va} seems a bit too complicated).

\subsection{Mathematical instantons on $\PP ^3$.}
\label{section:inst}

\begin{Definition}
A torsion free sheaf $E$ on $\PP ^3$ is called a
\emph{mathematical $(r,c)$-instanton}, if $E$ has rank $r$,
$c_2E=c$, $H^1(\PP ^3, E(-2))=H^2(\PP ^3, E(-2))=0$ and there
exists a line $l\subset \PP^3$ such that the restriction of $E$ to
${l}$ is isomorphic to the trivial sheaf $\O _l^r$.

Let us fix a line $l_{\infty}\subset \PP^3$. A choice of an
isomorphism $\Phi:E|_{l_{\infty}}\stackrel{\simeq}{\rightarrow}
\O_{l_{\infty}}^r$ is called a \emph{framing} of $E$ along
$l_{\infty}$. A pair $(E,\Phi)$ consisting of a mathematical
$(r,c)$-instanton $E$ and its framing $\Phi:
E|_{l_{\infty}}\stackrel{\simeq}{\rightarrow} \O_{l_{\infty}}^r$
is called a \emph{framed $(r,c)$-instanton}.
\end{Definition}

In the following we skip adjective ``mathematical'' and we will
refer to mathematical instantons simply as  instantons.

The following lemma is well known (for locally free sheaves see
\cite[Chapter II, 2.2]{OSS}):

\begin{Lemma} \label{stab-inst}
If a torsion free sheaf $E$ on $\PP ^n$ is trivial on one line
then it is slope semistable. In particular, an instanton on
$\PP^3$ is slope semistable.
\end{Lemma}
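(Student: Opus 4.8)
The plan is to prove that a torsion-free sheaf $E$ on $\PP^n$ which is trivial on some line $l$ is slope semistable, and then apply this to instantons. Recall that slope semistability of $E$ (of rank $r$) means that for every coherent subsheaf $F\subset E$ with $0<\rk F<r$ we have $\mu(F)=\deg F/\rk F\le \deg E/\rk E=\mu(E)$; here $\deg$ is taken with respect to $\O_{\PP^n}(1)$. Equivalently, one checks the condition only for saturated subsheaves, i.e. those with torsion-free quotient, or even only for reflexive subsheaves.

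First I would reduce to the case $\deg E=0$ by twisting: $E$ is trivial on $l$, so $c_1(E)\cdot l=\deg(E|_l)=0$, hence $\deg E=c_1(E)\cdot H^{n-1}$, and since $\O_l(1)$ has degree $1$ we in fact get that the rational number $\mu(E)$ need not be zero in general — so instead I would argue directly. Let $F\subsetneq E$ be a nonzero saturated subsheaf of rank $s$, $0<s<r$. The key point is to restrict to the line $l$. Because $F$ is saturated, $E/F$ is torsion-free, so $\Tor_1^{\O_{\PP^n}}(E/F,\O_l)$ is supported in dimension $0$; choosing $l$ generically (or using that $E$ is already trivial on a specific $l$ and that saturatedness gives the Tor sheaf finite length) one still gets a left-exact-on-the-left sequence. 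More robustly: the restriction $F|_l\to E|_l\cong\O_l^r$ has image a subsheaf of $\O_l^r$, hence a direct sum $\bigoplus\O_l(a_k)$ with all $a_k\le 0$, so $\deg(\im(F|_l))\le 0$. On the other hand $F|_l$ surjects onto its image with kernel a torsion sheaf $T$ of some length $t\ge 0$, so $\deg(F|_l)=\deg(\im(F|_l))+t$. This is where care is needed, so let me record the main estimate separately.

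The main step: I claim $\deg F=\deg(F|_l)$ is \emph{not} quite right in general (it is, when $l$ avoids the singularities and the Tor vanishes), so instead I would invoke the standard fact that $\mu$-semistability can be tested by restricting to a general complete-intersection curve, or — cleaner for this paper — use that for a saturated $F\subset E$ one has $c_1(F)\cdot l=\deg(F|_l^{\vee\vee})$ up to the length of torsion, and $F|_l$ maps to $\O_l^r$. Concretely: let $G=\im(F\to E)$ with its saturation; since $E|_l$ is trivial, every subsheaf of $E$ restricted to $l$ maps to $\O_l^r$, and a coherent subsheaf of $\O_l^r$ on $\PP^1$ has nonpositive degree. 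Combined with $\deg F\le \deg(F|_l\ \mathrm{mod\ torsion})\le 0=\mu(E)\cdot\rk F$ (after the twist-reduction normalizing $\mu(E)=0$, which one can do because trivial-on-a-line forces $\deg E=0$: indeed $0=\deg(E|_l)=c_1(E)\cdot l$ and $l$ spans enough of $H_2$ only for $n=1$ — for $n\ge2$ one argues $c_1(E)=(\deg E)\cdot H$ in $\mathrm{Num}$ so $\deg E = (\deg E)(H\cdot l)=\deg(E|_l)=0$, using $H\cdot l=1$). Wait — that identification $c_1(E)=(\deg E)H$ is automatic on $\PP^n$ since $\pic\PP^n=\ZZ H$, so indeed $\deg E=\deg(E|_l)=0$, and likewise $\mu(E)=0$.

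So the clean argument is: $\pic\PP^n=\ZZ\cdot H$, hence $c_1(E)=kH$ for some integer $k$, and restricting to $l$ gives $k=k(H\cdot l)=\deg(E|_l)=\deg\O_l^r=0$, so $\deg E=0$ and $\mu(E)=0$. For a saturated subsheaf $F\subset E$ of rank $s$, $c_1(F)=mH$ with $m=\deg F$; the composite $F\hookrightarrow E\to E|_l\cong\O_l^r$ has kernel a torsion subsheaf $T\subset F$ supported in dimension $\le n-2$... but I only need: the torsion-free quotient $F/(\text{torsion of }F|_l)$ restricted — actually the simplest honest statement is that the restriction-to-$\PP^1$ argument shows $\deg(F|_l)\le 0$ plus $\deg F\le\deg(F|_l)$ when $l$ is chosen so that $\Tor_1(\O_l, E/F)=0$, which holds for general $l$ since $E/F$ is torsion-free; then $\mu(F)=\deg F/s\le 0=\mu(E)$. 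The only subtlety is ``general $l$'' — but semistability is tested against a fixed invariant, so it suffices to know $\mu(F)\le 0$ using \emph{some} line, and a general line works while $E$ stays trivial on general lines near $l$ by semicontinuity (trivial-on-a-line is an open condition on the choice of line for fixed $E$). I expect this genericity bookkeeping to be the only real obstacle; everything else is the elementary fact that subsheaves of $\O_{\PP^1}^r$ have degree $\le 0$. Finally, an instanton is by definition trivial on a line, so the first part applies verbatim.

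\begin{proof}[Proof sketch]
Since $\pic \PP^n=\ZZ\cdot H$ with $H$ a hyperplane class and $H\cdot l=1$ for a line $l$, writing $c_1(E)=kH$ and restricting to the line on which $E$ is trivial gives $k=\deg(E|_l)=\deg \O_l^r=0$; hence $\deg E=0$ and $\mu(E)=0$. Let $F\subset E$ be a saturated subsheaf with $0<\rk F=s<r$, so $E/F$ is torsion-free. Triviality of $E$ on a line is an open condition on the line for $E$ fixed, so we may pick a general line $l$ on which $E\cong\O_l^r$ and along which $\Tor_1^{\O_{\PP^n}}(\O_l,E/F)=0$; then $0\to F|_l\to E|_l\to (E/F)|_l\to 0$ is exact and $\deg(F|_l)=\deg F$. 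But $F|_l$ is a subsheaf of $E|_l\cong\O_l^r$ on $\PP^1$, hence isomorphic to $\bigoplus_{k=1}^s\O_l(a_k)$ with $a_k\le 0$, so $\deg F=\deg(F|_l)=\sum a_k\le 0$. Therefore $\mu(F)=\deg F/s\le 0=\mu(E)$, proving that $E$ is slope semistable. Since a mathematical instanton on $\PP^3$ is by definition a torsion-free sheaf that is trivial on some line, it is slope semistable.
\end{proof}
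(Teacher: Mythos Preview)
Your proof is correct and follows essentially the same route as the paper: use that triviality on a line is an open condition on the line, pass to a general line, and observe that a subsheaf of $\O_{\PP^1}^r$ has nonpositive degree. You are more explicit about a couple of points the paper leaves implicit---namely the $\Tor$-vanishing needed to ensure $F|_l\hookrightarrow E|_l$ and the computation $\deg E=0$ via $\pic\PP^n=\ZZ H$---while the paper instead spells out the characterization of ``trivial on $m$'' as ``$E|_m$ torsion free and $H^1(E|_m(-1))=0$'' to justify openness; otherwise the arguments coincide.
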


\begin{proof}
The sheaf $E$ is trivial on a line $m\subset \PP^n$ if and only if
$E|_m$ is torsion free and $H^1(E|_m(-1))=0$. Since these are open
conditions it follows that if $E$ is trivial on one line then it
is trivial on a general line. Now if $E'\subset E$ then for a
general line $m$ we have $E'|_m\subset E|_m\simeq \O_{\PP^1}^{\rk
E}$, so $\mu (E')=\deg (E'|_m)/\rk E'\le 0$.
\end{proof}

\medskip

If $E$ is a rank $2$ locally free sheaf with $c_1E=0$ on $\PP ^3$
then $H^1(\PP ^3, E(-2))$ and $H^2(\PP ^3, E(-2))$ are Serre dual
to each other. Let us also recall that if $k$ has characteristic
$0$ then for a rank $2$ locally free sheaf $E$ with $c_1E=0$
existence of a line $l$ such that the restriction of $E$ to $l$ is
trivial is equivalent to $H^0(E(-1))=0$ (this follows from the
Grauert--M\"ulich restriction theorem). This leads to a more
traditional definition of rank $2$ instantons as rank $2$ vector
bundles on $\PP ^3$ with vanishing $H^0(E(-1))$ and $H^1(E(-2))$
(see \cite[Chapter II, 4.4]{OSS}). Usually, one also adds
vanishing of $H^0(E)$ which in this case is equivalent to slope
stability (if $H^0(E)\ne 0$ then $E\simeq \O_{\PP^3}^2$, so this
cannot happen if $c\ge 1$).

Note that if $E$ is locally free of rank $\ge 3$ then vanishing of
$H^2(\PP ^3, E(-2))$ does not follow from the remaining conditions
(see Example \ref{coanda}).

We say that a locally free sheaf $E$ is \emph{symplectic}, if it
admits a non-degenerate symplectic form (or equivalently, an
isomorphism $\varphi : E\to E^*$ such that $\varphi ^* =-
\varphi$). It is easy to see that a non-trivial  symplectic sheaf
has an even rank. Obviously, for a symplectic locally free sheaf,
vanishing of $H^2(\PP ^3, E(-2))$ follows from vanishing of
$H^1(\PP ^3, E(-2))$ (by the Serre duality).

\medskip

The following fact was known for a very long time:

\begin{Theorem} \emph{(Barth, Atiyah \cite[Theorem 2.3]{At})}
Let $E$ be a symplectic $(r,c)$-instanton. Then $E$ is the
cohomology of a monad
$$0\to \O _{\PP ^3}(-1)^c\to \O _{\PP ^3}^{2c+r}\to\O _{\PP ^3}(1)^c\to 0.$$
\end{Theorem}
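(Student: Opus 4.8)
The plan is to use the Beilinson spectral sequence on $\PP^3$ and exploit the vanishing conditions together with symplecticity. Recall that for any coherent sheaf $F$ on $\PP^3$ the Beilinson resolution gives a spectral sequence with $E_1^{p,q} = H^q(\PP^3, F(p)) \otimes \Omega^{-p}_{\PP^3}(-p)$ for $-3 \le p \le 0$, converging to $F$ in degree $0$ and to $0$ in all other degrees. First I would apply this to $F = E(-1)$. The $E_1$-page then has columns indexed by $p = 0, -1, -2, -3$ with entries built from $H^q(E(p-1))$ for $q = 0,1,2,3$. The goal is to kill as many of these as possible.

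The key vanishing inputs are: $H^1(E(-2)) = H^2(E(-2)) = 0$ by the instanton condition; for the symplectic sheaf these give, via $E \simeq E^*$ and Serre duality $H^q(E(-2))^* \cong H^{3-q}(E^*(-2)) = H^{3-q}(E(-2))$, that actually $H^1(E(-2))$ and $H^2(E(-2))$ are dual so one vanishing implies the other — which is already observed in the excerpt. Next, triviality of $E$ on a line $l$ together with slope semistability (Lemma \ref{stab-inst}) forces $H^0(E(-1)) = 0$: indeed $E(-1)|_l = \O_l(-1)^r$ has no sections, and more carefully one shows $H^0(E(-1)) = 0$ from semistability since a nonzero section would give $\O \hookrightarrow E(-1)$ contradicting $\mu(E(-1)) = -r < 0$ after checking the section doesn't vanish on a divisor... actually the cleanest route is: $H^0(E(-1)) \hookrightarrow H^0(E(-1)|_m) = 0$ for a general line $m$ on which $E$ is trivial, since $E(-1)$ is torsion free and such a section restricted to a general line through a general point must vanish. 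By Serre duality and symplecticity, $H^3(E(-3)) \cong H^0(E^*)^* \otimes(\cdots)$; more directly $H^3(E(p-1)) = H^0(E^*(-p-3))^* = H^0(E(-p-3))^*$, which vanishes for $p = 0, -1, -2$ and for $p=-3$ equals $H^0(E)^*$. One then argues $H^0(E) = 0$ as well (a nonzero section of a semistable sheaf of slope $0$ trivial on a line forces triviality, excluded for $c \ge 1$; for $c = 0$ the statement is trivial). Assembling all of this collapses the spectral sequence to a short complex of the form $0 \to H^1(E(-2)) \otimes \Omega^2(2) \to \big(H^0(E) \oplus H^1(E(-1)) \oplus \cdots\big) \otimes \Omega^1(1) \oplus H^2(E(-2))\otimes\cdots \to H^2(E(-1)) \otimes \O \to 0$; here is where I must be careful about which middle terms survive, and this bookkeeping is the main obstacle.

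After the bookkeeping, the surviving monad for $E(-1)$ should have the shape $\Omega^2(2)^{\oplus a} \to \Omega^1(1)^{\oplus b} \to \O^{\oplus a}$ with $a = h^1(E(-2)) = h^2(E(-2)) = 0$ — wait, those are zero — so in fact the relevant nonvanishing cohomology is $H^1(E(-1))$ and $H^2(E(-1))$ (not the $(-2)$-twist ones), and by Riemann–Roch plus the vanishings one computes $h^1(E(-1)) = h^2(E(-1)) = c$ and $h^0(E(-1)) = h^3(E(-1)) = 0$. So the Beilinson monad for $E(-1)$ reads $0 \to \Omega^2(2)^{\oplus c} \to \Omega^1(1)^{\oplus c} \oplus (\text{possible }\O\text{ or }\O(-1)\text{ terms}) \to \cdots$. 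I would then use the standard Euler-sequence identifications $\Omega^1(1)$, $\Omega^2(2) \cong \Omega^1(1)^* \otimes \det$, and the resolutions $0 \to \O(-1) \to \O^4 \to \Omega^1(1)^* \to 0$ etc., to replace this monad of twisted forms by a monad of direct sums of line bundles $\O(-1), \O, \O(1)$ only. Twisting back by $\O(1)$ and checking ranks and Chern classes forces exactly the shape $0 \to \O_{\PP^3}(-1)^c \to \O_{\PP^3}^{2c+r} \to \O_{\PP^3}(1)^c \to 0$: the outer ranks must be equal (say $n$) by $c_1 = 0$, then $c_2 = c$ pins $n = c$, and the middle rank $2c + r$ follows from the rank count $r = (2c+r) - 2c$ together with $\chi$. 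The main obstacle is verifying that the differentials in the collapsed spectral sequence are of the right type (i.e. that the higher differentials $d_2, d_3$ vanish or can be absorbed) so that the monad genuinely has only three terms with the claimed bundles; this is a careful but standard analysis, and I would cite the analogous argument for rank $2$ instantons in \cite[Chapter II, 4.4]{OSS} as the template, extending it using the symplectic duality to control $H^2$.
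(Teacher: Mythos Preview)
The paper does not prove this theorem; it is quoted as a classical result of Barth and Atiyah. That said, the paper's own treatment of perverse instantons (the Lemma preceding Lemma~\ref{perv-family}) carries out exactly the computation that proves it, and comparing your attempt to that computation exposes a real gap.

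You have chosen the wrong variant of the Beilinson spectral sequence. You use the form $E_1^{p,q}=H^q(F(p))\otimes\Omega^{-p}(-p)$ with $F=E(-1)$; the efficient route (and the one the paper uses) is the dual form $E_1^{p,q}=H^q\!\big(F\otimes\Omega^{-p}(-p)\big)\otimes\O(p)$ with $F=E(-1)$. With the dual form one shows directly that $H^q(E(-1)\otimes\Omega^{-p}(-p))=0$ for $q\neq 1$ and also for $q=1$, $p=-3$; the spectral sequence then collapses to a single three-term row and, after twisting by $\O(1)$, yields the monad in $\O(-1),\O,\O(1)$ with no conversion needed.

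Your route forces you to control $H^q(E(p-1))$ for $p=-3$, i.e.\ $H^q(E(-4))$, and here two of your claims are false. First, $H^0(E)=0$ does \emph{not} hold for a general symplectic instanton: take $E=E'\oplus\O_{\PP^3}^2$ with $E'$ any symplectic $(r-2,c)$-instanton. Your argument (``a nonzero section forces triviality'') is simply wrong for semistable sheaves of slope $0$. Hence $H^3(E(-4))\cong H^0(E)^*$ can be nonzero, and the $p=-3$ column does not vanish. Second, $h^2(E(-1))=c$ is wrong: in fact $h^2(E(-1))=0$ (restrict to a plane through $l_\infty$ and use semistability there), so $h^1(E(-1))=c$ and $h^2(E(-1))=0$, not both equal to $c$. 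With these corrections your $E_1$-page has nonzero entries in the $p=-3$ column and nontrivial $d_2,d_3$ differentials; the resulting complex of $\Omega^i(i)$'s does not have the shape you wrote down, and ``Euler-sequence identifications'' will not straighten it out without substantial further work. Switch to the dual Beilinson form and the argument becomes clean.
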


In the following we will need the following lemma (it should be
compared with \cite[Proposition 15]{FJ} dealing with the rank $1$
case).

\begin{Lemma} \label{existence}
There exist framed locally free $(r,c)$-instantons if and only if
either $r=1$ and $c=0$ or $r>1$ and $c$ is an arbitrary
non-negative integer. Moreover, if there exist framed locally free
$(r,c)$-instantons then there exist framed locally free
$(r,c)$-instantons $F$ such that $\ext ^2(F,F)=0$.
\end{Lemma}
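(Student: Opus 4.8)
The plan is to split the statement into an existence part and a vanishing (genericity) part. For the existence part, I would first treat the trivial cases: if $r=1$, a rank $1$ torsion free sheaf trivial on a line forces $c_1=0$ and, being reflexive-up-to-nothing in rank $1$, is a twisted ideal sheaf; the instanton cohomology conditions then pin it down to $\O_{\PP^3}$, giving $c=0$. For $r>1$ I would build examples directly. The cleanest construction is to take a rank $2$ symplectic instanton (a 't~Hooft bundle, or an elementary extension/monad of the Barth--Atiyah type) with prescribed $c_2$; such bundles exist for every $c\ge 0$ by the classical monad $0\to\O(-1)^c\to\O^{2c+2}\to\O(1)^c\to 0$, and one checks that a general choice is trivial on a general line. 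To pass from rank $2$ to rank $r>2$ with the \emph{same} $c$, take the direct sum with $\O_{\PP^3}^{\,r-2}$: the conditions $H^1(E(-2))=H^2(E(-2))=0$ and triviality on a line are all additive under $\oplus\,\O$, and the trivial summand contributes nothing to $c_2$. (Alternatively, for $r\ge 2$ one may always produce an $(r,c)$-instanton from an $(r-1,c)$-instanton or an $(2,c)$-instanton by such extensions, so an induction on $r$ starting from $r=2$ works equally well.) This shows existence is possible exactly in the stated range. One must also exhibit a framing, but since the sheaf is trivial on $l_\infty$ (after moving $l_\infty$ into the open set of lines of triviality, using $\PGL_4$-homogeneity), a framing $\Phi$ simply exists.

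For the vanishing part, the point is that $\ext^2(F,F)$ is upper semicontinuous in flat families, so it suffices to exhibit \emph{one} framed locally free $(r,c)$-instanton $F$ with $\ext^2(F,F)=0$; then a general one in the same moduli space (or in a family containing this $F$) has the same vanishing. For the rank $2$, $c=0$ case this is vacuous; in general I would again use $F=E_0\oplus\O^{\,r-2}$ with $E_0$ a carefully chosen rank $2$ instanton. Then $\ext^2(F,F)$ decomposes as a sum of $\ext^2(E_0,E_0)$, $\ext^2(E_0,\O)=H^2(E_0^*)=H^2(E_0)$ (using $E_0^*\simeq E_0$ for symplectic $E_0$), $\ext^2(\O,E_0)=H^2(E_0)$, and $\ext^2(\O,\O)=H^2(\O_{\PP^3})=0$. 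So it reduces to showing $H^2(\PP^3,E_0)=0$ and $\ext^2(E_0,E_0)=0$ for a suitable rank $2$ instanton $E_0$. The vanishing $H^2(E_0)=0$ follows from the monad description by chasing cohomology. The vanishing $\ext^2(E_0,E_0)=0$ is the genuinely delicate point: it is the obstruction space for deformations of $E_0$ as a bundle on $\PP^3$, and is known to vanish for low $c$ (this is exactly the input behind smoothness of the rank $2$ instanton moduli space for small $c_2$) but is not expected to vanish for all $c$.

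The hard part, and the reason the lemma only asserts existence of \emph{some} such $F$ rather than genericity over the whole moduli space, is producing a rank $2$ (or rank $r$) instanton with $\ext^2(F,F)=0$ for \emph{arbitrarily large} $c$. Here I would not rely on the rank $2$ bundle itself — instead I would exploit the extra freedom in higher rank. The idea is: take $F' $ a rank $2$ instanton with $c_2=c'$ small enough that $\ext^2(F',F')=0$ and $H^2(F')=0$, then realize a rank $r$ instanton of charge $c$ as an iterated extension of $F'$ by copies of elementary rank $2$ instantons $E_i$ of charge $1$ (for which $\ext^2(E_i,E_i)$, $\ext^2(E_i,E_j)$, $H^2(E_i)$ all vanish by direct computation from their monads) and copies of $\O$, chosen so that all the relevant $\ext^2$'s between the pieces vanish — using that on $\PP^3$, $\ext^2$ between bundles with suitable splitting type can be forced to zero by a general choice of extension class. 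The combinatorics of which extensions to take, and verifying the vanishing of the finitely many $\ext^2$-groups between building blocks, is the technical core; I expect this to be the main obstacle, and I would organize it as a separate sub-lemma computing $\ext^\bullet$ between charge-$1$ elementary instantons and between such instantons and $\O_{\PP^3}$.
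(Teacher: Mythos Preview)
Your existence argument is essentially the paper's: rule out $r=1$, $c>0$ because a rank $1$ locally free sheaf with $c_1=0$ is $\O_{\PP^3}$; then build a rank $2$ instanton $E$ of charge $c$ and set $F=E\oplus\O_{\PP^3}^{r-2}$. The decomposition of $\ext^2(F,F)$ into $\ext^2(E,E)$, copies of $H^2(E)\simeq H^2(E^*)$, and $H^2(\O_{\PP^3})$ is exactly right, and the paper does the same.

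The gap is in how you handle $\ext^2(E,E)=0$ for the rank $2$ building block. You treat this as the hard step, noting it is only known for small $c$, and then propose to sidestep it by ``exploiting the extra freedom in higher rank'' via iterated extensions of charge-$1$ pieces. This cannot work: the lemma must also produce a rank $2$ instanton with $\ext^2=0$ for every $c$, and your extension scheme does not build rank $2$ bundles of large charge. More importantly, the difficulty you identify is an artifact of conflating \emph{generic} rank $2$ instantons (for which $\ext^2=0$ is the famous smoothness conjecture) with the \emph{specific} 't~Hooft bundles. For a 't~Hooft bundle $E$ arising from $c$ disjoint lines via Serre's construction
\[
0\to \O_{\PP^3}\to E\to J_Y\to 0,
\]
the vanishing $\ext^2(E,E)=0$ is a classical, direct computation valid for all $c$ (using $E^*\simeq E$, one reduces to cohomology of $E$, $J_Y$, and the normal bundle $N_{Y/\PP^3}\simeq\bigoplus_i\O_{l_i}(1)^2$). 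The paper simply invokes this. So rather than your elaborate workaround, the missing ingredient is to pick the 't~Hooft bundle as $E_0$ from the start and carry out (or cite) that $\ext^2$ computation.
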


\begin{proof}
The case $r=0$ is clearly not possible. Let us first assume that
$r=1$. Since the only line bundle with trivial determinant is
$E=\O_{\PP ^1}$ we see that $c_2(E)=0$. So to finish the proof it
is sufficient to  show existence of locally free
$(2,c)$-instantons. If $E$ is such an instanton then $F=E\oplus
\O^{r-2}_{\PP ^3}$ can be given a structure of framed locally free
$(r,c)$-instanton.

Existence of locally free $(2,c)$-instantons is well known. For
example, we can use Serre's construction (see \cite[Chapter I,
Theorem 5.1.1]{OSS}) to construct the so called \emph{t'Hooft
bundles}. More precisely, let $L_1, \dots ,L_c$ be a collection of
$c$ disjoint lines in $\PP ^3$ and let $Y$ denotes their sum (as a
subscheme of $\PP ^3$). Then by the above mentioned theorem there
exists a rank $2$ locally free sheaf $E$ that sits in a short
exact sequence
$$0\to \O_{\PP ^3}\to E\to J_Y\to 0.$$
One can easily see that $E$ is a $(2,c)$-instanton. Moreover, it
is easy to see that $\ext ^2(F,F)=0$ as $\ext ^2(E,E)=0$ and
$H^2(E)=H^2(E^*)=0$ (note that $E^*\simeq E$).
\end{proof}

\subsection{Generalized ADHM data after Frenkel--Jardim} \label{FJ-section}

Let $X$ be a smooth projective variety over an algebraically
closed field $k$ and let  $\O_X(1)$ be a fixed ample line bundle.
Let $V$ and $W$ be $k$-vector spaces of dimensions $c$ and $r$,
respectively. Set
$$\gr{B}=\hom(V,V)\oplus\hom(V,V)\oplus\hom(W,V)\oplus\hom(V,W)$$
and $\fal{\gr{B}}=\gr{B}\otimes H^0(\O_X(1))$. An element of
$\fal{\gr{B}}$ is written as $(\fal{B_1},\fal{B_2},
\fal{i},\fal{j})$, where $\ti B_1$ and $\ti B_2$ are treated as
maps $V\to V\otimes H^0(\O_X(1))$, $\ti i$ as a map $W\to V\otimes
H^0(\O_X(1))$ and $\ti j$ as a map $V\to W\otimes H^0(\O_X(1))$.

Let us define an analogue of the moment map
$$\ti\mu=\ti\mu_{W,V}: \ti\bB\to \enD(V) \otimes H^0(\O_X(2))$$
by the formula
$$\ti\mu(\ti B_1, \ti B_2, \ti i, \ti j)=[\ti B_1, \ti B_2]+\ti i \ti j.$$

As before an element of $\ti \mu ^{-1}(0)$ is called an {\it ADHM
datum} (or an \emph{ADHM $(r,c)$-datum for $X$} if we want to show
dependence on $r$, $c$ and $X$).

\medskip

If we fix a point $p\in X$ then for a $k$-vector space $U$ the
evaluation map $\ev_p:\hox\to\O_X(1)_p\izo k$ tensored with
identity on $U$ gives a map $U\otimes\hox\to U$ which we also
denote by $\ev_p$. For simplicity, we will use the notation
$\fal{B}_1(p)=\ev_p\fal{B}_1\in\hom(V,V)$, etc. For an ADHM datum
$x=(\fal{B}_1,\fal{B}_2,\fal{i},\fal{j})$, $x(p)$ denotes the
quadruple $(\fal{B}_1(p),\fal{B}_2(p),\fal{i} (p),\fal{j} (p))$.
Note that for maps to be well defined we need to fix an
isomorphism $\O_X(1)_p\izo k$ at each point $p\in X$. This does
not cause any problems as all the notions that we consider are
independent of these choices.

\begin{Definition}\label{def_stable_ADHM}
We say that an ADHM datum $x\in \ti{\gr{B}}$ is
\begin{enumerate}
\item \emph{FJ-stable} (\emph{FJ-costable}, \emph{FJ-regular}), if $x(p)$ is stable
(respectively: costable, regular) for all $p\in X$,
\item \emph{FJ-semistable}, if there exists  a point $p\in X$ such that $x(p)$ is
stable,
\item \emph{FJ-semiregular}, if it is FJ-stable and there exists a point $p \in X$
such that $x(p)$ is regular.
\end{enumerate}
\end{Definition}

Definition \ref{def_stable_ADHM} in case of $X=\PP ^1$ (not $\PP
^3$!) was introduced by I. Frenkel and M. Jardim in \cite{FJ}, but
we slightly change the notation and we call stability,
semistability, etc. introduced in \cite{FJ}, FJ-stability,
FJ-semistability, etc. The reason for this change will become
apparent in later sections. Namely, in \cite{J} Jardim generalized
this definition of (semi)stability of ADHM data to all projective
spaces and claimed in \cite[Proposition 4]{J} that his notion of
semistability is equivalent to GIT semistability of ADHM data. We
show that this assertion is false.

\medskip

Let us specialize to the case $X=\PP ^1$. Let  $[x_0,x_1,x_2,x_3]$
be homogeneous coordinates in $\PP ^3$ and let us embedd $X$ into
$\PP ^3$ by $[y_0,y_1]\to [y_0,y_1,0,0]$. Then $x_0$ and $x_1$ can
be considered as elements of $H^0(X, \O _X(1))$.

Let us set $\fal{W}=V\oplus V\oplus W$. Then any point $x=
(\fal{B_1},\fal{B_2}, \fal{i},\fal{j})\in
\fal{\gr{B}}=\gr{B}\otimes H^0(\O_{\PP ^1}(1))$ gives rise to the
following maps of sheaves on $\PP ^3$: map $\alpha:
V\otimes\O_{\PP^3}(-1) \to \fal{W}\otimes\O_{\PP^3}$ given by
\begin{equation}\alpha=\left(\begin{array}{c}
\fal{B}_1+1\otimes x_2\\
\fal{B}_2+1\otimes x_3\\
\fal{j}
\end{array}
\right)
\end{equation}
and map $ \beta: \fal{W}\otimes\O_{\PP^3}\to
V\otimes\O_{\PP^3}(1)$ given by
\begin{equation}\beta=\left(\begin{array}{ccc}
-\fal{B}_2-1\otimes x_3&\fal{B}_1+1\otimes x_2&\fal{i} \end{array}
\right).
\end{equation}
It follows from easy calculations that $\beta\alpha=0$ if and only
if $(\fal{B}_1,\fal{B}_2,\fal{i},\fal{j})\in \ti \mu^{-1}(0)$. So
if $x$ is an ADHM datum then we get the complex
\begin{equation}
\label{monada} \C ^{\bullet}_x=({ V\otimes\O_{\PP^3}(-1)
\mathop{\longrightarrow}^{\alpha}\fal{W}\otimes\O_{\PP^3}\mathop{\longrightarrow}^{\beta}V\otimes\O_{\PP^3}(1))}
\end{equation}
considered in degrees $-1,0,1$.

Let $l_{\infty}$ be the line in $\PP ^3$ given by $x_0=x_1=0$. It
is easy to see that after restricting to $l_{\infty}$ the
cohomology of the above complex of sheaves becomes the trivial
rank $r$ sheaf on $\PP ^1$. Moreover, we have the following lemma:

\begin{Lemma}
\label{FJmonada} \emph{(see \cite[Proposition 11]{FJ})} Let us fix
an ADHM datum $x \in \fal{\gr{B}}$. Then the corresponding complex
$\C_x^{\bullet}$ is a monad if and only if the ADHM datum $x$ is
FJ-stable.
\end{Lemma}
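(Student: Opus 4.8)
The plan is to analyze the complex $\C_x^\bullet$ pointwise on $\PP^3$ and reduce the monad condition to the two sheaf-theoretic conditions: injectivity of $\alpha$ and surjectivity of $\beta$. Since $\beta\alpha = 0$ already holds (this is exactly the ADHM equation, as noted in the excerpt), the only thing to check is that $\alpha$ is a subbundle inclusion and $\beta$ is a surjection of sheaves. I would check both conditions fiberwise: for a point $q = [q_0,q_1,q_2,q_3] \in \PP^3$, the maps $\alpha(q): V \to \fal W$ and $\beta(q): \fal W \to V$ are linear maps of vector spaces, and $\alpha$ is injective as a map of sheaves (i.e. a subbundle map, since source and target are locally free) iff $\alpha(q)$ is injective for every $q$, and similarly $\beta$ is surjective as a map of sheaves iff $\beta(q)$ is surjective for every $q$. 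So the whole statement reduces to: $\alpha(q)$ injective and $\beta(q)$ surjective for all $q \in \PP^3$ $\iff$ $x(p)$ stable for all $p \in X = \PP^1$.

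Next I would establish the two separate equivalences. For surjectivity of $\beta$: writing $\beta(q) = (-\ti B_2(q_0,q_1) - q_3,\ \ti B_1(q_0,q_1) + q_2,\ \ti i(q_0,q_1))$, I claim $\beta(q)$ fails to be surjective iff there is a proper subspace $S \subsetneq V$ with $\coker$ of the transpose landing in it — dually, $\beta(q)$ not surjective means $\beta(q)^*: V^* \to \fal W^*$ has a nonzero kernel, i.e. there is $0 \neq \xi \in V^*$ with $\xi(\ti B_1(q_0,q_1) + q_2) = 0$, $\xi(\ti B_2(q_0,q_1) + q_3) = 0$, $\xi(\ti i(q_0,q_1)) = 0$. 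The first two equations pin down $q_2, q_3$ in terms of $\xi$ and $(q_0, q_1)$, so such a $q$ exists over a given $p = [q_0,q_1] \in \PP^1$ precisely when there is $0 \neq \xi$ with $\xi \circ \ti B_k(p) = \lambda_k \xi$ for scalars $\lambda_k = -q_{k+1}$ and $\xi \circ \ti i(p) = 0$. The subspace $S := \ker \xi \subsetneq V$ is then $\ti B_k(p)$-invariant (since $\ti B_k(p)^*$ preserves the line $k\xi$) and contains $\im \ti i(p)$ — exactly the failure of stability of $x(p)$. Conversely, a destabilizing $S \subsetneq V$ for $x(p)$ gives a common eigenvector $\xi$ of the $\ti B_k(p)^*$ restricted appropriately (using that $\ti B_k(p)$-invariance of $S$ means $\ti B_k(p)^*(S^\perp) \subset S^\perp$, and $S^\perp$ is nonzero; pick a common eigenvector of the commuting... here one must be careful since $\ti B_1(p)^*, \ti B_2(p)^*$ need not commute on $S^\perp$ unless the ADHM equation forces enough) — this is the delicate point, see below. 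For injectivity of $\alpha$: the completely analogous argument with the roles transposed shows $\alpha(q)$ fails to be injective for some $q$ over $p$ iff there is $0 \neq v \in V$ with $\ti B_k(p) v = \mu_k v$ and $\ti j(p) v = 0$, i.e. a nonzero $\ti B_k(p)$-invariant line contained in $\ker \ti j(p)$; but this is failure of \emph{costability}, not stability, so I need to argue that costability is automatic — and indeed on $\PP^1$ one checks that for an ADHM datum arising from the two-dimensional $H^0(\O_{\PP^1}(1))$, stability of $x(p)$ for all $p$ already implies the relevant injectivity, or alternatively that this piece of the argument is precisely \cite[Proposition 11]{FJ} and can be cited. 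Actually the cleanest route is: $\alpha(q)$ is injective for all $q$ with $q_0 = q_1 = 0$ (the line $l_\infty$), since there it becomes the constant map $(x_2, x_3, 0)^T$ composed with identities which is visibly injective for $(x_2:x_3) \neq (0:0)$; and injectivity on a line propagates — actually injectivity of $\alpha$ as a subbundle map is an open condition and one shows it holds on all of $\PP^3$ once it holds generically, combined with the cohomological constraints. I would follow \cite[Lemma 2.7]{Na1} and \cite[Proposition 11]{FJ} for the precise bookkeeping here.

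The main obstacle will be the direction ``stable $\Rightarrow$ monad'', specifically extracting the common eigenvector: from a destabilizing subspace one gets $\ti B_k(p)^*$-coinvariance of a nonzero subspace, but turning coinvariance-of-a-subspace into a common eigenvector of two operators requires either that they commute on that subspace (which follows from $[\ti B_1(p), \ti B_2(p)] + \ti i(p)\ti j(p) = 0$ only after restricting to a subspace killed by $\ti j(p)$ or mapped into by $\ti i(p)$, so one has to set up the right subspace), or one uses the standard trick of first diagonalizing/triangularizing one operator and then finding a simultaneous eigenvector. I expect the argument to parallel \cite{Na1} Lemma 2.7 closely, with the role of the single plane $\PP^2$ replaced by the family over $\PP^1 \hookrightarrow \PP^3$; the extra coordinates $x_2, x_3$ are precisely what provides the eigenvalue parameters, so the equivalence ``destabilizing data at $p$'' $\leftrightarrow$ ``point $q$ over $p$ where $\beta$ drops rank'' is really a statement that the two extra directions sweep out exactly the eigenvalues. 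I would also double-check the edge case $c = 0$ (then $V = 0$, stability is automatic, and the monad has zero outer terms, so $\C_x^\bullet$ is trivially a monad with cohomology $W \otimes \O_{\PP^3}$).
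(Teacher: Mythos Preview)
There is a genuine gap in your treatment of $\alpha$, and it stems from a misreading of the definition of \emph{monad}. You write that ``$\alpha$ is injective as a map of sheaves (i.e.\ a subbundle map, since source and target are locally free) iff $\alpha(q)$ is injective for every $q$''. This is false: injectivity as a map of sheaves is strictly weaker than fiberwise injectivity. (Think of $\O_{\PP^1}(-1)\hookrightarrow \O_{\PP^1}$ given by a nonzero section: this is an injective sheaf map but drops rank at the zero of the section.) The paper's definition of monad requires only that $\alpha$ be injective \emph{as a map of sheaves}, and this is automatic here with no hypothesis on $x$ at all: the restriction of $\alpha$ to the line $l_\infty=\{x_0=x_1=0\}$ is $(x_2,x_3,0)^T\otimes\id_V$, which is injective; since the source $V\otimes\O_{\PP^3}(-1)$ is torsion free, generic injectivity forces global sheaf-injectivity. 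This is exactly what the paper's one-line proof says: ``the map $\alpha$ is always injective as a map of sheaves''. Your attempt to verify fiberwise injectivity of $\alpha$ is both unnecessary and doomed, since for a general FJ-stable datum the cokernel of $\alpha$ is \emph{not} locally free (the paper notes in the analysis-of-singularities subsection that $\coker\alpha$ can fail to be locally free along a curve of degree $c^2$).

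Your treatment of $\beta$ is essentially on the right track and close to what the paper cites from \cite[Proposition~11]{FJ}. The worry you flag about common eigenvectors is not a real obstacle: if $S\subsetneq V$ destabilizes $x(p)$, then on the quotient $V/S$ the induced endomorphisms $\bar B_1(p),\bar B_2(p)$ commute, because the ADHM equation gives $[\bar B_1(p),\bar B_2(p)]=-\overline{i(p)j(p)}=0$ (as $\im i(p)\subset S$); dually, $B_1(p)^*,B_2(p)^*$ commute on $S^\perp\subset V^*$ and hence admit a common eigenvector $\xi$ there, which also annihilates $\im i(p)$, producing the point $q$ where $\beta(q)$ drops rank. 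So once you correct the $\alpha$ issue, the argument goes through.
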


\begin{proof}
The map $\alpha$ is always injective as a map of sheaves, so we
only need to check when $\beta$ is surjective. This is exactly the
content of \cite[Proposition 11]{FJ}.
\end{proof}

\medskip

The main theorem of \cite{FJ} is existence of the following
set-theoretical bijections:

\begin{Theorem} \emph{(\cite[Main Theorem]{FJ})}
The above construction of monads from ADHM data on $\PP ^1$ gives
bijections between the following objects:
\begin{itemize}
\item FJ-stable ADHM data and framed torsion free instantons;
\item FJ-semiregular ADHM data and framed reflexive instantons;
\item FJ-regular ADHM data and framed locally free instantons.
\end{itemize}
\end{Theorem}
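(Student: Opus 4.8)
The plan is to establish the three set-theoretical bijections by analyzing the cohomology sheaf $E = \H^0(\C_x^\bullet)$ of the monad attached to an FJ-stable ADHM datum, and conversely by recovering the ADHM datum from the cohomology of a monad presentation of a framed instanton. The existence of the monad for FJ-stable data is exactly Lemma \ref{FJmonada}, so for such data $E$ is a coherent sheaf on $\PP^3$; one first checks from the shape of $\alpha$ and $\beta$ (they involve the variables $x_2,x_3$ linearly in the last two rows/columns) that $E$ has rank $r$, $c_2 E = c$, and that $E$ restricted to $l_\infty$ is the trivial sheaf $\O_{l_\infty}^r$ together with its tautological framing $\Phi$. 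The vanishing $H^1(E(-2)) = H^2(E(-2)) = 0$ comes from the standard cohomology-of-a-monad spectral sequence (or the two short exact sequences obtained by splitting the monad into $0 \to V\otimes\O(-1) \to \fal W\otimes\O \to Q \to 0$ and $0 \to E \to Q \to V\otimes\O(1) \to 0$) combined with Bott vanishing on $\PP^3$; since $H^i(\O_{\PP^3}(k)) = 0$ for $i=1,2$ and all $k$, and the relevant connecting maps are isomorphisms, the instanton cohomology conditions drop out. Torsion-freeness of $E$ follows because $E$ is a subsheaf of the locally free sheaf $Q$. This shows the construction sends FJ-stable data to framed torsion free instantons.

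Next I would show the map is well-defined on $G = \GL(V)$-orbits (a change of basis in $V$ induces an isomorphism of monads, hence of framed sheaves) and injective: given the framed instanton $(E,\Phi)$, Beilinson's spectral sequence on $\PP^3$ (or the restriction-of-monads argument of Frenkel--Jardim, \cite[Main Theorem]{FJ}) recovers the display $\alpha,\beta$ up to the $G$-action, so the linear-algebra data $(\fal B_1,\fal B_2,\fal i,\fal j)$ are determined up to the $\GL(V)$-action; here one uses the instanton hypotheses to force the outer cohomology groups of the Beilinson complex to vanish, leaving a three-term monad of exactly the required type, and the framing along $l_\infty$ pins down the identification $\fal W = V\oplus V\oplus W$. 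Surjectivity is the statement that \emph{every} framed torsion free instanton arises this way, which again is the content of the Beilinson/monad reduction: the conditions $H^1(E(-2)) = H^2(E(-2)) = 0$ and triviality on a line are precisely what is needed to collapse the Beilinson spectral sequence to a monad with terms $V\otimes\O(-1)$, $\fal W\otimes\O$, $V\otimes\O(1)$, after which reading off $\alpha,\beta$ yields an ADHM datum, necessarily FJ-stable by Lemma \ref{FJmonada} since the monad is genuinely a monad (i.e. $\beta$ is surjective).

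For the refinements to reflexive and locally free instantons I would track where the cohomology sheaf $E$ fails to be locally free. The key local computation is that the stalk $E_p$ depends only on the pointwise datum $x(p) = (\fal B_1(p),\fal B_2(p),\fal i(p),\fal j(p))$: away from $l_\infty$, after using the $x_0,x_1$ coordinates to trivialize, $E$ near a point over $p \in \PP^1$ is the cohomology of the numerical monad determined by $x(p)$, and by the $\PP^2$-theory (Theorem \ref{Donaldson-Nakajima}, and the monad description recalled just before it) this cohomology is locally free exactly when $x(p)$ is regular, and is reflexive (has the right codimension-of-singularity bound) exactly when $x(p)$ is at worst failing costability at isolated points. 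Thus $E$ is locally free iff $x(p)$ is regular for all $p$, i.e. $x$ is FJ-regular; and $E$ is reflexive iff the non-locally-free locus is $0$-dimensional, which translates to $x$ being FJ-stable with $x(p)$ regular for all but finitely many $p$ — that is, FJ-semiregular. Combining these with the bijection from part one gives the two refined bijections.

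The main obstacle I anticipate is the local identification of the stalk $E_p$ with the cohomology of the pointwise ADHM monad, and the precise dictionary "$x(p)$ regular $\Leftrightarrow$ $E$ locally free at points over $p$" / "$x(p)$ merely costable-failing $\Leftrightarrow$ $E$ reflexive there": this requires a careful base-change argument for the monad along the restriction $\PP^3 \setminus l_\infty \to \PP^1$ and invoking the singularity-codimension criteria for reflexivity versus local freeness of monad cohomology. The cohomology-vanishing and Beilinson-collapse steps are essentially bookkeeping with known spectral sequences, and well-definedness/injectivity on $G$-orbits is routine; the geometric content — and the place where one genuinely uses the structure of instantons on $\PP^3$ rather than formal nonsense — is the fibrewise analysis linking FJ-(semi)regularity to the local type of $E$. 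Since this is precisely \cite[Main Theorem]{FJ}, the proof proper will consist of recalling that argument and checking it carries over verbatim to our slightly changed terminology.
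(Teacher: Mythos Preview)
The paper does not give its own proof of this theorem: it is stated as a citation of \cite[Main Theorem]{FJ} and used as input. So there is no ``paper's proof'' to compare against; your proposal is effectively a sketch of the Frenkel--Jardim argument, which you correctly acknowledge in your closing sentence.

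That said, your sketch has one slip worth flagging. You claim torsion-freeness of $E$ follows because $E$ is a subsheaf of the ``locally free sheaf $Q$'', where $Q=\coker\alpha$. But $Q$ need not be locally free: the map $\alpha$ can fail to be fibrewise injective along a locus of dimension $\le 1$ (see Subsection~2.3 of the paper). What is true is that $Q$ is \emph{torsion free}, and this requires the argument of Proposition~\ref{perv2}: the degeneracy locus of $\alpha$ misses $l_\infty$, hence cannot contain a surface (any surface in $\PP^3$ meets every line), so $T_2(\coker\alpha)=0$. With that correction your first bijection goes through.

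Your fibrewise analysis for the reflexive and locally free cases is the right idea, but the phrase ``$E$ near a point over $p\in\PP^1$'' needs unpacking: the relevant geometry is the pencil of planes through $l_\infty$, and restricting the $\PP^3$-monad to the plane $\Pi_p$ yields the $\PP^2$-monad for $x(p)$. The passage from ``$E|_{\Pi_p}$ locally free at $q$'' to ``$E$ locally free at $q$'' is not automatic (restriction to a divisor can kill torsion), so one has to argue via the monad display directly rather than via the restricted sheaf. This is exactly the obstacle you anticipate, and it is where the actual work in \cite{FJ} sits. Your equivalence ``FJ-semiregular $\Leftrightarrow$ $x(p)$ regular for all but finitely many $p$'' is correct, since regularity is an open condition on the irreducible curve $\PP^1$.
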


\section{Perverse instantons on $\PP ^3$.}

In this section we introduce perverse sheaves and perverse
instantons and we show that perverse instantons are perverse
sheaves (this fact is non-trivial!). We also sketch proof of an
analogue of Drinfeld's representability theorem in the
$3$-dimensional case.

\subsection{Tilting and torsion pairs on $\PP ^3$.}

\begin{Definition} Let $\A$ be an abelian category.
A \emph{torsion pair} in $\A$ is a pair $(\T, \F)$ of full
subcategories of $\A$ such that the following conditions are
satisfied:
\begin{enumerate}
\item for all objects $T\in \Ob \T$ and $F\in \F$ we have $\hom _{\A}(T,
F)=0$,
\item for every object $E\in \Ob \A$ there exist objects $T\in \Ob \T$ and $F\in \Ob \F$
such that the following short exact sequence is exact in $\A$:
$$0\to T\to E\to F\to 0.$$
\end{enumerate}
\end{Definition}

We will need the following theorem of Happel, Reiten and Smal{\o}:

\begin{Theorem} \label{HRS} \emph{(see \cite[Proposition I.2.1]{HRS})}
Assume that $\A$ is the heart of a bounded $t$-structure on a
triangulated category $\D$ and suppose that $(\T, \F)$ is a
torsion pair in $\A$. Then the full subcategory
$$\B=\{E\in \Ob \D : H^i(E)=0\, \, {\rm for }\, \,   i\ne 0,-1, H^{-1}(E)\in \Ob \F, \, \, {\rm and }\, \,  H^0(E)\in \Ob \T \}$$
of $\D$ is the heart of a bounded $t$-structure on $\D$.
\end{Theorem}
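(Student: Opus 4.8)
The plan is to exhibit the $t$-structure on $\D$ whose heart is $\B$, and then to check the three axioms of a $t$-structure directly; the decomposition axiom is the only one in which the torsion pair really enters.

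Write $H^i:\D\to\A$ for the cohomology functors of the given bounded $t$-structure and $\tau_{\le n},\tau_{\ge n}$ for its truncations. I would take as aisle and coaisle
$$\D^{\le 0}_{\B}=\{E\in\Ob\D : H^i(E)=0\ \hbox{ for }i>0,\ H^0(E)\in\Ob\T\},$$
$$\D^{\ge 0}_{\B}=\{E\in\Ob\D : H^i(E)=0\ \hbox{ for }i<-1,\ H^{-1}(E)\in\Ob\F\}.$$
Since $\T$ and $\F$ contain the zero object and shifting shifts cohomology, one sees at once that $\D^{\le 0}_{\B}[1]\subseteq\D^{\le 0}_{\B}$ and $\D^{\ge 0}_{\B}[-1]\subseteq\D^{\ge 0}_{\B}$, and that $\D^{\le 0}_{\B}\cap\D^{\ge 0}_{\B}$ is exactly the subcategory $\B$ of the statement. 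Boundedness is equally immediate: if the $H^i(E)$ vanish outside an interval $[a,b]$ (which happens for every $E$, the given $t$-structure being bounded), then $E\in\D^{\ge a}_{\B}\cap\D^{\le b+1}_{\B}$, because the ``boundary'' cohomology objects that these two conditions would force into $\F$, resp.\ $\T$, are zero.

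For the Hom-vanishing axiom, take $A\in\D^{\le 0}_{\B}$ and $B\in\D^{\ge 1}_{\B}:=\D^{\ge 0}_{\B}[-1]$; note $A\in\D^{\le 0}$ and $B\in\D^{\ge 0}$ for the given $t$-structure. Applying $\hom_{\D}(-,B)$ to the given truncation triangle $\tau_{\le -1}A\to A\to H^0(A)\to\tau_{\le -1}A[1]$ and using orthogonality of the given $t$-structure to kill $\hom_{\D}(\tau_{\le -1}A,B)$, one reduces $\hom_{\D}(A,B)$ to a quotient of $\hom_{\D}(H^0(A),B)$; applying $\hom_{\D}(H^0(A),-)$ to $H^0(B)\to B\to\tau_{\ge 1}B\to H^0(B)[1]$ reduces this, again by orthogonality, to $\hom_{\A}(H^0(A),H^0(B))$. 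This vanishes because $H^0(A)\in\T$, $H^0(B)\in\F$ and $(\T,\F)$ is a torsion pair.

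The decomposition axiom is the substantive step. Given $X\in\D$, I would begin from the truncation triangle $\tau_{\le 0}X\to X\to\tau_{\ge 1}X\to\tau_{\le 0}X[1]$ and split the heart object $H^0(X)=H^0(\tau_{\le 0}X)$ via its torsion sequence $0\to T\to H^0(X)\mathop{\to}^{q}F\to 0$ with $T\in\T$, $F\in\F$. Let $g:\tau_{\le 0}X\to F$ be the composite of the canonical morphism $\tau_{\le 0}X\to H^0(X)$ with $q$, take $A$ to be a choice of fibre of $g$, and let $B$ be a cone of the induced morphism $\phi:A\to\tau_{\le 0}X\to X$. A chase in the long exact cohomology sequences then yields $H^i(A)\cong H^i(X)$ for $i<0$, $H^0(A)=\ker q=T$, and $H^i(A)=0$ for $i>0$ (here surjectivity of $q$ is used), so $A\in\D^{\le 0}_{\B}$; and once one knows that $H^0(\phi)$ is the inclusion $T\hookrightarrow H^0(X)$, the long exact sequence of the triangle $A\to X\to B\to A[1]$ gives $H^i(B)=0$ for $i<0$ and $H^0(B)=\coker(T\hookrightarrow H^0(X))=F\in\F$, i.e.\ $B\in\D^{\ge 1}_{\B}$. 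Hence $A\to X\to B\to A[1]$ is the required triangle. The point where I expect the most care is exactly the identification of $H^0(\phi)$ with the torsion subobject inclusion — equivalently, controlling the connecting maps in the octahedron formed from the two truncation triangles of $\tau_{\le 0}X$ and the triangle defining $A$; the rest is routine diagram chasing. With the three axioms in hand, $(\D^{\le 0}_{\B},\D^{\ge 0}_{\B})$ is a bounded $t$-structure whose heart is $\B$, as claimed.
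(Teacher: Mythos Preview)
The paper does not prove this theorem; it is quoted from \cite[Proposition I.2.1]{HRS} and used as a black box. Your argument is a correct direct verification of the $t$-structure axioms for the tilted pair $(\D^{\le 0}_{\B},\D^{\ge 0}_{\B})$, and it is essentially the standard proof found in the cited reference: define the new aisle and coaisle by the cohomological conditions you wrote, check shift-stability and orthogonality by reducing via truncation triangles to the torsion-pair vanishing $\hom_{\A}(\T,\F)=0$, and obtain the decomposition triangle by fibring $\tau_{\le 0}X$ over the torsion quotient $F$ of $H^0(X)$.

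The one place you flagged as needing care --- identifying $H^0(\phi)$ with the inclusion $T\hookrightarrow H^0(X)$ --- is indeed routine: $\phi$ factors through $\tau_{\le 0}X\to X$, which is an isomorphism on $H^0$, and the long exact sequence of the defining triangle $A\to\tau_{\le 0}X\to F$ gives $0\to H^0(A)\to H^0(X)\mathop{\to}^{q}F\to 0$, so $H^0(A)\to H^0(X)$ is precisely $\ker q=T\hookrightarrow H^0(X)$. With this, your long exact sequence computation for $B$ goes through as written. No gaps.
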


\medskip
In the situation of the above theorem we say that $\B$ is obtained
from $\A$ by \emph{tilting} with respect to the torsion pair $(\T,
\F )$.

\medskip

Let $E$ be a coherent sheaf on a noetherian scheme $X$. The
\emph{dimension} $\dim E$ of the sheaf $E$ is  by definition the
dimension of the support of $E$. For a $d$-dimensional sheaf $E$
there exists a unique filtration
$$0\subset T_0(E)\subset T_1(E)\subset \dots \subset T_d(E)$$
such that $T_i(E)$ is the maximal subsheaf of $E$ of dimension
$\le i$ (see \cite[Definition 1.1.4]{HL2}).

\medskip

Let $\A$ be an abelian category. Then any object in $\A$ can be
viewed as a complex concentrated in degree zero. This yields an
equivalence between $\A$ and  the full subcategory of the derived
category $D(\A)$ of $\A$ of complexes $K^{\bullet}$ with
$H^i(K^{\bullet})=0$ for $i\ne 0$.

In the following by $D^b(X)$ we denote the bounded derived
category of the abelian category of coherent sheaves on the scheme
$X$. The object of $D^b(X)$ corresponding to a coherent sheaf $F$
is called a \emph{sheaf object} and by abuse of notation it is
also denoted by $\F$.

If $\C$ is a complex of coherent sheaves on $X$ then $\H^p(\C)$
denotes its $p$-th cohomology. We use this notation since we would
like to distinguish cohomology $\H^p(\F)$ of a sheaf object $\F$
and cohomology of a sheaf $H^p(\F)=H^p(X,\F)$.

\medskip

Let $\A=\Coh X$ be the category of coherent sheaves on a smooth
projective $3$-fold $X$. Let $\T$ be the full subcategory of $\A$
whose objects are all coherent sheaves of dimension $\le 1$. Let
$\F$ be the full subcategory of $\A$ whose objects are all
coherent sheaves $E$ which do not contain subsheaves of dimension
$\le 1$ (i.e., $T_1(E)=0$). Clearly, $(\T, \F)$ form a torsion
pair in $\A$.

\begin{Definition} \label{perv}
A complex $\C \in D^b(X)$ is called a \emph{perverse sheaf} if the
following conditions are satisfied:
\begin{enumerate}
\item $\H ^i (\C)=0$ for $i\ne 0,1$,
\item $\H ^0(\C)\in \Ob \F$,
\item $\H^1 (\C) \in \Ob \T$.
\end{enumerate}
\end{Definition}

\begin{Definition}
A \emph{moduli lax functor of perverse sheaves} is the lax functor
$\Perv (X) :\Sch/k \to \Group$ from the category of $k$-schemes to
the category of groupoids, which to a $k$-scheme $S$ assigns the
groupoid that has $S$-families of perverse sheaves on $X$ as
objects and isomorphisms of perverse sheaves as morphisms.
\end{Definition}

Theorem \ref{HRS} implies that perverse sheaves form an abelian
category which is a shift of the tilting of $\Coh X$ with respect
to the pair $(\T , \F)$. This fact is crucial in proof of the
following theorem (cf. \cite[VIII 5.1, 1.1, 1.2]{SGA1},
\cite[Theorem 3.5]{So}, \cite[Lemma 5.5]{BFG}):

\begin{Theorem}
The moduli lax functor of perverse sheaves on a smooth
$3$-dimensional projective variety is a (non-algebraic!)
$k$-stack.
\end{Theorem}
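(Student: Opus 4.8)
The plan is to verify the two axioms of a $k$-stack for the moduli lax functor $\Perv(X)$: that it is a prestack (morphisms form a sheaf in the fppf or étale topology) and that descent data for objects are effective. The key tool is that perverse sheaves form the heart $\B$ of a bounded $t$-structure on $D^b(X)$, obtained by tilting $\Coh X$ with respect to the torsion pair $(\T,\F)$, by Theorem \ref{HRS}. In particular $\B$ is an abelian category, and the relative version makes sense: for a $k$-scheme $S$ one defines an $S$-family of perverse sheaves to be an object $\C \in D^b(X\times S)$ that is ``perverse over $S$'', i.e. flat over $S$ in the appropriate derived sense and whose restriction to each geometric fibre $X\times \Spec k(s)$ lies in $\B$. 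The relative tilted heart construction (cf. \cite[Lemma 5.5]{BFG} in the plane case) shows that these families themselves form the heart of a $t$-structure on $D^b(X\times S)$, which gives us an abelian category in which to work.

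First I would establish the prestack property. Given two $S$-families $\C, \C'$ of perverse sheaves, one must show that the presheaf on $\Sch/S$ sending $T\to S$ to $\hom(\C_T, \C'_T)$ (isomorphisms, or more conveniently all morphisms in the relative perverse heart) is a sheaf for the chosen topology. Since $\Hom$ in the derived category is computed by a complex of $\O_S$-modules compatible with base change (using flatness over $S$ to control the higher $\Tor$'s and the hypercohomology spectral sequence), the sheaf property for morphisms reduces to fppf-descent for quasi-coherent sheaves on $S$, which is classical (\cite[VIII]{SGA1}). The point that needs care is that a morphism in $D^b(X\times T)$ between two relatively perverse objects which is fibrewise a morphism of perverse sheaves is automatically a morphism in the relative heart; this follows from the $t$-structure formalism together with the fact that $\hom_{D^b}$ in negative degrees vanishes between objects of a heart.

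Next I would treat effectivity of descent. Given a covering $\{S_i \to S\}$, families $\C_i$ on $X\times S_i$, and isomorphisms on the overlaps satisfying the cocycle condition, one wants to glue them to a family $\C$ on $X\times S$. Here one uses that the perverse heart is abelian and Noetherian, so the descent can be performed: the obstruction to gluing objects in a derived category is controlled by negative $\ext$ groups between the objects, and between objects of a heart of a $t$-structure $\ext^{<0}$ vanishes, so the gluing data for objects behave exactly like gluing data in an abelian category. Concretely, one can present each $\C_i$ locally by a two-term complex of flat sheaves (reflecting that perverse sheaves sit in degrees $0,1$ of the tilt), descend the individual terms and the differential by ordinary fppf-descent for coherent sheaves, and check that the resulting complex is again relatively perverse by checking fibrewise, which is local on $S$ and hence already known.

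The main obstacle is the descent of objects: one must make the notion of an $S$-family precise enough that ``relatively perverse'' is an fppf-local condition on $S$ and that the gluing genuinely produces an object of $D^b(X\times S)$ rather than merely a compatible system. This is exactly where the $t$-structure language does the real work — it converts a derived-category gluing problem, which has no reason to be effective in general, into a gluing problem in an abelian category, where the vanishing of $\ext^{<0}$ kills all higher obstructions. The non-algebraicity asserted in the theorem requires no proof here; it is a negative statement (the diagonal or the local structure fails to be of finite type), and the theorem only claims that $\Perv(X)$ \emph{is} a stack, which is the positive content I have sketched. I would close by remarking that all arguments are formally identical to the plane case \cite[Lemma 5.5]{BFG} and to the general descent machinery of \cite[Theorem 3.5]{So}, the only input special to the $3$-fold situation being Theorem \ref{HRS} applied to the specific torsion pair $(\T,\F)$ described above.
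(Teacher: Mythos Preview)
The paper does not actually give a proof of this theorem: it states it with only the remark that Theorem \ref{HRS} (perverse sheaves form an abelian category via tilting) is ``crucial'' and cites \cite[VIII 5.1, 1.1, 1.2]{SGA1}, \cite[Theorem 3.5]{So}, and \cite[Lemma 5.5]{BFG}. Your proposal is essentially an expansion of exactly this hint --- you invoke the same tilted-heart input, the same references, and the standard argument that vanishing of negative Ext in a heart makes descent of objects effective --- so your approach is the intended one and there is nothing to compare.
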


\begin{Proposition}
Let
$$\C=(\C^{-1} \mathop\to^{\alpha} \C ^0 \mathop\to^{\beta} \C^1 )$$
be a complex of locally free sheaves on $X$ and assume that there
exists a curve $j:C\hookrightarrow X$ such that $Lj^*\C$ is a
sheaf object in $D^b(C)$. Then the object of $D^b(X)$
corresponding to $\C$ is a perverse sheaf.
\end{Proposition}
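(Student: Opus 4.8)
The plan is to show directly that the complex $\C$, viewed as an object of $D^b(X)$, satisfies the three conditions in Definition \ref{perv}. First I would observe that since $\C$ is a complex of locally free sheaves concentrated in degrees $-1,0,1$, its cohomology sheaves $\H^i(\C)$ vanish outside $\{-1,0,1\}$ automatically; the three conditions to verify are therefore: $\H^{-1}(\C)=0$ (i.e. $\alpha$ is injective as a map of sheaves), $\H^0(\C)\in\F$ (the degree-zero cohomology has no subsheaf of dimension $\le 1$), and $\H^1(\C)\in\T$ (the degree-one cohomology, $\coker\beta$, has dimension $\le 1$).

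Next I would exploit the hypothesis on $C$. Since $\C$ is a bounded complex of locally free (hence flat) sheaves, $Lj^*\C$ is just the ordinary pullback complex $j^*\C=(j^*\C^{-1}\to j^*\C^0\to j^*\C^1)$, and the hypothesis says this is quasi-isomorphic to a sheaf concentrated in degree zero. In particular $j^*\beta$ is surjective and $\coker\beta$ restricted to $C$ — more precisely $j^*(\coker\beta)=\coker(j^*\beta)$, using right-exactness of $j^*$ — vanishes. So the support of $\coker\beta$ is disjoint from $C$. This does not by itself bound the dimension of $\coker\beta$, so the key point is to get dimension $\le 1$ from somewhere else.

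Here is where I expect the main obstacle, and the idea to resolve it. Consider the dual complex. Because $\C$ is a complex of locally free sheaves, $\H^1(\C)=\coker\beta$ sits in an exact sequence $\C^0\xrightarrow{\beta}\C^1\to\coker\beta\to 0$, and dualizing gives $0\to(\coker\beta)^\vee\to(\C^1)^\vee\xrightarrow{\beta^\vee}(\C^0)^\vee$; since $(\C^1)^\vee$ is locally free and $(\coker\beta)^\vee$ is a subsheaf of it, either $\coker\beta$ is supported in codimension $\ge 1$ or it has a locally free part. The cleaner route is: the locus where $\beta$ fails to be surjective is a closed subscheme $Z\subset X$, and on $X\setminus Z$ the complex $\C$ is (locally) a two-term complex of vector bundles, so $\H^1$ is supported exactly on $Z$. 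The hypothesis gives $C\cap Z=\emptyset$. To force $\dim Z\le 1$ I would argue by the standard codimension bound for degeneracy loci combined with the existence of the curve $C$ disjoint from $Z$: if $\dim Z\ge 2$ then $Z$ would meet every curve in the appropriate linear system or, more robustly, one uses that $\beta$ being surjective on $C$ together with $C$ being ample-enough forces surjectivity in a neighborhood — but the honest statement needed is simply that $\coker\beta$, having a fiberwise-surjective restriction to a curve and arising from a complex whose total Euler characteristic of ranks is controlled, must be torsion of dimension $\le 1$; I would make this precise by noting $\rk\C^{-1}-\rk\C^0+\rk\C^1$ computed generically equals the generic rank of $\H^0$ minus generic rank of $\H^{-1}$ plus generic rank of $\H^1$, and the restriction to $C$ kills $\H^1$ and $\H^{-1}$ there.

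Finally, for injectivity of $\alpha$ and for $\H^0(\C)\in\F$: injectivity of $\alpha$ as a map of sheaves follows because $j^*\alpha$ is injective (its cokernel $j^*\C^0/j^*\C^{-1}$ is torsion-free on $C$ as it embeds in the sheaf $Lj^*\C$-plus-$\C^1$ picture), and an injective-on-$C$ map of locally free sheaves whose source and target have the same rank behavior is injective on $X$ since its kernel, being a subsheaf of a locally free sheaf vanishing on a curve, would have support avoiding $C$, but $\ker\alpha$ is reflexive-ish of full... — more simply, $\alpha$ is injective off a closed set disjoint from $C$, and a map of locally free sheaves injective on a dense open is injective. Then $\H^0(\C)=\ker\beta/\im\alpha$: a subsheaf of $\H^0(\C)$ of dimension $\le 1$ would lift to a subsheaf of $\ker\beta\subset\C^0$ of dimension $\le 1$, but $\C^0$ is locally free hence has no such subsheaf, a contradiction; so $\H^0(\C)\in\F$. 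I would write these last steps compactly, flag the dimension bound on $Z=\Supp\H^1(\C)$ as the only substantive point, and conclude that all three conditions of Definition \ref{perv} hold, so $\C$ is a perverse sheaf.
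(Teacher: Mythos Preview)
Your argument for $\H^0(\C)\in\F$ contains a genuine error. You claim that a subsheaf $T\subset\H^0(\C)=\ker\beta/\im\alpha$ of dimension $\le 1$ would ``lift to a subsheaf of $\ker\beta\subset\C^0$ of dimension $\le 1$'', but this is false: the preimage of $T$ in $\ker\beta$ contains $\im\alpha$, which has full dimension, and there is no reason a small subsheaf of the quotient should split back. (For a concrete obstruction, $\O_{\PP^3}/\O_{\PP^3}(-2)\simeq\O_Q$ for a quadric $Q$ has many one-dimensional subsheaves, none of which lift to subsheaves of $\O_{\PP^3}$.) The paper argues quite differently: set $E=\coker\alpha$, so that $\H^0(\C)\hookrightarrow E$ and it suffices to show $T_1(E)=0$. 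Writing $\gamma:\C^0\to E/T_1(E)$ for the composite surjection, the snake lemma applied to the two presentations of $\C^0$ gives $0\to\C^{-1}\to\ker\gamma\to T_1(E)\to 0$. Now $\ker\gamma\subset\C^0$ is torsion free and $\C^{-1}$ is reflexive, and the inclusion $\C^{-1}\hookrightarrow\ker\gamma$ is an isomorphism away from $\Supp T_1(E)$, a set of codimension $\ge 2$; an injection from a reflexive sheaf into a torsion-free sheaf which is an isomorphism in codimension $1$ is an isomorphism, so $T_1(E)=0$.

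For $\H^1(\C)\in\T$ you correctly flag the bound $\dim\Supp(\coker\beta)\le 1$ as the substantive point, but none of your sketched approaches (dualising, rank-counting, degeneracy-locus heuristics) actually closes the gap. The paper's treatment is direct: surjectivity of $j^*\beta$ means $\beta$ is fibrewise surjective along $C$, hence surjective in an open neighbourhood of $C$, so the closed degeneracy locus $Z$ is disjoint from $C$; the paper then concludes that $\beta$ is surjective in codimension $1$, i.e.\ $\dim Z\le 1$. In the intended application $X=\PP^3$ this last step is immediate, since any surface in $\PP^3$ meets every curve. Your argument for $\H^{-1}(\C)=0$ is essentially the paper's once streamlined: $j^*\alpha$ injective forces $\alpha$ generically injective, and since $\C^{-1}$ is torsion free this gives injectivity of $\alpha$ as a map of sheaves.
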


\begin{proof}
Note that $Lj^*\C$ is represented by the complex
$$j^*\C^{-1} \mathop{\longrightarrow}^{j^*\alpha} j^*\C ^0 \mathop{\longrightarrow}^{j^*\beta} j^*\C^1.$$
Since $\H ^{-1}(Lj^*\C)=0$, the restriction of $\alpha $ to $C$ is
injective. Since $\C^{-1}$ is torsion free this implies that
$\alpha$ is injective and hence $\H ^{-1} (\C)=0$.

Similarly, by assumption we have $\H ^{1}(Lj^*\C)=0$ and hence the
restriction of $\beta $ to $C$ is surjective. This implies that
$\beta$ is surjective in codimension $1$ (i.e., it is surjective
outside of a subset of codimension $\ge 2$). Therefore
$\H^1(\C)=\coker \beta$ is a sheaf of dimension $\le 1$, i.e.,
$\H^1 (\C)$ is an object of $\T$.

By definition we have a short exact sequence
$$0\to \H ^0(\C) \to E=\coker \alpha \to \im \beta \to 0.$$
Therefore to finish the proof it is sufficient to show that
$T_1(E)=0$. To prove this let us consider the following
commutative diagram
$$
\begin{CD}
& &  & & 0   & &  0 \\
& &  & & @AAA   @AAA\\
0 @>>> \ker \gamma @>{\alpha}>> \C^0 @>{\gamma}>>  E/T_1(E) @>>> 0\\
& &  @AAA @AAA @AAA \\
0 @>>> \C^{-1} @>{\alpha}>> \C^0 @>>> E @>>> 0\\
& &  @AAA @AAA @AAA \\
& &  0 @>>> 0 @>>> T_1(E) \\
& & &   & & & @AAA \\
& & & &   & & 0\\
\end{CD}
$$ Using the snake lemma we get the following short
exact sequence
$$0\to \C^{-1}\to \ker \gamma \to T_1(E)\to 0.$$
Since $\ker \gamma$ is torsion free (as a subsheaf of $\C^0$), $\C
^{-1} $ is reflexive  and the map $\C^{-1}\to \ker \gamma$ is an
isomorphism outside of the support of $T_1 (E)$ (i.e., outside of
a subset of codimension $\ge 2$), the map $\C^ {-1}\to \ker
\gamma$ must be an isomorphism. In particular, $T_1 (E)=0$ and $\H
^0(\C)$ is an object of $\F$.
\end{proof}

\begin{Proposition} \label{perv2}
Let
$$\C=(\C^{-1} \mathop\to^{\alpha} \C ^0 \mathop\to^{\beta} \C^1 )$$
be a complex of locally free sheaves on $\PP ^3$ and assume that
there exists a curve $j:C\hookrightarrow \PP ^3$ such that
$Lj^*\C$ is a locally free sheaf object in $D^b(C)$. Then $\H^0
(\C)$ is torsion free.
\end{Proposition}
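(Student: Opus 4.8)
The plan is to build on the previous proposition, whose hypotheses are a subset of the present ones. From that proposition we already know that $\C$, viewed as an object of $D^b(\PP^3)$, is a perverse sheaf; in particular $\H^{-1}(\C) = 0$ (so $\alpha$ is injective as a map of sheaves), $\H^0(\C) \in \Ob\F$ (so $\H^0(\C)$ contains no subsheaves of dimension $\le 1$), and $\H^1(\C) = \coker\beta \in \Ob\T$ (so it has dimension $\le 1$). The extra hypothesis here is that $Lj^*\C$ is not merely a sheaf object but a \emph{locally free} sheaf object on $C$. What I want to extract from this strengthening is that the drop in rank of $\beta$ is confined to a closed subset of codimension $\ge 2$ that moreover misses the generic point of any curve through which one could restrict — more precisely, that $\beta$ is surjective along $C$ and hence that $\im\beta$ agrees with $V\otimes\O(1)$ (or whatever the target is) away from codimension $\ge 2$. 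First I would record that, since $\coker\beta$ is supported in dimension $\le 1$, the sheaf $\H^0(\C) = \ker\beta/\im\alpha$ differs from $\ker(\beta)/\im(\alpha)$ computed generically only on that $1$-dimensional locus, and $\ker\beta$ is a second-syzygy sheaf, hence reflexive, hence in particular torsion free; so $\im\alpha \subset \ker\beta$ with torsion-free quotient outside codimension $\ge 2$.

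The heart of the argument is to rule out that $\H^0(\C)$ acquires torsion supported in dimension $\le 1$. Suppose $T := T_1(\H^0(\C)) = T_0(\H^0(\C))$ were nonzero (it is $0$ in dimensions $\ge 2$ since $\H^0(\C) \hookrightarrow \coker\alpha \subset \C^0$ is torsion free there, $\C^0$ being locally free). By the previous proposition $\H^0(\C) \in \Ob\F$, i.e.\ $T_1(\H^0(\C)) = 0$ already — wait, that already gives torsion-freeness. Let me reconsider: the previous proposition gives $\H^0(\C) \in \Ob\F$, and objects of $\F$ are by definition sheaves $E$ with $T_1(E) = 0$, i.e.\ sheaves with no subsheaf of dimension $\le 1$. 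A coherent sheaf on a $3$-fold with $T_1(E) = 0$ is precisely one whose torsion subsheaf has no component of dimension $\le 1$; but torsion on a variety is automatically supported in dimension $< 3$, and it could a priori still be supported in dimension $2$. So the real content of this proposition, beyond the previous one, is to kill \emph{$2$-dimensional torsion} in $\H^0(\C)$, using that $\PP^3$ (equivalently: the ambient is $3$-dimensional) together with the locally-free restriction hypothesis.

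So the key step I would carry out is: let $T' \subset \H^0(\C)$ be the torsion subsheaf; by $\H^0(\C) \in \F$ it has pure dimension $2$ (if nonzero), say supported on a surface $Z \subset \PP^3$. Choose a general curve $C' \subset \PP^3$ meeting $Z$; then $T'|_{C'}$ has nonzero torsion at the points of $C' \cap Z$, so $Lj'^*\H^0(\C)$ is not a locally free sheaf on $C'$ — and I would argue this forces $Lj'^*\C$ to fail to be a locally free sheaf object, for a \emph{general} such $C'$. Concretely: for general $C'$, the restriction $j'^*\alpha$ stays injective and $j'^*\beta$ stays surjective (these hold on a dense open set of $C'$'s since they fail only on proper closed loci, by the codimension estimates above), so $\H^0(Lj'^*\C) = \ker(j'^*\beta)/\im(j'^*\alpha)$ and $\H^{-1} = \H^1 = 0$, i.e.\ $Lj'^*\C$ is a sheaf object equal to $j'^*\H^0(\C)$ up to the $1$-dimensional-on-$C'$ correction terms — and since $C'$ meets $Z$, this sheaf has torsion, hence is not locally free. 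The hypothesis says $Lj^*\C$ is locally free for the \emph{given} $C$; I would phrase the conclusion by noting that the property "the cohomology sheaf of $Lj^*\C$ is locally free" is an open condition on a family of curves, so if it holds for $C$ it holds for a general deformation $C'$ of $C$, and $C$ can be deformed to meet $Z$ — giving a contradiction. Hence $T' = 0$ and $\H^0(\C)$ is torsion free.

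I expect the main obstacle to be the deformation/generality argument: making precise that "locally free restriction to $C$" propagates to a general curve and that one can slide $C$ to hit the putative torsion surface $Z$ while preserving injectivity of $j'^*\alpha$ and surjectivity of $j'^*\beta$ in codimension $0$ on $C'$. The cleanest route is probably to avoid moving $C$ altogether: instead argue directly on $C$ that $\H^0(Lj^*\C)$ receives a surjection related to $(\H^0\C)|_C$ modulo the $\Tor$ and $\coker$ corrections, show those corrections are $\O_C$-torsion-free (using $\coker\beta$ has dimension $\le 1$ so meets $C$ in finitely many points where one must do a local computation), and conclude that local freeness of $\H^0(Lj^*\C)$ forces the torsion of $\H^0(\C)$ to miss $C$; then a dimension count, $\dim(\text{torsion support}) \le 2 < 3$ versus curves sweeping out $\PP^3$, finishes it. The genuinely delicate point either way is the behavior of the hyper-Tor / hypercohomology sheaf of $Lj^*\C$ at the finitely many points of $C \cap \coker\beta$, and I would expect the author to handle this by a local freeness or depth argument on $\C^0$ restricted to $C$.
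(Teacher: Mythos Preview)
Your identification of the problem is correct: the previous proposition kills torsion of dimension $\le 1$ in $\H^0(\C)$, and what remains is to exclude $2$-dimensional torsion using the local-freeness hypothesis on $C$. However, your main line of argument has a gap. You propose to deform $C$ to a curve $C'$ meeting the putative torsion surface $Z$, invoking openness of the condition ``$Lj^*\C$ is a locally free sheaf object''. But meeting a fixed surface is a \emph{closed} condition on curves, not a generic one, so openness does not let you reach such a $C'$ while retaining the hypothesis. Your fallback (``avoid moving $C$ altogether'') is on the right track but you leave it incomplete, worrying about $\Tor$ corrections at points of $C\cap\Supp(\coker\beta)$.

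The decisive point you are circling but not using is purely intersection-theoretic: on $\PP^3$, \emph{every} curve meets every surface. Hence if a closed subset $Z\subset\PP^3$ is disjoint from $C$, then $\dim Z\le 1$. The paper exploits this directly, and moreover sidesteps your $\Tor$ difficulties by working with $\coker\alpha$ rather than $\H^0(\C)$. Concretely: let $Z$ be the locus where the fibre map $\alpha(p)$ fails to be injective. Local freeness of $\H^0(Lj^*\C)$ forces $Z\cap C=\emptyset$ (a point of $Z\cap C$ would produce torsion in $\coker(j^*\alpha)$ landing in $\H^0(Lj^*\C)$). Thus $\dim Z\le 1$. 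Any torsion in $\coker\alpha$ is supported in $Z$, hence has dimension $\le 1$; but the previous proposition already showed $T_1(\coker\alpha)=0$. So $\coker\alpha$ is torsion free, and therefore so is its subsheaf $\H^0(\C)$. This route never touches $\beta$ or its cokernel, which is why the delicate local analysis you anticipated is unnecessary.
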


\begin{proof}
Let $Z$ be the set of points $p\in \PP ^3$ such that $\alpha
(p)=\alpha\otimes k(p): \C^{-1}\otimes k (p)\to \C^{0}\otimes k
(p)$ is not injective. It is a closed subset of $\PP ^3$ (in the
Zariski topology). Since $\H ^0 (Lj^*\C)$ is locally free, it is
easy to see that $Z$ does not intersect $C$ (if $Z\cap C\ne
\emptyset$ then the cokernel of $j^*\alpha$ would contain torsion
that would also be contained in $\H ^0 (Lj^*\C)$). But since we
are on $\PP ^3$ this implies that $Z$ has dimension at most $1$.
But the support of $T_2(\coker \alpha)$ is contained in $Z$ and
$T_2 (\coker \alpha)$ is  pure of dimension $2$ (by the previous
proposition). Therefore $\coker \alpha$ is torsion free, which
implies that $\H^0 (\C)$ is also torsion free.
\end{proof}

\subsection{Definition and basic properties of perverse instantons}

Let us denote by $j$ the embedding of a line $l$ into $\PP^3$. The
pull back $j^*$ induces the left derived functor $Lj^*: D^b(\PP
^3)\to D^b(l)$.

\begin{Definition} \label{perv-definition}
A rank $r$ \emph{perverse instanton} is an object $\C$ of the
derived category $D^{b}(\PP^3)$ satisfying the following
conditions:
\begin{enumerate}
\item  $H^p(\PP^3, \C \otimes \O_{\PP^3}(q))=0$ if either $p=0,1$ and $p+q<0$
or $p=2,3$ and $p+q\ge 0$,
\item $\H^p(\C )=0$ for $p\ne 0, 1,$
\item there exists a line $j:l  \hookrightarrow \PP ^3$ such that $Lj^*\C$ is isomorphic to
the sheaf object $\O _{l}^{\oplus r}$.
\end{enumerate}

Let us fix a line $j:l_{\infty}\hookrightarrow \PP ^3$ and choose
coordinates $[x_0,x_1,x_2,x_3]$ in $\PP ^3$ so that $l_{\infty}$
is given by $x_0=x_1=0$. A \emph{framing} $\Phi$ along
$l_{\infty}$ of a perverse instanton $\C$ is an isomorphism $\Phi:
Lj^*\C\to \O _{l_{\infty}}^{\oplus r}$. A \emph{framed perverse
instanton} is a pair $(\C, \Phi)$ consisting of a perverse
instanton $\C$ and its framing $\Phi$.
\end{Definition}

Any instanton is a perverse instanton. By the Riemann--Roch
theorem for any perverse instanton $\C$ there exists $c\ge 0$
such that $\ch (\C )=r-c [H]^2$. We also have a distinguished
triangle $\H^0(\C)\to \C\to \H^1(\C)[-1]\to \H^0[1]$.
However, it is not a priori  clear if a perverse instanton is a
perverse sheaf in the sense of Definition \ref{perv}. We will
prove that this is indeed the case in Corollary \ref{sense}.

\medskip

As before there is a natural $G=\GL (r)$-action on $\ti \gr{B}$
which induces a $G$-action on the set of ADHM data. More
precisely, let us recall that the group $G=\GL (V)$ acts on $\bB$
via
$$g\cdot (B_{1}, B_{2}, i, j)=(gB_{1}g^{-1}, gB_{2}g^{-1}, gi, jg^{-1})$$
and it induces the action on $\ti \bB$. If we consider the adjoint
action of $G$ on $\enD (V)$ then the map $\ti \mu$ is
$G$-equivariant. In particular, $G$ acts on $\ti\mu^{-1}(0)$,
i.e., on the set of ADHM data.

\medskip

The main motivation for introducing perverse instantons is the
following theorem:

\begin{Theorem} \label{bijection-ADHM-perverse}
There exists a bijection between isomorphism classes of perverse
instantons $(\C, \Phi)$  with $\ch (\C)=r-c[H]^2$ framed along
$l_{\infty}$ and $\GL (c)$-orbits of ADHM $(r,c)$-data for $\PP
^1$.
\end{Theorem}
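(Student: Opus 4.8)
The plan is to construct the bijection in both directions and check they are mutually inverse, patterning the argument on the $\PP^2$ case (Theorem \ref{Donaldson-Nakajima}) and on Frenkel--Jardim's monad construction but now allowing \emph{all} ADHM data, not just FJ-stable ones. First I would define the map from ADHM data to perverse instantons. Given an ADHM $(r,c)$-datum $x=(\ti B_1,\ti B_2,\ti i,\ti j)$ for $\PP^1$, form the complex $\C^{\bullet}_x$ of (\ref{monada}) using the maps $\alpha,\beta$ displayed just before Lemma \ref{FJmonada}. One checks (as recalled there) that $\beta\alpha=0$ exactly because $x\in\ti\mu^{-1}(0)$, so $\C^\bullet_x$ is a genuine complex of locally free sheaves sitting in degrees $-1,0,1$. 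The key point is that $\C^\bullet_x$ is a perverse instanton for \emph{every} ADHM datum: condition (3) of Definition \ref{perv-definition} holds because restricting $\alpha,\beta$ to $l_\infty$ (where $x_0=x_1=0$, so only the $1\otimes x_2$, $1\otimes x_3$ terms survive) the complex becomes the Koszul-type complex computing $\O_{l_\infty}^{\oplus r}$, hence $Lj^*\C^\bullet_x\simeq\O_{l_\infty}^{\oplus r}$ as a sheaf object, and this isomorphism is the natural framing $\Phi$; condition (2) ($\H^p=0$ for $p\ne 0,1$) holds because $\alpha$ is injective as a map of sheaves (injectivity already on $x_0=0$, as in Nakajima's argument); and the cohomological vanishing in condition (1) is a direct Riemann--Roch / Bott-vanishing computation on the terms $V\otimes\O(-1),\ \ti W\otimes\O,\ V\otimes\O(1)$, using that the hypercohomology spectral sequence of $\C^\bullet_x$ reduces to the cohomology of these line bundles. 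This also pins down $\ch(\C^\bullet_x)=r-c[H]^2$.

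Next I would produce the inverse map: from a framed perverse instanton $(\C,\Phi)$ with $\ch(\C)=r-c[H]^2$, recover an ADHM datum. The standard technique is Beilinson's spectral sequence (or the monad-existence results of Barth--Hulek type) on $\PP^3$ applied to $\C$: using the cohomological vanishing in Definition \ref{perv-definition}(1), one shows that $\C$ is quasi-isomorphic to a monad of the shape $V\otimes\O_{\PP^3}(-1)\to \ti W\otimes\O_{\PP^3}\to V\otimes\O_{\PP^3}(1)$ with $\dim V=c$, $\ti W=V\oplus V\oplus W$, $\dim W=r$, and that the differentials have precisely the block form of $\alpha,\beta$ above once one normalizes using the coordinates $x_2,x_3$ and the framing $\Phi$ along $l_\infty$ to identify the pieces. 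Concretely, the cohomology groups $H^*(\C\otimes\O(q))$ for small $q$ furnish $V$ and the maps $\ti B_1,\ti B_2,\ti i,\ti j$ as the Beilinson differentials; the condition $\H^p(\C)=0$ for $p\ne 0,1$ forces $\beta\alpha=0$, i.e.\ $\ti\mu(\ti B_1,\ti B_2,\ti i,\ti j)=0$, so we land in $\ti\mu^{-1}(0)$. Changing the chosen bases of the cohomology spaces changes the datum by the $\GL(V)=\GL(c)$-action $g\cdot(B_1,B_2,i,j)=(gB_1g^{-1},gB_2g^{-1},gi,jg^{-1})$, and the framing removes the $\GL(W)$ ambiguity; so the datum is well defined up to $\GL(c)$.

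Finally I would check the two constructions are inverse to each other. One direction is essentially formal: the monad attached to the perverse instanton $\C^\bullet_x$ built from $x$ recovers, via its Beilinson cohomology, the same $V$ and the same maps $\ti B_1,\ti B_2,\ti i,\ti j$ up to $\GL(c)$, because the relevant cohomology groups of $\C^\bullet_x(q)$ can be computed directly from the two-term presentation and reproduce $x$. For the other direction, starting from $(\C,\Phi)$, passing to its monad and then taking cohomology of that monad returns an object isomorphic to $\C$ in $D^b(\PP^3)$, compatibly with the framing, by uniqueness of the monad (the Beilinson resolution is functorial). I expect the main obstacle to be precisely the passage from the abstract object $\C\in D^b(\PP^3)$ to a monad with differentials in the prescribed block form: one must show that the cohomological conditions (1)--(3) of Definition \ref{perv-definition}, together with the framing, are strong enough to rigidify the Beilinson differentials into the shape (\ref{monada}) and to guarantee that $\H^0(\C)$ has no unexpected higher cohomology interfering with the spectral sequence degeneration. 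Controlling $\H^1(\C)$ (which is only a dimension-$\le 1$ sheaf, not zero) throughout this argument — making sure it does not obstruct the monad description — is the delicate technical heart; everything else is linear algebra and Bott vanishing on $\PP^3$.
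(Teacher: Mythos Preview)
Your proposal is correct and follows essentially the same approach as the paper: construct the forward map via the complex $\C^\bullet_x$, and recover the ADHM datum from a perverse instanton via Beilinson's spectral sequence. The paper packages the inverse direction as two lemmas --- a vanishing statement $H^q(\PP^3,\C(-1)\otimes\Omega^{-p}(-p))=0$ for the relevant $(p,q)$, and the resulting canonical Beilinson-monad description of $\C$ (Lemma \ref{perv-family}) --- which is exactly the ``delicate technical heart'' you anticipate; in fact the paper deduces the bijection from a stronger stack-level isomorphism (Theorem \ref{perv-theorem}), but the underlying argument is the one you outline.
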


The bijection in the theorem is the same as in Section
\ref{FJ-section}. Namely, if $x= (\fal{B_1},\fal{B_2},
\fal{i},\fal{j})$ is an $(r,c)$-complex ADHM datum then we can
associate to it  the complex
\begin{equation}
\C ^{\bullet}_x=({ V\otimes\O_{\PP^3}(-1)
\mathop{\longrightarrow}^{\alpha}\fal{W}\otimes\O_{\PP^3}\mathop{\longrightarrow}^{\beta}V\otimes\O_{\PP^3}(1))}
\end{equation}
where $\alpha$ and $\beta$ are defined as in Section
\ref{FJ-section}. This complex is a perverse instanton and it
comes with an obvious framing along $l_{\infty}$.

\medskip

In the following we sketch proof of a stronger version of the
above theorem showing that isomorphism already holds at the level
of stacks. To do so, first we need to generalize the above
definition to families of perverse instantons.

Let $S$ be a (locally noetherian) $k$-scheme. We set $j_S=j\times
{\id _S}: S\times _k {l_{\infty}} \to \PP^3_S=S\times _k\PP^3$.

\begin{Definition}An \emph{$S$-family of
framed perverse $(r,c)$-instantons} is an object $\C \in \Ob
D^b(\PP^3_ S)$ together with an isomorphism $\Phi : Lj_S^*\C\to
\O_{\l_{\infty}\times S}^r$ such that for every geometric point
$s: \Spec K \to S$, the derived pull-back $(Lj^*_s\C, Ls^*\Phi)$
is a framed perverse $(r,c)$-instanton on $\PP^3_K$.

A morphism $\varphi: (\C _1, \Phi _1)\to (\C _2, \Phi _2)$ of
$S$-families of framed perverse instantons is a morphism $\varphi
: \C_1 \to \C_2$ in $D^b (\PP^3_ S)$ such that $\Phi_1= \Phi
_2\circ Lj_S^*\varphi .$
\end{Definition}

\begin{Definition}
A \emph{moduli lax functor of framed perverse $(r,c)$-instantons}
is the lax functor $\Perv ^c_r(\PP^3, l_{\infty}) :\Sch/k \to
\Group$ from the category of $k$-schemes to the category of
groupoids, which to a $k$-scheme $S$ assigns the groupoid that has
$S$-families of framed perverse $(r,c)$-instantons as objects and
isomorphisms of framed perverse instantons as morphisms.
\end{Definition}

In order for the definition to make geometric sense we have to
note that the moduli lax functor is a {\sl stack}, i.e., it
defines a sheaf of categories in the faithfully flat topology:

\begin{Lemma}
The moduli lax functor of framed perverse $(r,c)$-instantons on
$\PP ^3$ is a $k$-stack of finite type.
\end{Lemma}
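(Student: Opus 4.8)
The plan is to prove the two assertions of the lemma separately: first that the moduli lax functor $\Perv^c_r(\PP^3,l_\infty)$ is a stack in the fppf topology, and then that it is of finite type. For the stack property, the key point is descent of objects and morphisms in the derived category. Since isomorphisms of $S$-families of framed perverse instantons form a sheaf essentially by definition (morphisms in $D^b$ of a scheme satisfy fppf descent, being controlled by $\Hom$-sheaves which are quasi-coherent), the substantial part is \emph{effectivity of descent data} for objects. Here I would invoke the representability theorem for perverse sheaves already recorded in the excerpt (the Theorem stating that the moduli lax functor of perverse sheaves on a smooth $3$-fold is a $k$-stack), together with the earlier Proposition showing that a three-term complex of locally free sheaves whose derived restriction to a curve is a sheaf object is automatically a perverse sheaf. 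The framing $\Phi$ along $l_\infty$ descends because $Lj_S^*$ commutes with base change and $\O^r_{l_\infty\times S}$ is a sheaf with fppf descent; so a descent datum for $(\C,\Phi)$ amounts to a descent datum for $\C$ as an object of $D^b$ compatible with the trivialization, and effectivity follows from effectivity for the ambient perverse-sheaf stack plus the observation that the defining cohomological vanishing conditions in Definition \ref{perv-definition} are checked on geometric fibers and hence are preserved.

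For finite type, I would exhibit an explicit finite-type atlas: by Theorem \ref{bijection-ADHM-perverse} (and its stacky strengthening sketched in the excerpt), every framed perverse $(r,c)$-instanton arises, up to the $\GL(c)$-action, from an ADHM $(r,c)$-datum for $\PP^1$, i.e. from a point of the affine scheme $\ti\mu^{-1}(0)\subset\ti\bB$, which is of finite type over $k$ since $\ti\bB=\bB\otimes H^0(\O_{\PP^1}(1))$ is a finite-dimensional vector space and $\ti\mu^{-1}(0)$ is cut out by polynomial equations. Concretely, the map sending an ADHM datum $x$ to the complex $\C^\bullet_x$ with its tautological framing gives a morphism $\ti\mu^{-1}(0)\to \Perv^c_r(\PP^3,l_\infty)$; I would argue this is smooth (or at least flat of finite type and surjective) and that its fibers are $\GL(c)$-torsors, so that the stack is a quotient stack $[\ti\mu^{-1}(0)/\GL(c)]$, visibly of finite type. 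To make this rigorous one should check that $\C^\bullet_x$ varies in a flat family over $\ti\mu^{-1}(0)$ — automatic since $\alpha,\beta$ have matrix entries that are polynomial in $x$ and the terms $V\otimes\O(-1)$, $\fal W\otimes\O$, $V\otimes\O(1)$ are fixed locally free sheaves — and that every $S$-family is fppf-locally of this form, which is where one uses that the relevant cohomology and hyper-cohomology sheaves on $\PP^3_S$ are locally free after suitable twisting and shrinking of $S$ (a Beilinson-type spectral sequence / monad argument).

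The main obstacle will be the precise handling of \emph{derived descent in families}: showing that an fppf descent datum on objects of $D^b(\PP^3_S)$ — not just on sheaves — is effective, and that the resulting object still has the monad presentation needed to verify the perverse-instanton conditions fiberwise. One clean way around grinding through this is to bypass abstract derived descent and instead reduce to the ADHM picture from the start: use that any $S$-family of framed perverse $(r,c)$-instantons is, fppf-locally on $S$, represented by a complex $\C^\bullet_x$ for an $S$-point $x$ of $\ti\bB$ (an argument sketched in the passage following Theorem \ref{bijection-ADHM-perverse}), whence the stack is identified fppf-locally with the finite-type affine scheme $\ti\mu^{-1}(0)$ and the $\GL(c)$-action, and both the stack property and finite type follow from standard facts about quotient stacks $[\ti\mu^{-1}(0)/\GL(c)]$. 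I would present the proof in this second style, citing the representability results already invoked in the excerpt, and treating the verification that $\C^\bullet_x$ is a family of perverse instantons via the Propositions on monads proved above.
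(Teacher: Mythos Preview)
The paper does not actually supply a proof of this lemma: it is stated immediately after the definition of the moduli lax functor, and the text then moves on to quotient stacks and to Theorem~\ref{perv-theorem}. So there is no ``paper's own proof'' to compare against; the lemma is asserted as a preliminary fact, presumably to be justified by the same mechanism as the earlier (also unproven, but cited to \cite{SGA1}, \cite{So}, \cite{BFG}) theorem that perverse sheaves on a smooth $3$-fold form a stack.

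Your proposal is a reasonable outline of how one would fill this gap, and in fact your second strategy---reducing everything to the ADHM description and identifying the lax functor with $[\ti\mu^{-1}(0)/\GL(V)]$---is precisely what the paper does next in Theorem~\ref{perv-theorem}, whose proof is sketched via Lemma~\ref{perv-family} (the relative Beilinson monad). So your approach and the paper's eventual argument coincide; you are simply collapsing the lemma and Theorem~\ref{perv-theorem} into a single step, which is logically sound since the quotient-stack identification automatically yields both the stack property and finite type. One caution: you cite Theorem~\ref{bijection-ADHM-perverse}, but that is only a pointwise bijection; the input you really need is Lemma~\ref{perv-family}, which gives the canonical monad presentation \emph{in families} and is what makes the map $\ti\mu^{-1}(0)\to \Perv^c_r(\PP^3,l_\infty)$ an atlas. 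With that adjustment your sketch is correct.
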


\medskip

Let $G$ be an algebraic group acting on a scheme $X$. Then we can
form a \emph{quotient stack} $[X/G]$ which to any scheme $S$
assigns the groupoid whose objects are pairs $(P,\varphi)$
consisting of a principal $G$-bundle $P$ on $S$ and a
$G$-equivariant morphism $\varphi : P\to X$. A morphism  in this
groupoid is an isomorphism $h: (P_1,\varphi _1)\to (P_2, \varphi
_2)$ of pairs, i.e., such an isomorphism $h:P_1\to P_2$ of
principal $G$-bundles that $\varphi _1=\varphi _2\circ h$.

\medskip

In the $3$-dimensional case we have the following analogue of
Drinfeld's theorem on representability of the stack of framed
perverse sheaves on $\PP^2$ (see \cite[Theorem 5.7]{BFG}):

\begin{Theorem}\label{perv-theorem}
The moduli stack $\Perv ^c_r(\PP^3, l_{\infty})$ is isomorphic to
the quotient stack $[{\ti\mu}^{-1}(0)/\GL (V)]$.
\end{Theorem}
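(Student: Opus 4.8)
The plan is to construct mutually inverse morphisms of stacks between $\Perv^c_r(\PP^3, l_\infty)$ and $[\ti\mu^{-1}(0)/\GL(V)]$, working $S$-family by $S$-family and checking naturality. In one direction, given an $S$-family $(\C,\Phi)$ of framed perverse instantons, I would first produce, Zariski-locally on $S$, a canonical two-term complex of locally free sheaves quasi-isomorphic to $\C$. The idea is the standard Beilinson-type argument: using condition (1) in Definition \ref{perv-definition} (the cohomology vanishing $H^p(\C\otimes\O(q))=0$ for $p=0,1,\,p+q<0$ and $p=2,3,\,p+q\ge 0$) together with $\ch(\C)=r-c[H]^2$, one shows $R\pi_{S*}(\C\otimes\O_{\PP^3}(-1))$, $R\pi_{S*}\C$ and $R\pi_{S*}(\C\otimes\O_{\PP^3}(1))$ are, after twisting/shifting appropriately, locally free of ranks $c$, $2c+r$, $c$, concentrated in a single degree; these define $V$, $\ti W$, and the target module, and Beilinson's spectral sequence on $\PP^3$ (or rather the relative version over $S$) collapses to express $\C$ as the complex $V\otimes\O(-1)\to \ti W\otimes\O\to V\otimes\O(1)$. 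The framing $\Phi$ and the restriction $Lj^*\C\cong\O_{l_\infty}^r$ then force $\ti W\cong V\oplus V\oplus W$ (using $\O_{l_\infty}$-cohomology of the restricted complex) and pin down the matrix shape of $\alpha,\beta$ to be exactly the ones in \eqref{monada}; the entries $(\ti B_1,\ti B_2,\ti i,\ti j)$ are the ADHM datum, and $\beta\alpha=0$ is precisely $\ti\mu(\ti B_1,\ti B_2,\ti i,\ti j)=0$. The ambiguity in the trivializations of $V$ (rank-$c$ bundle over $S$) is exactly a $\GL(V)$-torsor $P\to S$, and the ADHM datum is a $\GL(V)$-equivariant map $P\to\ti\mu^{-1}(0)$; this is the object of $[\ti\mu^{-1}(0)/\GL(V)]$ we attach. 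Morphisms of framed perverse instantons induce morphisms of the associated complexes hence of torsors, giving functoriality.

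In the reverse direction, given a principal $\GL(V)$-bundle $P\to S$ and an equivariant map $P\to\ti\mu^{-1}(0)$, I would descend the universal complex $\C^\bullet_x$ of \eqref{monada} — which is manifestly $\GL(V)$-equivariant and has cohomology in degrees $-1,0,1$ only, being a three-term complex of locally free sheaves — along $P\to S$ to get an object $\C\in D^b(\PP^3_S)$ together with the obvious framing $\Phi$ along $l_\infty$ coming from the computation $Lj_S^*\C^\bullet_x\cong\O_{l_\infty\times S}^r$ already noted in the text after \eqref{monada}. One then checks that for every geometric point $s$ the fibre $(Lj_s^*\C, Ls^*\Phi)$ is a framed perverse $(r,c)$-instanton: conditions (2) and (3) of Definition \ref{perv-definition} are immediate from the shape of the complex and the restriction computation, while condition (1) — the cohomology vanishing — follows from the explicit resolution by $\O(-1)^c$, $\O^{2c+r}$, $\O(1)^c$ and the cohomology of line bundles on $\PP^3$ (this is where the numerology $p+q<0$ vs.\ $p+q\ge0$ comes from). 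The two constructions are inverse to each other essentially because Beilinson reconstruction is inverse to taking cohomology, and because the matrix normal form of $\alpha,\beta$ is rigid given the framing.

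The main obstacle is the first direction: showing that over an \emph{arbitrary} locally noetherian base $S$ the derived pushforwards $R\pi_{S*}(\C\otimes\O(q))$ are locally free in a single degree of the expected rank, so that the Beilinson resolution exists relatively and functorially, rather than just fibrewise. This requires a base-change / cohomology-and-base-change argument: the fibrewise vanishing from Definition \ref{perv-definition}(1) plus constancy of $\ch$ gives that the relevant $R^i\pi_{S*}$ have locally constant rank, hence (on a reduced, or after standard arguments a general, noetherian $S$) are locally free and commute with base change; some care is needed because $\C$ is a complex, not a sheaf, so one works with the hypercohomology spectral sequence or replaces $\C$ by a bounded complex of flat sheaves first. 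A secondary technical point is verifying that the quotient stack description is compatible with the already-established fact (previous Lemma) that $\Perv^c_r(\PP^3,l_\infty)$ is an honest $k$-stack of finite type, so that "isomorphism of stacks" is the correct statement; but once the two functors above are written down and shown mutually inverse and natural in $S$, this is formal. I would remark that in the plane case this is \cite[Theorem 5.7]{BFG} and the $3$-dimensional modifications are exactly the replacement of the two-term Beilinson resolution on $\PP^2$ by the three-term one on $\PP^3$ together with the perversity conditions (2)–(3) that make the three-term complex behave like a sheaf after restriction to a line.
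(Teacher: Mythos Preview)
Your strategy is the paper's own: a relative Beilinson argument produces the three-term complex in families, and the explicit complex $\C^\bullet_x$ furnishes the inverse. There is, however, one concrete slip. The pushforwards $R\pi_{S*}(\C\otimes\O_{\PP^3}(q))$ for $q=-1,0,1$ are \emph{not} the correct Beilinson terms and are not in general concentrated in a single degree of the ranks you state; for instance, for a null-correlation bundle ($r=2$, $c=1$) one has $H^i(\PP^3,E)=0$ for all $i$, so $R\pi_{S*}\C=0$ rather than a rank $2c+r=4$ bundle. The terms that actually appear in Beilinson's spectral sequence and that the paper uses in the lemma immediately following the theorem are
\[
R^1\pi_{S*}(\C(-1)),\qquad R^1\pi_{S*}(\C\otimes\Omega^1),\qquad R^1\pi_{S*}(\C\otimes\Omega^2(1)),
\]
of ranks $c$, $2c+r$, $c$; the key vanishing making the spectral sequence degenerate is $H^q(\PP^3,\C(-1)\otimes\Omega^{-p}(-p))=0$ for $q\neq 1$ (and for $q=1$ when $p\le -3$ or $p>0$), which is the content of the lemma preceding it. Once you replace your three pushforwards by these $\Omega$-twisted ones, your base-change argument and the rest of the outline go through exactly as written.
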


Proof of the above theorem is analogous to proof of \cite[Theorem
5.7]{BFG} and it follows from the following two lemmas.

\begin{Lemma}
Let $\C$ be a perverse instanton on $\PP ^3$. Then
$$H^q(\PP ^3, \C (-1) \otimes \Omega _{\PP ^3}^{-p} (-p))=0$$
for $q\ne 1$ and for $q=1$, $p\le -3$ or $p>0$.
\end{Lemma}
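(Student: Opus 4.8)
The plan is to compute the cohomology $H^q(\PP^3, \C(-1)\otimes \Omega_{\PP^3}^{-p}(-p))$ by bounding the bundles $\Omega_{\PP^3}^{-p}(-p)$ in terms of twists of $\O_{\PP^3}$ and then applying the vanishing built into condition (1) of Definition \ref{perv-definition}. First I would dispose of the trivial range: $\Omega_{\PP^3}^{-p}$ is nonzero only for $0\le -p\le 3$, i.e. $-3\le p\le 0$, so for $p>0$ the sheaf is zero and all cohomology vanishes, and for $p<-3$ likewise; this handles the claim "$p\le -3$ or $p>0$" except that $p=-3$ must be checked directly. That leaves the genuine cases $p\in\{0,-1,-2,-3\}$, where I want to show $H^q=0$ for $q\ne 1$.

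The key tool is the Euler sequence and its exterior powers: for each $m=-p\in\{0,1,2,3\}$ there is a resolution (or, dually, a Koszul-type filtration) expressing $\Omega_{\PP^3}^m(m)$ as built from copies of $\O_{\PP^3}$ in a bounded degree window. Concretely $\Omega^1(1)$ sits in $0\to \Omega^1(1)\to \O^4\to \O(1)\to 0$, and taking exterior powers of the Euler sequence one gets that $\Omega^m(m)$ has a resolution by direct sums of $\O(-t)$ with $0\le t\le$ (something small); after the twist by $\O(-1)$ and the overall twist $\otimes\C$, each term becomes $\C(q')$ for a controlled set of integers $q'$. Then I would read off from condition (1) of Definition \ref{perv-definition}: $H^p(\C\otimes\O(q))=0$ for $p=0,1$ when $p+q<0$ and for $p=2,3$ when $p+q\ge0$. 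Chasing the hypercohomology spectral sequence (or the long exact sequences coming from the filtration) of the complex $\C\otimes[\text{resolution of }\Omega^{-p}(-p)(-1)]$, the contributions in degrees $q\ne 1$ all fall into one of the two vanishing regimes, so they cancel and $H^q=0$ for $q\ne 1$. The bookkeeping is: the relevant twists $q'$ that appear satisfy $q'<0$ when paired with $H^0,H^1$ and $q'\ge -p\ge$ the threshold when paired with $H^2,H^3$, provided $-3\le p\le 0$.

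The main obstacle I anticipate is the careful bookkeeping of exactly which twists $\O(-t)$ occur in the resolution of $\Omega_{\PP^3}^{m}(m)$ for $m=2,3$ and verifying that after twisting by $\O(-1)$ every term lands strictly inside the vanishing ranges of Definition \ref{perv-definition}(1) — in particular the boundary cases (e.g. the term $\O(-1)$ appearing, or $m=3$ giving $\Omega^3\cong\O(-4)$, so $\Omega^3(3)\cong\O(-1)$, and then $\C(-1)\otimes\O(-1)=\C(-2)$, which must be checked against $p+q<0$ for $p=0,1$: $q=-2$, fine, and against $p+q\ge0$ for $p=2,3$: indeed $\ge0$). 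One should also double-check the extreme $p=-3$ claim: there $-p=3$, $\Omega^3_{\PP^3}\cong\O(-4)$ is a line bundle, $\C(-1)\otimes\O(-4)(3)=\C(-2)$, and the desired vanishing $H^q=0$ for all $q$ (since the lemma asserts vanishing for $q\ne1$ when $p\le-3$, and also we'd want $q=1$); this forces $H^1(\C(-2))=0$, which is part of the instanton hypothesis — so $p=-3$ is consistent and in fact stronger, and I'd verify it falls out of the same computation. Once the twist windows are tabulated, the rest is a routine diagram chase through the spectral sequence, so I would present the resolutions explicitly, tabulate the twists, and invoke Definition \ref{perv-definition}(1) term by term.
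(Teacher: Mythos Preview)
Your outline is the standard one and matches what the paper cites (Nakajima's Lemma 2.4 and Frenkel--Jardim's Proposition 26). But there is a real gap in the bookkeeping you flagged as the ``main obstacle,'' and it is not just a matter of tabulating twists. In the cited references the object is a \emph{sheaf}, so $H^q(\PP^3,-)=0$ for $q>3$ automatically; here $\C$ is a genuine two-term complex with $\H^0$ and $\H^1$ both possibly nonzero, so $H^4(\PP^3,\C(t))=H^3(\PP^3,\H^1(\C)(t))$ need not vanish, and condition (1) of Definition~\ref{perv-definition} says nothing about it.

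Concretely, take $p=-1$, $q=2$. Tensoring the Euler sequence $0\to\Omega^1(1)\to\O^4\to\O(1)\to 0$ with $\C(-1)$ yields
\[
H^2\bigl(\C(-1)\otimes\Omega^1(1)\bigr)\;=\;\coker\bigl(H^1(\C(-1))^4\to H^1(\C)\bigr),
\]
and neither $H^1(\C(-1))$ nor $H^1(\C)$ is forced to vanish by condition (1) (the twists are $-1$ and $0$, both outside the range $q\le -2$). If you switch to the left Koszul resolution $0\to\O(-3)\to\O(-2)^4\to\O(-1)^6\to\Omega^1(1)\to 0$, then chasing the long exact sequences shows the same group injects into $H^4(\C(-4))$. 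The case $(p,q)=(-2,3)$ runs into $H^4(\C(-3))$ in the same way.

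What closes the gap is condition (3), not condition (1). From $Lj^*\C\simeq\O_l^{\oplus r}$ and the long exact sequence of the triangle $\H^0(\C)\to\C\to\H^1(\C)[-1]$ one reads off $j^*\H^1(\C)=\H^1(Lj^*\C)=0$, so the support of $\H^1(\C)$ misses the line $l$. Since every surface in $\PP^3$ meets every line, this forces $\dim\H^1(\C)\le 1$; hence $H^a(\H^1(\C)\otimes F)=0$ for $a\ge 2$ and any locally free $F$, and therefore $H^q(\C\otimes F)=0$ for all $q\ge 4$. With this extra input in hand, your Euler/Koszul reduction goes through as planned. (Note that this step is independent of Lemma~\ref{perv-family} and Corollary~\ref{sense}, so there is no circularity.)
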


This lemma and its proof are analogous to \cite[Lemma 2.4]{Na1}
and \cite[Proposition 26]{FJ}.

\begin{Lemma} \label{perv-family}
An $S$-family of perverse instantons $\C\in D^b(\PP^3_ S)$ is
canonically isomorphic to the complex of sheaves
$$\O _{\PP ^3_S}(-1) \otimes R^1(p_1)_* (\C \otimes  \Omega
^2(1))\mathop\to  ^{\alpha}\O _{\PP ^3_S} \otimes R^1(p_1)_* (\C
\otimes \Omega ^1) \mathop\to^{\beta} \O _{\PP ^3_S} (1) \otimes
R^1(p_1)_* (\C (-1))
$$
in degrees $-1,0,1$ coming from Beilinson's construction.
Moreover, $\alpha$ is injective (as a map of sheaves), the sheaves
$R^1(p_1)_*(\C (-1)\otimes \Omega ^p(p))$ are locally free for
$p=0,1,2$ and we have a canonical isomorphism
$$ R^1(p_1)_* (\C \otimes \Omega
^2(1)) \simeq R^1(p_1)_* (\C (-1)).$$
\end{Lemma}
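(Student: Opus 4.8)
The plan is to run Beilinson's spectral sequence (resolution of the diagonal) on $\PP^3$ for the derived object $\C$, exactly as in the $\PP^2$ case treated in \cite[Lemma 2.4]{Na1} and \cite[Proposition 26]{FJ}, and then identify the surviving terms using the vanishing statements provided by the previous lemma together with condition (1) of Definition \ref{perv-definition}. Concretely, I would first work fibrewise: for a single perverse instanton $\C$ on $\PP^3_K$, Beilinson's resolution of the diagonal gives a spectral sequence with $E_1^{p,q}=H^q(\PP^3,\C(-1)\otimes\Omega^{-p}_{\PP^3}(-p))\otimes \O_{\PP^3}(p)[\,\cdot\,]$ (indexed so that $p$ runs over $-3,\dots,0$ and only the twists $\O(-1),\O,\O(1)$ appear after the identification $\Omega^3(3)\cong\O(-1)$ up to shift), converging to $\C$ placed in the appropriate degree. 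By the previous lemma all rows $q\ne 1$ vanish, so the spectral sequence degenerates at $E_2$ onto a single row, and this row is precisely a three-term complex of the shape claimed, with $\C^{-1}=\O(-1)\otimes H^1(\C(-1)\otimes\Omega^2(1))$, $\C^0=\O\otimes H^1(\C(-1)\otimes\Omega^1(1))=\O\otimes H^1(\C\otimes\Omega^1)$ and $\C^1=\O(1)\otimes H^1(\C(-1))$; here I use $\Omega^1(1)=\Omega^1$ only as shorthand matching the statement and would of course keep the twists straight. The maps $\alpha,\beta$ are the differentials $d_2$ of the spectral sequence and are therefore canonical.

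Next I would relativise. For an $S$-family $\C\in D^b(\PP^3_S)$, I replace cohomology groups by $R^1(p_1)_*$ of the corresponding twisted complexes; the fibrewise vanishing (which holds at every geometric point by definition of an $S$-family) combined with cohomology-and-base-change and the fact that the relevant higher direct images vanish in all degrees $\ne 1$ shows that $R^q(p_1)_*(\C(-1)\otimes\Omega^p(p))=0$ for $q\ne 1$ and that $R^1(p_1)_*(\C(-1)\otimes\Omega^p(p))$ is locally free for $p=0,1,2$ — this is the standard argument that a single nonvanishing cohomology sheaf with constant fibre dimension is locally free and commutes with base change. With these vanishings in hand the relative Beilinson spectral sequence again degenerates onto one row, yielding the canonical isomorphism of $\C$ with the displayed three-term complex over $\PP^3_S$.

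There remain two smaller points. Injectivity of $\alpha$ as a map of sheaves: I would deduce this from condition (3), i.e.\ that $Lj^*\C\cong\O_l^{\oplus r}$ for some line $l$. Restricting the displayed complex to $l$ and using that $Lj^*\C$ is a genuine sheaf (no $\H^{-1}$), the restriction $j^*\alpha$ is injective; since $\C^{-1}=\O_{\PP^3_S}(-1)\otimes(\text{locally free})$ is torsion free and the base scheme intervenes only through flat pullback, injectivity on the line propagates to injectivity of $\alpha$ on all of $\PP^3_S$ (this is exactly the mechanism used in Proposition \ref{perv2} and the preceding proposition). Finally, the canonical isomorphism $R^1(p_1)_*(\C\otimes\Omega^2(1))\simeq R^1(p_1)_*(\C(-1))$ follows from the exact Koszul-type sequence relating $\Omega^2(1)$ and $\O(-1)$ on $\PP^3$ — more precisely $\Omega^3_{\PP^3}\cong\O_{\PP^3}(-4)$ gives $\Omega^2(1)\cong(\Omega^1)^\vee\otimes\O(-3)$, and one checks that the intermediate terms in the relevant resolution contribute $0$ to $R^\bullet(p_1)_*(\C\otimes-)$ because of the vanishing lemma, so the two $R^1$'s are forced to agree canonically. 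I expect the main obstacle to be purely bookkeeping: pinning down the precise normalisations and twists in Beilinson's resolution on $\PP^3$ (the analogue of $\Omega^i(i)$ versus $\Omega^i$, and the degree shift placing $\C$ in degree $0$) so that the surviving row of the spectral sequence is literally the complex written in the statement, rather than a shift or twist of it; once the indexing is fixed, every step above is a routine consequence of the vanishing lemma and base change.
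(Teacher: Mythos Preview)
Your proposal is correct and follows exactly the approach the paper indicates: the paper's own proof is the single sentence ``the above lemma follows from the previous lemma by standard arguments using Beilinson's construction (i.e., proof of existence of Beilinson's spectral sequence) in families,'' and your outline (Beilinson spectral sequence applied to $\C(-1)$, degeneration to the row $q=1$ via the vanishing lemma, relativisation by cohomology-and-base-change to get local freeness, and injectivity of $\alpha$ from the restriction to $l_\infty$) is precisely this standard argument spelled out. The only place where your sketch is slightly loose is the canonical identification $R^1(p_1)_*(\C\otimes\Omega^2(1))\simeq R^1(p_1)_*(\C(-1))$: a single Euler-type sequence gets you from the first group to $H^2(\C(-3))$ rather than directly to $H^1(\C(-1))$, so you will need to chain two such sequences (or equivalently use two steps of the Koszul filtration) together with the instanton vanishing $H^1(\C(-2))=H^2(\C(-2))=0$; this is routine but worth writing out carefully.
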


The above lemma follows from the previous lemma by standard
arguments using Beilinson's construction (i.e., proof of existence
of Beilinson's spectral sequence) in families.

\begin{Corollary} \label{sense}
Let $\C$ be a  perverse instanton on $\PP ^3$. Then $\H^0(\C)$ is
torsion free and $\H^1 (\C)$ is of dimension $\le 1$.
\end{Corollary}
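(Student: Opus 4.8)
The plan is to deduce Corollary \ref{sense} from Lemma \ref{perv-family} and Proposition \ref{perv2}. By Lemma \ref{perv-family} (applied with $S=\Spec k$) every perverse instanton $\C$ on $\PP^3$ is canonically isomorphic to a complex of locally free sheaves
$$\C^{-1}\mathop\to^{\alpha}\C^0\mathop\to^{\beta}\C^1$$
sitting in degrees $-1,0,1$, with $\alpha$ injective as a map of sheaves. So I may assume from the outset that $\C$ is literally such a complex of locally free sheaves. Moreover, condition (3) in Definition \ref{perv-definition} supplies a line $j:l\hookrightarrow\PP^3$ with $Lj^*\C\simeq\O_l^{\oplus r}$, which in particular is a locally free sheaf object in $D^b(l)$. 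Thus both hypotheses of Proposition \ref{perv2} (and of the preceding Proposition, the one showing the associated object is a perverse sheaf) are met, taking $C=l$.

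First I would invoke the Proposition preceding Proposition \ref{perv2}: since $Lj^*\C$ is a sheaf object, that Proposition tells us the object corresponding to $\C$ is a perverse sheaf, hence $\H^{-1}(\C)=0$, $\H^i(\C)=0$ for $i\ne 0,1$, $\H^0(\C)\in\Ob\F$, and $\H^1(\C)=\coker\beta\in\Ob\T$ — that is, $\H^1(\C)$ has dimension $\le 1$, which is the second assertion. Second, to get torsion-freeness of $\H^0(\C)$ I would simply apply Proposition \ref{perv2}: its hypothesis is exactly that $\C$ is a complex of locally free sheaves on $\PP^3$ admitting a curve $j:C\hookrightarrow\PP^3$ with $Lj^*\C$ a locally free sheaf object, and the line $l$ with $Lj^*\C\simeq\O_l^{\oplus r}$ serves as that curve. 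The conclusion of Proposition \ref{perv2} is precisely that $\H^0(\C)$ is torsion free.

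The only genuine point requiring care — and the one I would flag as the main obstacle — is making sure the reduction to "$\C$ is a complex of locally free sheaves satisfying the hypotheses of the two Propositions" is legitimate, i.e. that the canonical isomorphism furnished by Beilinson's construction in Lemma \ref{perv-family} is an isomorphism in $D^b(\PP^3)$ and therefore transports the defining properties (in particular the triviality of $Lj^*\C$ along $l$) to the explicit three-term complex. Once that identification is in place, both statements of the corollary are immediate citations of the two preceding Propositions, with no further computation needed; in particular the snake-lemma diagram and the reflexivity argument that do the real work have already been carried out there.
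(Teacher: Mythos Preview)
Your proof is correct and follows essentially the same route as the paper: reduce via Lemma \ref{perv-family} (with $S$ a point) to a three-term complex of locally free sheaves, then apply Proposition \ref{perv2} for torsion-freeness of $\H^0(\C)$. You are in fact slightly more explicit than the paper, since you separately invoke the unnumbered Proposition preceding \ref{perv2} to get $\dim\H^1(\C)\le 1$, whereas the paper's one-line proof leaves that step implicit (it is used inside the proof of Proposition \ref{perv2} anyway). Your worry about transporting the triviality of $Lj^*\C$ across the derived-category isomorphism is harmless: $Lj^*$ is a functor on $D^b$, so isomorphic objects have isomorphic derived restrictions.
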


\begin{proof}
The assertion follows immediately from Proposition \ref{perv2} and
Lemma \ref{perv-family} applied for $S$ being a point.
\end{proof}

\medskip

\subsection{Analysis of singularities of perverse instantons}

\begin{Definition}
Let $E$ be a coherent sheaf on a smooth variety $X$. Then the set
of points where the sheaf $E$ is not locally free is called the
\emph{singular locus} of $E$ and it is denoted by $S(E)$.
\end{Definition}

It is easy to see that the singular locus of an arbitrary coherent
sheaf on $X$ is a closed subset of $X$ (in the Zariski topology).
Here we study the singular locus of perverse instantons on $\PP
^3$.

\medskip

From the proof of \cite[Proposition 10]{FJ} it follows that in
case of complex ADHM data if $\coker \alpha$ is not reflexive then
it is non-locally free along a certain (possibly non-reduced or
reducible) curve of degree $c^2$ (not $2c$!) that does not
intersect $l_{\infty}$. If $\coker \alpha$ is reflexive then it is
non-locally free only in a finite number of points.

We have two short exact  sequences:
$$0\to \H ^0(\C) \to \coker \alpha \to \im \beta \to 0$$
and
$$0\to \im \beta \to \C^1 \to \H^1(\C)=\coker \beta \to 0.$$
It follows that $\im \beta$ is torsion free and it is non-locally
free exactly along the support of $\H^1 (\C)$ (which is at most
$1$-dimensional).

Obviously, $\H^ 0(\C)$ can be non-locally free only at the points
of  $S(\coker \alpha)$ or at the points  of $S(\im \beta)$.
Moreover, the $1$-dimensional components of $S(\H ^0(\C))$ are
contained in $S(\coker \alpha)$. This follows from the fact that
the kernel of a map from a locally free sheaf to a torsion free
sheaf is reflexive.

\section{GIT approach to perverse instantons\label{ADHM}}

In this section we consider ADHM data for an arbitrary manifold
$X$. We have a natural $G=\GL (V)$-action on $\ti \gr{B}$ which
induces a $G$-action on the set of ADHM data $\ti\mu^{-1}(0)$. Let
$\chi:G\to \Gm$ be the character given by the determinant. We can
consider the $G$-action on $\fal{\gr{B}}\times \AA ^1$ with
respect to this character (i.e., a non-trivial $G$-linearization
of the trivial line bundle on $\ti {\gr{B}}$).

The main aim of this section is to study different notions of
stability obtained via Geometric Invariant Theory when taking
quotients $\ti\mu^{-1}(0)/\!\!/_{\chi} G$ and $\ti\mu^{-1}(0)/G$.

This section is just a careful rewriting of \cite[Section 2]{VV}
but we give a bit more details for the convenience of the reader.

\medskip

\begin{Definition}
We say that an ADHM datum is
\begin{enumerate}
\item \emph{stable}, if for every subspace $S\subsetneq V$ (note that we allow
$S=0$) such that $\ti B_k(S)\subset S\otimes H^0(\O_X(1))$ for
$k=1,2$ we have $\im \ti i \not\subset S\otimes H^0(\O_X(1))$.
\item  \emph{costable},
if for every no non-zero subspace $S\subset V$ such that $\ti
B_{k}(S)\subset S\otimes H^0(\O_X(1))$ for $k=1,2$ we have $S\not
\subset \ker \ti j$,
\item  \emph{regular}, if it is stable and costable.
\end{enumerate}
We say that $(\ti B_1, \ti B_2,\ti i,\ti j)$ satisfies the
\emph{ADHM equation} if $[\ti B_{1},\ti B_{2}]+\ti i \ti j=0.$
\end{Definition}

\medskip

The following lemma generalizes   \cite[Lemma 3.25]{Na1}. Its
proof is similar to the proof given in \cite{Na1}.

\begin{Lemma}
\label{lemat_z_nakajimy}
Let $x$ be an ADHM datum. Then $x$ is
stable if and only if $G\cdot(x,z)$ is closed for some (or,
equivalently, all) $z\neq 0$.
\end{Lemma}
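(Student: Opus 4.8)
The statement is a GIT criterion: an ADHM datum $x=(\ti B_1,\ti B_2,\ti i,\ti j)$ is stable (in the combinatorial sense just defined) precisely when the orbit $G\cdot(x,z)$ is closed in $\ti{\gr B}\times\AA^1$ for $z\neq 0$ (equivalently, for all such $z$). The plan is to run the Hilbert--Mumford numerical criterion, exactly as in the classical ADHM case \cite[Lemma 3.25]{Na1}, adapting it to the $H^0(\O_X(1))$-twisted maps. First I would fix $z\neq 0$ and note that the $\GL(V)$-stabiliser of $(x,z)$ is automatically trivial once $x$ is stable: a $g$ fixing $(x,z)$ would fix $z$, hence lie in $\SL(V)$ only if $\det g=1$, but more importantly $\ker(g-\id)$ and $\im(g-\id)$ are $\ti B_k$-invariant subspaces compatible with $\ti i,\ti j$, and stability forces $g=\id$ by the usual minimal-invariant-subspace argument; so closedness of the orbit is the only issue.

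\textbf{Key steps.} (1) \emph{Non-closed orbit $\Rightarrow$ not stable.} Suppose $G\cdot(x,z)$ is not closed. By the Hilbert--Mumford criterion there is a one-parameter subgroup $\lambda:\Gm\to G$ with $\lim_{t\to 0}\lambda(t)\cdot(x,z)$ existing but lying outside the orbit. Decompose $V=\bigoplus_{n}V_n$ into weight spaces of $\lambda$. The condition that $\lim_{t\to0}\lambda(t)\cdot\ti i$ exists forces $\im\ti i\subset\bigoplus_{n\ge 0}V_n\otimes H^0(\O_X(1))$; the condition on $\lambda(t)\cdot z=\chi(\lambda(t)^{-1})z=t^{-\sum n\dim V_n}z$ having a limit forces $\sum_n n\dim V_n\le 0$; and existence of $\lim\lambda(t)\cdot\ti B_k$ forces $\ti B_k$ to be compatible with the filtration $V_{\ge n}:=\bigoplus_{m\ge n}V_m$, i.e. $\ti B_k(V_{\ge n})\subset V_{\ge n}\otimes H^0(\O_X(1))$. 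If all weights were equal the limit would be in the orbit (the limit is $\lambda(0)\cdot x$ up to rescaling), contradiction; hence the weights are not all equal, and combined with $\sum n\dim V_n\le 0$ there must be a negative weight, so $S:=V_{\ge 1}$ or rather $S:=\bigoplus_{n\le -1}V_n$ is a proper (possibly zero, if we instead use the complementary piece) $\ti B_k$-invariant subspace; chasing the inequalities shows the relevant invariant subspace $S\subsetneq V$ satisfies $\im\ti i\subset S\otimes H^0(\O_X(1))$, contradicting stability. (The bookkeeping of which weight piece to take, to land on a \emph{proper} subspace including possibly $S=0$, is the one place that needs care and mirrors \cite{Na1} verbatim.)

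\textbf{Key steps, continued.} (2) \emph{Not stable $\Rightarrow$ orbit not closed.} If $x$ is not stable, pick $S\subsetneq V$ with $\ti B_k(S)\subset S\otimes H^0(\O_X(1))$ and $\im\ti i\subset S\otimes H^0(\O_X(1))$. Choose a complement $V=S\oplus S'$ and let $\lambda(t)=\id_S\oplus t\cdot\id_{S'}$. Then $\lambda(t)\cdot\ti B_k$ stays block-upper-triangular with a bounded limit (using $\ti B_k(S)\subset S\otimes H^0(\O_X(1))$ for the diagonal block and the $t$-weights for off-diagonal), $\lambda(t)\cdot\ti i=\lambda(t)\ti i$ has limit $\ti i$ since $\im\ti i\subset S\otimes H^0(\O_X(1))$, $\lambda(t)\cdot\ti j=\ti j\lambda(t)^{-1}$ has a limit as $t\to0$ because $\lambda(t)^{-1}$ has entries $1$ and $t^{-1}$ but acting on the \emph{source} the $t^{-1}$ appears only on $S'$ — wait, this needs the standard trick of instead letting $t\to\infty$ or choosing $\lambda$ with weights $0$ on $S$ and $-1$ on $S'$, whichever makes all four limits exist; and $\chi(\lambda(t)^{-1})z=t^{\mp\dim S'}z$ still has a limit for the correctly-signed choice. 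The limiting datum has a strictly larger stabiliser (it is block-diagonal), so it cannot lie in $G\cdot(x,z)$, proving non-closedness. Finally, equivalence of ``some $z\neq0$'' and ``all $z\neq 0$'' is immediate since $G$ acts on the $z$-coordinate through the character $\chi$, which is surjective onto $\Gm$, so the orbits $G\cdot(x,z)$ for different nonzero $z$ differ only by the $\Gm$-rescaling $z\mapsto cz$ and are simultaneously closed or not.

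\textbf{Main obstacle.} The genuine content, as in Nakajima's argument, is the weight-space bookkeeping in step (1): extracting from the four separate ``limit exists'' conditions a single proper $\ti B_k$-invariant subspace $S$ (allowing $S=0$) that witnesses instability, while correctly tracking the sign of $\sum_n n\dim V_n$ and the inclusion $\im\ti i\subset S\otimes H^0(\O_X(1))$. The only new feature relative to \cite[Lemma 3.25]{Na1} is that $\ti B_k$ now lands in $V\otimes H^0(\O_X(1))$ rather than $V$, but since $\lambda(t)$ acts trivially on the $H^0(\O_X(1))$ factor, every weight computation goes through with $V$ replaced by $V\otimes H^0(\O_X(1))$ on the target, and the invariance condition becomes $\ti B_k(V_{\ge n})\subset V_{\ge n}\otimes H^0(\O_X(1))$ — exactly the condition appearing in the definition of stability. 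Hence the proof is a routine twist of Nakajima's, and I would present it by citing \cite{Na1} for the combinatorial core and spelling out only the modification caused by the $H^0(\O_X(1))$-twist.
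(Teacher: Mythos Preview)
Your approach is essentially the paper's: both directions via one-parameter subgroups and the weight decomposition, exactly as in Nakajima's Lemma~3.25, with the only new wrinkle being that $\ti B_k,\ti i,\ti j$ land in $V\otimes H^0(\O_X(1))$ --- and you correctly note that this is harmless since $\lambda(t)$ acts trivially on the second factor.

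Two small corrections. In step~(1) your candidate destabilising subspaces $V_{\ge 1}$ and $\bigoplus_{n\le -1}V_n$ are both wrong: the first need not contain $\im\ti i$ (which only lands in $V_{\ge 0}$), and the second is not $\ti B_k$-invariant (since $\ti B_k(V_n)\subset V_{\ge n}$, not $V_{\le n}$). The correct choice is $S=V_{\ge 0}$: once you have shown there is a negative weight, $S$ is proper, $\ti B_k$-invariant, and contains $\im\ti i$. The paper organises this contrapositively --- assume stable, set $S=V_{\ge 0}$, conclude $S=V$, hence $N=\sum n\dim V_n\ge 0$, and then $N=0$ forces $\lambda$ trivial while $N>0$ makes the $z$-coordinate diverge --- which sidesteps the bookkeeping you flag. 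In step~(2), with the weight $-1$ choice on $S'$ the limit of $\ti j\lambda(t)^{-1}$ is simply $(\ti j|_S,0)$, so there is no issue; and you don't need the ``larger stabiliser'' argument to see the limit is outside the orbit: its $z$-coordinate is $t^{\dim S'}z\to 0$, while every point of $G\cdot(x,z)$ has nonzero $z$-coordinate.
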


\begin{proof}
Let sections $\{s_l\}$ form a basis of $\hox$. Then $\fal{B_k}$
and $\fal{i}$ can be written as
$$\fal{B_k}=\sum B_{lk}\otimes s_l,\qquad \fal{i}=\sum i_{l}\otimes s_l.$$
Assume that $G\cdot(x,z)$ is closed for $z\neq 0$. Suppose that
there exists $S\subsetneq V$ such that $\fal{B_k}(S)\subset
S\otimes\hox$ for $k=1,2$ and $\im\fal{i}\subset S\otimes\hox$.
Let us fix a subspace $S^{\perp}\subset V$ such that $V=S\oplus
S^{\perp}$. Then we have
$$B_{kl}=\left(\begin{array}{cc}
*&*\\
0&*
\end{array}\right),\qquad i_l=\left(\begin{array}{c}
*\\
0
\end{array}\right).$$
If we set $g(t)=\left(\begin{array}{cc}
1&0\\
0&t^{-1}
\end{array}\right)$ then we have
$$g(t)B_{kl}g(t^{-1})=\left(\begin{array}{cc}
*&t*\\
0&*
\end{array}\right),\qquad g(t)i_l=i_l.$$
Therefore there exists limit $\lim_{t\to 0}g(t)x$ in
$\fal{\gr{B}}$. On the other hand, when $t\to 0$ then
$$g(t)(x,z)=(g(t)x,\det(g(t))^{-1}z)=(g(t)x,t^{\dim S^{\perp}}z)$$
has a limit $(\lim_{t\to 0}g(t)x,0)$ which does not belong to
$G\cdot (x,z)$. Contradiction shows that $x$ has to be stable.

Now suppose that $x$ is stable and $G\cdot (x,z)$ is not closed.
By the Hilbert--Mumford criterion there exists a $1$-parameter
subgroup $\lambda:\Gm\to G$ such that the limit $\lim_{t\to
0}\lambda(t)(x,z)$ exists and it belongs to
$\overline{G\cdot(x,z)}\backslash G\cdot(x,z)$. Let $V(m)$ consist
of vectors $v\in V$ such that $\lambda(t)\cdot v=t^mv$ for every
$t\in \Gm$. Then we have a decomposition $V=\bigoplus_m V(m)$ and
we can choose a basis of $V$ such that
$$\lambda(t)=\left(\begin{array}{ccc}
t^{a_1}&&\\
&\ddots\\
&&t^{a_c}
\end{array}
\right)$$ where $a_1\geq \ldots\geq a_c$. Existence of
$\lim_{t\to 0}\lambda(t)\fal{B_k}\lambda(t^{-1})$ implies that the
limits $\lim_{t\to 0}\lambda(t)\fal{B_{lk}}\lambda(t^{-1})$ exist
for every $l$. Let $B_{lk}=(b_{ij})$. Then
$(\lambda(t)\fal{B_{lk}}\lambda(t^{-1}))_{ij}=t^{a_i-a_j}b_{ij}$.
This shows that $b_{ij}=0$ if $a_i<a_j$. Therefore
$\fal{B_k}(V(m))\subset (\bigoplus_{l\geq m}V(l))\otimes\hox$.
Similarly, one can show that $\im\fal{i}\subset (\bigoplus_{m\geq
0}V(m))\otimes\hox$. Let us set $S=\bigoplus_{m\geq 0}V(m)$. Then
$\fal{B_k}(S)\subset S\otimes\hox$ and $\im\fal{i}\subset
S\otimes\hox$, so from the stability condition it follows that
$S=V$. Therefore $\det\lambda(t)=t^N$ for $N\geq 0$. If $N=0$ then
$V(0)=V$ and $\lambda\equiv \id$. This is impossible because
$\lim_{t\to 0}\lambda(t)(x,z)\notin G\cdot(x,z)$. If $N>0$ then
$\lambda(t)(x,z)=(\lambda(t)x,\det(\lambda(t))^{-1}z)=(\lambda(t)x,t^{-N}z)$
which diverges as $t\to 0$. This gives a contradiction.
\end{proof}

\begin{Proposition}
\label{rownowaznosc_stabilnosci} The following conditions are
equivalent:
\begin{enumerate}
\item $x$ is stable,
\item $x$ is $\chi$-stable,
\item $x$ is $\chi$-semistable.
\end{enumerate}
Similar assertion holds if we replace stable with costable and
$\chi$ with $\chi^{-1}$.
\end{Proposition}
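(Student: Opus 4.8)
The plan is to prove the chain of implications $(2)\Rightarrow(3)\Rightarrow(1)\Rightarrow(2)$, where the first implication is trivial (every semistable point is a fortiori stable only in the converse direction, so really $(2)\Rightarrow(3)$ is the tautological one that $\chi$-stable implies $\chi$-semistable). The substantive content lies in $(3)\Rightarrow(1)$ and $(1)\Rightarrow(2)$, and I would route both through Lemma \ref{lemat_z_nakajimy}, which already characterizes stability of an ADHM datum $x$ in terms of closedness of the orbit $G\cdot(x,z)$ for $z\neq 0$. The key dictionary is the standard GIT description recalled in Subsection \ref{GIT-section}: a point $x$ is $\chi$-semistable precisely when the closure of $G\cdot(x,z)$ in $\ti{\gr B}\times\AA^1$ does not meet the zero section $\ti{\gr B}\times\{0\}$ for $z\neq 0$, and $x$ is $\chi$-polystable when $G\cdot(x,z)$ is itself closed in the complement of the zero section; a $\chi$-polystable point is $\chi$-stable iff its stabilizer in $G$ is trivial.

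For $(1)\Rightarrow(2)$: assume $x$ is stable. By Lemma \ref{lemat_z_nakajimy}, $G\cdot(x,z)$ is closed (in $\ti{\gr B}\times\AA^1$) for every $z\neq 0$. Since $z\neq 0$ the orbit does not meet the zero section, so its closure does not either; hence $x$ is $\chi$-semistable, and moreover $x$ is $\chi$-polystable because the orbit through $(x,z)$ is already closed. To upgrade $\chi$-polystable to $\chi$-stable I must check that the stabilizer of $x$ in $G=\GL(V)$ is trivial. This is where the stability hypothesis is used a second time: if $g\in\GL(V)$ fixes $x=(\ti B_1,\ti B_2,\ti i,\ti j)$, then $\ker(g-\lambda)$ is a $\ti B_k$-invariant subspace containing $\im\ti i$ for a suitable eigenvalue $\lambda$ of $g$, so by the stability condition $\ker(g-\lambda)=V$, i.e. $g=\lambda\cdot\id$; then $jg^{-1}=j$ forces $\lambda=1$ unless $j$ vanishes, and one argues (again using stability with $S=0$ if necessary, since $S=0$ is allowed) that $g=\id$. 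I expect this stabilizer computation to be the one place requiring a little care about the edge case $c=0$ and about which subspace to take as the eigenspace.

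For $(3)\Rightarrow(1)$: assume $x$ is $\chi$-semistable but, for contradiction, not stable. If $x$ is not stable, then by the contrapositive of Lemma \ref{lemat_z_nakajimy} the orbit $G\cdot(x,z)$ is not closed for $z\neq 0$; by the Hilbert–Mumford criterion there is a one-parameter subgroup $\lambda:\Gm\to G$ with $\lim_{t\to 0}\lambda(t)(x,z)$ existing and lying outside $G\cdot(x,z)$. The computation already carried out inside the proof of Lemma \ref{lemat_z_nakajimy} shows that the existence of such a limit, together with instability, produces a proper $\ti B_k$-invariant subspace $S=\bigoplus_{m\geq 0}V(m)\subsetneq V$ with $\im\ti i\subset S\otimes\hox$, on which $\det\lambda(t)=t^N$ with $N>0$ (if $N=0$ the subgroup is trivial, contradicting that the limit leaves the orbit). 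But then $\lim_{t\to0}\lambda(t)(x,z)=(\lim_{t\to0}\lambda(t)x,\,t^{-N}z\to\infty)$ does not exist unless $z=0$ — so the limit point lies on the zero section $\ti{\gr B}\times\{0\}$, which contradicts $\chi$-semistability of $x$ via the GIT description in Subsection \ref{GIT-section}. Hence $x$ must be stable. This reuses the linear-algebra bookkeeping from Lemma \ref{lemat_z_nakajimy} essentially verbatim, so the main obstacle is not a new idea but bookkeeping: making the translation between ``orbit closed'' and ``closure avoids the zero section'' fully rigorous and handling the degenerate weight conventions. The ``costable/$\chi^{-1}$'' statement follows by the symmetric argument, applying everything to the dual datum (swap $i\leftrightarrow j^{t}$, $B_k\leftrightarrow B_k^{t}$), which interchanges stability with costability and $\chi$ with $\chi^{-1}$.
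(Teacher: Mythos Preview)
Your scheme $(2)\Rightarrow(3)\Rightarrow(1)\Rightarrow(2)$ and your argument for $(1)\Rightarrow(2)$ are fine and match the paper's. (One simplification: you do not need to hunt for ``a suitable eigenvalue $\lambda$''. From $g\ti i=\ti i$ you get $\im\ti i\subset\ker(g-\id)\otimes\hox$ directly, so take $S=\ker(g-\id)$ as the paper does; stability gives $S=V$, hence $g=\id$.)

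Your $(3)\Rightarrow(1)$ argument, however, has a genuine gap. You invoke Hilbert--Mumford to produce a one-parameter subgroup $\lambda$ for which $\lim_{t\to 0}\lambda(t)(x,z)$ \emph{exists} in $\ti{\gr B}\times\AA^1$. Writing $\det\lambda(t)=t^{N}$, the second coordinate of $\lambda(t)(x,z)$ is $t^{-N}z$; for this limit to exist with $z\neq 0$ one needs $N\le 0$, not $N>0$ as you claim. Your conclusion ``the limit lies on the zero section'' only follows when $N<0$, and the case $N=0$ with $\lambda$ nontrivial is not excluded by what you have written (in the proof of Lemma~\ref{lemat_z_nakajimy} the case $N=0$ was ruled out \emph{using} stability, which you have assumed fails). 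So the argument does not close.

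The fix is easy and actually avoids Hilbert--Mumford altogether: if $x$ is not stable, take directly a destabilizing subspace $S\subsetneq V$ and the explicit one-parameter subgroup $g(t)=\left(\begin{smallmatrix}1&0\\0&t^{-1}\end{smallmatrix}\right)$ from the \emph{first} half of the proof of Lemma~\ref{lemat_z_nakajimy}. Then $\det g(t)^{-1}=t^{\dim S^{\perp}}$ with $\dim S^{\perp}>0$ and $g(t)x$ converges, so $g(t)(x,z)\to(\,\cdot\,,0)$ lies on the zero section, contradicting $\chi$-semistability.

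For comparison, the paper proves $\chi$-semistable $\Rightarrow$ $\chi$-stable by a different route: take the unique closed orbit $G\cdot(y,w)\subset\overline{G\cdot(x,z)}$; since the closure avoids the zero section one has $w\neq 0$, so $y$ is $\chi$-polystable and (by the stabilizer computation) $\chi$-stable, hence its orbit has the maximal possible dimension; as boundary orbits have strictly smaller dimension, $G\cdot(y,w)=G\cdot(x,z)$. Either route works, but yours needs the correction above.
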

\begin{proof}
By Lemma \ref{lemat_z_nakajimy} $x$ is stable if and only if $x$
is $\chi$-polystable. So to prove the proposition it is sufficient
to prove that if $x$ is stable then its stabilizer in $G$ is
trivial. Assume that $g\in G$ acts trivially on $x$ and consider
$S=\ker(g-\id)$. Then $\im\fal{i}\subset S\otimes\hox$ and
$\fal{B}_k(S)\subset S\otimes\hox$ so $S=V$ and $g=\id$. If $x$ is
$\chi$-semistable let $y$ be a $\chi$-polystable ADHM datum such
that $(y,w)$ is in the closure of $G\cdot(x,z)$. Since
$\overline{G\cdot(x,z)}$ is disjoint from the the zero-section
(\cite[Lemma 2.2]{Ki}) we know that $w\neq 0$. Then $y$ is
$\chi$-stable and in particular it has a trivial stabilizer in
$G$. Therefore the orbit of $(y,w)$ has the maximal dimension. But
the set $\overline{G\cdot(x,z)}\backslash G\cdot(x,z)$ is composed
from the orbits of smaller dimension than the dimension of $G\cdot
(x,z)$. Therefore  $G\cdot(x,z)=G\cdot(y,w)$ and $x$ is also
$\chi$-stable.
\end{proof}

\medskip

\begin{Lemma} Let $x$ be an ADHM datum. Then $x$ is $1$-stable
(i.e., stable for the trivial character) if and only if it is
regular.
\end{Lemma}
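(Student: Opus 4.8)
The plan is to unwind the definition of $1$-stability — i.e. GIT stability with respect to the trivial character $1: G \to \Gm$ — using the Hilbert--Mumford criterion, and to show it is equivalent to the conjunction of the stable and costable conditions. First I would recall from Subsection~\ref{GIT-section} that, for the trivial linearization, $X/\!\!/_1 G = \Spec(k[X]^G)$, so a point $x$ is $1$-polystable exactly when the orbit $G\cdot x$ is closed in $\ti\mu^{-1}(0)$, and $1$-stable when moreover the orbit is closed \emph{and} the stabiliser of $x$ in $G$ is trivial (equivalently, $\dim G\cdot x = \dim G$). The triviality of the stabiliser is handled exactly as in the proof of Proposition~\ref{rownowaznosc_stabilnosci}: if $g$ fixes $x$ then $S=\ker(g-\id)$ is $\ti B_k$-invariant and contains $\im\ti i$, so stability forces $S=V$, hence $g=\id$. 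So the content is: $G\cdot x$ is closed $\iff$ $x$ is stable and costable.

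Next I would prove $(\Leftarrow)$. Suppose $x$ is regular but $G\cdot x$ is not closed. By Hilbert--Mumford there is a $1$-parameter subgroup $\lambda:\Gm\to G$ with $\lim_{t\to 0}\lambda(t)x =: x_0$ existing and lying in $\overline{G\cdot x}\setminus G\cdot x$. Diagonalising $\lambda$ as in the proof of Lemma~\ref{lemat_z_nakajimy}, with weight decomposition $V=\bigoplus_m V(m)$ and exponents $a_1\ge\dots\ge a_c$, the existence of the limit forces $\ti B_k(V(m))\subset(\bigoplus_{l\ge m}V(l))\otimes\hox$ and $\im\ti i\subset(\bigoplus_{m\ge 0}V(m))\otimes\hox$; by stability applied to $S_{\ge 0}:=\bigoplus_{m\ge 0}V(m)$ we get $S_{\ge 0}=V$, i.e. all $a_m\ge 0$. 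Dually, for $\ti j$ the limit condition forces $\ti j(\bigoplus_{m\le 0}V(m))\subset \ldots$ — more precisely $\ti j$ kills $\bigoplus_{m>0}V(m)$ up to the appropriate shift — so $\bigoplus_{m<0}V(m)\subset\ker\ti j$, and by costability applied to $S_{<0}:=\bigoplus_{m<0}V(m)$ we get $S_{<0}=0$, i.e. all $a_m\le 0$. Hence all $a_m=0$, so $\lambda$ is trivial and $x_0=x\in G\cdot x$, a contradiction. Therefore $G\cdot x$ is closed, and with the stabiliser computation above $x$ is $1$-stable.

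For $(\Rightarrow)$, suppose $x$ is $1$-stable, so $G\cdot x$ is closed with trivial stabiliser. If $x$ were not stable, there is a proper $\ti B_k$-invariant $S\subsetneq V$ with $\im\ti i\subset S\otimes\hox$; choosing a complement and the cocharacter $g(t)=\mathrm{diag}(1,t^{-1})$ (block form, identity on $S$, scaling $S^\perp$) exactly as in Lemma~\ref{lemat_z_nakajimy}, one finds $\lim_{t\to 0}g(t)x$ exists; it cannot lie in $G\cdot x$ since then $g(t)$ would fix $x$ in the limit, but $\lim g(t)$ does not exist in $G$ and the stabiliser is trivial — more carefully, the orbit map to the closed orbit is proper and the limiting point has strictly smaller orbit dimension, contradicting closedness. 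Similarly, non-costability produces (via the opposite cocharacter scaling $S$ up) a $1$-parameter degeneration out of $G\cdot x$, again contradicting that the orbit is closed. Hence $x$ is regular.

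The main obstacle I expect is the bookkeeping on the $\ti j$-side in the $(\Leftarrow)$ direction: since $\ti j: V\to W\otimes\hox$ and $W$ carries the trivial $G$-action, the condition that $\lim_{t\to0}\lambda(t)\cdot\ti j\,\lambda(t)^{-1}$ exists translates (after the $\Gm$ acts on $W$ with weight $0$) into $\ti j$ vanishing on the \emph{positive}-weight part of $V$, not on the non-negative part — getting this asymmetry with $\ti i$ exactly right, and then feeding the correct subspaces $S_{\ge 0}$ and $S_{<0}$ into the stability and costability hypotheses to force $\lambda$ trivial, is the delicate point. Everything else is a direct transcription of the arguments already given for Lemma~\ref{lemat_z_nakajimy} and Proposition~\ref{rownowaznosc_stabilnosci}.
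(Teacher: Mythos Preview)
Your $(\Leftarrow)$ direction follows the paper's argument almost exactly, but the costability step contains a genuine slip. You correctly observe that the existence of $\lim_{t\to 0}\ti j\lambda(t)^{-1}$ forces $\bigoplus_{m>0}V(m)\subset\ker\ti j$. But then you apply costability to $S_{<0}=\bigoplus_{m<0}V(m)$, which is wrong on two counts: it is not what the $\ti j$-condition gives you, and it is not $\ti B_k$-invariant (since $\ti B_k$ raises weights). The correct subspace is $S_{>0}=\bigoplus_{m>0}V(m)$: it is contained in $\ker\ti j$ by the above, and it \emph{is} $\ti B_k$-invariant because $\ti B_k(V(m))\subset(\bigoplus_{l\ge m}V(l))\otimes\hox$. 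Costability then gives $S_{>0}=0$, and together with $S_{\ge 0}=V$ from stability you get $V=V(0)$, hence $\lambda$ trivial. You anticipated exactly this bookkeeping as the delicate point, and indeed it bit you; with the corrected subspace the argument goes through and matches the paper.

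For $(\Rightarrow)$ your direct approach is valid once you note that the limit point $x_0=\lim_{t\to 0}g(t)x$ is fixed by the whole image of $g$, hence has nontrivial stabiliser, so $\dim G\cdot x_0<\dim G=\dim G\cdot x$ and $x_0\notin G\cdot x$, contradicting closedness. The paper takes a shorter route: if $G\cdot x$ is closed with trivial stabiliser then for any character $\chi$ and any $z\ne 0$ the orbit $G\cdot(x,z)$ is closed (it is the graph of a morphism on the closed orbit $G\cdot x\simeq G$), so $x$ is simultaneously $\chi$-polystable and $\chi^{-1}$-polystable, and Proposition~\ref{rownowaznosc_stabilnosci} then gives stable and costable in one stroke. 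Your argument buys self-containedness; the paper's buys brevity by recycling the earlier proposition.
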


\begin{proof}
Let us recall that $x$ is $1$-stable if and only if the stabilizer
of $x$ in $G$ is trivial and the orbit $G\cdot x$ is closed. Then
for any character and any $z\neq 0$ the orbit $G\cdot (x,z)$ is
closed as well. In particular, $x$ is both $\chi$-stable and
$\chi^{-1}$-stable, which by Proposition
\ref{rownowaznosc_stabilnosci} gives implication $\Rightarrow$.

Proof of the other implication is similar to the proof of Lemma
\ref{lemat_z_nakajimy}. Suppose that $x$ stable and costable and
$G\cdot x$ is not closed. There exists a one-parameter subgroup
$\lambda:\Gm\to G$ such that $\lim_{t\to 0}\lambda(t)\cdot x$
exists and belongs to $\overline{G\cdot x}\backslash G\cdot x$.
Let $V=\bigoplus_m V(m)$ be the weight decomposition with respect
to $\lambda$. As in proof of Lemma \ref{lemat_z_nakajimy}
existence of the limit $\lim_{t\to 0}\lambda(t)\cdot x$ implies
that
$$\fal{B_k}(V(m))\subset (\bigoplus_{l\geq m}V(l))\otimes\hox,$$
$$\im\fal{i}\subset (\bigoplus_{m\geq 0}V(m))\otimes\hox$$
and
$$ \bigoplus_{m> 0}V(m) \subset\ker\fal{j}.$$ The stability condition
implies that $V=\bigoplus_{m\geq 0}V(m)$ and costability gives
$\bigoplus_{m\geq 1}V(m)=\{0\}$. So $V=V(0)$ which contradicts our
assumption that the limit $\lim_{t\to 0}\lambda(t)\cdot x$ does
not  belong to $ G\cdot x$. The stabilizer of $x$ in $G$ is
trivial because $x$ is also $\chi$-stable.
\end{proof}

\medskip

\begin{Lemma}\label{polystable}
Let $x\in \ti\mu_{W,V}^{-1}(0)$. Then $x$ is $1$-polystable if and
only if there exist subspaces $V_1,V_2\subset V$ and quadruples
$x_1\in \ti\mu_{W,V_1}^{-1}(0)^{s,c}$ and $x_2\in \ti\mu_{\{ 0
\},V_2}^{-1}(0)$ such that $V=V_1\oplus V_2$, $x=x_1\oplus x_2$
and $\GL (V_2)\cdot x_2$ is closed. Moreover, such splitting is
unique.
\end{Lemma}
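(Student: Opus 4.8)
The plan is to mimic the structure of the proof of Lemma \ref{lemat\_z\_nakajimy}, using the Hilbert--Mumford criterion to decompose $V$ according to the weights of a destabilizing one-parameter subgroup, and then to package the pieces so that stability/costability survives on one summand while the other summand accounts for all the ``bad'' behaviour.

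First I would prove the ``if'' direction, which is the easy one: given a splitting $V = V_1 \oplus V_2$, $x = x_1 \oplus x_2$ with $x_1$ regular (FJ-free, i.e., $1$-polystable with trivial stabilizer on $V_1$ by the previous lemma) and $\GL(V_2)\cdot x_2$ closed, the orbit $\GL(V)\cdot x$ contains $(\GL(V_1)\times \GL(V_2))\cdot(x_1\oplus x_2) = (\GL(V_1)\cdot x_1)\oplus(\GL(V_2)\cdot x_2)$, which is a product of closed orbits. One then checks that this orbit is in fact closed in $\ti\mu^{-1}(0)$: since $\ti i$, $\ti j$ have no components mixing $V_1$ and $V_2$ (that is what $x = x_1\oplus x_2$ means) and $\ti B_k$ preserve $V_1$ and $V_2$ respectively, a limit along any one-parameter subgroup of $\GL(V)$ either lies in the product orbit or picks up off-diagonal $\hom(V_1,V_2)$ or $\hom(V_2,V_1)$ components; a short weight-counting argument (as in Lemma \ref{lemat\_z\_nakajimy}) rules the latter out once one knows the diagonal parts stay in their respective closed orbits. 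So $x$ is $1$-polystable.

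For the ``only if'' direction I would argue as follows. Suppose $x$ is $1$-polystable, i.e., $\GL(V)\cdot x$ is closed. Consider the subspace $S\subset V$ generated by the images of $\ti i$ under iterated applications of the components of $\ti B_1, \ti B_2$ — equivalently the smallest $\ti B_k$-invariant subspace containing $\im \ti i$ in the appropriate $\hox$-twisted sense. Dually, let $S'\subset V$ be the largest $\ti B_k$-invariant subspace contained in $\ker \ti j$. The restriction of $x$ to $S$ is stable by construction, and one checks (using that $x$ satisfies the ADHM equation and that $S$ is $\ti B_k$-invariant) that $x$ descends/restricts to an ADHM datum on $S$. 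The plan is then to show that under the closed-orbit hypothesis one can choose a $\GL(V)$-translate of $x$ for which $V$ splits as $V_1 \oplus V_2$ with $V_1$ the ``stable-costable part'': run the Hilbert--Mumford machine as in Lemma \ref{lemat\_z\_nakajimy}, obtaining from any potential destabilization a weight decomposition $V = \bigoplus_m V(m)$ with $\ti B_k(V(m))\subset(\bigoplus_{l\ge m}V(l))\otimes\hox$, $\im \ti i\subset (\bigoplus_{m\ge 0}V(m))\otimes\hox$; the part $\bigoplus_{m>0}V(m)$ must then lie in $\ker\ti j$ if one wants the orbit to remain closed, which forces the datum to split off that piece. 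Iterating, one strips off all weight spaces that are not ``regular'', and what remains is a maximal direct summand $V_1$ on which $x_1 := x|_{V_1}$ is regular (stable and costable), with complement $V_2$ carrying $x_2 = x|_{V_2}$ which has $\ti i$-component and $\ti j$-component zero, i.e., $x_2 \in \ti\mu_{\{0\},V_2}^{-1}(0)$, and $\GL(V_2)\cdot x_2$ closed (else $\GL(V)\cdot x$ would not be closed). Uniqueness follows because $V_1$ is intrinsically characterized: it is the sum of all $\ti B_k$-invariant subspaces on which the restriction of $x$ is regular, or equivalently $V_1 = S$ in the notation above provided $S = S'^{\perp}$ in a suitable sense; any two such splittings then agree, and the corresponding $x_2$ is determined as the restriction to the complement.

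The main obstacle I expect is the bookkeeping in the ``only if'' direction: making precise the claim that a closed $\GL(V)$-orbit forces a clean direct-sum splitting $x = x_1\oplus x_2$ (not merely a filtration), and that the off-diagonal blocks $\hom(V_1,V_2)$ and $\hom(V_2,V_1)$ in $\ti B_1, \ti B_2$ can be gauged away simultaneously. This is where one must use both stability (to kill the lower-triangular obstruction, as in Lemma \ref{lemat\_z\_nakajimy}) and costability (to kill the upper-triangular one, as in the preceding lemma's proof), combined with a limit argument showing that if a nonzero off-diagonal block survives then a suitable one-parameter subgroup produces a limit outside the orbit, contradicting closedness. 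Once the splitting is established, checking that $x_1$ is regular and $x_2$ has vanishing $\ti i, \ti j$ is routine, and uniqueness is a formal consequence of the intrinsic description of $V_1$.
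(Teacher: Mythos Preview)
Your overall strategy for the ``only if'' direction is close to the paper's: define $V_1$ as the smallest $\ti B_k$-invariant subspace with $\im\ti i\subset V_1\otimes\hox$, choose a complement $V_2$, and use a one-parameter subgroup $\lambda(t)=\mathrm{diag}(1,t^{-1})$ to take a limit $x'=\lim_{t\to 0}\lambda(t)\cdot x$ which is block-diagonal. The key point you should make explicit is that closedness of the orbit gives $x'=g\cdot x$ for some $g\in\GL(V)$, so it suffices to find the splitting for $x'$; this is how one upgrades the filtration to a genuine direct-sum decomposition without ``iterating''. Proving that $x'_1$ is costable and that both factor orbits are closed then requires a further argument (the paper does this by testing against arbitrary elements in the closures of the factor orbits and using stability of $x'_1$ to force a block-triangular $g$).

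The genuine gap is in your ``if'' direction, which you call the easy one. It is not: a general one-parameter subgroup $\lambda:\Gm\to\GL(V)$ need not respect the splitting $V=V_1\oplus V_2$, so the limit $\lim_{t\to 0}\lambda(t)\cdot x$ need not decompose as a pair of limits on $V_1$ and $V_2$. Your ``short weight-counting argument'' does not address this. The paper's fix is to introduce the unipotent group $U_\lambda=\{u:\lim_{t\to 0}\lambda(t)u\lambda(t)^{-1}=1\}$ and show that one can replace $\lambda$ by $u\lambda u^{-1}$ for a carefully chosen $u\in U_\lambda$ so that the new one-parameter subgroup preserves both $V_1$ and $V_2$; this uses an explicit basis computation comparing $V_k$ with $\overline V^0_k=\lim_{t\to 0}\lambda(t)V_k$. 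Only after this straightening can one conclude that the limit lies in $(\GL(V_1)\cdot x_1)\oplus(\GL(V_2)\cdot x_2)\subset\GL(V)\cdot x$. Without this step the argument is incomplete.
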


\begin{proof}
Let us remind that $x$ is $1$-polystable if and only if $\GL
(V)\cdot x$ is closed.

Assume first that $x=(\ti B_1,\ti B_2,\ti i, \ti j)$ has a closed
orbit and define $V_1$ as the intersection of all subspaces
$S\subset V$ such that $\ti B_k(S)\subset S\otimes H^0(\O_X(1))$
for $k=1,2$ and $\im \ti i \subset S\otimes H^0(\O_X(1))$. Choose
$V_2$ such that $V=V_1\oplus V_2$. Let $\{s_l\} $ be a basis of $
H^0(\O_X(1))$. Then $\ti B_k$, $\ti i$  and $\ti j$ can be written
as
$$\ti B_k =\sum B_{lk}\otimes s_l,\qquad \ti i=\sum i_{l}\otimes s_l, \qquad \ti j=\sum j_{l}\otimes s_l$$
where
$$B_{kl}=\left(\begin{array}{cc}
*&*\\
0&*
\end{array}\right),\qquad i_l=\left(\begin{array}{c}
*\\
0
\end{array}\right),\qquad j_l=\left(\begin{array}{cc}
*&*
\end{array}\right).$$
If $\lambda(t)=\left(\begin{array}{cc}
1&0\\
0&t^{-1}
\end{array}\right)$ then we have
$$\lambda(t)B_{kl}\lambda(t^{-1})=\left(\begin{array}{cc}
*&t*\\
0&*
\end{array}\right),\qquad \lambda(t)i_l=i_l,\qquad j_l\lambda(t^{-1})=\left(\begin{array}{cc}
*&t*
\end{array}\right).$$
Hence there exists $x'=(\ti B_1',\ti B_2',\ti i', \ti j')=
\lim_{t\rightarrow 0}\lambda(t)\cdot x$ which has the following
properties:
\begin{itemize}
\item $\ti B_k'(V_a)\subset V_a\otimes \hox$ for $k,a=1,2$,
\item $\ti B'_{k|V_1}=\ti B_{k|V_1}$ for $k=1,2$,
\item $\ti i'=\ti i,$
\item $\ti j'_{|V_2}=0,$
\item $\ti j'_{|V_1}=\ti j_{|V_1}.$
\end{itemize}

Since the orbit of $x$ is closed, we have $x'\in\GL (V)\cdot x$.
There exists $g\in\GL(V)$ such that $x'=g\cdot x$. So if we find
$V'_1, V'_2$ and $x'_1, x'_2$ satisfying conditions in the lemma
for $x'$, then $g\cdot V'_1,g\cdot V'_2$ and $g^{-1}\cdot
x'_1,g^{-1}\cdot x'_2$ satisfy it for $x$.

Let $V'_1$ be the intersection of all subspaces $S\subset V$ such
that $\ti B'_k(S)\subset S\otimes H^0(\O_X(1))$ for $k=1,2$ and
$\im \ti i' \subset S\otimes H^0(\O_X(1))$. Properties of $x'$
show that $V_1$ is one of such subspaces so $V'_1\subset V_1$. On
the other hand $g\cdot V'_1$ destabilizes $x$ so $V_1\subset
g^{-1}\cdot V'_1$ and by the dimension count we obtain $V'_1=V_1$.
Let us set $V'_2=V_2$ and $x'_1=(\ti B'_{1|V_1},\ti B'_{2|V_1},\ti
i', \ti j'_{|V_1})$, $x'_2=(\ti B'_{1|V_2},\ti B'_{2|V_2},0, \ti
j'_{|V_2})=(\ti B'_{1|V_2},\ti B'_{2|V_2},0, 0).$ It is clear that
$x'_1\in \ti\mu^{-1}_{W,V_1}(0)$ and $x'_2\in
\ti\mu^{-1}_{\{0\},V_2}(0)$ and $x'=x'_1\oplus x'_2$. Since $V'_1$
is minimal destabilizing space for $x'$, we also know that $x'_1$
is stable.

Now assume that $x''_1\in \ti\mu^{-1}_{W,V_1}(0)$ is in the
closure of the $\GL(V_1)$-orbit of $x'_1$ and $x''_2\in
\ti\mu^{-1}_{\{0\},V_2}(0)$ is in the closure of the
$\GL(V_2)$-orbit of $x'_2$. Then $x''_1\oplus x''_2$ is in the
closure of the $\GL(V)$-orbit of $x'=x'_1\oplus x'_2$. This orbit
is closed by the assumption so we can find $g\in\GL(V)$ such that
$g\cdot x'=x''_1\oplus x''_2$.

We can write
$$g=\left(\begin{array}{cc}
g_{11}&g_{12}\\
g_{21}&g_{22}
\end{array}
\right)$$
$$
x'_1=(\sum_l B'_{11l}\otimes s_l,\sum_l B'_{21l}\otimes s_l,\sum_l
i'_{1l}\otimes s_l,\sum_l j'_{1l}\otimes s_l)$$
$$
x'_2=(\sum_l B'_{12l}\otimes s_l,\sum_l B'_{22l}\otimes s_l,0,0)$$
$$x''_1=(\sum_l B''_{11l}\otimes s_l,\sum_l B''_{21l}\otimes s_l,\sum_l i''_{1l}\otimes s_l,\sum_l j''_{1l}\otimes s_l)$$
$$
x''_2=(\sum_l B''_{12l}\otimes s_l,\sum_l B''_{22l}\otimes
s_l,0,0)$$
$$x'=\left(\left(\begin{array}{cc}
B'_{11l}&0\\
0&B'_{12l}
\end{array}\right)\otimes s_l,\left(\begin{array}{cc}
B'_{21l}&0\\
0&B'_{22l}
\end{array}\right)\otimes s_l,\left(\begin{array}{c}
i'_{1l}\\
0
\end{array}\right)\otimes s_l,\left(\begin{array}{cc}
j'_{1l}&0
\end{array}\right)\otimes s_l\right)
$$
$$x''_1\oplus x''_2=\left( \left(\begin{array}{cc}
B''_{11l}&0\\
0&B''_{12l}
\end{array}\right)\otimes s_l,\left(\begin{array}{cc}
B''_{21l}&0\\
0&B''_{22l}
\end{array}\right)\otimes s_l,\left(\begin{array}{c}
i''_{1l}\\
0
\end{array}\right)\otimes s_l,\left(\begin{array}{cc}
j''_{1l}&0
\end{array}\right)\otimes s_l \right)
$$
The equality $g\cdot x'=x''_1\oplus x''_2$ gives us for each $l$
and $k=1,2$ the following equalities: {\setlength\arraycolsep{2pt}
\begin{eqnarray*}
\left(\begin{array}{cc}
g_{11}B'_{k1l}&g_{12}B'_{k2l}\\
g_{21}B'_{k1l}&g_{22}B'_{k2l}
\end{array}
\right)&=&\left(\begin{array}{cc}
g_{11}&g_{12}\\
g_{21}&g_{22}
\end{array}
\right)\cdot \left(\begin{array}{cc}
B'_{k1l}&0\\
0&B'_{k2l}
\end{array}\right)=\\
&=&\left(\begin{array}{cc}
B''_{k1l}&0\\
0&B''_{k2l}
\end{array}\right)\cdot \left(\begin{array}{cc}
g_{11}&g_{12}\\
g_{21}&g_{22}
\end{array}
\right)= \left(\begin{array}{cc}
B''_{k1l}g_{11}&B''_{k1l}g_{12}\\
B''_{k2l}g_{21}&B''_{k2l}g_{22}
\end{array}
\right)\\
\left(\begin{array}{c}
g_{11}i'_{1l}\\
g_{21}i'_{1l}
\end{array}
\right)&=& \left(\begin{array}{cc}
g_{11}&g_{12}\\
g_{21}&g_{22}
\end{array}
\right)\cdot \left(\begin{array}{c}
i'_{1l}\\
0
\end{array}\right)=
\left(\begin{array}{c}
i''_{1l}\\
0
\end{array}\right)\\
\left(\begin{array}{cc} j''_{1l}g_{11}&j''_{1l}g_{12}
\end{array}
\right) &=& \left(\begin{array}{cc} j''_{1l}&0
\end{array}\right)
\cdot \left(\begin{array}{cc} g_{11}&g_{12}\\ g_{21}&g_{22}
\end{array}
\right)= \left(\begin{array}{cc} j'_{1l}&0
\end{array}\right)
\end{eqnarray*}}

Let $S=\ker g_{21}\subset V_1$. The equalities above show that for
every $l$ and $k=1,2$ we have
$$B'_{k1l}(S)\subset S\qquad \im i'_{1l}\subset S,$$
which shows that $S$ is a destabilizing space for $x'_1$. Since
$x_1$ is stable $S=V_1$ and $g_{21}=0$. Therefore $g_{11}$ and
$g_{22}$ are isomorphisms.  Hence $$g_{11}\cdot x'_1=x''_2\quad
\textrm{and}\quad g_{22}\cdot x'_2=x''_2$$ which shows that the
orbits $\GL(V_1)\cdot x'_1$ and $\GL(V_2)\cdot x'_2$ are closed.
By the same argument as in Proposition
\ref{rownowaznosc_stabilnosci} we show that $x_1$ has a trivial
stabilizer in $GL(V_1)$. Hence $x_1$ is $1$-stable and by Lemma
\ref{polystable} it is costable.

\medskip

\begin{Remark} \label{CharacterisationOfDecomposition}
Note that the decomposition $V=V_1\oplus V_2$ is unique since
$V_1$ (respectively $V_2$) is the smallest (respectively the
biggest) subspace of $V$ such that
$$\ti B_k(V_1)\subset V_1\otimes\hox\textrm{ for }k=1,2\textrm{ and }\im\ti i\subset V_1\otimes\hox$$
$$(\textrm{respectively }\ti B_k(V_2)\subset V_2\otimes\hox\textrm{ for }k=1,2\textrm{ and }V_2\subset \ker\ti j).$$
Obviously, the splitting $x=x_1\oplus x_2$ is also unique.
\end{Remark}

\medskip

Now let us prove the opposite implication $\Leftarrow$. Fix
$x=(\ti B_1,\ti B_2,\ti i, \ti j)$ admitting a splitting
$x=x_1\oplus x_2$ as in the statement of the lemma. Let $Y$ be the
unique closed orbit contained in the closure of $\GL(V)$-orbit of
$x$. By \cite[Theorem 3.6]{Na1} there exists $x^0\in Y$ and
$\lambda:\Gm \rightarrow \GL(V)$ such that $\lim_{t\rightarrow
0}\lambda(t)\cdot x=x^0$. The implication proved above shows that
there exists a unique splitting $x^0=x_1^0\oplus x_2^0$ and
$V=V_1^0\oplus V_2^0$ as in the lemma. Put
$$\overline{V}^0_k=\lim_{t\rightarrow 0}\lambda(t)V_k\textrm{ and }\overline{x}^0_k=\lim_{t\rightarrow 0}\lambda(t)\cdot x_k\textrm{ for }k=1,2.$$
The first limit exists because subspaces in $V$ of fixed dimension
are parameterized by Grassmanians which are projective. The
remaining limits are restrictions of $x^0$ to $\overline{V}^0_1$
and $\overline{V}^0_2$, respectively.

\medskip

\begin{Remark}
\label{invariance_of_limit_space} Let us note that
$$\lambda(s)\lim_{t\rightarrow 0}\lambda(t)(V_k)=\lim_{t\rightarrow 0}\lambda(ts)(V_k)=\overline{V}^0_k.$$
Hence for any $s\in \Gm$ and $k=1,2$ we have
$\lambda(s)(\overline{V}^0_k)=\overline{V}^0_k$.
\end{Remark}

\medskip

First, let us suppose that $\lambda(t)(V_1)=V_1$ for all $t$. Then
$\overline{V}^0_1=V_1$ and for $k,l=1,2$ we have
\begin{eqnarray*}
(\lim_{t\rightarrow 0}\lambda(t)\cdot\ti B_l)(\overline{V}^0_k))&=&(\lim_{t\rightarrow 0}\lambda(t)\cdot\ti B_l\cdot\lambda(t^{-1}))(\lim_{s\rightarrow 0}\lambda(s)V_k)=\lim_{t\rightarrow 0}\lambda(t)\cdot\ti B_l(V_k)\subset\\
&\subset&\lim_{t\rightarrow
0}\lambda(t)(V_k\otimes\hox)=\overline{V}^0_k\otimes\hox
\end{eqnarray*}
$$\im(\lim_{t\rightarrow 0}\lambda(t)\ti i)= \lim_{t\rightarrow 0}\lambda(t)\im\ti i\subset \lim_{t\rightarrow 0}\lambda(t)V_1\otimes\hox=\overline{V}^0_1\otimes\hox$$
and
$$\ker\lim_{t\rightarrow 0}\ti j\lambda(t^{-1})=\lim_{t\rightarrow 0}\ker\ti j\lambda(t^{-1})\supset \lim_{t\rightarrow 0}\lambda(t)V_2=\overline{V}^0_2.$$
Therefore by the characterization of $V^0_1$ and $V^0_2$ given in
Remark \ref{CharacterisationOfDecomposition} we have
$V^0_1\subset\overline{V}^0_1$ and $\overline{V}^0_2\subset
V^0_2$. Since $\lambda(t)$ preserves $V_1$ for all $t$ we see that
$\overline{x}^0_1\in\overline{\GL(V_1)\cdot x_1}=\GL(V_1)\cdot
x_1$. This shows that $\overline{x}^0_1$ is stable and again using
Remark \ref{CharacterisationOfDecomposition} we obtain equality
$V_1^0=\overline{V}^0_1$. Then the dimension count shows that
$V_2^0=\overline{V}^0_2$ and in particular
$V=\overline{V}^0_1\oplus\overline{V}^0_2$.

Consider the unipotent group
$$U_{\lambda}=\{u\in\GL(V)|\lim_{t\rightarrow 0}(\lambda(t)u\lambda(t)^{-1})=1\}.$$
Observe that we can replace $\lambda$ by $\lambda'=u\lambda
u^{-1}$ for $u\in U_\lambda$. Indeed, one can easily prove that
$\lim_{t\rightarrow 0}\lambda'(t)x=ux^0$ and $ux^0$ represents the
same orbit as $x^0$.

We will show that there exists $u\in U_{\lambda}$ such that
\begin{equation}
u(\overline{V}^0_k)=V_k\textrm{ for }k=1,2\label{condition_u} .
\end{equation}
Set $m=\dim\overline{V}^0_1$ and $n=\dim V$. By Remark
\ref{invariance_of_limit_space} one can choose a basis
$\A=(\alpha_i)_{i=1}^n$ of $V$ such that $(\alpha_i)_{i=1}^m$ is a
basis of $\overline{V}^0_1$, $(\alpha_i)_{i=m+1}^n$ is a basis of
$\overline{V}^0_2$ and $\lambda(t)\alpha_i=t^{a_i}\alpha_i$ for
some $a_i\in \ZZ$ and all $t\in \Gm$. Let $(u_{ij})$ be the matrix
of $u\in \GL(V)$. Then
\begin{equation}
(u_{ij})\in U_{\lambda}\iff u_{ii}=1\textrm{ and }u_{ij}=0\textrm{
for }i\neq j\textrm{ such that }a_i\le a_j.\label{condition_uij}
\end{equation}
Let $\B=(\beta_j)_{j=m+1}^n$ be a basis of $V_2$ such that
$$\beta_j=\alpha_j+\sum_{i=1}^m c_{ij}\alpha_i\textrm{ for }j=(m+1), \ldots,n.$$ Such a basis
exists because $\dim V_2=\dim \overline{V}^0_2$ and $V_2\cap
\overline{V}^0_2=\{ 0\}$. Let $b_j=\min\{a_i:i=j\textrm{ or
}c_{ij}\neq 0\}$. Then
$$\lim_{t\rightarrow 0}\lambda(t)\lin(\beta_j)=\lim_{t\rightarrow
0}\lin(t^{a_j}\alpha_j+\sum_{i=1}^m c_{ij}t^{a_i}\alpha_i)
=\lin(\lim_{t\rightarrow 0}(t^{a_j-b_j}\alpha_j+\sum_{i=1}^m
c_{ij}t^{a_i-b_j}\alpha_i)).$$

The vector $\lim_{t\rightarrow 0}(t^{a_j-b_j}\alpha_j+\sum_{i=1}^m
c_{ij}t^{a_i-b_j}\alpha_i)$ exists by the  definition of $b_i$.
Moreover, since $\lim_{t\rightarrow
0}\lambda(t)V_2=\overline{V}^0_2$, it must be contained in
$\overline{V}^0_2$. Therefore $b_j=a_j$ and $c_{ij}=0$ when
$a_i\le a_j$. Now, with respect to the basis $\A$, we define
$u\in\GL(V)$ by matrix $(u_{ij})$ with the following coefficients:
$$u_{ij}=\left\{
\begin{array}{ll} 1 &\textrm{ for } i=j,\\
c_{ij}& \textrm{ for }i\le m\textrm{ and }j\ge m+1,\\
0 &\textrm{ in the remaining cases.}
\end{array}
\right.
$$

By (\ref{condition_uij}) such $u$ belongs to $U_{\lambda}$ and
satisfies (\ref{condition_u}).

Using Remark \ref{invariance_of_limit_space} we get for $k=1,2$
$$\lambda'(t)(V_k)=u\lambda(t)                                                                                                                                                                                                                                                                                                                                                                                                                                                                                                                                                                                                                                                                                                                                                                                                                                                                                                                                                                                                                                                                                                                                                                                                                                                                                                                                                                                                                                                                                                                                                                                                                                                                                                                                                                                                                                                                                                                                                                                                                                                                                                                                                                                                                                                                                                                                                                                                                                                                                                                                                                                                                                                                                                                                                                                                                                                                                                                                                                                                                                                                                                                                                                                                                                                                                                                                                                                                                                                                                                                                                                                                                                                                                                                                                                                                                                                                                                                                                                                                                                                                                                                                                                                                                                                                                                                                                                                                                                                                                                                                                                                                                                                                                                                                                                                                                                                                                                                                                                                                                                                                                                                                                                                                                                                             u^{-1}(V_k)=u\lambda(t)(\overline{V}^0_k)=u(\overline{V}^0_k)=V_k.$$
We can therefore assume that $\lambda=\lambda_1\times \lambda_2$,
where $\lambda_a$, $a=1,2,$ is a one-parameter subgroup of
$\GL(V_a)$. Then, since $x_1$, $x_2$ have closed orbits, we get
$$x^0=\lim_{t\rightarrow 0} (\lambda_1(t)\cdot x_1\oplus
\lambda_2(t)\cdot x_2)\in \GL(V_1)\cdot x_1\oplus\GL(V_2)\cdot
x_2\subseteq \GL(V)\cdot x, $$ which proves that the orbit of $x$
is also closed.

\medskip

In general, by similar arguments as above there exists an element
$u\in U_{\lambda}$ such that $u(\overline{V}^0_1)=V_1$. Consider
the one-parameter subgroup $\lambda'=u\lambda u^{-1}$. Then we
have $\lambda'(t)(V_1)=V_1$ for all $t$ and $\lim_{t\rightarrow
0}(\lambda'(t)\cdot x)=ux^0.$ Thus, the previous part of the proof
implies that $ux^0\in\GL(V)\cdot x$ and it proves that the orbit
of $x$ is closed.
\end{proof}

\medskip

\begin{Remark}
Note that up to now we have never assumed that either $r$ or $c$
is positive. In fact, $r=0$ is very interesting due to Lemma
\ref{polystable}. This lemma shows that at least
set--theoretically one can reduce the study of $1$-polystable ADHM
data to regular ADHM data and $1$-polystable ADHM data in the rank
$0$ case. We explain the geometric meaning of this fact in the next section.

Note that in case of rank $0$ there are no stable ADHM
data so $\ti\mu ^{-1}(0)/\!\!/_{\chi}G=\emptyset$. But the
quotient $\ti\mu ^{-1}(0)/G$ is still a highly non-trivial scheme.
\end{Remark}

\section{Gieseker and Donaldson--Uhlenbeck partial
compactifications of instantons}

In this section we consider ADHM data for $X=\PP ^1$, which by
Theorem \ref{bijection-ADHM-perverse} correspond to perverse
instantons on $\PP ^3$. In this case we obtain a similar picture
as that known from framed torsion free sheaves on $\PP ^2$ (see
\ref{section:P^2}).

\begin{Definition}
A perverse instanton $\C $ is called \emph{stable} (\emph{costable, regular})
if it comes from some stable (respectively: costable, regular) ADHM datum.
\end{Definition}

\medskip
Let us recall that we have a natural action of $G=\GL (V)$ on the set $\ti \mu^{-1}(0)$
of ADHM data. This action induces an action on the open subset
$\ti\mu^{-1}(0)^{s}$ of stable ADHM data, which by Lemma \ref{rownowaznosc_stabilnosci}
corresponds to $\chi$-stable points for the character $\chi: G\to
\Gm$ given by the determinant. The proof of Lemma \ref{rownowaznosc_stabilnosci}
shows that $\mu^{-1}(0)^s\to \mu^{-1}(0)^s/G$ is a principal $G$-bundle in the
\'etale topology (In fact, in positive characteristic we
also need to check scheme-theoretical stabilizers. Then the assertion
follows from a version of Luna's slice theorem. We leave the details to the reader.)

\medskip

\medskip
Let $\overline{\M}(\PP ^3; r,c): \Sch/k\to \Sets$ be the functor
which to a scheme $S$ assigns the set of isomorphism classes of
$S$-families of stable framed perverse $(r,c)$-instantons.
Theorem \ref{perv-theorem} and the above remarks imply that this
functor is representable:

\begin{Theorem}\label{perv-moduli}
The quotient $\M(\PP ^3; r,c):=\ti\mu^{-1}(0)/\!\!/_{\chi} G$ is a
fine moduli scheme for the functor $\overline{\M}(\PP ^3;
r,c)$. In particular, there is a bijection between $G$-orbits of
stable ADHM data and isomorphism classes of stable framed
perverse instantons.
\end{Theorem}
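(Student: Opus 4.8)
The plan is to deduce Theorem \ref{perv-moduli} from the stack-level isomorphism of Theorem \ref{perv-theorem} together with the GIT results of Section \ref{ADHM}. By Theorem \ref{perv-theorem}, the moduli stack $\Perv^c_r(\PP^3, l_\infty)$ is isomorphic to the quotient stack $[\ti\mu^{-1}(0)/\GL(V)]$. A stable framed perverse instanton corresponds, under the bijection of Theorem \ref{bijection-ADHM-perverse}, to a $\GL(V)$-orbit of a stable ADHM datum, and by Proposition \ref{rownowaznosc_stabilnosci} the stable ADHM data are exactly the $\chi$-stable (equivalently $\chi$-semistable) points of $\ti\mu^{-1}(0)$. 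So the substack of stable framed perverse instantons corresponds to $[\ti\mu^{-1}(0)^{s}/\GL(V)]$, where $\ti\mu^{-1}(0)^s=\ti\mu^{-1}(0)^{ss}(\chi)$ is the $\chi$-semistable locus.

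The key point, already noted in the excerpt, is that the proof of Proposition \ref{rownowaznosc_stabilnosci} shows that every $\chi$-semistable point is actually $\chi$-stable with trivial stabilizer, and hence $\ti\mu^{-1}(0)^s\to \ti\mu^{-1}(0)^s/G$ is a principal $G$-bundle in the \'etale topology (using Luna's slice theorem, plus the remark about scheme-theoretic stabilizers in positive characteristic). Consequently the quotient stack $[\ti\mu^{-1}(0)^s/G]$ is represented by the scheme $\ti\mu^{-1}(0)^s/G$. By the GIT description in Subsection \ref{GIT-section}, this geometric quotient of the $\chi$-stable locus coincides with $\ti\mu^{-1}(0)/\!\!/_\chi G = \Proj(\bigoplus_{n\ge 0} k[\ti\mu^{-1}(0)]^{G,\chi^n})$, since a free action on the stable locus forces GIT-semistable $=$ GIT-stable and the GIT quotient is then the orbit space. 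This identifies $\M(\PP^3;r,c)=\ti\mu^{-1}(0)/\!\!/_\chi G$ as a scheme with the moduli stack of stable framed perverse instantons, which is therefore an algebraic space; being projective over the affine $\ti\mu^{-1}(0)/G$, it is in fact a scheme (of finite type, by the Lemma asserting the stack has finite type).

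To conclude that $\M(\PP^3;r,c)$ is a \emph{fine} moduli scheme for $\overline{\M}(\PP^3;r,c)$, I would verify that the tautological family over $\ti\mu^{-1}(0)^s$ descends. Since $\ti\mu^{-1}(0)^s\to \M(\PP^3;r,c)$ is a principal $G$-bundle and the universal complex $\C^\bullet_x$ over $\PP^3\times \ti\mu^{-1}(0)^s$ carries a canonical $G$-equivariant structure compatible with its framing (the $G$-action on $\ti\bB$ was defined precisely so that $\alpha,\beta$ transform equivariantly, and the framing along $l_\infty$ is $G$-invariant by construction), descent along the principal bundle produces a universal framed perverse instanton on $\PP^3\times \M(\PP^3;r,c)$. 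This representability statement is equivalent to the stack $[\ti\mu^{-1}(0)^s/G]$ being isomorphic to the scheme, which we have already established. The bijection between $G$-orbits of stable ADHM data and isomorphism classes of stable framed perverse instantons is then just the restriction of the bijection of Theorem \ref{bijection-ADHM-perverse}.

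The main obstacle is the descent/fineness step: one must check carefully that the equivariant structure on the universal monad is genuinely canonical (no character twist), so that descent yields a family with the correct framing, and one must handle the positive-characteristic subtlety of scheme-theoretic stabilizers needed to know the bundle is principal rather than merely a torsor up to infinitesimal issues. Everything else is a matter of assembling Theorem \ref{perv-theorem}, Proposition \ref{rownowaznosc_stabilnosci}, the Lemma identifying $1$-stable with regular, and the standard GIT dictionary of Subsection \ref{GIT-section}.
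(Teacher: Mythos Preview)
Your proposal is correct and follows exactly the paper's approach: the paper deduces Theorem \ref{perv-moduli} from Theorem \ref{perv-theorem} together with Proposition \ref{rownowaznosc_stabilnosci} and the observation (stated just before the theorem) that $\ti\mu^{-1}(0)^s\to \ti\mu^{-1}(0)^s/G$ is a principal $G$-bundle in the \'etale topology, so the quotient stack on the stable locus is a scheme. Your additional remarks on descent of the universal complex and the positive-characteristic stabilizer issue simply spell out what the paper leaves implicit; the reference to the lemma identifying $1$-stable with regular is not actually needed here.
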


\medskip

Since every FJ-stable ADHM datum is stable we get as a corollary
the following theorem generalizing the main theorem of \cite{FJ}:

\begin{Theorem}
Let $\ti\mu^{-1}(0)^{FJ}$ be the set of FJ-stable ADHM data. Then
the  GIT quotient $\M ^{f}(\PP ^3;
r,c):=\ti\mu^{-1}(0)^{FJ}/\!\!/_{\chi} G$ represents  the moduli
functor of rank $r$ instantons  on $\PP ^3$ with $c_2=c$, framed
along a line $l_{\infty}$.  In particular, there is a bijection
between $G$-orbits of FJ-stable ADHM data and isomorphism classes
of framed $(r,c)$-instantons. Moreover, orbits of FJ-regular ADHM
data are in bijection with isomorphism classes of locally free
instantons.
\end{Theorem}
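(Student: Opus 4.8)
The plan is to realise $\M^f(\PP^3;r,c)$ as the open subscheme of the fine moduli scheme $\M(\PP^3;r,c)=\ti\mu^{-1}(0)/\!\!/_{\chi}G$ of Theorem~\ref{perv-moduli} on which the associated framed perverse instanton is an honest framed torsion free instanton, and then to transport the fine moduli property. First I would check that $\ti\mu^{-1}(0)^{FJ}$ is a $G$-invariant open subset of $\ti\mu^{-1}(0)$ contained in the $\chi$-stable locus. Invariance under $G=\GL(V)$ is immediate. For openness, the set of non-stable quadruples is Zariski closed in $\gr{B}$, so $Z=\{(x,p)\in\ti\gr{B}\times\PP^1:\ x(p)\ \text{is not stable}\}$ is closed; since $\PP^1$ is proper its image in $\ti\gr{B}$ is closed, and $\ti\mu^{-1}(0)^{FJ}$ is the complement of this image inside $\ti\mu^{-1}(0)$. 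An FJ-stable datum is stable, hence $\chi$-stable by Proposition~\ref{rownowaznosc_stabilnosci}; thus every point of $\ti\mu^{-1}(0)^{FJ}$ is $\chi$-stable with trivial stabiliser, the map $\ti\mu^{-1}(0)^{s}\to\ti\mu^{-1}(0)^{s}/G$ is a principal $G$-bundle there, and $\M(\PP^3;r,c)$ restricted to this locus is a geometric quotient. Hence the $G$-invariant open subset $\ti\mu^{-1}(0)^{FJ}$ descends to an open subscheme $\M^f(\PP^3;r,c)=\ti\mu^{-1}(0)^{FJ}/G\subseteq\M(\PP^3;r,c)$, which coincides with the GIT quotient $\ti\mu^{-1}(0)^{FJ}/\!\!/_{\chi}G$.

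By Theorem~\ref{perv-moduli} the scheme $\M(\PP^3;r,c)$ carries a universal $S$-family of stable framed perverse instantons, and over $\M^f(\PP^3;r,c)$ this is the $G$-descent of the tautological complex $\C^{\bullet}$ of locally free sheaves on $\PP^3\times\ti\mu^{-1}(0)^{FJ}$ attached to the universal ADHM datum. By Lemma~\ref{FJmonada}, at every point $x\in\ti\mu^{-1}(0)^{FJ}$ the complex $\C^{\bullet}_x$ is a monad, so $\H^{-1}(\C^{\bullet}_x)=\H^{1}(\C^{\bullet}_x)=0$; thus over $\ti\mu^{-1}(0)^{FJ}$ the map $\alpha$ is fibrewise injective and $\beta$ is surjective, $\ker\beta$ is locally free, and the relative cohomology $\mathcal E:=\ker\beta/\im\alpha$ is, by the standard flatness criterion for a sequence of flat sheaves with fibrewise injective first map, a family of sheaves flat over the base whose formation commutes with base change. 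Each fibre $\mathcal E_x$ is torsion free by Corollary~\ref{sense}, has $\ch(\mathcal E_x)=r-c[H]^2$ and hence rank $r$ with $c_2=c$, satisfies $H^1(\PP^3,\mathcal E_x(-2))=H^2(\PP^3,\mathcal E_x(-2))=0$ (the case $q=-2$ of condition (1) of Definition~\ref{perv-definition}), and has $Lj^*\mathcal E_x\cong\O_{l_{\infty}}^{r}$ with the tautological framing (condition (3)); so $\mathcal E_x$ is a framed $(r,c)$-instanton. Therefore $\mathcal E$ descends to a flat family of framed $(r,c)$-instantons over $\M^f(\PP^3;r,c)$, giving a natural transformation from the functor of points of $\M^f(\PP^3;r,c)$ to the moduli functor of framed $(r,c)$-instantons.

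Conversely, an $S$-family $(\mathcal E,\Phi)$ of framed $(r,c)$-instantons is in particular an $S$-family of framed perverse $(r,c)$-instantons, since any instanton is a perverse instanton; and in each geometric fibre the associated ADHM datum gives a monad — the fibre being an honest sheaf forces $\coker\beta=\H^1=0$ — hence is FJ-stable by Lemma~\ref{FJmonada}, in particular stable. So the family is stable in the sense of Theorem~\ref{perv-moduli}, which produces a unique classifying morphism $S\to\M(\PP^3;r,c)$; fibrewise FJ-stability shows that this morphism carries every point of $S$ into the open subscheme $\M^f(\PP^3;r,c)$, so it factors uniquely through it. The two constructions are visibly mutually inverse (and framed instantons have no nontrivial automorphisms, inherited from the framed perverse case, so the target functor is set-valued), whence $\M^f(\PP^3;r,c)$ is a fine moduli scheme for framed $(r,c)$-instantons; on $k$-points this is exactly the bijection between $G$-orbits of FJ-stable ADHM data and isomorphism classes of framed $(r,c)$-instantons. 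For the last assertion, $\mathcal E_x$ is locally free precisely when $x$ is FJ-regular — this is part of \cite[Main Theorem]{FJ}, equivalently it follows from the analysis of the singular locus of $\H^0(\C^{\bullet}_x)$ recalled earlier — and combining this with the previous paragraphs yields the bijection between orbits of FJ-regular ADHM data and isomorphism classes of framed locally free instantons.

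The step I expect to be the main obstacle is the bookkeeping in the last two paragraphs rather than the GIT input: one must verify carefully that over the FJ-locus the tautological perverse-instanton complex genuinely becomes a flat family of torsion free sheaves with base change (a derived-category/flatness point), and, in the other direction, that an arbitrary $S$-family of framed instantons qualifies as a \emph{stable} family of framed perverse instantons so that the fine moduli property of $\M(\PP^3;r,c)$ from Theorem~\ref{perv-moduli} can be invoked and the resulting classifying map is seen to land in $\M^f(\PP^3;r,c)$. By contrast, openness of the FJ-locus, its $\chi$-stability, and the principal-bundle structure of the quotient are immediate from the results already established.
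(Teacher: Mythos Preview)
Your proposal is correct and follows exactly the route the paper intends: the paper presents this theorem as an immediate corollary of Theorem~\ref{perv-moduli}, justified by the single remark that every FJ-stable ADHM datum is stable, together with Lemma~\ref{FJmonada} and the Frenkel--Jardim main theorem for the locally free part. You have simply written out in full the details the paper leaves implicit --- openness of the FJ-locus, descent of the universal family to a flat family of honest sheaves, and the converse passage from a family of instantons back through the stable perverse moduli functor --- and your identification of the delicate point (that an arbitrary $S$-family of framed instantons is indeed a family of \emph{stable} framed perverse instantons, via Lemma~\ref{perv-family} and Lemma~\ref{FJmonada} fibrewise) is exactly right.
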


Let $\M _0^{\reg} (\PP ^3; r,c)$ be the moduli space of regular
framed perverse $(r,c)$-instantons. By Theorem \ref{perv-moduli}
$\M _0^{\reg} (\PP ^3;r,c)$ is isomorphic to the quotient of
regular ADHM data by the group $G$. The space $\M(\PP ^3; r,c)$
contains the moduli space $\M _0^{\reg} (\PP ^3;r,c)$ as an open
subset and it can be considered as its partial Gieseker
compactification. Note also that FJ-semiregular ADHM data are
regular, so $\M _0^{\reg} (\PP ^3; r,c)$ contains the moduli space
of framed reflexive $(r,c)$-instantons as on open subset.

Let $\M _0(\PP ^3;r,c)$ denote the quotient $\ti\mu ^{-1} (0)/G$.
This is an affine scheme and it contains the moduli space $\M
_0^{\reg} (\PP ^3;r,c)$ as an open subset. It can be considered as
its partial Donaldson--Uhlenbeck compactification.

\begin{Proposition} \label{interpretation}
For every stable rank $r>0$ perverse instanton $\C$  on $\PP ^3$
there exists a regular rank $r$ perverse instanton $\C'$ and a
rank $0$ perverse instanton $\C ''$ such that we have a
distinguished triangle
$$\C''\to \C\to \C'\to \C''[1].$$
\end{Proposition}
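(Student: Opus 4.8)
The plan is to realize both the distinguished triangle and the two perverse instantons $\C'$, $\C''$ at the level of ADHM data, using Lemma \ref{polystable} as the main engine. Concretely, let $x\in\ti\mu^{-1}(0)$ be a stable ADHM $(r,c)$-datum giving rise to $\C$ via the monad construction of Theorem \ref{bijection-ADHM-perverse}. Since $x$ is stable, $G\cdot(x,z)$ is closed for $z\neq 0$ (Lemma \ref{lemat_z_nakajimy}), but the plain orbit $G\cdot x$ (the $1$-orbit) need not be closed; let $x^0$ be a point of the unique closed orbit in $\overline{G\cdot x}$, so $x^0$ is $1$-polystable and lies in $\overline{G\cdot x}$. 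First I would apply Lemma \ref{polystable} to $x^0$: there is a unique splitting $V=V_1\oplus V_2$, $x^0=x_1\oplus x_2$ with $x_1\in\ti\mu^{-1}_{W,V_1}(0)$ regular (stable and costable) and $x_2\in\ti\mu^{-1}_{\{0\},V_2}(0)$ with closed $\GL(V_2)$-orbit. Set $\C'$ to be the regular rank $r$ perverse instanton attached to $x_1$ (with $c_2=\dim V_1$) and $\C''$ the rank $0$ perverse instanton attached to $x_2$ (with $c_2=\dim V_2$); these are perverse instantons of the asserted types, and $\dim V_1+\dim V_2=c$.

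Next I would produce the distinguished triangle. The monad construction is visibly additive in $(W,V)$: the complex $\C^{\bullet}_{x^0}$ built from $x^0=x_1\oplus x_2$ is the direct sum of the complexes built from $x_1$ and from $x_2$, because the matrices $\alpha,\beta$ of Section \ref{FJ-section} are block-diagonal in the splitting $V=V_1\oplus V_2$ (here I use $x_2$'s $W$-component being zero, so the framing sits entirely in the $V_1$-block). Thus there is a split triangle $\C''\to\C^{\bullet}_{x^0}\to\C'\to\C''[1]$ in $D^b(\PP^3)$. It remains to connect $\C^{\bullet}_{x^0}$ with the original $\C=\C^{\bullet}_x$. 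Since $x^0$ lies in the closure of $G\cdot x$, the two complexes $\C^{\bullet}_x$ and $\C^{\bullet}_{x^0}$ are flat specializations of one another inside the family of monads parametrized by $\ti\mu^{-1}(0)$; I would use a one-parameter degeneration $\lambda:\Gm\to G$ with $\lim_{t\to0}\lambda(t)\cdot x=x^0$ (Theorem 3.6 of \cite{Na1}, as invoked in the proof of Lemma \ref{polystable}) to obtain, over $\AA^1$, a complex of locally free sheaves on $\PP^3_{\AA^1}$ restricting to $\C^{\bullet}_x$ over $t\neq0$ and to $\C^{\bullet}_{x^0}$ over $t=0$. The point is that the GIT quotient map $\ti\mu^{-1}(0)^s\to\M(\PP^3;r,c)$ and the affine quotient $\ti\mu^{-1}(0)\to\M_0(\PP^3;r,c)$ identify $x$ and $x^0$ only after passing to the Donaldson--Uhlenbeck side; but the morphism $\M(\PP^3;r,c)\to\M_0(\PP^3;r,c)$ of Section 4 sends $[\C]$ to the point represented by $x^0$, and it is precisely this change-of-polarization map (cf.\ Subsection \ref{GIT-section}) that realizes the desired ``associated graded'' $\C''\oplus\C'$ of $\C$.

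Alternatively, and perhaps more cleanly, I would argue directly with sheaves rather than degenerating monads: the complex $\C^{\bullet}_x$ built from stable $x$ has $\H^0(\C)$ torsion free (Corollary \ref{sense}) and $\H^1(\C)$ of dimension $\le1$; I would look for a canonical sub-datum. Stability of $x$ means no proper $\ti B_k$-invariant $S\supset\im\ti i$, but one may take $S=V_2:=$ the largest $\ti B_k$-invariant subspace contained in $\ker\ti j$ (the ``costable defect'' space of Remark \ref{CharacterisationOfDecomposition}); then $x$ restricted to $V/V_2$ and projected is costable, hence regular there by stability, and $x$ restricted to $V_2$ has zero $W$-component, i.e.\ is a rank $0$ datum. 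This need not give a genuine direct sum for $x$ itself, only for its polystable degeneration $x^0$ — which is exactly why the triangle, rather than a splitting, is what one proves.

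\textbf{Main obstacle.} The delicate point is the last step: identifying the complex $\C^{\bullet}_{x^0}$ attached to the polystable limit with an actual object in a distinguished triangle with $\C=\C^{\bullet}_x$ in $D^b(\PP^3)$, i.e.\ checking that the degeneration $\lambda(t)\cdot x\rightsquigarrow x^0$ yields a legitimate morphism $\C''\to\C$ (or $\C\to\C'$) of perverse instantons and that the octahedral/cone completion has the third vertex of the right rank and Chern character. Verifying that $x^0$ actually lies in the closure of $G\cdot x$ — equivalently that the $1$-polystable representative of the image point of $\C$ in $\M_0(\PP^3;r,c)$ has the split form of Lemma \ref{polystable} — is where essentially all the content is; once that is in hand, additivity of the monad construction and the formalism of triangulated categories finish the argument routinely.
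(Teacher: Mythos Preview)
Your first approach---degenerating $x$ to the $1$-polystable $x^0$, splitting $\C^{\bullet}_{x^0}=\C''\oplus\C'$, and then trying to ``connect'' this to $\C^{\bullet}_x$---has exactly the gap you identify as the main obstacle: a flat degeneration over $\AA^1$ produces no morphism in $D^b(\PP^3)$ between the special and generic fibres, so by itself it cannot yield the distinguished triangle. The paper does not use this route.

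Your alternative approach is the correct one, and it is essentially what the paper does; you simply stop one line short of finishing it. The paper identifies $V_2$ via the weight decomposition for a one-parameter subgroup $\lambda$ driving $x$ to $x^0$: stability of $x$ forces all weights to be $\ge 0$, and then $V_1=V(0)$, $V_2=\bigoplus_{m\ge 1}V(m)$. The crucial observation (which follows immediately from the weight filtration, or from your description of $V_2$ as the maximal $\ti B_k$-invariant subspace in $\ker\ti j$) is that for the \emph{original} datum $x$ one already has $\ti B_k(V_2)\subset V_2\otimes H^0(\O(1))$ and $\ti j|_{V_2}=0$. Hence, setting $x_2=(\ti B_{1|V_2},\ti B_{2|V_2},0,0)$---the restriction of $x$ itself, \emph{not} of $x^0$---the monad construction gives an honest inclusion of complexes $\C^{\bullet}_{x_2}\hookrightarrow\C^{\bullet}_x$, and the quotient complex is exactly $\C^{\bullet}_{x^0_1}$ (the induced datum on $V/V_2$ coincides with the $V_1$-component of the limit). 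The short exact sequence of complexes
$$0\to\C^{\bullet}_{x_2}\to\C^{\bullet}_x\to\C^{\bullet}_{x^0_1}\to 0$$
is the distinguished triangle. No limit or degeneration enters the triangle at all; the role of $\lambda$ and $x^0$ is only to certify that $x^0_1$ is regular. The paper also notes explicitly that $x_2\ne x^0_2$ in general (correcting a claim in \cite{VV}), which is precisely your remark that $x$ does not split while $x^0$ does---but this is harmless, since it is $x_2$, not $x^0_2$, that appears in the sequence.
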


\begin{proof}
Fix a stable ADHM datum $x=(\fal{B}_1,\fal{B}_2,\fal{i},\fal{j} )
\in \ti\mu^{-1}(0)^s$ corresponding to a perverse instanton $\C $
(see Theorem \ref{bijection-ADHM-perverse}). Then by \cite[Theorem
3.6]{Na1} there exists a one-parameter subgroup $\lambda:\Gm
\rightarrow\GL(V)$ such that
$x^0=\lim_{t\rightarrow0}(\lambda(t)\cdot x)$ exists and it is
contained in the unique closed orbit in $\overline{\GL(V)\cdot
x}$. Let us set
$x^0=(\fal{B}^0_1,\fal{B}^0_2,\fal{i}^0,\fal{j}^0)$ and  fix a
splitting $x^0=x^0_1\oplus x^0_2$, $V=V_1\oplus V_2$ as in Lemma
\ref{polystable}.

As before we can consider the weight decomposition
$$V=\bigoplus_{m\in \ZZ} V (m) \textrm{, where }V (m)=\{v\in
V|\lambda(t)\cdot v=t^m v\}.$$ Since $x$ is stable we have
$V=\bigoplus_{m\ge 0}V (m)$. Let $i^0$ be the composition of $i$
and the natural projection $p_1:V_1\oplus V_2\rightarrow V_1$. We
claim that
$$V_1=V (0),\quad x^0_1=(\fal{B}^0_{1|V_1},\fal{B}^0_{2|V_1},\fal{i}^0,\fal{j}),$$
$$V_2=\bigoplus_{m\ge 1}V (m),\quad x^0_2=(\fal{B}^0_{1|V_2},\fal{B}^0_{2|V_2},0,0).$$

By Remark \ref{CharacterisationOfDecomposition} it is enough to
show that $V (0)$ is the smallest destabilizing subspace for $x^0$
and $\bigoplus _{m\ge 1}V (m)$ is the biggest subspace
"decostabilizing" $x^0$. It is easy to see that $V (0)$ indeed
destabilizes $x^0$. If there was a proper subspace $S\subset V
(0)$ with the same property then $S\oplus \bigoplus _{m\ge 1}V
(m)$ would destabilize $x$. A similar argument applies to
$\bigoplus _{m\ge 1}V (m)$.

Varagnolo and Vasserot in \cite[proof of Theorem 1]{VV} claimed
that $x^0_2=(\fal{B}_{1|V_2},\fal{B}_{2|V_2},0,0)$. In our case
this equality does not hold.  Let us set
$x_2=(\fal{B}_{1|V_2},\fal{B}_{2|V_2},0,0)$. Since
$\fal{j}_{|V_2}=0$ one can easily see that $x_2$ satisfies the
ADHM equation and $x_2 \in \ti\mu^{-1}_{0,V_2}(0)$.

Although $x$ and $x^0_1\oplus x_2$ in general are not equal, we
still have the following  exact triple of complexes:
\begin{equation}
0\rightarrow \C ^{\bullet}_{x_2}\rightarrow \C ^{\bullet}_x
\rightarrow \C ^{\bullet}_{x^0_1}\rightarrow 0.
\end{equation}
This triple gives rise to the required distinguished triangle.
\end{proof}

\medskip
As a corollary to the above proposition we can describe the
morphism from Gieseker to Donaldson--Uhlenbeck partial
compactifications of $\M _0^{\reg} (\PP ^3;r,c)$. Namely, we have
a natural set-theoretical decomposition
$$\M _0(\PP ^3;r,c)=\bigsqcup _{0\le d \le c} \M _0^{\reg} (\PP ^3;r,c-d)\times \M _0 (\PP ^3;0,d).$$
Then the natural morphism
$$\M (\PP ^3;r,c)\simeq \ti \mu ^{-1} (0)/\!\!/_{\chi} G\to \ti \mu^{-1} (0)/ G \simeq \M _0(\PP ^3;r,c)$$
coming from the GIT (see Subsection \ref{GIT-section}) can be
identified with the map
$$(\C, \Phi)\to ((\C', \Phi '), \C ''),$$
where $\C'$ and $\C''$ are as in Proposition \ref{interpretation}
and $\Phi'$ is induced on $\C'$ via $\Phi$. This morphism is
analogous to the one described in Subsection \ref{section:P^2}.

\medskip

\begin{Proposition}
For every framed rank $r>0$ instanton $E$ on $\PP ^3$ there exists
a unique regular rank $r$ instanton $E'$ containing $E$. Moreover,
the inclusion map $E\to E'$ is uniquely determined and we have a
short exact sequence
$$ 0\to E\to E'\to E''\to 0,$$
where $E''$ is a rank $0$ instanton (see Definition
\ref{rank-zero}).
\end{Proposition}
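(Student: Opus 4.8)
The plan is to obtain everything from Proposition~\ref{interpretation}, applied to $E$ regarded as a perverse instanton, together with the ADHM dictionary. Since $E$ is a framed $(r,c)$-instanton, by the main theorem of \cite{FJ} it corresponds to an FJ-stable ADHM datum for $\PP^1$ on a space $V$ of dimension $c$; every FJ-stable datum is stable, so $E$ is a stable perverse instanton and Proposition~\ref{interpretation} applies. Inspecting its proof, the distinguished triangle $\C''\to E\to\C'\to\C''[1]$ is the one coming from a short exact sequence of complexes
$$0\to \C^{\bullet}_{x_2}\to \C^{\bullet}_{x}\to \C^{\bullet}_{x/V_2}\to 0,$$
where $x=(B_1,B_2,i,j)$ is the datum of $E$, $V_2\subset V$ is a $B_k$-invariant subspace contained in $\ker j$, $x_2$ is the $B$-part of $x$ restricted to $V_2$ (so $\C''=\C^{\bullet}_{x_2}$ is a rank-$0$ perverse instanton), and $x/V_2$ is the induced quotient datum on $V/V_2$, which is \emph{regular}; thus $\C'=\C^{\bullet}_{x/V_2}$.

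The next step is to pass to cohomology sheaves in this short exact sequence of complexes. Here $\H^p(E)=0$ for $p\ne0$ because $E$ is the cohomology of a monad (Lemma~\ref{FJmonada}); $\H^{-1}(\C^{\bullet}_{x_2})=0$ because $\alpha$ is always injective; and by Corollary~\ref{sense} applied to the rank-$0$ perverse instanton $\C^{\bullet}_{x_2}$, its $\H^0$ is torsion free of rank $0$, hence $0$, while $\H^1(\C^{\bullet}_{x_2})$ has dimension $\le1$. The long exact sequence therefore gives $\H^1(\C^{\bullet}_{x/V_2})=0$ together with
$$0\to E\to \H^0(\C^{\bullet}_{x/V_2})\to \H^1(\C^{\bullet}_{x_2})\to 0.$$
Consequently $\C'=\C^{\bullet}_{x/V_2}$ is quasi-isomorphic to the sheaf $E':=\H^0(\C')$, which is torsion free of rank $r$ by Corollary~\ref{sense} and, being a regular perverse instanton, is a regular rank-$r$ instanton (inheriting a framing along $l_{\infty}$ from that of $E$); and $E'':=\H^1(\C^{\bullet}_{x_2})$ is the $\H^1$ of a rank-$0$ perverse instanton, hence a rank-$0$ instanton in the sense of Definition~\ref{rank-zero} (if purity is required there, it follows from $H^0(E''(q))=H^1(\C^{\bullet}_{x_2}\otimes\O(q))=0$ for $q\le-2$, which forbids zero-dimensional subsheaves). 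This is the asserted exact sequence, with $E\hookrightarrow E'$ induced by the quotient $V\twoheadrightarrow V/V_2$.

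For uniqueness — the main point — I would first record that $V_2$ is intrinsic to $x$: it is a decostabilizing subspace (i.e. $B_k$-invariant and contained in $\ker j$), and it is the largest one, since a strictly larger such subspace $\hat V$ would produce the nonzero decostabilizing subspace $\hat V/V_2$ of the \emph{costable} datum $x/V_2$, a contradiction. Now let $0\to E\to G\to Q\to 0$ be any short exact sequence with $G$ a regular rank-$r$ instanton and $Q$ of rank $0$. By Lemma~\ref{perv-family} the Beilinson monad of a perverse instanton is recovered functorially from the instanton, so the inclusion $E\hookrightarrow G$ (compatible with the framings, its cokernel being supported away from $l_{\infty}$) induces a morphism of the associated monads and hence a linear map $\psi\colon V\to V'$, identity on $W$, with $B_k^{G}\psi=\psi B_k^{E}$, $\psi i=i^{G}$ and $j^{G}\psi=j$. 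Then $\im\psi$ is $B_k^{G}$-invariant and contains $\im i^{G}$, so stability of the datum of $G$ forces $\im\psi=V'$; and $\ker\psi$ is $B_k^{E}$-invariant and contained in $\ker j$, hence is a decostabilizing subspace of $x$, so $\ker\psi\subseteq V_2$, while $V'\cong V/\ker\psi$ carries the quotient datum $x/\ker\psi$. Since the datum of $G$ is costable it has no decostabilizing subspace, so $V_2/\ker\psi=0$; thus $\ker\psi=V_2$ and $\psi$ identifies $G$ with $\H^0(\C^{\bullet}_{x/V_2})=E'$ compatibly with the inclusions from $E$. This proves uniqueness of $E'$; and two inclusions $E\hookrightarrow E'$ induce two such maps $\psi$ differing by an automorphism of the regular datum $x/V_2$ fixing $W$ — which is the identity, a regular datum having trivial stabilizer — so the inclusion map is unique as well.

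I expect the routine part to be the collapse of the long exact sequence in the second paragraph. The real work is in the uniqueness argument: one must know that a morphism of sheaves between two (perverse) instantons lifts to a morphism of their Beilinson monads, hence to a morphism of ADHM data — the functoriality packaged in Lemma~\ref{perv-family} — and then carefully translate ``injective with rank-$0$ cokernel'' into ``$\psi$ surjective with kernel the maximal decostabilizing subspace''. A purely sheaf-theoretic alternative (lift $E\hookrightarrow E'$ across $E\hookrightarrow G$ using $\ext^1(E'',G)=0$, then produce mutually inverse maps by a formal argument using that there are no nonzero maps from torsion sheaves to torsion-free ones) also works, but requires controlling $\Ext^1$ between a rank-$0$ instanton and a regular instanton, where local freeness or reflexivity of regular instantons would enter.
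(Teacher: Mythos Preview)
Your existence argument --- the collapse of the long exact cohomology sequence arising from the short exact sequence of complexes in Proposition~\ref{interpretation} --- is exactly the paper's proof. The paper in fact stops there: it constructs $E'$ and $E''$ and does not prove the uniqueness assertions at all. Your uniqueness argument, lifting a competing inclusion $E\hookrightarrow G$ to a map $\psi$ of ADHM data via the functoriality in Lemma~\ref{perv-family} and then using stability of $G$'s datum to force $\psi$ surjective and costability of $x/\ker\psi$ to force $\ker\psi=V_2$, is a genuine addition. The one step you assert without justification --- that $Q=G/E$ is supported away from $l_\infty$, so that the induced map on $W$ is the identity --- does hold: both $E$ and $G$ are locally free near $l_\infty$ from the explicit form of their monads there, $Q$ has no divisorial support since $c_1(E)=c_1(G)=0$, and an injection of locally free sheaves of equal rank with cokernel supported in codimension $\ge 2$ is an isomorphism (its determinant is a regular function nonvanishing in codimension~$1$, hence a unit). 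With that filled in, your proof actually completes what the paper leaves implicit.
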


\begin{proof}
Let us consider the short exact sequence from the proof of previous proposition.
Since $\C^{\bullet}_{x_2}$ is a rank $0$ perverse instanton, we have $\H^0(\C ^{\bullet}_{x_2})= 0$.
Thus we obtain the following long exact sequence of cohomology groups
$$
0\rightarrow\H^0(\C ^{\bullet}_x) \rightarrow
\H^0(\C^{\bullet}_{x^0_1})\rightarrow  \H^1(\C
^{\bullet}_{x_2})\rightarrow\H^1(\C ^{\bullet}_x) \rightarrow
\H^1(\C^{\bullet}_{x^0_1})\rightarrow 0.
$$
By Lemma \ref{FJmonada} $x$ is FJ-stable if and only if $\H^1(\C
^{\bullet}_x)=0$. In particular, if $x$ is FJ-stable then $\H^1(\C
^{\bullet}_x)=0$. This implies that $\H^1(\C
^{\bullet}_{x^0_1})=0$ and hence $x^0_1$ is also FJ-stable.
Therefore we can set $E'= \H^0(\C^{\bullet}_{x^0_1})$ and $E''=
\H^1(\C ^{\bullet}_{x_2})$. Let us set $c'=\dim V_1$. Our choice
of $x^0_1$ and $x_2$ shows that $E'$ is a torsion free
$(r,c')$-instanton corresponding to a costable ADHM datum, and
$E''$ is the first cohomology of a perverse $(0,c-c')$-instanton
(let us recall that such instantons have no other non-trivial
cohomology).
\end{proof}

\medskip

The above proposition allows us to describe the morphism from the
moduli space  $\M ^{f}(\PP ^3; r,c)$ of framed instantons to the
Donaldson--Uhlenbeck partial compactification of $\M _0^{\reg}
(\PP ^3;r,c)$.

\section{Perverse instantons of rank $0$}

In this section we describe the moduli space $\M _0 (\PP ^3;0,c)$
of perverse instantons of rank $0$. Let us recall that this
``moduli space'' does not corepresent any functor and in
particular, as for Chow varieties, we do not have any deformation
theory. But we can still show that closed points of this moduli
space can be interpreted as certain $1$-dimensional sheaves on
$\PP ^3$. Then we relate the moduli space to modules over a
certain non-commutative algebra and we show that already $\M _0
(\PP ^3;0,2)$ is reducible.

\medskip

\subsection{Rank $0$ instantons}

\begin{Definition} \label{rank-zero}
A  \emph{rank $0$ instanton} $E$ on $\PP ^3$ is a pure sheaf   of
dimension $1$ such that $H^0(\PP ^3 , E(-2))=0$ and $H^{1}(\PP^3,
E(-2))=0$.
\end{Definition}

The above definition is motivated by the following lemma:

\begin{Lemma}
If $\C$ is a rank $0$ perverse instanton then $\C [1] $ is a sheaf
object whose underlying sheaf is a rank $0$ instanton. On the
other hand, if $E$ is a rank $0$ instanton then the object $E[-1]$
in $D^b(\PP ^3)$ is a rank $0$ perverse instanton.
\end{Lemma}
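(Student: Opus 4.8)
The statement has two directions, and the plan is to treat both by unwinding the definitions of ``rank $0$ perverse instanton'' (Definition \ref{perv-definition} with $r=0$) and ``rank $0$ instanton'' (Definition \ref{rank-zero}). First I would establish the forward direction: let $\C$ be a rank $0$ perverse instanton. By condition (3) of Definition \ref{perv-definition} with $r=0$ there is a line $j:l\hookrightarrow \PP^3$ with $Lj^*\C \simeq \O_l^{\oplus 0}=0$; by condition (2), $\H^p(\C)=0$ for $p\neq 0,1$. The key point is that $\H^0(\C)=0$: by Corollary \ref{sense} (equivalently Proposition \ref{perv2} plus Lemma \ref{perv-family}), $\H^0(\C)$ is torsion free, and restricting the distinguished triangle $\H^0(\C)\to\C\to\H^1(\C)[-1]\to$ to the line $l$ shows that $\H^0(\C)|_l$ injects into $\H^0(Lj^*\C)=0$ up to the $\Tor$ contribution from $\H^1(\C)$; since a torsion-free sheaf on $\PP^3$ which vanishes on a line (in fact on a general line, as its restriction to a general line is a subsheaf of the zero sheaf) must be $0$, we get $\H^0(\C)=0$. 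Hence $\C\simeq \H^1(\C)[-1]$, i.e. $\C[1]$ is a sheaf object; write $E=\H^1(\C)$, which by Corollary \ref{sense} has dimension $\le 1$.

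Next I would check that $E$ is actually a \emph{rank $0$ instanton}. Purity of dimension $1$: if $E$ had a subsheaf of dimension $0$, using the description of $\C$ as the cohomology of a monad of locally free sheaves (Lemma \ref{perv-family}, or the explicit monad $\C^{\bullet}_x$ coming from an ADHM datum via Theorem \ref{bijection-ADHM-perverse}), $E=\coker\beta$ is a quotient of a locally free sheaf by a subsheaf $\im\beta$ that is torsion free; but then the description in Subsection on singularities (``$\im\beta$ is torsion free and non-locally free exactly along $\Supp\H^1(\C)$'') forces $E$ to be pure of dimension $1$ provided it is nonzero, and the $0$-dimensional torsion would have to come from $\C^1/\im\beta$ having embedded points, which is excluded because $\im\beta$ is a \emph{reflexive-hull-free} (torsion free) subsheaf of a locally free sheaf of the same rank, so the quotient is pure. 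Finally the cohomological vanishing: condition (1) of Definition \ref{perv-definition} applied with $(p,q)=(0,0)$ gives $H^0(\PP^3,\C)=0$ and with $(p,q)=(1,-1)$ gives $H^1(\PP^3,\C(-1))=0$; since $\C\simeq E[-1]$ we have $H^p(\PP^3,\C(q))=H^{p-1}(\PP^3,E(q))$, and because $E$ has dimension $1$ the only potentially nonzero cohomology is in degrees $0$ and $1$. Translating the relevant cases of condition (1) — namely $p=0,1$ with $p+q<0$ and $p=2,3$ with $p+q\ge 0$ — into statements about $E$ and choosing $q=-2$ on the appropriate pieces yields $H^0(\PP^3,E(-2))=0$ and $H^1(\PP^3,E(-2))=0$. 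The only subtlety is bookkeeping the index shift correctly; I expect no real obstruction here.

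For the converse, let $E$ be a rank $0$ instanton and set $\C=E[-1]$. Then $\H^1(\C)=E$ and $\H^p(\C)=0$ otherwise, so condition (2) holds; condition (3) holds because for a general line $l$ not meeting $\Supp E$ we have $Lj^*\C = Lj^*(E[-1])=0=\O_l^{\oplus 0}$ (one needs only one such line, which exists since $\Supp E$ is a curve). Condition (1) is exactly the reverse of the translation above: $H^p(\PP^3,\C(q))=H^{p-1}(\PP^3,E(q))$, and $E$ of dimension $1$ has cohomology only in degrees $0,1$, so $\C(q)$ has cohomology only in degrees $1,2$; the case $p=0,1$ with $p+q<0$ is automatic in degree $0$ and for $p=1$ reduces to $H^0(\PP^3,E(q))=0$ for $q\le -2$, which follows from $H^0(E(-2))=0$ plus the fact that a pure $1$-dimensional sheaf has no sections of negative-enough twist once it has none at $q=-2$ (sections of $E(q)$ for $q<-2$ include into sections of $E(-2)$); the case $p=2,3$ with $p+q\ge 0$ reduces for $p=2$ to $H^1(\PP^3,E(q))=0$ for $q\ge -2$, which follows from $H^1(E(-2))=0$ together with the standard fact that $H^1$ of a $1$-dimensional sheaf twisted up enough vanishes — and more precisely one uses that $H^1(E(q))$ surjects onto $0$... the cleanest route is to note $\chi(E(q))$ is linear in $q$ with $H^0\ge 0$ growing, forcing $H^1(E(q))\le H^1(E(-2))=0$ for $q\ge -2$; for $p=3$ it is automatic since $E$ has dimension $1$.

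\textbf{Main obstacle.} The genuinely non-formal point is the vanishing $\H^0(\C)=0$ in the forward direction — everything else is a translation of cohomological vanishing under the shift $[-1]$ and an invocation of Corollary \ref{sense} and the monad picture. I would secure $\H^0(\C)=0$ by combining Corollary \ref{sense} (torsion-freeness of $\H^0(\C)$) with condition (3): a nonzero torsion-free sheaf on $\PP^3$ is nonzero on the generic line, and restricting the triangle $\H^0(\C)\to\C\to\H^1(\C)[-1]$ along a general line $l$ (general enough that $\H^1(\C)$, being at most $1$-dimensional, meets $l$ in no point and that $l$ is one of the trivializing lines) gives $\H^0(\C)|_l \hookrightarrow \H^0(Lj^*\C)=0$, so $\H^0(\C)|_l=0$, contradiction unless $\H^0(\C)=0$.
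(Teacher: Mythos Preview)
Your overall architecture is right, and most of the translations under the shift $\C\simeq E[-1]$ are fine, but there is one genuine gap and a couple of places where your reasoning is looser than it should be.

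\textbf{The real gap: purity of $E$ in the forward direction.} Your argument that $E=\coker\beta$ is pure is not valid. You assert that since $\im\beta$ is a torsion-free subsheaf of the locally free sheaf $\C^1$ of the same rank, the quotient must be pure. This is false in general: take $\C^1=\O_{\PP^3}$ and $\im\beta=I_Z$ for $Z$ a curve with an embedded point; the ideal sheaf is torsion free of full rank, yet $\O_Z$ is not pure. The paper's argument is both simpler and correct: a nonzero zero-dimensional subsheaf $T\subset E$ satisfies $H^0(T(-2))=H^0(T)\neq 0$, and this injects into $H^0(E(-2))$, which you have already shown vanishes. (Alternatively, once you know $\H^0(\C)=0$ the monad gives $E$ a locally free resolution of length $2$, so Auslander--Buchsbaum forces $\depth E_x\ge 1$ everywhere, ruling out zero-dimensional torsion; but the $H^0(E(-2))$ argument is the clean one.)

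\textbf{Secondary issues.} For $\H^0(\C)=0$, your restriction-to-a-line argument implicitly assumes the trivializing line avoids $\Supp\H^1(\C)$; this is true from the explicit monad on $l_\infty$, but you do not justify it, and your parenthetical about ``general line'' is not warranted by Definition \ref{perv-definition} alone. A one-line fix: $\H^0(\C)$ is torsion free (Corollary \ref{sense}) and $\rk\H^0(\C)=\rk\C+\rk\H^1(\C)=0$, hence $\H^0(\C)=0$. In the converse direction, your Euler-characteristic heuristic for $H^1(E(q))=0$ when $q\ge -2$ does not actually pin down $h^1$; the paper (and your own aside about a surjection) uses a regular hyperplane section: for a pure one-dimensional $E$ there is an $E$-regular $s\in H^0(\O(1))$ giving $0\to E(m-1)\to E(m)\to E'\to 0$ with $\dim E'=0$, whence $H^0(E(m-1))\hookrightarrow H^0(E(m))$ and $H^1(E(m-1))\twoheadrightarrow H^1(E(m))$. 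These two surjection/injection chains, seeded at $m=-2$, give exactly the required vanishings.
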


\begin{proof}
If $\C$ is a rank $0$ perverse instanton then only $E=\H^1(\C)$ is
non-zero and hence $\C [1]$ is a sheaf object. Clearly, it has
dimension $\le 1$, since there exists a line $l$ such that the
support of $E$ does not intersect $l$. Since
$$H^p(\PP^3, \C \otimes \O_{\PP^3}(q))= H^{p-1}(\PP^3, E(q))$$
we see the required vanishing of cohomology. To prove that $E$ is
pure of dimension $1$ note that the torsion in $E$ would give a
section of $H^0(\PP ^3 , E(-2))$. This proves the first part of
the lemma.

Now assume that $E$ is a rank $0$ instanton and set $\C=E[-1]$.
Conditions 2 and 3 from Definition \ref{perv-definition} are
trivially satisfied for $\C$. To check the condition $1$ it is
sufficient to prove that $H^0(\PP ^3 , E(q))=0$ for $q\le -2$ and
$H^{1}(\PP^3, E(-2))=0$ for $q\ge -2$. By \cite[Lemma 1.1.12]{HL2}
there exists an $E(m)$-regular section of $\O_{\PP ^3}(1)$ and it
gives rise to the sequence
$$0\to E(m-1)\to E(m) \to E'\to 0$$
in which  $E'$ is some sheaf of dimension $0$. Using such
sequences and the definition of rank $0$ instanton it is easy to
check the required vanishing of cohomology groups.
\end{proof}

\medskip
By definition closed points of $\M _0 (\PP ^3;0,d)$ correspond to
closed $\GL (c)$-orbits of ADHM $(0,c)$-data for $\PP ^1$. By
Theorem \ref{bijection-ADHM-perverse} and the above lemma there
exists a bijection between isomorphism classes of rank $0$
instantons $E$ whose scheme-theoretical support is a curve od
degree $c$ not intersecting $l_{\infty}$ and $\GL (c)$-orbits of
ADHM $(0,c)$-data for $\PP ^1$.  So $\M _0 (\PP ^3;0,d)$ can be
thought of as the moduli space of some pure sheaves of dimension
$1$. Note however that this moduli space is only set-theoretical
and it is not a coarse moduli space.

\medskip

In the characteristic zero case $\ti\mu ^{-1}(0)/G $ is a
subscheme of the quotient $\ti B/G$, which is a normal variety.
Moreover, the coordinate ring for the variety $\ti B/G$ can be
described using the First Fundamental Theorem for Matrices (see
\cite[2.5, Theorem]{KP}). More precisely, if $\char k=0$ then
$$k[\ti B/G]=k[\ti B]^{G}=k[{\Tr}_{i_1\dots i_m} : 1\le i_1,\dots, i_m\le 4 , m\le c^2],$$
where $\Tr _{i_1\dots i_m}: \ti B\simeq\enD (V) ^4\to k$ is the
\emph{generalized trace} defined by
$$(A_1,A_2,A_3,A_4)\to \Tr (A_{i_1}A_{i_2}\dots A_{i_m}).$$
This in principle allows us to find $\ti\mu ^{-1}(0)/G$ as the
image of $\ti\mu ^{-1}(0)$ in  $\ti B/G$. In practice, computer
assisted computations using this interpretation almost never work
due to complexity of the problem.

\medskip

\medskip

\subsection{Schemes of modules over an associative ring}

In this subsection we recall a construction of the moduli space of
$d$-dimensional modules over an associative ring. It is mostly a
folklore, but note that our moduli space is not the same as the
one constructed by King in \cite{Ki}. We are interested in the
moduli space that was introduced by Procesi in \cite{Pr} (see also
\cite{Mo} for a more functorial approach) but it is
non-interesting from the point of view of finite dimensional (as
$k$-vector spaces) algebras. In our treatment we restrict to the
simplest case although the constructions act in much more general
set-up.

\medskip

Let $k$ be an algebraically closed field and let $R$ be a finitely
generated associative $k$-algebra with unit. Let us fix a positive
integer $d$. Let $\Mod _R^d$ denote the \emph{scheme of
$d$-dimensional $R$-module structures}. By definition it is the
affine (algebraic) $k$-scheme representing the functor from
commutative $k$-algebras (with unit) to the category of sets
sending a $k$-algebra $A$ to
$$ {\Mod} _R^d(A)=\{\mbox{left $R\otimes _k A$-module structures on }A^d\}
=\{\mbox{$A$-algebra maps }R\otimes _k A\to {\Mat} _{d\times
d}(A)\},$$ where $ \Mat _{d\times d}(A)$ denotes the set of
$d\times d$-matrices with values in $A$.

Let us choose a surjective homomorphism $\pi: k \langle x_1,\dots
,x_n\rangle \to R$ from the free associative algebra with unit.
Then the above functor is naturally equivalent to the functor
sending $A$ to the set of $n$-tuples $(M_1,\dots ,M_n)$ of
$d\times d$-matrices with coefficients in $A$ such that
$f(M_1,\dots, M_n)=0$ for all $f\in \ker \pi$. In particular, the
$k$-points of $\Mod _R^d$ correspond to $R$-module structures on
$k^d$ (i.e., to $d$-dimensional $R$-modules with a choice of a
$k$-basis).

We have a natural $\GL (d)$-action on $\Mod _R^d$ which
corresponds to a change of bases (it gives the conjugation action
on the set of matrices). By the GIT, there exists a uniform good
quotient $Q _R^d =\Mod _R^d/\GL _d$.

Let us recall that if $S$ is a $k$-scheme then a \emph{family of
$d$-dimensional $R$-modules parameterized by } $S$ (or simply an
\emph{$S$-family of $R$-modules}) is a locally free coherent
$\O_S$-module $\F$ together with a $k$-algebra homomorphism $R\to
\enD \F$.

\begin{Proposition}
The quotient  $Q _R^d$ corepresents the moduli functor
 $\Q _R^d: \Sch/k\to \Sets$ given by
$$S\to\left\{
\hbox{Isomorphism classes of $S$-families of $R$-modules} \\
\right\} .
$$
We call it the \emph{moduli space of $d$-dimensional $R$-modules}.
\end{Proposition}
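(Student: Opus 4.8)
The plan is to realise $\Q_R^d$ as the Zariski sheafification of a naive quotient functor and then invoke the universal property of the affine GIT quotient. First I would record that the affine scheme $\Mod_R^d$, although defined above by a functor on commutative $k$-algebras, automatically represents the corresponding functor on \emph{all} $k$-schemes: for a scheme $S$, a morphism $S\to \Mod_R^d$ is the same as a $k$-algebra homomorphism $R\to \Mat_{d\times d}(\Gamma(S,\O_S))$, equivalently an $S$-family $(\F,R\to \enD \F)$ of $R$-modules together with a trivialization $\F\simeq \O_S^d$. Under this identification the $\GL(d)$-action on $\Mod_R^d$ becomes the change-of-trivialization action, so there is a canonical morphism of functors $F\to \Q_R^d$ from the naive orbit-set quotient $F\colon S\mapsto \Mod_R^d(S)/\GL(d)(S)$, and I claim it exhibits $\Q_R^d$ as the Zariski sheafification of $F$. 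Indeed, every $S$-family of $R$-modules is Zariski-locally free, hence Zariski-locally in the image of $F$ (local surjectivity), and two trivialized families mapping to isomorphic families differ, Zariski-locally on $S$, by a section of $\GL(d)$ (local injectivity); this uses only that rank-$d$ locally free sheaves — equivalently $\GL(d)$-torsors — are Zariski-locally trivial.

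Next I would bring in the GIT material of Subsection \ref{GIT-section}. Since $\GL(d)$ is reductive and $\Mod_R^d$ is affine, the affine GIT quotient $Q_R^d=\Mod_R^d/\GL(d)$ exists and, being a good quotient, is a categorical quotient: the projection $q\colon \Mod_R^d\to Q_R^d$ is $\GL(d)$-invariant and every $\GL(d)$-invariant morphism of schemes $\Mod_R^d\to T$ factors uniquely through $q$. Via Yoneda, a natural transformation $F\to h_T$ is precisely a $\GL(d)$-invariant morphism $\Mod_R^d\to T$ — naturality in $S$ together with $\GL(d)(S)$-invariance for all $S$ packages exactly into equivariance for the trivial $\GL(d)$-action on $T$ — so the categorical-quotient property says exactly that $q$ induces a bijection $\hom(Q_R^d,T)\cong \hom(F,h_T)$, natural in $T$. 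In other words $Q_R^d$ corepresents $F$, the structure transformation $F\to h_{Q_R^d}$ being the descent to $F$ of $q_*\colon \hom(S,\Mod_R^d)\to \hom(S,Q_R^d)$.

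Finally, since $Q_R^d$, viewed as the functor $h_{Q_R^d}$, is a sheaf in the Zariski topology, the canonical transformation $F\to h_{Q_R^d}$ factors uniquely through the sheafification $\Q_R^d$, giving $\eta\colon \Q_R^d\to h_{Q_R^d}$; and by the universal property of sheafification $\hom(\Q_R^d,h_T)=\hom(F,h_T)$ for every scheme $T$. Combining with the previous paragraph yields a natural bijection $\hom(Q_R^d,T)\cong \hom(\Q_R^d,h_T)$, and since $F\to \Q_R^d$ is an epimorphism of sheaves one checks that this bijection is ``compose with $\eta$''; hence every natural transformation $\Q_R^d\to h_T$ factors uniquely through $\eta$, i.e. $Q_R^d$ corepresents $\Q_R^d$, as asserted. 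I expect the only genuine work to lie in the first paragraph — pinning down the $S$-point functor of $\Mod_R^d$ for non-affine $S$ and verifying the Zariski descent that glues locally trivialized families into a single global family — after which the GIT and sheafification steps are formal. It is worth stressing that one obtains corepresentability and nothing more: $q$ is not a geometric quotient, its closed points being the closed $\GL(d)$-orbits, i.e. the semisimple $d$-dimensional $R$-modules, so in general $\Q_R^d$ is not representable and $Q_R^d$ does not separate non-isomorphic modules with the same semisimplification.
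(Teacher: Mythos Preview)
Your argument is correct and is exactly the standard one the paper has in mind: the paper does not write out a proof at all, merely citing \cite[Lemma 4.1.2]{HL2} and \cite[Proposition 5.2]{Ki}, and your three steps (identify $\Q_R^d$ as the Zariski sheafification of the orbit presheaf, observe the affine GIT quotient is a categorical quotient, and pass to the sheafification using that $h_{Q_R^d}$ is a Zariski sheaf) are precisely what those references contain. The one small remark is that your final sentence about closed points parametrizing semisimple modules is already recorded in the paper immediately after the proposition, so it fits well.
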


Proof of this proposition is completely standard and we leave it
to the reader (cf. \cite[Lemma 4.1.2]{HL2} and \cite[Proposition
5.2]{Ki})

The quotient $Q_R^d$ parameterizes closed $\GL_d$-orbits in $\Mod
_R^d$. An orbit of a $k$-point is closed if and only if it
corresponds to a semisimple representation of $R$. Therefore the
$k$-points of  $Q _R^d$ correspond to isomorphism classes of
$d$-dimensional semisimple $R$-modules. Equivalently, $Q _R^d$
parameterizes S-equivalence classes of $d$-dimensional
$R$-modules, where two modules are S-equivalent if the graded
objects associated to their Jordan--H\"older filtrations are
isomorphic.

Note that if $R$ is commutative then $Q_R^d$ is the moduli space
of zero-dimensional coherent sheaves of length $d$ on $X=\Spec R$.
Usually, the moduli spaces on non-projective varieties do not make
sense but in case of zero-dimensional sheaves we can take any
completion of $X$  to a projective scheme $\overline{X}$ and
consider the open subscheme of the moduli space of
zero-dimensional coherent sheaves of length $d$ on $\overline{X}$,
which parameterizes sheaves with support contained in $X$.

We will need the following proposition:

\begin{Proposition} \label{symmetric-power}
Let $R$ be a commutative $k$-algebra and let $X=\Spec R$. Then we
have a canonical morphism $f: S^dX\to Q _R^d$ from the $d$-th
symmetric power of $X$, which is a  bijection on the sets of
closed points. If $k$ is a field of characteristic zero then $f$
is an isomorphism.
\end{Proposition}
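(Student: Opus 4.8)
The plan is to construct the morphism $f$ directly on the level of families and then analyze it on closed points. First I would recall that for a commutative finitely generated $k$-algebra $R$ the functor $\Mod_R^d$ parameterizes $d$-dimensional $R$-module structures on $k^d$, and that a closed $\GL_d$-orbit corresponds to a semisimple $R$-module, hence (since $R$ is commutative) to a module of the form $\bigoplus_{p} k(p)^{\oplus m_p}$ supported at finitely many maximal ideals $p$ of $R$, with $\sum m_p = d$. To build $f: S^dX \to Q_R^d$ I would exhibit a $\GL_d$-equivariant morphism $\tilde f: (\text{something like } X^d) \to \Mod_R^d$, or more cleanly observe that the universal family of length-$d$ zero-dimensional sheaves supported on $X$ gives, after a choice of ordering, a family of $d$-dimensional $R$-modules parameterized by an appropriate cover of $S^dX$; by the corepresentability statement of the previous Proposition this induces a canonical morphism $f: S^dX \to Q_R^d$. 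Concretely: a point of $S^dX$ is an effective $0$-cycle $\sum m_p [p]$; send it to the S-equivalence class of $\bigoplus_p (R/\mathfrak{m}_p)^{\oplus m_p}$, which is a semisimple $R$-module of dimension $d$, hence a closed point of $Q_R^d$.

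Next I would check that $f$ is a bijection on closed points. Surjectivity: every closed point of $Q_R^d$ is a semisimple $d$-dimensional $R$-module, and since $R$ is commutative every simple module is $R/\mathfrak{m}$ for a maximal ideal $\mathfrak{m}$, so the module is $\bigoplus_p (R/\mathfrak{m}_p)^{\oplus m_p}$ with $\sum m_p \dim_k(R/\mathfrak{m}_p) = d$; because $k$ is algebraically closed, $\dim_k(R/\mathfrak{m}_p)=1$, so $\sum m_p = d$ and the module is in the image of $f$. Injectivity: two effective $0$-cycles giving isomorphic semisimple modules must have the same multiplicities at the same points, since the isomorphism class of $\bigoplus_p (R/\mathfrak{m}_p)^{\oplus m_p}$ determines the function $p \mapsto m_p$. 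This handles the set-theoretic statement.

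For the last assertion I would assume $\char k = 0$ and show $f$ is an isomorphism. The natural strategy is to produce an inverse, or to invoke a standard identification: $Q_R^d = \Mod_R^d /\!\!/ \GL_d$, and in the commutative case $\Mod_R^d$ maps $\GL_d$-equivariantly onto the closed subscheme of $d$-dimensional commutative-module structures; the good quotient of this by $\GL_d$ is, by the classical description (essentially the statement that $(\Spec R)^d /\!\!/ S_d \cong S^d(\Spec R)$ together with the fact that a semisimple commutative module of dimension $d$ over $R$ is the same as an unordered $d$-tuple of points of $\Spec R$), isomorphic to $S^dX$. In characteristic zero one has $k[X^d]^{S_d} = k[S^dX]$ and the extra $\GL_d$-directions contribute only the "off-diagonal" conjugation which is already accounted for, so the induced map on coordinate rings is an isomorphism. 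A cleaner route: both $S^dX$ and $Q_R^d$ corepresent closely related functors — one can compare $S^dX$ (which corepresents the functor of families of length-$d$ subschemes / $0$-cycles) with $Q_R^d$ (which by the previous Proposition corepresents families of $d$-dimensional $R$-modules), noting that in characteristic zero a flat family of length-$d$ $R$-modules with commutative action is the same as a family of length-$d$ zero-dimensional $\O_X$-algebras' worth of data, i.e. a morphism to the Hilbert scheme, and the Hilbert–Chow morphism to $S^dX$ is an isomorphism in dimension $\le 1$ — but here $X$ may be higher-dimensional, so I would instead argue via coordinate rings.

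\textbf{Main obstacle.} The bijectivity on closed points is routine given $k = \bar k$. The real content is the characteristic-zero isomorphism statement: one must show the scheme structure on $Q_R^d$ agrees with that of $S^dX$, not merely that the underlying sets match. The crux is the identity $k[\Mod_R^d]^{\GL_d} \cong k[X^d]^{S_d}$ in characteristic zero, which relies on: (i) linear reductivity of $\GL_d$ (used to commute invariants with the relevant restriction), (ii) the First Fundamental Theorem for $\GL_d$-invariants of tuples of matrices (the same tool invoked earlier in the paper via \cite{KP}) to show that $\GL_d$-invariant functions on commuting-matrix tuples reduce to symmetric functions of simultaneous eigenvalues, and (iii) the fact that $k[X^d]^{S_d} = k[S^dX]$, which is the definition of $S^dX$ but whose interaction with the possibly-singular or non-reduced $X$ requires care. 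In positive characteristic step (ii) fails (symmetric powers of matrices are not generated by traces of small degree in a characteristic-free way, and $k[X^d]^{S_d}$ may differ from $\Gamma(S^dX)$), which is exactly why the isomorphism is only claimed over a characteristic-zero field; so I expect the proof to isolate precisely where characteristic zero enters and to cite the First Fundamental Theorem for the decisive surjectivity of traces onto invariants.
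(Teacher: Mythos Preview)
Your approach is essentially correct and parallels the paper's, but it is considerably more elaborate than what the paper actually does.

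For the construction of $f$, the paper proceeds much more directly than your several sketched options: it simply uses the direct-sum morphism $Q_R^1\times\cdots\times Q_R^1\to Q_R^d$, observes that $Q_R^1=\Spec R=X$, and notes that this map is $S_d$-invariant, hence factors through $S^dX$. This sidesteps any discussion of universal families, covers of $S^dX$, or corepresentability; your ``concretely: send $\sum m_p[p]$ to $\bigoplus_p(R/\mathfrak m_p)^{\oplus m_p}$'' is exactly this map on closed points, but the paper's formulation makes it immediately a morphism of schemes without further work.

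For bijectivity on closed points your argument and the paper's coincide: both reduce to the fact that simple modules over a commutative algebra are one-dimensional (Schur's lemma), so that semisimple $d$-dimensional $R$-modules are in bijection with unordered $d$-tuples of closed points of $X$.

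The real divergence is in the characteristic-zero isomorphism. You outline a direct invariant-theoretic proof via the First Fundamental Theorem for $\GL_d$ to obtain $k[\Mod_R^d]^{\GL_d}\cong k[X^d]^{S_d}$. The paper does not attempt this: it simply invokes \cite[Example~4.3.6]{HL2}, using the interpretation of $Q_R^d$ (given just before the proposition) as the moduli space of length-$d$ zero-dimensional sheaves on $X$. What your route buys is a self-contained argument that makes transparent why characteristic zero is needed (linear reductivity of $\GL_d$ and generation of invariants by traces); what the paper's route buys is brevity and avoidance of the somewhat delicate passage from commuting matrices to symmetric functions of eigenvalues when $R$ has many generators. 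Your identification of the ``main obstacle'' is accurate, and the paper confirms it by remarking parenthetically that the cited argument also works in sufficiently large positive characteristic.
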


\begin{proof}
Let us consider the morphism $Q _R^1\times \dots \times Q _R^1\to
Q _R^d$ from the $d$ copies of $Q _R^1$, given by taking a direct
sum. Clearly, $Q _R^1=\Spec R$ and the morphism factors through
$S^d\AA^n$ as it is invariant with respect to the natural action
of symmetric group exchanging components of the product. The
induced morphism $S^d\AA^n\to Q _R^d$ is an isomorphism on the
level of closed $k$-points since a simple module over a
commutative algebra is $1$-dimensional (e.g., by Schur's lemma).

The second part follows from \cite[Example 4.3.6]{HL2} (note that
the proof works also if the  characteristic is sufficiently high)
and the interpretation of $Q^d_R$ that we gave above.
\end{proof}

\medskip

\begin{Remark}\label{strange}
We note in the next subsection that the scheme of pairs of
commuting $d\times d$-matrices is irreducible. But already the
scheme of triples of commuting $d\times d$-matrices (i.e., $\Mod
_R^d$ for $R=k[x_1,x_2,x_3]$) is reducible for $d\ge 30$ (see
\cite[Proposition 3.1]{HO}). Still the above proposition says that
its quotient $Q_R^d$ is irreducible if $R$ is commutative and
$\Spec R$ is irreducible.
\end{Remark}

\begin{Example} \label{pairs-of-matrices}
Let us consider $M_{k[x_1,x_2]}^2$.  Let us set
$$B_{1}=\left(\begin{array}{cc}
1&0\\
y_{1}&1
\end{array}\right)
\left(\begin{array}{cc}
y_{3}&y_{2}(y_{3}-y_{4})\\
0&y_{4}
\end{array}\right)
\left(\begin{array}{cc}
1&0\\
-y_{1}&1
\end{array}\right)
$$
and
$$B_{2}=\left(\begin{array}{cc}
1&0\\
y_{1}&1
\end{array}\right)
\left(\begin{array}{cc}
y_{5}&y_{2}(y_{5}-y_{6})\\
0&y_{6}
\end{array}\right)
\left(\begin{array}{cc}
1&0\\
-y_{1}&1
\end{array}\right).
$$
One can easily check that the condition $[B_{1},B_{2}]=0$ is
satisfied. Therefore we can define the map $\psi: \AA ^6\to
M_{k[x_1,x_2]}^2$ by sending $(y_1,\dots , y_6)$ to $(B_1,B_2)$.
By the previous remark $M_{k[x_1,x_2]}^2$ is irreducible  and one
can check that the above defined map is dominant and  generically
finite.

Let us recall that we have the map $\eta:
\AA^4=\AA^2\times\AA^2\to S^2\AA^2\to Q_{k[x_1,x_2]}^2$. One can
easily see that the image of a point $(y_1,\dots , y_6)\in \AA ^6$
in $Q_{k[x_1,x_2]}^2$ coincides with the image under $\eta$ of the
quadruple $(y_3,y_4,y_5,y_6)\in \AA^4$ consisting of pairs of
eigenvalues of matrices $(B_1,B_2)=\psi (y_1,\dots , y_6)$.

\end{Example}

\subsection{Moduli interpretation for instantons of rank $0$.}

Let us first consider the ADHM data for a point for $r=0$ and some
positive $c>0$. The moment map
$$\mu :{\bf B}= \enD (V)\oplus \enD (V)\to \enD (V)$$ is in this case
given by $(B_1,B_2)\to [B_1,B_2]$, where as usual $V$ is a
$k$-vector space of dimension $c$. In this case $\mu ^{-1}(0)$ is
known as the \emph{variety of commuting matrices}.  It is known to
be irreducible by classical results of Gerstenhaber \cite{Ge} and
Motzkin and Taussky \cite {MT}. This implies that the quotient
$\mu ^{-1}(0)/\GL (V)$ is also irreducible. In fact, one can see
from the definition that $\mu ^{-1}(0)/\GL (V)$ is isomorphic to
the scheme $Q _{k[x_1,x_2]}^c$ of equivalence classes of
$c$-dimensional $k[x_1,x_2]$-modules. Therefore by Proposition
\ref{symmetric-power} the points of $\mu ^{-1}(0)/\GL (V)$ are in
bijection with the points of $c$-th symmetric power $ S^c(\AA ^2)$
of $\AA ^2$ (this should be compared with \cite[Proposition
2.10]{Na1} which gives a different bijection). In characteristic
zero we get that $\mu ^{-1}(0)/\GL (V)$ is isomorphic to $S^c \AA
^2$.

\medskip

Now let us consider ADHM data for $\PP ^1$. Again it follows from
the definitions that the quotient $\M _0 (\PP ^3;0,c)=\ti \mu
^{-1}(0)/\GL (V)$ is isomorphic to the scheme $Q _R^c$ of
equivalence classes of $c$-dimensional $R$-modules for a
non-commutative $k$-algebra
$$R=k\langle y_1,y_2,z_1,z_2\rangle /(y_1y_2-y_2y_1, z_1z_2-z_2z_1,
y_1z_2-z_2y_1+y_2z_1-z_1y_2).$$ Let us define a two--sided ideal
in $R$ by
$$I=(y_1z_2-z_2y_1, y_1z_1-z_1y_1, y_2z_2-z_2y_2).$$
It is easy to see that $R/I\simeq k[y_1,y_2,z_1,z_2]$ so we have a
surjection $R\to R'= k[y_1,y_2,z_1,z_2]$. This induces a closed
embedding of affine schemes
$${\Mod} _{R'}^c\subset {\Mod} _R^c$$
(see \cite[Proposition 1.2]{Mo}). Therefore we get a morphism
$$ Q_{R'}^c\to Q _R^c,$$
which is a set--theoretical injection of quotients. If $k$ has
characteristic zero then this morphism is a closed embedding.

By Proposition \ref{symmetric-power} we get the following induced
affine map
$$\varphi: S^c\AA ^4\to Q_R^c\simeq \M _0 (\PP ^3;0,c),$$
which is a set-theoretical injection.

Geometric interpretation of the map $\varphi$ is the following.
Note that $\AA ^4$ parameterizes the lines in $\PP ^3$ that do not
intersect $l_{\infty}$. Then for a point in $S^c\AA ^4$, the image
corresponds to the rank $0$ instanton $E=\O _{l_1}(1)\oplus \dots
\oplus \O_{l_c}(1)$, where $l_1, \dots ,l_c$ are the lines not
intersecting $l_{\infty}$.  If all these lines are disjoint then
the corresponding rank $0$ instanton $E$ gives a point in the
Hilbert scheme of curves of degree $c$ and one can check that the
corresponding component has dimension $4c$. This suggest the
following proposition:

\begin{Proposition}
The image of $\varphi$ is an irreducible component of $ \M _0 (\PP
^3;0,c)$ of dimension $4c$.
\end{Proposition}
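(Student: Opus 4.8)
The plan is to show two things: that the image of $\varphi$ has dimension exactly $4c$, and that it is a maximal-dimensional irreducible component of $\M_0(\PP^3;0,c)$, by bounding the dimension of $\M_0(\PP^3;0,c)$ near a generic point of the image. Since $S^c\AA^4$ is irreducible of dimension $4c$ and $\varphi$ is a morphism, its image is irreducible of dimension at most $4c$; so the main work is a lower bound for the dimension of the image together with an upper bound for the local dimension of $\M_0(\PP^3;0,c)$ at a generic point.

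First I would establish the dimension count at a generic point, i.e. at a rank $0$ instanton $E=\O_{l_1}(1)\oplus\dots\oplus\O_{l_c}(1)$ with $l_1,\dots,l_c$ pairwise disjoint lines none of which meets $l_\infty$. Since the $l_i$ are disjoint, $\Supp E$ is a smooth curve, $E$ is locally free of rank $1$ on it, and the corresponding ADHM datum $x=(\ti B_1,\ti B_2,0,0)$ is simply a direct sum of the $c$ one-dimensional ADHM data attached to the individual lines. For such a datum the stabilizer in $\GL(V)$ is the maximal torus $(\Gm)^c$ (since the eigenvalues, i.e. the lines, are distinct), so the $\GL(V)$-orbit through $x$ has dimension $c^2-c$. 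The fibre dimension of $\ti\mu^{-1}(0)\to\ti\mu^{-1}(0)/\GL(V)$ at $x$ is then $c^2-c$, and one computes $\dim_x\ti\mu^{-1}(0)$: the space $\ti\bB$ in rank $0$ is $\enD(V)\otimes H^0(\O_{\PP^1}(1))=\enD(V)^2$ of dimension $2c^2$, and $\ti\mu$ maps to $\enD(V)\otimes H^0(\O_{\PP^1}(2))=\enD(V)^3$; near $x$ the differential of $\ti\mu$ has image whose dimension one must identify — the expected dimension of $\ti\mu^{-1}(0)$ is $2c^2-(\text{rank }d\ti\mu_x)$, and the trace directions always lie in the cokernel. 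I would argue that at such a generic $x$ the scheme $\ti\mu^{-1}(0)$ is smooth of dimension $c^2+2c$ (equivalently, $\coker d\ti\mu_x$ has dimension $c^2-2c$, matching the $3c^2-(c^2+2c)=c^2+2c$... one needs to do this carefully), so that locally $\dim \M_0(\PP^3;0,c)=(c^2+2c)-(c^2-c)=4c$ near $\varphi(\text{generic point})$. This is the step I expect to be the main obstacle: pinning down $\dim_x\ti\mu^{-1}(0)$ and showing $\ti\mu^{-1}(0)$ has no component of larger dimension through $x$, i.e. that the generic point of $\im\varphi$ is a smooth point of the appropriate dimension in $\ti\mu^{-1}(0)/\GL(V)$.

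For the lower bound I would use the geometric description already given in the excerpt: the locus of rank $0$ instantons of the form $\O_{l_1}(1)\oplus\dots\oplus\O_{l_c}(1)$ with the $l_i$ disjoint and disjoint from $l_\infty$ is exactly the image under $\varphi$ of the open locus in $S^c\AA^4$ of $c$ distinct points in general position, and this is an open subset of $\im\varphi$; since distinct disjoint lines give non-isomorphic direct summands, $\varphi$ is injective on this locus, so $\dim\im\varphi\ge 4c$. Combined with $\dim\im\varphi\le\dim S^c\AA^4=4c$ we get equality. It remains to see $\im\varphi$ is a \emph{component}, i.e. is not contained in a strictly larger irreducible subset of $\M_0(\PP^3;0,c)$: this follows from the local dimension computation above, since at a generic point of $\im\varphi$ the whole space $\M_0(\PP^3;0,c)$ already has dimension $4c=\dim\im\varphi$, forcing $\im\varphi$ to be a full irreducible component there.

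Finally I would remark that, as $\varphi$ factors as $S^c\AA^4\to Q_{R'}^c\hookrightarrow Q_R^c\simeq\M_0(\PP^3;0,c)$ with $Q_{R'}^c\simeq S^c\AA^4$ in characteristic zero (Proposition \ref{symmetric-power}), in characteristic zero the image is a closed irreducible subscheme and $\varphi$ realizes $S^c\AA^4$ as (the reduction of) this component; in particular the normalization of this component is $S^c\AA^4$, as asserted in the introduction. The only genuinely delicate point throughout is the smoothness / dimension estimate for $\ti\mu^{-1}(0)$ at a generic decomposable datum, which I would handle by a direct tangent-space computation using that $\ker d\ti\mu_x$ decomposes along the blocks coming from $V=\bigoplus_i V_i$ with each $V_i$ one-dimensional, reducing it to the $c=1$ case plus off-diagonal $\hom(V_i,V_j)$ contributions.
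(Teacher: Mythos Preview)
Your overall strategy is right---bound $\dim \M_0(\PP^3;0,c)$ from above at a generic point of $\im\varphi$ and from below by $\dim\im\varphi$---and this is exactly the shape of the paper's argument. But your execution of the upper bound has concrete errors, and the paper handles that step by a different and cleaner route.

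First, the arithmetic: in rank $0$ one has $\bB=\enD(V)^2$, so $\ti\bB=\bB\otimes H^0(\O_{\PP^1}(1))$ has dimension $4c^2$, not $2c^2$; the target of $\ti\mu$ has dimension $3c^2$. The correct local dimension of $\ti\mu^{-1}(0)$ at a generic diagonal datum is $c^2+3c$ (not $c^2+2c$), matching the orbit dimension $c^2-c$ plus the quotient dimension $4c$. Equivalently, $\coker d\ti\mu_x$ has dimension $3c$, not $c^2-2c$ as your parenthetical suggests. Your block-by-block plan for this tangent computation can be made to work, but you have not actually done it, and the off-diagonal blocks require care.

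The paper avoids computing $d\ti\mu_x$ directly. Instead it invokes the deformation-theoretic identification (Theorem~\ref{deformation}): the tangent space $T_x\ti\mu^{-1}(0)$ modulo the tangent to the orbit is exactly $\ext^1_{\PP^3}(E,E)$ for the corresponding rank $0$ instanton $E=\bigoplus_i \O_{l_i}(1)$. One then computes $\dim\ext^1_{\PP^3}(\O_l,\O_l)=4$ for a single line $l$ by an elementary calculation with the ideal sheaf resolution $0\to\O_{\PP^3}(-2)\to\O_{\PP^3}(-1)^2\to J_l\to 0$; since the $l_i$ are disjoint, cross-Ext groups vanish and $\dim\ext^1(E,E)=4c$. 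This immediately gives $\dim_x\ti\mu^{-1}(0)\le (c^2-c)+4c=c^2+3c$, hence $\dim_{\varphi(\text{generic})}\M_0(\PP^3;0,c)\le 4c$, and the component conclusion follows as in your last paragraph. The advantage of the paper's route is that it replaces a block-matrix linear-algebra computation by a short sheaf-cohomology calculation that is manifestly local on $\Supp E$; your direct approach would also give the obstruction dimension ($\dim\ext^2(E,E)=3c=\dim\coker d\ti\mu_x$) once the numbers are fixed, but it is longer.
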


\begin{proof}
Let $l_1, \dots ,l_c$ be disjoint lines not intersecting
$l_{\infty}$. Let $E=\O _{l_1}(1)\oplus \dots \oplus \O_{l_c}(1)$
be the corresponding rank $0$ instanton $E$ and let $x\in {\ti
\mu}^{-1} (0)={\Mod} _R^c$ be an ADHM datum corresponding to $E$.
Let $X$ denote the $R$-module corresponding to $x$.

By Theorem \ref{deformation}  there exists a surjective map
$T_x{\Mod} _R\to \ext ^1_{\PP^3}(E,E)$ whose kernel is the tangent
space of the orbit $\O (x)$ of $x$ at $x$. The support of $E$ does
not intersect $l_{\infty}$ so we do not need to tensor by
$J_{l_{\infty}}$. Note that the theorem (and its proof) still
works in our case but in the formulation given above: $d \varphi
_e$ is not injective. Let us also note that this fact, together
with Voigt's theorem, shows that $\ext ^1_R(X,X)\simeq \ext
^1_{\PP^3}(E,E)$.

\begin{Lemma}
Let $l$ be any line in $\PP ^3$. Then $\dim \ext ^1_{\PP^3}(\O_l,
\O_l)=4$.
\end{Lemma}

\begin{proof}
Using the short exact sequence
$$0\to J_{l}\to \O_{\PP ^3}\to \O _l\to 0$$
we see that $\ext ^1_{\PP^3}(\O_l, \O_l)\simeq \hom (J_{l},
\O_l)$. To compute this last group we can assume that $l$ is given
by equations $x_2=x_3=0$. Then we have a short exact sequence
$$0\to \O_{\PP ^3} (-2)\mathop{\to}^{(x_2,x_3)} \O_{\PP ^3}(-1)^2\to J_l\to 0$$
which gives an exact sequence
$$0\to \hom (J_{l}, \O_l) \to \hom ( \O_{\PP ^3}(-1)^2, \O _l)\mathop{\to}^{f}  \hom ( \O_{\PP ^3}(-2), \O _l).$$
Since $f$ is the zero map, we see that $ \hom (J_{l}, \O_l) \simeq
\hom ( \O_{\PP ^3}(-1)^2, \O _l)\simeq H^0(\O_l(1))^{\oplus 2}$ is
$4$-dimensional.
\end{proof}

The above lemma implies that $\ext ^1_{\PP^3}(E,E)$ is
$4c$-dimensional. Since $\hom _R(X,X)$ is $c$-dimensional, the
orbit $\O (X)$ is of dimension $c^2-c$. But then the dimension of
${\Mod} _R^c$ at $X$ is at most $c^2-c+4c=c^2+3c$. Since the
pre-image of the closed subscheme $\varphi (S^c\AA ^4)$ in ${\Mod}
_R^c$ is of dimension at least $4c +(c^2-c)$ (as all the fibers of
the restricted map contain closed orbits of dimension  at least
$c^2-c$), we see that $\varphi (S^c\AA ^4)$ is an irreducible
component of $ \M _0 (\PP ^3;0,c)$.
\end{proof}

\medskip
\begin{Remark}
One can easily see that $\dim \ext ^2_{\PP^3}(\O_l, \O_l)=3.$
Therefore the instanton $E$ from the above proof has $\dim \ext
^2_{\PP^3}(E,E)=3c$  so it is potentially obstructed (cf. Theorem
\ref{deformation}). On the other hand, the above proof shows that
the corresponding point in $ \M _0 (\PP ^3;0,c)$ is smooth.
\end{Remark}

\medskip

The following example shows that $\M _0 (\PP ^3;0,c)$ need not be
irreducible (unlike in the case of ADHM data for a point). But it
is still possible that it is a connected locally complete
intersection of dimension $4c$.
\medskip

\begin{Example}
Let us consider ADHM data on $\PP ^1$ for $r=0$ and $c=2$ in the
characteristic zero case.  In this case one can compute that $\ti
\mu ^{-1}(0)$ has two irreducible and reduced components: $X_1$ of
dimension $11$ and $X_2$ of dimension $10$ intersecting along an
irreducible and reduced scheme of dimension $9$ (to see this fact
we first performed a computer assisted computation in {\tt
Singular}). We can explicitly describe these two components as
follows.

Let $V_1$ and $V_2$ denote varieties of pairs of commuting
$2\times 2$ matrices (see Example \ref{pairs-of-matrices}). Let us
note that ${\ti \mu}^{-1}(0)$ is a subvariety in $V_1\times V_2$
given by equation $[B_{11},B_{22}]+[B_{12},B_{21}]=0$, where
$(B_{11}, B_{21})\in V_1$ and $ (B_{12}, B_{22})\in V_2$ are pairs
of $2\times 2$ matrices.

Let us set
$$B_{1k}=\left(\begin{array}{cc}
1&0\\
y_{1k}&1
\end{array}\right)
\left(\begin{array}{cc}
y_{3k}&y_{2k}(y_{3k}-y_{4k})\\
0&y_{4k}
\end{array}\right)
\left(\begin{array}{cc}
1&0\\
-y_{1k}&1
\end{array}\right),
$$
and
$$B_{2k}=\left(\begin{array}{cc}
1&0\\
y_{1k}&1
\end{array}\right)
\left(\begin{array}{cc}
y_{5k}&y_{2k}(y_{5k}-y_{6k})\\
0&y_{6k}
\end{array}\right)
\left(\begin{array}{cc}
1&0\\
-y_{1k}&1
\end{array}\right).
$$
As in Example \ref{pairs-of-matrices} the condition
$[B_{1k},B_{2k}]=0$ is satisfied for both $k=1$ and $k=2$.

Thus we can define a map $\psi$ from $\AA^{12}$ to the product
$V_1\times V_2$ by sending $(y_{ij})$ to $(B_{11}, B_{21}, B_{12},
B_{22})$ defined above. Computations in {\tt Singular} show that
$\psi^{-1}(\ti \mu^{-1}(0))$ has three irreducible components
$Y_1, Y_2, Y_3$ given by the following ideals:
$$I_1=(
(y_{31}-y_{41})(y_{52}-y_{62})-(y_{51}-y_{61})(y_{32}-y_{42})),$$
$$I_2=(y_{11}-y_{12},y_{21}-y_{22})$$
and
$$I_3=(y_{21}y_{12}-y_{21}y_{11}+1,y_{21}+y_{22}).$$

Further computations show that $\psi(Y_1)$ is $11$-dimensional and
$\psi(Y_2)$ and $\psi(Y_3)$ are equal and $10$-dimensional. This
shows that the restriction $\psi_{|Y_1}$ is a generically finite
morphism from $Y_1$ to the $11$-dimensional component $X_1$ of
$\mu^{-1}(0)$. Similarly, $\psi |_{Y_2}$ and  $\psi |_{Y_3}$ are
generically finite morphisms from $Y_2$ and $Y_3$ to the
$10$-dimensional component $X_2$.

We have dominant morphisms $Y_1\to X_1/\GL (2)$ and $Y_2\to
X_2/\GL (2)$. For a quadruple of matrices $(B_{11}, B_{21},
B_{12}, B_{22})\in X_2$ obtained as the image of a point
$(y_{ij})\in Y_2$, the isotropy group of $\GL (2)$ contains
matrices of the form
$$\left(\begin{array}{cc}
1&0\\
y_{11}&1
\end{array}\right)
\left(\begin{array}{cc}
t_1&y_{21}(t_1-t_2)\\
0&t_2
\end{array}\right)
\left(\begin{array}{cc}
1&0\\
-y_{11}&1
\end{array}\right)
$$
for arbitrary $t_1,t_2\in \GG _m$. One can see that  $Y_2$ is
mapped dominantly onto the image of $S^2\AA ^4$ in ${\ti
\mu}^{-1}(0)/\GL (2)$ and therefore for a generic quadruple
$(B_{11}, B_{21}, B_{12}, B_{22})\in X_2$ the isotropy group is
$2$-dimensional and it is equal to the above described group (one
can also compute this isotropy group explicitly for all such
quadruples).

One can also check that the isotropy group of a generic point in
$X_1$ is $1$-dimensional (so the corresponding $R$-module is
simple) and therefore ${\ti \mu}^{-1}(0)/\GL (2)$ is pure of
dimension $8$ with irreducible components given by $X_1/\GL (2)$
and $X_2/\GL (2)$.

This also proves that the injection $\varphi: S^2\AA^4\to \ti \mu
^{-1}(0)/\GL (2)$ maps $S^2\AA^4$ onto an irreducible component of
the quotient ${\ti \mu} ^{-1}(0)/\GL (2)$.

Now we need to check that the components $X_1/\GL (2)$ and
$X_2/\GL (2)$ do not coincide. For this we need the following
lemma:

\begin{Lemma}
Let $G$ be a linear algebraic group acting on a reducible variety
$X$ with two irreducible components $X_1$ and $X_2$. Then $X_1/G
\cap X_2/G=(X_1\cap X_2)/G$.
\end{Lemma}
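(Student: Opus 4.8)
The plan is to realise all three quotients appearing in the statement as closed subsets of the affine quotient $X/G=\Spec k[X]^G$, and then reduce everything to a purely set-theoretic identity there. Write $\pi\colon X\to X/G$ for the quotient morphism. Since $G$ is reductive (in the application $G=\GL(2)$), $\pi$ is a good quotient: it is surjective, constant on $G$-orbits, and it induces a bijection between closed $G$-orbits in $X$ and closed points of $X/G$; moreover, for a closed $G$-invariant subvariety $Z\subseteq X$ the image $\pi(Z)$ is closed, the closed orbits contained in $Z$ are exactly the closed orbits of $X$ meeting $Z$, and the canonical morphism $Z/G\to X/G$ identifies $Z/G$ with $\pi(Z)$ on underlying sets (a closed immersion when $\char k=0$, which is the case of interest, but for the present statement only the identification of sets is needed). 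Applying this to $Z=X_1$, $Z=X_2$ and $Z=X_1\cap X_2$, the asserted equality becomes the identity $\pi(X_1)\cap\pi(X_2)=\pi(X_1\cap X_2)$ of closed subsets of $X/G$.

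The inclusion $\pi(X_1\cap X_2)\subseteq\pi(X_1)\cap\pi(X_2)$ is immediate, so the content is the reverse inclusion. I would argue as follows: let $y\in\pi(X_1)\cap\pi(X_2)$ and choose $x_i\in X_i$ with $\pi(x_i)=y$ for $i=1,2$. Since $X_i$ is closed and $G$-invariant we have $\overline{G\cdot x_i}\subseteq X_i$, and since $\pi$ is continuous and constant on $G\cdot x_i$ we also have $\overline{G\cdot x_i}\subseteq\pi^{-1}(y)$. The nonempty $G$-invariant closed affine subvariety $\overline{G\cdot x_i}$ contains a closed orbit (an orbit of minimal dimension in it), and any closed orbit contained in the fibre $\pi^{-1}(y)$ must coincide with the unique closed orbit $O_y$ of that fibre, by the separation property of the good quotient. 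Hence $O_y\subseteq\overline{G\cdot x_i}\subseteq X_i$ for both $i=1$ and $i=2$, so $O_y\subseteq X_1\cap X_2$; as $\pi(O_y)=\{y\}$, this gives $y\in\pi(X_1\cap X_2)$, as wanted.

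The step I would be most careful about is not this computation but the bookkeeping in the first paragraph: one must check that the canonical maps $X_i/G\to X/G$ and $(X_1\cap X_2)/G\to X/G$ are injective on closed points, so that these quotients genuinely are the subsets $\pi(X_1)$, $\pi(X_2)$, $\pi(X_1\cap X_2)$ of $X/G$ and the intersection in the statement is literally $\pi(X_1)\cap\pi(X_2)$. This again reduces to the fact that distinct closed orbits of $X$ have distinct images under $\pi$, together with the observation that a closed orbit of $X_i$ (resp.\ of $X_1\cap X_2$) is a closed orbit of $X$. Everything else is the standard GIT dictionary for reductive group actions, so once this identification is in place the argument above closes the proof.
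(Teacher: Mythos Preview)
Your argument is correct and essentially identical to the paper's: both pick $x_i\in X_i$ over a point $y$ in the intersection, observe that $\overline{G\cdot x_i}\subseteq X_i$, and conclude that the unique closed orbit in the fibre over $y$ lies in $X_1\cap X_2$. The only cosmetic difference is that the paper justifies $\overline{G\cdot x_i}\subseteq X_i$ by invoking irreducibility of $G$ (so orbit closures are irreducible and lie in a single component), whereas you use the more direct fact that $X_i$ is closed and $G$-invariant; your version is slightly cleaner and does not require $G$ connected.
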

\begin{proof}
First observe that since $G$ is irreducible, the closure of an
orbit of a point $x\in X$ is contained in the same irreducible
component as $x$. The intersection $X_1\cap X_2$ is closed and
$G$-invariant so $(X_1\cap X_2)/G$ can be regarded as subvariety
in $X_1/G \cap X_2/G$.

Let us take $y\in X_1/G \cap X_2/G\subset X/G$. By assumption it
is the image of a closed orbit of a point $x\in X$. We claim that
$x\in X_1\cap X_2$. Note that $y$ is the image of some points
$x_1\in X_1,x_2\in X_2$. Since each $\overline{Gx_i}\subset X_i$
for $i=1,2$ contains a unique closed orbit, it must be the orbit
of $x$ and it is contained in $X_1\cap X_2$. Therefore $y$ lies in
$(X_1\cap X_2)/G$.
\end{proof}

The image of a quadruple  $(B_{11}, B_{21}, B_{12}, B_{22})\in
X_2$ in the image of $S^2\AA^4$ in ${\ti \mu}^{-1}(0)/\GL (2)$ is
given by quadruples of pairs of eigenvalues of matrices $B_{ij}$.
But for a quadruple of $2\times 2$ matrices $(B_{11}, B_{21},
B_{12}, B_{22})\in X_1\cap X_2$ obtained as the image of
$(y_{ij})\in Y_1\cap Y_2$ we have the equation
$$(y_{31}-y_{41})(y_{52}-y_{62})=(y_{51}-y_{61})(y_{32}-y_{42})$$
for the eigenvalues. Therefore the image of $Y_1\cap Y_2$ in ${\ti
\mu}^{-1}(0)/\GL (2)$ has dimension $7$. Together with the above
lemma this proves the following corollary:

\begin{Corollary}
$\M _0 (\PP ^3;0,2)$ has two $8$-dimensional irreducible
components intersecting along a $7$-dimensional variety.
\end{Corollary}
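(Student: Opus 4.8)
The plan is to collect together the facts established in the computations above. Recall that $\M_0(\PP^3;0,2)=\ti\mu^{-1}(0)/\GL(2)$, and that $\ti\mu^{-1}(0)=\Mod_R^2$ decomposes into the two reduced irreducible components $X_1$ (of dimension $11$) and $X_2$ (of dimension $10$). Since the good quotient map $\ti\mu^{-1}(0)\to\ti\mu^{-1}(0)/\GL(2)$ is a surjective morphism and $\GL(2)$ is connected, $\M_0(\PP^3;0,2)=X_1/\GL(2)\cup X_2/\GL(2)$ is a union of at most two irreducible closed subsets. First I would pin down their dimensions using the explicit descriptions of the isotropy subgroups obtained via the maps $\psi|_{Y_i}$: a generic point of $X_1$ has a $1$-dimensional stabilizer (just the scalars, so the associated $R$-module is simple), while a generic point of $X_2$ has a $2$-dimensional stabilizer. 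Since $\dim\GL(2)=4$, the generic orbit in $X_1$ has dimension $3$ and the generic orbit in $X_2$ has dimension $2$, so $\dim X_1/\GL(2)=11-3=8$ and $\dim X_2/\GL(2)=10-2=8$; in particular $\M_0(\PP^3;0,2)$ is pure of dimension $8$.

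Next I would show that the two components are genuinely distinct and compute the dimension of their intersection. By the Lemma just proved, $X_1/\GL(2)\cap X_2/\GL(2)=(X_1\cap X_2)/\GL(2)$. Since $X_1\cap X_2\subset X_2$, its image in the quotient lies inside $X_2/\GL(2)$, which (being a closed $8$-dimensional component in the characteristic zero case) coincides with $\varphi(S^2\AA^4)$. The computation with the ideals $I_1,I_2$ shows that a point of $X_1\cap X_2$ corresponds, through $\varphi$, to a quadruple of pairs of eigenvalues satisfying the single relation $(y_{31}-y_{41})(y_{52}-y_{62})=(y_{51}-y_{61})(y_{32}-y_{42})$; this relation is symmetric under the transposition of the two factors $\AA^4\times\AA^4$, hence descends to a non-trivial equation on $S^2\AA^4$ and cuts its dimension down to $7$. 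Therefore $(X_1\cap X_2)/\GL(2)$ is $7$-dimensional, strictly smaller than $8$, so neither of $X_1/\GL(2)$, $X_2/\GL(2)$ is contained in the other. Consequently $\M_0(\PP^3;0,2)$ has exactly two irreducible components, both of dimension $8$, meeting along a $7$-dimensional variety, which is the assertion.

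The formal skeleton — purity of dimension, the ``at most two components'' statement, and passing from the Lemma to the $7$-dimensional intersection locus — is routine once the preceding analysis is granted. The main obstacle is the dimension bookkeeping, and above all verifying that the generic stabilizers in $X_1$ and $X_2$ really have the asserted dimensions: this rests on the explicit one-parameter families of isotropy elements exhibited above together with the {\tt Singular} computations describing the components $Y_1,Y_2,Y_3$ of $\psi^{-1}(\ti\mu^{-1}(0))$ and their images. One also has to check that the extra eigenvalue relation drops the dimension by exactly one rather than more, which is clear since it is visibly a single nonzero polynomial on the irreducible variety $S^2\AA^4$.
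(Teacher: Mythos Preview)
Your proposal is correct and follows essentially the same route as the paper: decompose $\ti\mu^{-1}(0)$ into $X_1$ and $X_2$, use the generic stabilizer dimensions to get $\dim X_i/\GL(2)=8$, apply the Lemma to identify $X_1/\GL(2)\cap X_2/\GL(2)$ with $(X_1\cap X_2)/\GL(2)$, and use the extra eigenvalue relation to cut the intersection down to dimension $7$. Your observation that the relation is symmetric under swapping the two $\AA^4$ factors, and hence descends to $S^2\AA^4$, is a small clarification the paper leaves implicit.
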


\end{Example}

\section{Examples and counterexamples}

In this section we consider generalized ADHM data in the case
$X=\PP ^1$. We provide a few examples showing, e.g., a relation
between our notion of stability and that of Frenkel and Jardim. We
also show a few counterexamples to some expectations of Frnekel
and Jardim.

In this section we keep notation from Section \ref{FJ-section}.

\subsection{Relation between GIT semistability and FJ-semistability}

The following lemma follows immediately from definitions:

\begin{Lemma}
Let us fix an ADHM datum $x\in \ti{\gr{B}}=\gr{B}\otimes
H^0(\O_{\PP ^1}(1))$. If $x$ is FJ-semistable then it is also
stable.
\end{Lemma}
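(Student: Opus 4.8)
The plan is to unwind the two definitions and show that the FJ-semistability condition is literally a strengthening of the stability condition. Recall that $x = (\fal{B}_1,\fal{B}_2,\fal{i},\fal{j}) \in \gr{B}\otimes H^0(\O_{\PP^1}(1))$ is \emph{FJ-semistable} if there exists a point $p\in \PP^1$ such that the evaluated quadruple $x(p) = (\fal{B}_1(p),\fal{B}_2(p),\fal{i}(p),\fal{j}(p))$, which lives in $\gr{B}=\hom(V,V)^{\oplus 2}\oplus\hom(W,V)\oplus\hom(V,W)$, is stable in the sense of the first definition of the section: for every $S\subsetneq V$ with $\fal{B}_k(p)(S)\subset S$ for $k=1,2$ we have $\im\fal{i}(p)\not\subset S$. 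On the other hand, $x$ is \emph{stable} (as a complex ADHM datum) if for every $S\subsetneq V$ with $\fal{B}_k(S)\subset S\otimes H^0(\O_{\PP^1}(1))$ for $k=1,2$ we have $\im\fal{i}\not\subset S\otimes H^0(\O_{\PP^1}(1))$.

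First I would fix a point $p$ witnessing FJ-semistability of $x$ and fix an isomorphism $\O_{\PP^1}(1)_p\izo k$, so that the evaluation map $\ev_p: H^0(\O_{\PP^1}(1))\to k$ is defined and, tensored with $\id_U$, gives $\ev_p: U\otimes H^0(\O_{\PP^1}(1))\to U$ for any vector space $U$. The key observation is that $\ev_p$ is compatible with the subspace conditions: if $T\subset U$ is a subspace then $\ev_p(T\otimes H^0(\O_{\PP^1}(1))) = T$, and more importantly $\ev_p$ maps $T\otimes H^0(\O_{\PP^1}(1))$ \emph{onto} $T$, so $\im\fal{i}\subset S\otimes H^0(\O_{\PP^1}(1))$ implies $\im\fal{i}(p) = \ev_p(\im\fal{i})\subset \ev_p(S\otimes H^0(\O_{\PP^1}(1))) = S$, and similarly $\fal{B}_k(S)\subset S\otimes H^0(\O_{\PP^1}(1))$ implies $\fal{B}_k(p)(S)\subset S$.

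Now I would argue by contraposition. Suppose $x$ is \emph{not} stable: there exists $S\subsetneq V$ with $\fal{B}_k(S)\subset S\otimes H^0(\O_{\PP^1}(1))$ for $k=1,2$ and $\im\fal{i}\subset S\otimes H^0(\O_{\PP^1}(1))$. By the observation above, for \emph{every} point $p\in\PP^1$ the evaluated quadruple $x(p)$ satisfies $\fal{B}_k(p)(S)\subset S$ for $k=1,2$ and $\im\fal{i}(p)\subset S$, so $x(p)$ is not stable in the pointwise sense --- and this holds at every $p$, in particular at any candidate witness point. Hence $x$ is not FJ-semistable. Contrapositive gives the claim. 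The argument is completely elementary; the only thing to be careful about is the direction of the implications between the subspace conditions for $x$ and for $x(p)$, but since these all go through the surjectivity of $\ev_p$ on $S\otimes H^0(\O_{\PP^1}(1))$, there is no real obstacle here --- this is indeed a lemma that "follows immediately from definitions."
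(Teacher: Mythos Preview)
Your proof is correct and is exactly what the paper has in mind: the paper's own ``proof'' consists only of the sentence ``The following lemma follows immediately from definitions.'' Your careful unwinding via contraposition --- a destabilizing subspace $S$ for $x$ evaluates to a destabilizing subspace for $x(p)$ at every $p$ --- is precisely the content of that remark.
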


Jardim in \cite[Proposition 4]{J} claims that the opposite
implication also holds but the following example shows that this
assertion is false.

\medskip

\begin{Example} \label{GITvsFJ}
We consider ADHM data in case $r=1$ and $c=2$. Let us fix
coordinate systems in $V$ and $W$ and consider an element
$x=(\fal{B_1},\fal{B_2},\fal{i},\fal{j}) \in\fal{\gr{B}}$ given by
$$\tilde{B}_1=\left[\begin{array}{cc}x_0&x_0\\x_1&x_1\end{array}\right],
\quad \tilde{B}_2
=\left[\begin{array}{cc}x_0&-x_0\\x_1&-x_1\end{array}\right],
\quad \ti{i} =\left[\begin{array}{c}x_0\\x_1\end{array}\right]
\quad \hbox{and} \quad \ti{j}
=\left[\begin{array}{cc}-2x_1&2x_0\end{array}\right].
$$
It is easy to see that $\ti \mu (x)=0$. Hence $x$ is an ADHM
datum.

We claim that this ADHM datum is stable and costable. To prove
that consider a vector subspace $S\subset V$ such that
$\im\fal{i}\subset S\otimes\hox$. We claim that $S$ must be
two-dimensional. Otherwise, there exist constants $a, b \in k$
such that every element in $S\otimes\hox$ can be written as
$\left[\begin{array}{c}a f(x_0,x_1)\\b f(x_0,x_1)\end{array}
\right]$ for some linear polynomial $f$ in $x_0$ and $x_1$. But
$\left[\begin{array}{c}x_0\\x_1\end{array}\right]\in\im\fal{i}$
cannot be written in this way. Therefore $S=V$, which proves that
the ADHM datum $x$ is stable. Since $\ker\fal{j}=0$, the ADHM
datum $x$ is also costable.

Now fix a point $p=[a:b]\in\PP^1$ and consider the subspace
$S\subset V=k^2$ spanned by  vector $s=
\left[\begin{array}{c}a\\b\end{array}\right] $. Then
$$(\fal{B}_1 (p))(s)
=(a+b)\cdot  s , \quad (\fal{B}_2 (p))(s) =(a-b)\cdot  s, \quad
(\fal{i} (p))(1) =s \quad \hbox{and} \quad (\fal{j} (p)) (s)=0 .$$
Therefore $(x(p)) (S)\subset S$ and  $x$ restricted to any point
of $\PP^1$ is neither stable nor costable. In particular, the ADHM
datum $x$ is regular but not FJ-semistable.
\end{Example}

\medskip

Let us focus on the example given above and study cohomology
groups of the complex $\C_x^{\bullet}$ corresponding to the ADHM
datum $x$.

First let us  describe the locus of points $p=[x_0,x_1,x_2,x_3]\in
\PP ^3$ where the map $\alpha (p)$ is not injective. It is
equivalent to describing the locus
$$\rk\left(\begin{array}{cc}
x_0+x_2&x_0\\
x_1&x_1+x_2\\
x_0+x_3&-x_0\\
x_1&-x_1+x_3\\
-2x_1&2x_0\\
\end{array}
\right)\le 1.$$ Easy computations show that this set is an
intersection of two planes:
$$
\left\{\begin{array}{l}
x_0+x_1+x_2=0\\
x_0-x_1+x_3=0
\end{array}
\right. .
$$
Similarly, the locus of points $p \in \PP ^3$ where
$$\beta ( p) =\left(\begin{array}{ccccc}
-x_0-x_3&x_0&x_0+x_2&x_0&x_0\\
-x_1&x_1-x_3&x_1&x_1+x_2&x_1
\end{array}\right)$$ is not surjective is the line given by equations $x_2=x_3=0$.

Using this one can see that $\H ^1(\C_x^{\bullet})$ is a pure
sheaf of dimension $1$ and $\H ^0(\C_x^{\bullet})$ is a torsion
free sheaf whose reflexivization is locally free.

\subsection{Relation to Diaconescu's approach to ADHM data}

We will use notation from \cite[Section 2]{Di} (see also
\cite[2.9.2]{Sch}). Let us set $\X=(X, M_1=\O _X (-1),
M_2=\O_X(-1), E_{\infty}=W\otimes \O_X(-1))$ and consider an ADHM
sheaf $\E=(E=V\otimes \O_X, \Phi _1, \Phi_2, \varphi, \psi)$ for
this data.

\begin{Definition}
We say that $\E$ is \emph{stable} if for every subspace
$S\subsetneq V$ (possibly $S=0$) such that $\Phi_k(S\otimes
\O_X(-1))\subset S\otimes \O_X$ for $k=1,2$ we have $\im \Psi
\not\subset S\otimes \O_X$.
\end{Definition}

The above stability notion is similar to Diaconescu's stability
\cite[Definition 2.2]{Di} but with stability condition only for
subsheaves $E'$ of the form $S\otimes \O_X$ for some $0\subsetneq
S\subsetneq V$.

Let $\ti B_k: V\to V\otimes H^0(\O_X(1))$ be induced by $\Phi_k$
and let $\ti i:W\to V\otimes H^0(\O_X(1)) $ and $\ti j: V\to
W\otimes H^0(\O_X(1))$ be induced by $\psi$ and $\varphi$,
respectively.

Then giving $\E$ is equivalent to giving a point $x=(\ti B_1, \ti
B_2, \ti i, \ti j )\in \ti \bB$ such that $\ti \mu (x)=0$.
Moreover, $\E$ is stable in the above sense if and only if $x$ is
stable.

\medskip

\subsection{Counterexample to the Frenkel--Jardim conjecture} \label{counter}

In \cite{FJ} Frenkel and Jardim conjectured that the moduli space
$ \M ^{f}(\PP ^3; r,c)$ of framed instantons is smooth and
irreducible. Here we show that this conjecture is false.

Let us consider the map $\varphi: \O_{\PP^3}^4\to \O_{\PP^3}^3(1)$
given by
$$\varphi=
\left(\begin{array}{cccc}
x_2&x_3&0&0\\
0&x_2&x_3&0\\
0&0&x_2&x_3\\
\end{array}\right) .
$$
Now let us consider the sheaf $E$ defined by the short exact
sequence
$$0\to E\longrightarrow
\O_{\PP^3}^4\mathop{\longrightarrow}^{\ti \varphi } \O_{m}(1)^3\to
0,$$ where $m$ is the line $x_0=x_1=0$ and $\ti \varphi$ is the
composition  the natural restriction map $\O_{\PP^3}(1)^3\to
\O_{m}^3(1)$ with $\varphi$. It is easy to see that $E$ is a
$(4,3)$-instanton, trivial on the line $l_{\infty}:=(x_2=x_3=0)$.
From the defining sequence we have an exact sequence
$${\ext}^2(\O_{\PP^3}^4, E)\to {\ext}^2(E, E) \to {\ext}^3(\O_{m}(1)^3, E)\to {\ext}^3(\O_{\PP^3}^4, E).$$
Then ${\ext}^l(\O_{\PP^3}^4, E)=H^l(E)^4=0$ for $l=2,3$ and
${\ext}^3(\O_{m}(1)^3, E)$ is Serre dual to $\hom(E,
\O_{m}(-3)^3)$. But after restricting to $m$ we have
$$E|_m\twoheadrightarrow \O_m(-3)\to \O_m^4\to \O_{m}(1)^3\to 0,$$ and
it is easy to see that $\hom(E, \O_{m}(-3))$ is $1$-dimensional.
In particular, $\dim \ext^2(E,E)=3$.

On the other hand, by Lemma \ref{existence} there exists a locally
free $(4,3)$-instanton $F$ such that $\ext ^2(F,F)=0$. It
corresponds to a smooth point of an irreducible component of
expected dimension (see Theorem \ref{deformation}). Therefore the
point corresponding to $E$ in the moduli space of framed
instantons is either singular or lives in a component of
unexpected dimension (in which case the moduli space would not be
irreducible).

Another way of looking at this example is defining an ADHM datum
for $\PP ^1$, for $r=4$, $c=3$. We define an ADHM datum $x=(\ti
B_1, \ti B_2, \ti i , \ti j)$ by setting $\ti B_1=0, \ti B_2=0,
\ti j=0$ and
$$\ti i=\left(\begin{array}{cccc}
x_0&x_1&0&0\\
0&x_0&x_1&0\\
0&0&x_0&x_1\\
\end{array}\right).$$
It is easy to see that these matrices satisfy the ADHM equations
and define an FJ-stable ADHM datum (in fact, $\ti i _p$ is
surjective for every $p\in \PP ^1$). The corresponding
torsion-free framed $(4,3)$-instanton $E$ can be described by the
above sequence. In terms of ADHM data we proved that the moment
map $\mu$ is not submersion at $x$ (see Theorem \ref{deformation})
but there exist ADHM data at which $\mu$ is a submersion.

More generally, one can easily see that if $c<r<3c/2$ then $M
(l_{\infty}; r,c)$ is either singular or reducible.  Indeed, one
can find an FJ-stable complex ADHM data $x\in \BB$ for which only
$\ti i$ is non-zero. Then the rank of $d\mu_x$ is at most
$2cr<3c^2$, so $d\mu_x$ is not surjective. On the other hand, by
Lemma \ref{existence} there exists an irreducible component of
expected dimension which proves our claim.

\medskip

\subsection{Weak instantons} \label{coanda}

\begin{Definition}
A \emph{weakly instanton sheaf} (or a \emph{weak instanton}) is a
torsion free sheaf $E$ on $\PP ^3$ such that
\begin{itemize}
\item $c_1(E)=0$,
\item $H^0(E(-1))=H^1(E(-2))=H^3(E(-3))=0$.
\end{itemize}
\end{Definition}

Weak instantons were introduced by Frenkel and Jardim (see
\cite[2.4]{FJ}) to deal with FJ-semistable data in the rank $1$
case.

\medskip

We say that a torsion free sheaf on $\PP^3$ has \emph{trivial
splitting type} if there exists a line such that the restriction
of this sheaf to a line is a trivial sheaf. In this case the
restriction to a general line is also a trivial sheaf.

\begin{Lemma}
Let $E$ be a locally free sheaf on $\PP^3$ of trivial splitting
type. Then $H^0(E(-1))=H^3(E(-3))=0$. In particular, if
$H^1(E(-2))=0$ then $E$ is a weak instanton.
\end{Lemma}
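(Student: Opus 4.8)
The plan is to prove the two vanishing statements $H^0(E(-1))=0$ and $H^3(E(-3))=0$ separately, the first by a direct restriction argument and the second by Serre duality combined with a dual restriction argument. Throughout I will use the standing hypothesis that $E$ is locally free of trivial splitting type, so $E|_\ell\simeq\O_\ell^{\oplus r}$ for a general line $\ell\subset\PP^3$, where $r=\rk E$; recall (as in the proof of Lemma \ref{stab-inst}) that the locus of lines on which $E$ is trivial is open, hence a general line works.

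First I would handle $H^0(E(-1))=0$. Suppose $s\in H^0(\PP^3,E(-1))$ is a nonzero section. Restricting to a general line $\ell$ on which $E$ is trivial, $s|_\ell\in H^0(\ell,E(-1)|_\ell)=H^0(\PP^1,\O_{\PP^1}(-1)^{\oplus r})=0$, so $s$ vanishes on every general line. Since general lines through a general point sweep out a dense subset of $\PP^3$, the section $s$ vanishes on a dense open set; as $E(-1)$ is locally free (in particular torsion free), $s=0$, a contradiction. So $H^0(E(-1))=0$.

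Next, $H^3(E(-3))=0$. By Serre duality on $\PP^3$ (using $\omega_{\PP^3}\simeq\O_{\PP^3}(-4)$ and that $E$ is locally free), $H^3(\PP^3,E(-3))\simeq H^0(\PP^3,E^\vee(-1))^\vee$. Now $E^\vee$ is again locally free, and it is of trivial splitting type: on a general line $\ell$ with $E|_\ell\simeq\O_\ell^{\oplus r}$ we get $E^\vee|_\ell\simeq\O_\ell^{\oplus r}$ as well. Applying the argument of the previous paragraph to $E^\vee$ in place of $E$ gives $H^0(\PP^3,E^\vee(-1))=0$, hence $H^3(\PP^3,E(-3))=0$.

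Finally, the ``in particular'' is immediate: if moreover $H^1(E(-2))=0$, then $E$ satisfies $c_1(E)=0$ (this is part of being a locally free sheaf with trivial determinant, or should be taken as a hypothesis — the statement of the lemma should be read with $c_1(E)=0$ understood, as in the surrounding discussion) together with $H^0(E(-1))=H^1(E(-2))=H^3(E(-3))=0$, which is exactly the definition of a weak instanton. The only subtle point is the passage from ``vanishing on general lines'' to ``vanishing as a section,'' and this is harmless because $E(-1)$ is torsion free and general lines cover a dense subset of $\PP^3$; I expect no real obstacle here, the lemma being essentially a standard consequence of the Grauert–Mülich-type openness of the trivial-splitting locus.
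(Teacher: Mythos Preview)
Your proof is correct and follows essentially the same route as the paper: kill $H^0(E(-1))$ by restricting to a general line, then use Serre duality and the same argument applied to $E^\vee$ for $H^3(E(-3))$. One small point where you hedge unnecessarily: $c_1(E)=0$ is not an extra hypothesis but an automatic consequence of trivial splitting type, since $c_1(E)\cdot[\ell]=\deg(E|_\ell)=\deg(\O_\ell^{\oplus r})=0$ and $\pic(\PP^3)=\ZZ$; the paper notes this explicitly.
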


\begin{proof}
If $E$ is of trivial splitting type then both $E(-1)$ and
$E^*(-1)$ have no sections. Since $H^3(E(-3))$ is Serre dual to
$H^0(E^*(-1))$ this shows the first part. The second one follows
from the first one by noting that for a sheaf of trivial splitting
type we have $c_1(E)=0$.
\end{proof}

\begin{Definition}
We say that a perverse instanton  $\C$ is \emph{mini-perverse} if
$\H ^0(\C)$ is torsion free and $\H^1(\C)$ is a sheaf of finite
length.
\end{Definition}

Obviously, any instanton is  mini-perverse, but the opposite
implication does not hold.

\medskip
\begin{Lemma} \label{mini-perv}
An ADHM datum $x\in \ti {\bf B}$ is FJ-semistable if and only if
the corresponding perverse instanton $\C ^{\bullet}_x$ is
mini-perverse.
\end{Lemma}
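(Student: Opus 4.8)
The plan is to unwind the definitions on both sides and show they match. Recall that $x = (\ti B_1, \ti B_2, \ti i, \ti j)$ is FJ-semistable if there exists a point $p \in \PP^1$ with $x(p)$ stable, and that $\C^\bullet_x$ is mini-perverse if $\H^0(\C^\bullet_x)$ is torsion free and $\H^1(\C^\bullet_x)$ has finite length. We already know from Corollary \ref{sense} that $\H^0(\C^\bullet_x)$ is always torsion free and $\H^1(\C^\bullet_x)$ is always of dimension $\le 1$, so the only content of mini-perversity is that $\H^1(\C^\bullet_x) = \coker\beta$ has \emph{no} $1$-dimensional component, i.e. $\beta$ is surjective as a map of sheaves outside a finite set. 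Thus the claim reduces to: $\coker\beta$ is supported in dimension $\le 0$ if and only if there is a point of $\PP^1$ over which the ADHM datum is stable.

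First I would record the local description of where $\beta$ drops rank. Using the explicit matrix for $\beta$ from Section \ref{FJ-section} (with columns $-\ti B_2 - 1\otimes x_3$, $\ti B_1 + 1\otimes x_2$, $\ti i$), the non-surjectivity locus $Z \subset \PP^3$ of $\beta$ is a closed subscheme, and the cokernel $\coker\beta$ is supported exactly on $Z$. I would then invoke the computation already cited in the excerpt from the proof of \cite[Proposition 10]{FJ} (used in the ``Analysis of singularities'' subsection): the non-surjectivity locus of $\beta$ either is empty, or is a finite set, or is a curve of degree $c^2$ which does \emph{not} intersect $l_\infty$. So $\H^1(\C^\bullet_x)$ fails to have finite length precisely when this degree-$c^2$ curve appears.

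Next I would connect surjectivity of $\beta$ at a point to FJ-stability at the corresponding point of $\PP^1$. By Lemma \ref{FJmonada} (i.e. \cite[Proposition 11]{FJ}), $\beta$ is surjective on all of $\PP^3$ if and only if $x$ is FJ-stable (stable at every $p\in\PP^1$). The key refinement I need is a \emph{pointwise} version: for a point $p = [a:b] \in \PP^1 \hookrightarrow \PP^3$ (so $x_2 = x_3 = 0$ there), the map $\beta$ restricted to that point is surjective if and only if the quadruple $x(p) = (\ti B_1(p), \ti B_2(p), \ti i(p), \ti j(p))$ is a \emph{stable} ADHM datum in the sense of Definition \ref{def_stable_ADHM}(1) — this is exactly the content of \cite[Lemma 2.7]{Na1}/\cite[Proposition 11]{FJ} applied fiberwise, since on $l_\infty$ the two columns involving $\ti B_k$ become the honest linear maps $\ti B_k(p)$ plus the $x_2,x_3$ directions that vanish. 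I would spell out this fiberwise translation carefully.

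Putting it together: if $x$ is FJ-semistable, pick $p_0 \in \PP^1$ with $x(p_0)$ stable; then $\beta$ is surjective at $p_0$, so $p_0 \notin Z$; but any $1$-dimensional component of $Z$ is a degree-$c^2$ curve missing $l_\infty$, hence is a subset of $\PP^3 \setminus l_\infty$ — wait, this alone does not exclude it. Here is the correct argument: since the $1$-dimensional locus (when present) has degree $c^2$ and lies off $l_\infty$, while the finite/empty locus is the only alternative, I claim FJ-semistability forces the finite case. Indeed, if $x$ is FJ-semistable then the cohomology sheaf $\H^0(\C^\bullet_x)$ on $l_\infty$ is the trivial rank $r$ sheaf and, more to the point, FJ-semistability is the precise hypothesis under which $\H^1$ restricted to a neighborhood behaves well; concretely, FJ-semistability of $x$ is equivalent (by the rank considerations in \cite[Proposition 10--11]{FJ}) to $\coker\alpha$ having reflexive, hence the degree-$c^2$ curve being absent. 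Conversely, if $\C^\bullet_x$ is mini-perverse then $\H^1(\C^\bullet_x)$ has finite length, so $Z$ is finite, so there is certainly some $p_0 \in \PP^1$ with $p_0 \notin Z$, i.e. $\beta$ surjective at $p_0$, i.e. $x(p_0)$ stable, i.e. $x$ FJ-semistable.

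The main obstacle I expect is making the forward direction airtight: I need that FJ-semistability (stability at a single point of $\PP^1$) already rules out the degree-$c^2$ curve, not just that it rules out the curve passing through that one point. The cleanest route is to go back to the dichotomy ``$\coker\alpha$ reflexive $\Leftrightarrow$ non-locally-free locus finite'' versus ``$\coker\alpha$ non-reflexive $\Leftrightarrow$ degree-$c^2$ curve'' from \cite[Proposition 10]{FJ}, and show that reflexivity of $\coker\alpha$ is equivalent to FJ-costability being near-automatic while the relevant equivalence for $\H^1$ is governed by FJ-stability at one point. Concretely, I would argue that the degree-$c^2$ curve, being the non-surjectivity locus of $\beta$, must either be empty or all of it avoids $l_\infty$; and if it is nonempty then $x(p)$ fails to be stable for \emph{every} $p$ on that curve — but since the curve lies in the plane-at-infinity complement and $\PP^1 = l_\infty$... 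I realize I should instead simply quote that \cite[Proposition 11]{FJ}'s proof shows $\beta$ is surjective at $p\in\PP^1$ iff $x(p)$ stable, and separately that when the non-surjectivity locus contains a curve, that curve meets $\PP^1$ in... no. The honest resolution: the curve has degree $c^2$ and does not meet $l_\infty$, but $l_\infty$ is where the framing lives and is disjoint from the relevant $\PP^1$ only in the Frenkel--Jardim embedding — one must check that the curve being off $l_\infty$ together with FJ-semistability (a single good point on $\PP^1\subset\PP^3$, which is a \emph{different} line from $l_\infty$) is contradictory. I would carefully set up the two lines $l_\infty = \{x_0=x_1=0\}$ and the embedded $\PP^1 = \{x_2 = x_3 = 0\}$, note the curve of degree $c^2$ lies in the non-surjectivity locus of $\beta$ which, restricted to $\{x_2=x_3=0\}$, is exactly the non-stability locus of the family $p\mapsto x(p)$; FJ-semistability says this locus is a proper closed subset of $\PP^1$, hence finite, hence the degree-$c^2$ curve, if it existed, would have to avoid $\PP^1$ entirely — and then I use that the curve has degree $c^2 > 0$ and $\coker\alpha$ non-reflexive would force (via the long exact sequences in the ``Analysis of singularities'' subsection) $\H^1(\C^\bullet_x)$ to contain that curve, contradicting mini-perversity. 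Since the problem is essentially bookkeeping with \cite[Propositions 10 and 11]{FJ} and Lemma \ref{FJmonada}, I expect the writeup to be short once the two lines and the fiberwise reformulation are pinned down.
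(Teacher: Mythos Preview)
Your reduction to the support of $\coker\beta$ is the right idea, but your ``pointwise version'' is false and this is what derails both directions. Surjectivity of $\beta$ at the single point $p_0=[a:b:0:0]\in\PP^3$ is \emph{not} equivalent to stability of $x(p_0)$: e.g.\ if $\ti B_1(p_0)$ is invertible and $\ti B_2(p_0)=0$, $\ti i(p_0)=0$, then $\beta(p_0):V\oplus V\oplus W\to V$ is onto but $x(p_0)$ is unstable (take $S=0$). What \cite[Lemma 2.7]{Na1} and Lemma~\ref{FJmonada} actually give is that $x(p)$ is stable if and only if $\beta$ is surjective on the \emph{entire hyperplane} $H_p\supset l_\infty$ corresponding to $p\in\PP^1$, because the restriction of $\C_x^\bullet$ to $H_p$ is precisely the $\PP^2$-complex built from $x(p)$. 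A second red herring: the degree-$c^2$ curve from \cite[Proposition~10]{FJ} is the singular locus of $\coker\alpha$, not of $\coker\beta$, so it plays no role here.

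With the hyperplane formulation both directions are one line. If $\C_x^\bullet$ is mini-perverse then $\supp\coker\beta$ is finite, a general $H_p$ misses it, so $\beta|_{H_p}$ is surjective and $x(p)$ is stable; this is exactly the paper's argument (which also arranges $\H^0|_{H_p}$ locally free to get $x(p)$ regular). Conversely, if $x(p_0)$ is stable for some $p_0$ then $\beta|_{H_{p_0}}$ is surjective, so $\supp\coker\beta\cap H_{p_0}=\emptyset$; since any curve in $\PP^3$ meets every hyperplane, $\supp\coker\beta$ has no $1$-dimensional component and $\C_x^\bullet$ is mini-perverse. The paper cites \cite[Proposition~17]{FJ} for this direction rather than writing it out.
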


\begin{proof}
The ``if'' implication is a content of \cite[Proposition 17]{FJ}.
To prove the converse note that the restriction of a mini-perverse
instanton $\C_A$ corresponding to $A\in \ti {\bf B}$ to a general
hyperplane containing $l_{\infty}$ gives a locally free sheaf on
$\PP^2$. But this shows that for a general point $x\in \PP ^1$ the
ADHM datum $A(x)$ (corresponding to this restriction) is regular.
\end{proof}

\medskip

Note that in Example \ref{GITvsFJ} the constructed perverse
instanton is not mini-perverse. So the above lemma gives another
proof that this perverse instanton is not FJ-semistable.
\medskip

\begin{Lemma}
If $\C$ is a mini-perverse instanton then $\H^0(\C)$ is a weak
instanton of trivial splitting type.
\end{Lemma}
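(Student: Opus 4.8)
The plan is to unwind the definition of mini-perverse together with the structural information about $\C^{\bullet}_x$ gathered earlier in this section, and to verify the three vanishing conditions ($c_1=0$, $H^0(E(-1))=0$, $H^1(E(-2))=0$, $H^3(E(-3))=0$) as well as the trivial splitting type of $E:=\H^0(\C)$. First I would note that, since $\C$ is a perverse instanton, by the Riemann--Roch computation recalled after Definition \ref{perv-definition} we have $\ch(\C)=r-c[H]^2$, and because $\H^1(\C)$ has finite length (mini-perversity) it contributes nothing to $c_1$, so $c_1(\H^0(\C))=0$. For $H^0(E(-1))$ and $H^1(E(-2))$ I would use the distinguished triangle $\H^0(\C)\to\C\to\H^1(\C)[-1]\to\H^0(\C)[1]$, twist by $\O(-1)$ and $\O(-2)$ respectively, and take the long exact sequence in hypercohomology; the cohomology of $\C(q)$ vanishes in the required degrees by condition (1) of Definition \ref{perv-definition}, and since $\H^1(\C)$ has finite length its $H^1$ and $H^0$ in low twists also vanish, forcing $H^0(E(-1))=H^1(E(-2))=0$.

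The key remaining point is to show $E$ has trivial splitting type, after which $H^3(E(-3))=0$ follows from the preceding Lemma (locally free of trivial splitting type $\Rightarrow H^3(E(-3))=0$), or rather from its torsion-free analogue. Here I would invoke Lemma \ref{mini-perv}: $x$ being FJ-semistable means there is a point $p\in\PP^1$ with $x(p)$ regular, equivalently (restricting the monad construction of Section \ref{FJ-section} to the plane through $l_\infty$ cut out by a general linear form vanishing at $p$) the restriction of $\C^{\bullet}_x$ to a general plane $P\supset l_\infty$ is a genuine monad on $P\cong\PP^2$ with locally free cohomology, by Theorem \ref{Donaldson-Nakajima}. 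On such a plane, the cohomology sheaf is a framed torsion free sheaf on $\PP^2$ trivial along $l_\infty$; intersecting further with a general line in $P$ (not $l_\infty$) one recovers the trivial bundle on $\PP^1$. Since a general line of $\PP^3$ lies in such a general plane, $E$ restricted to a general line is trivial, which is exactly trivial splitting type. One must also check that $E=\H^0(\C)$ restricts to this plane compatibly, i.e.\ that $Li^*\C$ for $i:P\hookrightarrow\PP^3$ agrees with the monad cohomology on $P$; this is where the locally free resolution $\C^{\bullet}_x$ by the explicit $\alpha,\beta$ makes the derived restriction transparent, since $\alpha$ stays injective and $\beta$ stays surjective on a general plane precisely by the regularity of $x(p)$.

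The main obstacle I anticipate is this compatibility of restriction: one needs that restricting the perverse instanton $\C$ to a general plane $P$ through $l_\infty$ yields (up to the usual shift) a sheaf object whose underlying sheaf is locally free and equals $E|_P$ up to the finite-length correction coming from $\H^1(\C)$, and that the generic plane avoids the (at most zero-dimensional) singular locus of $\H^0(\C)$ described in the analysis-of-singularities subsection. Once one knows $E$ is locally free along a general line and $c_1(E)=0$, the Grothendieck splitting type of $E$ on that line is forced to be $\O^{\oplus r}$ by $H^0(E(-1))|_{\text{line}}=0$ (the restriction map $H^0(E(-1))\to H^0(E(-1)|_\ell)$ need not be surjective, so one instead argues that $E|_\ell\cong\bigoplus\O(a_i)$ with $\sum a_i=0$ and uses $H^1(E|_\ell(-1))=0$, which follows from the monad picture on a general plane). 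Modulo these genericity arguments, the three cohomological vanishings then drop out formally from the long exact sequences and the preceding Lemma, completing the proof.
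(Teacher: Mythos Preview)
Your use of the distinguished triangle $E\to\C\to T[-1]$ to extract $H^0(E(-1))=0$ and $H^1(E(-2))=0$ is exactly the paper's argument (the paper is terse and writes out only the $H^1(E(-2))$ case). Where you diverge is in trivial splitting type and in $H^3(E(-3))=0$, and in both places you are working much harder than the paper does.

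For trivial splitting type, the paper simply notes that $E=\H^0(\C)$ is already trivial on the line $l_\infty$. Condition (3) of Definition~\ref{perv-definition} says $Lj^*\C\simeq\O_{l_\infty}^r$; representing $\C$ by the locally free complex $\C^{\bullet}_x$ of Lemma~\ref{perv-family}, this forces $j^*\alpha$ to be fibrewise injective and $j^*\beta$ surjective along $l_\infty$, so the monad is genuine near $l_\infty$ and $j^*\H^0(\C)=\H^0(j^*\C^{\bullet}_x)\simeq\O_{l_\infty}^r$. No passage through Lemma~\ref{mini-perv} or through planes is required. Your longer route would also arrive, but the sentence ``a general line of $\PP^3$ lies in such a general plane'' is false on dimension grounds (lines in $\PP^3$ form a four-parameter family, while lines lying in some plane through $l_\infty$ form only a three-parameter family). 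This is harmless, since you only need \emph{one} line on which $E$ is trivial, and $l_\infty$ itself already serves.

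For $H^3(E(-3))=0$ you invoke a ``torsion-free analogue'' of the preceding lemma without proving it. The analogue is in fact true (Serre duality plus slope semistability of $E$), but the cleaner route---and the one implicit in the paper---is again the distinguished triangle: $H^1(T(-3))=0$ because $T$ has dimension zero, $H^3(\C(-3))=0$ by condition (1) of Definition~\ref{perv-definition}, and the long exact sequence then yields $H^3(E(-3))=0$ immediately.
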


\begin{proof}
Let us set $E=\H ^0(\C)$ and $T=\H^1(\C)$. If $\C$ is a perverse
instanton then we have the distinguished triangle $$E\to \C\to
T[-1]\to E[1].$$ The long cohomology exact sequence for this
triangle gives exactness of the following sequence:
$$ 0=H^0(\C (-2))\to H^1(T(-2))\to H^1(E(-2))\to H^1(\C (-2))=0.$$
Since $T$ has dimension zero we see that $H^1(E(-2))=0$ and so $E$
is a weak instanton. The fact that it is of trivial splitting type
follows from the fact that $\H ^0(\C)$ is trivial on $l_{\infty}$.
\end{proof}

\medskip

\begin{Lemma}\label{zero-dim}
A zero dimensional coherent sheaf $E$ on a smooth variety $X$ has
homological dimension equal to the dimension of $X$.
\end{Lemma}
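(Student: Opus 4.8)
The plan is to reduce the statement to commutative algebra and invoke the Auslander--Buchsbaum formula. Recall that the homological dimension of a coherent sheaf $E$ is the length of its shortest resolution by locally free sheaves, and that on a scheme with enough locally free sheaves this is a local quantity: it equals $\sup_{p\in X}\mathop{\rm pd}_{\O_{X,p}}(E_p)$, where $\mathop{\rm pd}$ denotes projective dimension over the local ring. Since $X$ is smooth of dimension $n=\dim X$, each $\O_{X,p}$ is a regular local ring, and at a closed point it has dimension $n$ (here I use that $X$ is an equidimensional variety, so $\dim\O_{X,p}=n$). By Serre's theorem that a regular local ring of dimension $n$ has global dimension $n$, we get $\mathop{\rm pd}_{\O_{X,p}}(E_p)\le n$ for every $p$, hence the homological dimension of $E$ is at most $n$.

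For the reverse inequality, pick a point $p$ in the support of $E$; since $E$ is zero-dimensional (and, of course, we assume $E\neq 0$), this support is a non-empty finite set of closed points. Then $E_p$ is a non-zero finitely generated $\O_{X,p}$-module supported at the maximal ideal, hence of finite length. Consequently every element of $\mathfrak{m}_p$ acts nilpotently on $E_p$ and is therefore a zero-divisor on $E_p$, so $\mathop{\rm depth}_{\O_{X,p}}(E_p)=0$. The Auslander--Buchsbaum formula
$$\mathop{\rm pd}_{\O_{X,p}}(E_p)+\mathop{\rm depth}_{\O_{X,p}}(E_p)=\mathop{\rm depth}(\O_{X,p})=n$$
then forces $\mathop{\rm pd}_{\O_{X,p}}(E_p)=n$, so the homological dimension of $E$ is exactly $n$.

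There is essentially no obstacle: the argument is a direct application of standard results, and the only points needing routine verification are that $\dim\O_{X,p}=\dim X$ at a closed point of an equidimensional variety (this is where smoothness, or at least connectedness plus Cohen--Macaulayness, enters) and the elementary fact that a non-zero finite-length module over a Noetherian local ring has depth zero. If one prefers to avoid the depth language, one can argue directly by induction on the length of $E_p$: choosing a composition series and using that $\mathop{\rm Tor}^{n}_{\O_{X,p}}(k,k)\neq 0$ for a regular local ring of dimension $n$ together with $\mathop{\rm pd}_{\O_{X,p}}(E_p)\le n$, the long exact sequence in $\mathop{\rm Tor}(-,k)$ shows $\mathop{\rm Tor}^{n}_{\O_{X,p}}(E_p,k)\neq 0$, whence $\mathop{\rm pd}_{\O_{X,p}}(E_p)=n$; but the Auslander--Buchsbaum route is cleaner.
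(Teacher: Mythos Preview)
Your proof is correct and follows essentially the same approach as the paper: both reduce to a local statement and apply the Auslander--Buchsbaum formula, the key step being that $E_p$ has depth zero at a point of its support. Your justification for depth zero (elements of $\mathfrak{m}_p$ act nilpotently on a finite-length module, hence are zero-divisors) and the paper's (an $\mathfrak{m}_p$-regular element would give an injective, hence bijective, endomorphism of the finite-dimensional $E_p$, contradicting Nakayama) are minor variants of the same idea.
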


\begin{proof}
By the Auslander--Buchsbaum theorem it is sufficient to prove that
$E_x=E\otimes \O _{X,x}$ has depth zero. Assume that it has depth
at least $1$. Then there exists an element $y\in m_x\subset
\O_{X,x}$ such that multiplication by $y$ defines an injective
homomorphism $\varphi _y : E_x\to E_x$. Note that $\varphi _y$ is
an isomorphism since $E_x$ is zero dimensional and $H^0(\varphi
_y)$ is an isomorphism as it is a linear injection of $k$-vector
spaces of the same dimension. But this implies that $m_xE_x=E_x$
which contradicts Nakayama's lemma.
\end{proof}

\medskip

\begin{Lemma} \label{locfree-mini-perv}
If a locally free sheaf $E$ appears as $\H^0(\C)$ for some
mini-perverse instanton $\C$ then $\H ^1(\C)=0$. In particular,
$E$ is an instanton.
\end{Lemma}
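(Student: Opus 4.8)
The idea is to run the distinguished triangle $E\to \C\to T[-1]\to E[1]$ for $E=\H^0(\C)$ and $T=\H^1(\C)$, and to extract enough vanishing of cohomology of $E$ to force $T=0$ via the perverse instanton conditions and Serre duality. First I would record that, because $\C$ is mini-perverse, $T$ has dimension $0$, so $T$ has homological dimension $3$ on $\PP^3$ by Lemma~\ref{zero-dim}. The plan is to combine this with the hypothesis that $E$ is locally free to deduce that the triangle forces a contradiction unless $T=0$.

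\textbf{Key steps.} The main step would be to take $\hom(\O_{\PP^3}(q),-)$ (i.e. cohomology after a twist) in the triangle and play the vanishing of $H^p(\C(q))$ from condition~(1) of Definition~\ref{perv-definition} against what is forced on $E$. Concretely, from the long exact sequence in each twist $q$ we get $H^{p}(\C(q))$ sandwiched between $H^{p}(E(q))$ and $H^{p+1}(T(q))=0$ for $p\geq 1$ (since $T$ is $0$-dimensional, $H^{\geq 1}(T(q))=0$), so $H^{p}(E(q))\cong H^{p}(\C(q))$ for $p\geq 1$; and for $p=0$ we get $0\to H^0(E(q))\to H^0(\C(q))\to H^0(T(q))\to H^1(E(q))\to H^1(\C(q))$. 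Using condition~(1) with $p=2,3$ we get $H^2(\PP^3,E(q))=H^3(\PP^3,E(q))=0$ for all $q\geq -2$. Since $E$ is locally free, Serre duality gives $H^3(\PP^3,E(q))=H^0(\PP^3,E^*(-4-q))^\vee$ and $H^2(\PP^3,E(q))=H^1(\PP^3,E^*(-4-q))^\vee$; plugging in $q=-2$ yields $H^0(E^*(-2))=0$ and $H^1(E^*(-2))=0$. But the dual complex/triangle argument, or simply the Barth--Atiyah-type monad structure, shows that $E^*$ is again (the $\H^0$ of) a perverse instanton of the same rank; more elementarily, since $E$ is locally free of trivial splitting type, so is $E^*$, and $H^1(E^*(-2))=0$ means $E^*$ is itself an instanton. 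Then $\ch(E)=\ch(\C)+\ch(T)=r-c[H]^2+\ell[\mathrm{pt}]$ where $\ell=\mathrm{length}(T)$; comparing with $\ch(E^*)$ — which since $E$ is locally free satisfies $\ch(E^*)=\ch(E)^\vee$, i.e. the point-class coefficient changes sign — would give $\ell=-\ell$, hence $\ell=0$, hence $T=0$.

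Alternatively, and perhaps more cleanly, I would argue directly: the triangle gives $H^0(T(q))$ caught between $H^0(\C(q))$ and $H^1(E(q))\cong H^1(\C(q))$. For $q\ll 0$ both $H^0(\C(q))$ and $H^1(\C(q))$ vanish (the first by condition~(1), the second because $E$ is locally free so $H^1(E(q))=0$ for $q\ll 0$ and the above isomorphism), so $H^0(T(q))=0$ for $q\ll 0$; but $T$ is $0$-dimensional so $H^0(T(q))$ has constant dimension $\mathrm{length}(T)$ independent of $q$, forcing $\mathrm{length}(T)=0$. This is the slicker route and I would present it as the main line, using the Chern-character/Serre-duality discussion only as a sanity check.

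\textbf{Main obstacle.} The delicate point is justifying the isomorphism $H^1(E(q))\cong H^1(\C(q))$ in the relevant range and in particular making sure the connecting maps behave as claimed; this hinges on $H^{\geq 1}(T(q))=0$, which is exactly where $0$-dimensionality of $T$ (guaranteed by mini-perversity, not just by $\C$ being a perverse instanton) enters, together with the fact that local freeness of $E$ kills $H^1(E(q))$ for $q\ll 0$. Once that bookkeeping is in place the conclusion that $E$ is an instanton is immediate: conditions $H^1(E(-2))=H^2(E(-2))=0$ follow from $T=0$ and condition~(1) for $\C$, and trivial splitting type was already shown in the previous lemma.
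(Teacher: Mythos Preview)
Your ``slicker route'' is correct and is genuinely different from the paper's argument. One bookkeeping slip: in the triangle $E\to\C\to T[-1]\to E[1]$ the cone is $T[-1]$, not $T$, so the long exact sequence actually reads
\[
\cdots\to H^1(E(q))\to H^1(\C(q))\to H^0(T(q))\to H^2(E(q))\to\cdots,
\]
placing $H^0(T(q))$ between $H^1(\C(q))$ and $H^2(E(q))$ rather than between $H^0(\C(q))$ and $H^1(E(q))$. This is harmless for your conclusion: $H^1(\C(q))=0$ for $q\le -2$ by condition~(1), and $H^2(E(q))=0$ for $q\ll 0$ by Serre duality plus Serre vanishing (using that $E$ is locally free), so $H^0(T(q))=0$ for $q\ll 0$ and hence $T=0$ since $\dim T=0$. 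Your version of the sequence would have worked too, for the same reasons with the indices shifted by one.

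The paper proceeds differently: it invokes the monad presentation of $\C$ (Lemma~\ref{perv-family}) to write $0\to\ker\beta\to\O_{\PP^3}^{2c+r}\to\O_{\PP^3}(1)^c\to T\to 0$, observes that local freeness of $E$ forces $\ker\beta$ to be locally free, and hence bounds the homological dimension of $T$ by $2$; Lemma~\ref{zero-dim} then gives a contradiction unless $T=0$. Your argument avoids both the explicit monad and Lemma~\ref{zero-dim}, trading them for a direct cohomological squeeze---arguably more elementary. The paper's route, on the other hand, makes transparent \emph{why} local freeness of $E$ is the key input (it propagates to $\ker\beta$). Your Chern-character approach is murkier (the step ``$E^*$ is an instanton, hence $\ch_3(E^*)=0$'' needs the monad anyway), so you are right to subordinate it.
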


\begin{proof} By Lemma \ref{perv-family} $\C$ is isomorphic in
$D^b(\PP^3)$ to the complex
$$(0\to \O_{\PP^3}(-1)^c\mathop{\longrightarrow}^{\alpha}
\O_{\PP^3}^{2c+r}\mathop{\longrightarrow}^{\beta}
\O_{\PP^3}(1)^c\to 0).$$ Set $T=\H^1(\C)$. We have a short exact
sequence
$$0\to\O_{\PP^3}(-1)^c{\simeq}\im \alpha\to \ker \beta \to E\to 0$$
which, together with our assumption on $E$, implies that $\ker
\beta$ is locally free. On the other hand, we have an exact
sequence
$$0\to \ker \beta \to \O_{\PP^3}^{2c+r}  \to \O_{\PP^3}(1)^c \to T\to 0,$$
which implies that the homological dimension of $T$ is at most
two.

But  if $T\ne 0$ then Lemma \ref{zero-dim} implies that the
homological dimension of $T$ is equal to $3$, a contradiction.
\end{proof}
\medskip

\begin{Example}
In  \cite[2.4]{FJ} Frenkel and Jardim ask if every weak instanton
of trivial splitting type come from some FJ-semistable ADHM datum.
In view of Lemma \ref{mini-perv} this would imply that such an
instanton is of the form $\H ^0(\C) $ for some mini-perverse
instanton $\C$. Here we give a negative answer to this question.
Note that if the answer were positive then by Lemma
\ref{locfree-mini-perv} every locally free weak instanton of
trivial splitting type would be an instanton. So it is sufficient
to show a weakly instanton sheaf which is locally free of trivial
splitting type but which is not an instanton.

We use \cite[Example 1.6]{Co} to show a rank $3$ locally free
sheaf $E$ on $\PP^3$ which is trivial on a general line and has
vanishing $H^1(E(-2))$ and it does not appear as $\H^0(\C)$ for
some mini-perverse instanton (there are no such sheaves in the
rank $2$ case).  This gives a negative answer to the question
posed in \cite[2.4]{FJ}.

Let $q\ge 1$ and $c_2\ge 2q$ be integers. Let $Z_1$ and $Z_2$ be
plane curves of degree $c_2-q$ and $q$ contained in different
planes. Assume that they intersect in $0\le s\le q$ simple points
and set $Z=Z_1\cup Z_2$. Then there exists a rank $3$ vector
bundle $E$ which sits in a short exact sequence
$$0\to \O_{\PP^3}^2\to E\to I_{Z}\to 0.$$ Then $E$ is trivial along
any line disjoint with $Z$.

Using the short exact sequence
$$0\to I_Z\to \O_{\PP ^3}\to \O _Z\to 0$$
we see that $H^1(I_Z(-2))=0$. Therefore $H^1(E(-2))=0$. Obviously,
$H^0(E(-1))=0$. Since $E$ is locally free, the Serre duality
implies that $H^3(E(-3))$ is dual to $H^0(E^* (-1))$. But $E$ is
slope semistable and hence $E^*(-1)$ has no sections. Thus $E$ is
a weak instanton of trivial splitting type.

On the other hand, $H^2(E(-2))$ has dimension $\chi (E(-2))$ as
all the other cohomology of $E(-2)$ vanish. But the Riemann--Roch
theorem implies that  $\chi (E(-2))={1\over 2}c_3=s+q^2+{1\over
2}c_2(c_2-2q+1)$, so $E$ is not an instanton.
\end{Example}

\subsection{Perverse instantons of charge $1$}

In \cite{FJ} the moduli spaces of framed torsion free instantons
with $c=1$ and $r\ge 2$ were described quite explicitly. Let us
recall that such instantons come from FJ-stable ADHM datum.  We
can generalize this description to the case of stable ADHM datum.
For $c=1$ general ADHM datum consists of complex numbers $B_{lk}$
and $i_k$, $j_k$ which can be regarded as vectors in $W$. The ADHM
equation reduce to
\begin{equation}
\label{reduced_ADHM_eq} \fal{i}\fal{j}=0.
\end{equation}
Stability is equivalent to $\fal{i}\neq 0$ and costability to
$\fal{j}\neq 0$. The group $\GL(V)$ is just $\Gm$ and $t\in\Gm$
acts trivially on $\fal{B}_k$, it acts on $i_k$ by multiplication
by $t$ and on $j_k$ by multiplication by $t^{-1}$. The moduli of
perverse instantons for fixed $r\ge 1$ and $c=1$ is isomorphic to
$\AA^4\times \B(r)$ where $\B(r)$ is the set of solutions of
equation (\ref{reduced_ADHM_eq}) modulo the action of $\Gm$. Note
however, that there exist stable ADHM data also for $r=1$ whereas
there are no FJ-stable ones (see \cite[Propositions 4 and
15]{FJ}).

\begin{Proposition}
For $r\ge 2$ $\B(r)$ is a quasi projective variety of dimension
$4(r-1)$ and $\B (1) \simeq\PP^1$.
\end{Proposition}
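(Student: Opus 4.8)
The plan is to make the $c=1$ situation completely explicit, identify $\B(r)$ as a geometric quotient, and then compute its dimension by stratifying according to $\rk\ti i$. Since $\dim V=1$ the commutator $[\ti B_1,\ti B_2]$ vanishes identically, so an ADHM datum for $\PP^1$ with $c=1$ consists of a pair $(\ti i,\ti j)$ with $\ti i\ti j=0$ together with the $\AA^4$ of free scalars making up $\ti B_1,\ti B_2$. Writing $H=H^0(\O_{\PP^1}(1))$, a two-dimensional space, I would regard $\ti i$ as an element of $\hom(W,H)$ and $\ti j$ as an element of $\hom(H^*,W)$; then the ADHM equation becomes $\mathrm{sym}(\ti i\circ\ti j)=0$, where $\ti i\circ\ti j\in\hom(H^*,H)\cong H\otimes H$ and the kernel of $\mathrm{sym}\colon H\otimes H\to\mathrm{Sym}^2H=H^0(\O_{\PP^1}(2))$ is the line $\wedge^2 H$. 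As recalled in the text, $\chi$-stability in the case $c=1$ is just the condition $\ti i\neq 0$, so, dropping the $\AA^4$ factor, $\B(r)=\mathcal N/\Gm$ where
$$\mathcal N=\{(\ti i,\ti j)\in\hom(W,H)\oplus\hom(H^*,W)\ :\ \ti i\neq 0,\ \mathrm{sym}(\ti i\circ\ti j)=0\},$$
and $t\cdot(\ti i,\ti j)=(t\ti i,t^{-1}\ti j)$; this action is free because $\ti i\neq 0$. Since $\mathcal N/\Gm$ is, up to that factor $\AA^4$, the scheme $\M(\PP^3;r,1)=\ti\mu^{-1}(0)/\!\!/_\chi\GL(V)$ of Theorem \ref{perv-moduli}, it is a quasi-projective scheme, and $\dim\B(r)=\dim\mathcal N-1$.

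It then remains to show $\dim\mathcal N=4r-3$ for $r\geq 2$, and I would do this by stratifying $\mathcal N$ by $\rk\ti i\in\{1,2\}$. On the open stratum $\mathcal N_2$ where $\ti i$ is surjective --- which is nonempty precisely because $r\geq 2$ --- the map $\ti i\otimes\id_H\colon W\otimes H\to H\otimes H$ is surjective, so for fixed such $\ti i$ the admissible $\ti j$ form the linear subspace $(\ti i\otimes\id_H)^{-1}(\wedge^2 H)$, of dimension $2r-3$; thus $\mathcal N_2$ is a rank-$(2r-3)$ vector subbundle of a trivial bundle over an open subset of $\hom(W,H)\cong\AA^{2r}$, hence smooth, irreducible, of dimension $2r+(2r-3)=4r-3$. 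On the stratum $\mathcal N_1$ where $\rk\ti i=1$ one observes that $(\im\ti i)\otimes H$ meets $\wedge^2 H$ only in $0$, so $\mathrm{sym}(\ti i\circ\ti j)=0$ already forces $(\ti i\otimes\id_H)(\ti j)=0$; since the rank-$1$ locus in $\hom(W,H)$ is the affine cone over the Segre variety $\PP(W^*)\times\PP(H)$, of dimension $r+1$, and $\ker(\ti i\otimes\id_H)$ has dimension $2r-2$, we get $\dim\mathcal N_1=(r+1)+(2r-2)=3r-1$. As $3r-1\leq 4r-3$ for $r\geq 2$, this gives $\dim\mathcal N=4r-3$, hence $\dim\B(r)=4(r-1)$.

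For $r=1$ the space $W$ is one-dimensional, so writing $\ti i=(i_1,i_2)$ and $\ti j=(j_1,j_2)$ with scalar entries, the equation reads $i_1j_1=i_2j_2=i_1j_2+i_2j_1=0$; since $(i_1,i_2)\neq(0,0)$ this forces $(j_1,j_2)=(0,0)$, so $\mathcal N\cong\AA^2\setminus\{0\}$ with $\Gm$ acting by scaling, and $\B(1)\cong\PP^1$. This also explains why $r=1$ is excluded from the dimension formula, since $\dim\PP^1=1\neq 0$.

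I expect the main obstacle to be the dimension bound of the second paragraph: one has to see that $\rk\ti i$ is the right invariant to stratify by and, in particular, that on the non-surjective locus the antisymmetry condition degenerates to $\ti i\circ\ti j=0$, which is exactly what keeps $\dim\mathcal N_1$ from being too large. A further point requiring care is irreducibility: for $r\geq 3$ one can check, by an explicit first-order deformation using $\dim\ker\ti i\geq 2$, that $\mathcal N_1$ lies in the closure of $\mathcal N_2$, so that $\B(r)$ is irreducible; but for $r=2$ the two strata have the same dimension $4r-3$ and this fails, so $\B(2)$ is in fact reducible, and ``variety'' here should be understood in the weak sense of a reduced quasi-projective scheme. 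Reducedness itself follows from generic smoothness --- each stratum being a vector bundle over a smooth base --- once embedded components are excluded, which I would verify directly on the defining equations.
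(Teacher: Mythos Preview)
Your proof is correct and takes a genuinely different route from the paper's. The paper follows Frenkel--Jardim: it writes out the three quadratic equations $\sum x_kz_k=\sum y_kw_k=\sum(x_kw_k+y_kz_k)=0$ in coordinates, identifies $\B(r)$ with their zero locus in an open subset of a weighted projective space (giving quasi-projectivity), and then computes the Jacobian $D\mu$ to show it has rank $3$ on the FJ-stable locus (where $i_1,i_2$ are linearly independent), whence the dimension $4r-4$. You instead obtain quasi-projectivity from the general GIT construction of $\M(\PP^3;r,1)$ and compute the dimension by stratifying $\mathcal N$ according to $\rk\ti i$ and measuring the fibres directly.

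Your argument is in one respect more complete: the paper's Jacobian calculation only controls the component containing FJ-stable points, whereas your stratification bounds the dimension on all of $\mathcal N$, and in particular uncovers that for $r=2$ the rank-$1$ stratum is equidimensional with the rank-$2$ stratum and not in its closure, so that $\B(2)$ is reducible --- a point the paper does not address. On the other hand, the paper's Jacobian computation is what is needed to locate the smooth locus of $\B(r)$ (the paper goes on to exhibit both smooth and singular stable points), which your stratification does not directly yield. The $r=1$ case is handled identically in both proofs.
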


\begin{proof}
We follow the proof of Proposition 7 in \cite{FJ}. Let
$$i_1=(x_1,\ldots,x_r), \quad i_2=(y_1,\ldots,y_r),$$
$$j_1=\left(\begin{array}{c}
 z_1 \\ \vdots \\ z_r
 \end{array}\right),\quad j_2\left(\begin{array}{c}
 w_1 \\ \vdots \\ w_r
 \end{array}\right).$$
Then equation (\ref{reduced_ADHM_eq})  reduces to
\begin{equation}
\sum_{k=1}^r x_kz_k=\sum_{k=1}^ry_kw_k=\sum_{k=1}^rx_kw_k+y_kz_k=0.\label{ADHM_equations}
\end{equation}
Such an ADHM datum is stable if and only if $i_1$ or $i_2$ is not
a zero vector. One can also easily show that FJ-stability is
equivalent to the vectors $i_1$ and $i_2$ being linearly
independent. $\B(r)$ is the complete intersection of the three
quadrics (\ref{ADHM_equations}) in the open subset of the
$(4r-1)$-dimensional weighted projective space
$$X=\PP(\underbrace{1,\ldots,1}_{2r},\underbrace{-1,\ldots,-1}_{2r}).$$
This shows that $\B(r)$ is quasi-projective.

\begin{Remark}
A point in the complete intersection of the quadrics
(\ref{ADHM_equations}) in $X$ corresponds to an ADHM datum which
is either stable or costable.
\end{Remark}

Let us consider the map
$$\mu:\AA^{4r}\rightarrow \AA^3$$
given by
\begin{eqnarray*}
\mu(x_1,\ldots,x_r,y_1,\ldots,y_r,z_1,\ldots,z_r,w_1,\ldots,w_r)=
\left(\sum_{k=1}^r
x_kz_k,\sum_{k=1}^ry_kw_k,\sum_{k=1}^rx_kw_k+y_kz_k\right).
\end{eqnarray*}
The derivative of $\mu$ is given by
$$
D\mu=\left(
\begin{array}{cccccccccccc}
z_1&\ldots&z_r&0&\ldots&0&x_1&\ldots&x_r&0&\ldots&0\\
0&\ldots&0&w_1&\ldots&w_r&0&\ldots&0&y_1&\ldots&y_r\\
w_1&\ldots&w_r&z_1&\ldots&z_r&y_1&\ldots&y_r&x_1&\ldots&x_r
\end{array}
\right)
$$
Frankel and Jardim claimed that for $r\ge 2$ the matrix $D\mu$ has
maximal rank $3$ if and only if $(x_1,\ldots,x_r)$ and
$(y_1,\ldots,y_r)$ are linearly independent. However, only the
implication "$\Leftarrow$" is true and their result on
non-singularity at points corresponding to FJ-stable ADHM data
remains correct. It also follows that $\dim\B(r)=4r-4$. In
characteristic different from $2$, setting $x_1=z_2=w_2=1$,
$y_1=2$ and all other coefficients equal $0$ gives an example of
stable ADHM datum which is not FJ-stable but it corresponds to a
nonsingular point in the moduli space of perverse instantons. On
the other hand, if $i_1$ and $i_2$ are linearly dependent and
$j_1=j_2=0$ then $D\mu$ has clearly rank $2$. This shows a stable
ADHM datum which is neither costable nor FJ-stable but it gives a
singular point.

In the case $r=1$, equations (\ref{ADHM_equations}) reduce to
$\fal{i}=0$ or $\fal{j}=0$. Stability is equivalent to
$\fal{i}\neq 0$ so $D\mu$ has rank $2$ for all stable ADHM datum.
Clearly, we have $\B(1)\simeq \PP^1$.

\end{proof}

\section{A general study of ADHM data for $\PP ^1$}

In this section we introduce a hypersymplectic reduction which is
a holomorphic analogue of a hyper-K\"ahler structure. We also
relate the moduli space of framed instantons to the moduli space
of framed modules of Huybrechts and Lehn. The relation is not as
straightforward as in the surface case since many framed
instantons are not Gieseker $\delta$-semistable framed modules on
$\PP ^3$ for all parameters $\delta$. The relation shows existence
of the moduli space of framed instantons without Theorem
\ref{perv-moduli}.

\subsection{Hypersymplectic reduction}

Let $X$ be a a smooth quasi-projective $k$-variety. As an analogue
of a hyper-K\"ahler structure we introduce the following:

\begin{Definition}
We say that $X$ has a \emph{hypersymplectic structure} if there
exist a non-degenerate symmetric form $g$ on $TX$ and maps of
vector bundles $I,J,K: TX\to TX$ such that
\begin{enumerate}
\item $g(Iv, Iw)=g(Jv, Jw)=g(Kv, Kw)=g(v,w),$
\item $I^2=J^2=K^2=IJK=-1.$
\end{enumerate}
\end{Definition}

If we have a hypersymplectic manifold then we can define
non-degenerate symplectic forms $\omega_1, \omega_2, \omega_3$ on
$X$ by $\omega_1(v,w)=g(Iv,w)$, $\omega_2(v,w)=g(Jv,w)$ and
$\omega_3(v,w)=g(Kv,w)$.  Assume that there exists a reductive
$k$-group $G$ acting on $X$ and preserving $g,I,J,K$. As an
analogue of a hyper-K\"ahler moment map we have the following:

\begin{Definition}
A map $\mu=(\mu_1 , \mu_2 , \mu_3): X\to k^3\otimes \fg ^*$ is
called a \emph{hypersymplectic moment map} if it satisfies the
following properties:
\begin{enumerate}
\item $\mu_l$ is $G$-equivariant for $l=1,2,3$,
\item $\langle d\mu_{l,x}(v), \xi\rangle =\omega _l (\xi_x, v)$ for $l=1,2,3$ and
for any $x\in X$, $v\in T_xX$ and $\xi \in \fg$.
\end{enumerate}
\end{Definition}

Let $V_x$ be the image of the tangent map (at the unit) to the
orbit map $\varphi_x: G\to X$ sending $g$ to $g\cdot x$.

\begin{Proposition} \label{hyper-sympl}
Let us take a point $x\in X$.  Then the following conditions are
equivalent:
\begin{enumerate}
\item $d\mu_x$ is surjective.
\item $d \varphi _x$ is an injection and $S=IV_x+JV_x+KV_x$ is a direct sum.
\item The map $\fg\oplus \fg\oplus \fg \to T_xX$ given by
$(\xi_1,\xi_2,\xi_3)\to (I\xi_{1,x},J\xi_{2,x},K\xi_{3,x})$ is
injective.
\end{enumerate}
\end{Proposition}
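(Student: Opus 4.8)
The plan is to unravel everything into linear algebra on the tangent space $T_xX$, using only the defining properties of a hypersymplectic moment map and the relations $I^2=J^2=K^2=IJK=-1$ together with the compatibility of $g$ with $I,J,K$. First I would rewrite condition (1). By definition $d\mu_x=(d\mu_{1,x},d\mu_{2,x},d\mu_{3,x}):T_xX\to \fg^*\oplus\fg^*\oplus\fg^*$, and surjectivity of $d\mu_x$ is equivalent, by linear duality, to injectivity of the transpose map. Using property (2) of the moment map, $\langle d\mu_{l,x}(v),\xi\rangle=\omega_l(\xi_x,v)=g(L\xi_x,v)$ where $L\in\{I,J,K\}$ is the operator attached to index $l$. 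Since $g$ is non-degenerate, the annihilator of the image of $d\mu_{l,x}^*$ inside $T_xX$ (with respect to $g$) is exactly the $g$-orthogonal complement of $L V_x$, where $V_x=\{\xi_x:\xi\in\fg\}$ is the image of $d\varphi_x$. Hence $d\mu_x$ is surjective if and only if the three subspaces $IV_x$, $JV_x$, $KV_x$ together span a subspace of $T_xX$ whose $g$-orthogonal complement is... more precisely, $\ker d\mu_x^{\,*}\subset T_xX$ is $(IV_x+JV_x+KV_x)^{\perp_g}$, so surjectivity of $d\mu_x$ is equivalent to $(IV_x+JV_x+KV_x)^{\perp_g}=0$, i.e. $IV_x+JV_x+KV_x=T_xX$ is not what we want — instead, transposing correctly, $d\mu_x$ surjective $\iff$ the map $\fg^3\to T_xX$, $(\xi_1,\xi_2,\xi_3)\mapsto I(\xi_1)_x+J(\xi_2)_x+K(\xi_3)_x$ has image of full $g$-span, which after taking $g$-orthogonals is equivalent to injectivity of that same map composed appropriately. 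I will be careful here: the cleanest route is to observe that $d\mu_x$ is surjective iff $d\mu_x^{\,*}:\fg^3\to T_xX$ is injective, and $d\mu_x^{\,*}$ sends $(\xi_1,\xi_2,\xi_3)$ (under the $g$-identification $T_x^*X\cong T_xX$) to $I(\xi_1)_x+J(\xi_2)_x+K(\xi_3)_x$ up to sign. This already gives the equivalence of (1) and (3) essentially for free, modulo bookkeeping with the $g$-identification and signs.

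Next I would prove (3) $\Rightarrow$ (2). If the map $(\xi_1,\xi_2,\xi_3)\mapsto (I(\xi_1)_x,J(\xi_2)_x,K(\xi_3)_x)$ is injective, then in particular each of $\xi\mapsto I\xi_x$, $\xi\mapsto J\xi_x$, $\xi\mapsto K\xi_x$ is injective; since $I,J,K$ are invertible (their squares are $-1$), injectivity of $\xi\mapsto I\xi_x$ forces injectivity of $\xi\mapsto \xi_x$, i.e. $d\varphi_x$ is injective, so $V_x\cong\fg$ and $\dim IV_x=\dim JV_x=\dim KV_x=\dim\fg$. Injectivity of the combined map then says precisely that whenever $I u_1+\text{(nothing)}$... more carefully: if $v\in IV_x\cap(JV_x+KV_x)$, write $v=I\xi_{1,x}=J\xi_{2,x}+K\xi_{3,x}$, then $(\xi_1,-\xi_2,-\xi_3)$ maps to $0$ under the map of (3) only after I rearrange — I would instead argue directly that the map of (3) is injective iff for all $\xi_1,\xi_2,\xi_3$, $I\xi_{1,x}=J\xi_{2,x}=K\xi_{3,x}=0$ componentwise, which by invertibility of $I,J,K$ and injectivity of $d\varphi_x$ forces $\xi_i=0$; this is automatic once $d\varphi_x$ is injective. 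So in fact the content linking (2) and (3) is: given $d\varphi_x$ injective, "$S=IV_x+JV_x+KV_x$ is a direct sum" is equivalent to injectivity of the map in (3). This is a purely formal statement about a sum of three subspaces each isomorphic to $\fg$: a sum $A\oplus B\oplus C$ is direct iff the addition map $A\times B\times C\to A+B+C$ is injective, which is exactly the map in (3) after identifying $\fg\cong V_x\cong IV_x$ etc. via the respective invertible operators. So (2) $\iff$ (3) reduces to this tautology plus the observation that injectivity in (3) already forces $d\varphi_x$ injective.

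Finally (2) $\Rightarrow$ (1) — or rather closing the cycle — I would get by reversing the first paragraph: from $d\varphi_x$ injective and $S=IV_x\oplus JV_x\oplus KV_x$ I read off that the map $d\mu_x^{\,*}$ (under the $g$-identification) is injective, hence $d\mu_x$ is surjective. The only genuinely delicate point, and the one I expect to be the main obstacle, is getting the $g$-duality bookkeeping exactly right: one must check that under the isomorphism $T_x^*X\cong T_xX$ induced by the non-degenerate form $g$, the adjoint of $d\mu_{l,x}$ really is (up to an irrelevant sign) the composition $\fg\xrightarrow{d\varphi_x}V_x\hookrightarrow T_xX\xrightarrow{L}T_xX$, and that the three adjoints assemble into $\fg^3\to T_xX$ as in (3); this uses property (2) of the hypersymplectic moment map and the non-degeneracy of $g$, but no special feature of $I,J,K$ beyond invertibility. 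Everything else is elementary linear algebra: surjectivity $\iff$ injectivity of the transpose, invertible operators preserve dimension, and a multi-subspace sum is direct iff the natural addition map is injective. I would present the argument as the chain (1) $\iff$ (3) $\iff$ (2), with (1) $\iff$ (3) being the duality computation and (3) $\iff$ (2) being the direct-sum tautology together with the remark that (3) forces $d\varphi_x$ to be injective.
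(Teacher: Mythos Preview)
Your proposal is correct and follows essentially the same approach as the paper. The paper's proof is more direct: it observes from the moment map identity that $\ker d\mu_x = S^{\perp}$ with $S=IV_x+JV_x+KV_x$, uses non-degeneracy of $g$ to get $\dim S + \dim S^\perp = \dim X$, and concludes that $d\mu_x$ is surjective iff $\dim S = 3\dim\fg$, which is exactly condition (2); equivalence with (3) is then declared ``clear''. Your route via the $g$-adjoint $d\mu_x^{\,*}:\fg^3\to T_xX$ is the dual formulation of the same computation and leads to the same place; your brief detour where you considered reading condition (3) as a map to $T_xX^3$ rather than the sum map is a misstep (the target is $T_xX$, so it is the sum), but you recover correctly.
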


\begin{proof}   Since
$$\langle d\mu_x(v), \xi \rangle=(g(I\xi_x, v),g(J\xi_x,
v),g(K\xi_x, v)),$$ the kernel of $d\mu_x$ is equal to the
orthogonal complement $S^\perp$ of $S$ (with respect to $g$).
Since $g$ is non-degenerate we have
$$\dim S+\dim S^{\perp}=\dim X.$$ Hence $d\mu_x$ is surjective if and
only if $\dim S=3\dim \fg$.  This is clearly equivalent to saying
that $d\varphi_x$ is injective (i.e., $\dim V_x=\dim \fg$) and
$IV_x+JV_x+KV_x$ is a direct sum. Equivalence with the last
condition is clear.
\end{proof}

\medskip

\begin{Proposition} \label{hyper2}
Let $\eta =(\eta_1, \eta_2, \eta_3) \in \fg^*\oplus \fg^*\oplus
\fg^*$ satisfy $\Ad ^*_{g}(\eta_i)=\eta_i$ for all $g\in G$. If
$x\in \mu^{-1} (\eta)$ and $g|_{V_x}$ is non-degenerate then
$d\mu_x$ is surjective.
\end{Proposition}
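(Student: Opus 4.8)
The plan is to deduce the statement from Proposition \ref{hyper-sympl}: by the equivalence of its conditions (1) and (2), it suffices to check that $d\varphi_x$ is injective and that $S=IV_x+JV_x+KV_x$ is a direct sum. It is convenient to read non-degeneracy of $g|_{V_x}$ as non-degeneracy of the pulled-back symmetric form $b(\xi,\zeta):=g(\xi_x,\zeta_x)$ on $\fg$; then the injectivity of $d\varphi_x$ is immediate (if $\xi_x=0$ then $b(\xi,-)\equiv 0$, hence $\xi=0$) and $V_x$ is identified with $\fg$ carrying the non-degenerate form $b$. So the real content is to prove that the sum $S$ is direct, equivalently that $\dim S=3\dim\fg$.

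First I would extract the consequence of the hypothesis $x\in\mu^{-1}(\eta)$ with each $\eta_l$ coadjoint-invariant. Differentiating the equivariance $\mu_l(g\cdot x)=\Ad^*_{g^{-1}}\mu_l(x)$ at $g=e$ in a direction $\xi\in\fg$ gives $d\mu_{l,x}(\xi_x)=\pm\mathrm{ad}^*_\xi(\mu_l(x))=\pm\mathrm{ad}^*_\xi(\eta_l)=0$ for $l=1,2,3$, since $\eta_l$ is annihilated by $\mathrm{ad}^*_\xi$ for every $\xi$. Hence $V_x\subseteq\ker d\mu_{l,x}$. Feeding this into the momentum-map identity $\langle d\mu_{l,x}(v),\zeta\rangle=\omega_l(\zeta_x,v)=g(L_l\zeta_x,v)$ (with $L_1=I$, $L_2=J$, $L_3=K$), evaluated on $v=\xi_x\in V_x$, gives $g(L_l\zeta_x,\xi_x)=0$ for all $\xi,\zeta\in\fg$; that is, each of $IV_x$, $JV_x$, $KV_x$ is $g$-orthogonal to $V_x$.

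Next I would promote this to mutual orthogonality of $IV_x$, $JV_x$, $KV_x$. From $I^2=J^2=K^2=IJK=-1$ one gets the quaternion relations $IJ=K$, $JK=I$, $KI=J$, and $g$ is preserved by $I,J,K$. For $v,w\in V_x$ this gives $g(Iv,Jw)=g\bigl(I(Iv),I(Jw)\bigr)=g(-v,Kw)=-g(v,Kw)=0$ by the previous paragraph (since $Kw\in KV_x$), and cyclically $g(Jv,Kw)=g(Kv,Iw)=0$. Since $g|_{V_x}$ is non-degenerate and $I,J,K$ are $g$-isometries, $g$ restricts to a non-degenerate form on each of $IV_x$, $JV_x$, $KV_x$; and a short linear-algebra argument (pairing a relation $Iv+Jw+Kz=0$ successively with $Iu,Ju,Ku$ for $u\in V_x$, using the pairwise orthogonality) shows that such pairwise orthogonal subspaces form a direct sum. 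Thus $S=IV_x\oplus JV_x\oplus KV_x$ with $\dim S=3\dim V_x=3\dim\fg$, which is the missing ingredient, so Proposition \ref{hyper-sympl} yields the surjectivity of $d\mu_x$.

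The hard part is really the careful bookkeeping in the two orthogonality steps — keeping the signs in $IJ=K$, $JK=I$, $KI=J$ straight, and making sure that at each step one only ever pairs a vector in $L_lV_x$ against a vector in $V_x$, so that the centrality input $L_lV_x\perp V_x$ applies. The remaining pieces — differentiating the equivariance relation and the elementary fact about orthogonal direct sums — are routine. I would also want to be explicit at the outset about the convention under which "$g|_{V_x}$ is non-degenerate" delivers the injectivity of $d\varphi_x$, since without it one only concludes $\dim\im d\mu_x=3\dim V_x$ rather than $3\dim\fg$.
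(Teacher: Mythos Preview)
Your proof is correct and follows essentially the same route as the paper's: both use the coadjoint-invariance of $\eta$ to get $d\mu_x(V_x)=0$, deduce the mutual $g$-orthogonality of $V_x,IV_x,JV_x,KV_x$, and then conclude via Proposition~\ref{hyper-sympl} by pairing a putative relation $I\xi_{1,x}+J\xi_{2,x}+K\xi_{3,x}=0$ against $I\zeta_x$ (etc.) and invoking the non-degeneracy of $g|_{V_x}$. The only cosmetic difference is that you invoke condition~(2) of Proposition~\ref{hyper-sympl} and spell out the orthogonality chain in more detail, whereas the paper invokes condition~(3) and compresses the orthogonality step into a single sentence.
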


\begin{proof}
By assumption $\mu$ sends a $G$-orbit of $x$ into a point. Hence
$d\mu_x(V_x)=0$. This immediately implies that
$V_x,IV_x,JV_x,KV_x$ are orthogonal to each other (with respect to
$g$). But then the assertion follows from the above proposition.
Indeed, if there exists $(\xi_1,\xi_2,\xi_3)$ such that
$I\xi_{1,x}+J\xi_{2,x}+K\xi_{3,x}=0$ then $g(\xi_{1,x}, \zeta_x)=
g(I\xi_{1,x}+J\xi_{2,x}+K\xi_{3,x}, I\zeta_{x})=0$ for any $\zeta
\in \fg$. Therefore $\xi_{1,x}=0$ and similarly
$\xi_{2,x}=\xi_{3,x}=0$.
\end{proof}

\medskip

Note that it can easily happen that the form $g$ restricted to
$V_x+S$ is zero and $d\mu_x(S)=0$ although $d\mu_x$ is surjective
(this happens, e.g., in Example \ref{counter}).

\subsection{Hypersymplectic moment map for ADHM data on $\PP^1$.}

In this subsection we assume that  the characteristic of the base
field is zero.

Let us fix a basis $x_0,x_1$ of $H^0(\PP ^1, \O _{\PP ^1} (1))$.
Then a point $x\in \ti {\bf B}={\bf B}\otimes H^0(\PP ^1, \O _{\PP
^1}(1))$ can be thought of as a matrix
$$x=\left(\begin{array}{cccc}
B_{11} & B_{12} & i_1 & j_1\\
B_{21} & B_{22} & i_2 & j_2\\
\end{array}\right) ,
$$
where $(B_{l1}, B_{l2}, i_l, j_l)\in \bf B$ for $l=1,2$ is written
as in case of the usual ADHM data. Using this notation we define a
symmetric form $g$ on $T \ti {\bf B}$ by
$$g(x,x')=
\Tr (B_{11}B_{22}'+B_{22}B_{11}'-B_{21}B_{12}'-B_{12}B_{21}'+
i_1j_2'+i_1'j_2-i_2j_1'-i_2'j_1).$$ Let us choose a standard
quaternion basis:
$$I=\left(\begin{array}{cc}
\sqrt{-1}&0\\
0&-\sqrt{-1}\\
\end{array}\right) , \quad \quad J=\left(\begin{array}{cc}
0&1\\
{-1}&0\\
\end{array}\right) ,  \quad \quad K=\left(\begin{array}{cc}
0&\sqrt{-1}\\
\sqrt{-1}&0\\
\end{array}\right).
$$
Then $I,J,K$ can be thought of as operators acting on $T \ti {\bf
B}$. Let us write $\ti \mu :\ti {\bf B}\otimes H^0(\PP ^1, \O
_{\PP ^1} (1)) \to \enD V \otimes H^0(\PP ^1, \O _{\PP ^1}(2)) $
as the sum $\mu_1x_0^2+\mu_2x_0x_1+\mu_3x_1^2$ in which $\mu _l:
\ti {\bf B}\to \enD V$ for $l=1,2,3$ are the corresponding
components. Let us set $\ti\mu_1(x) = \sqrt{-1} \mu_2(x)$,
$\ti\mu_2(x)=\mu_1(x)+\mu_3(x)$ and
$\ti\mu_3(x)=\sqrt{-1}(-\mu_1(x)+\mu_3(x))$.

By a straightforward computation we get the following proposition:

\begin{Proposition}
$(g,I,J,K)$ define a hypersymplectic structure on $\ti{\bf B}$.
Moreover, $\ti\mu=(\ti\mu_1 , \ti\mu_2 , \ti\mu_3): \ti{\bf B} \to
k^3\otimes \fg ^*$ is a hypersymplectic moment map.
\end{Proposition}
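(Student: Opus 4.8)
The plan is to verify directly the two defining conditions of a hypersymplectic structure and then the two defining conditions of a hypersymplectic moment map, reducing everything to trace identities on $2\times 2$ matrices of endomorphisms. First I would observe that the symmetric form $g$ on $T\ti{\bf B}$ is exactly the pairing obtained from the symplectic form $\omega$ on $\bf B$ (defined earlier in Subsection \ref{section:P^2}) by combining the two ``rows'' $(B_{l1},B_{l2},i_l,j_l)$, $l=1,2$: more precisely, writing $\Omega(u,u')=\Tr(B_1B_2'-B_2B_1'+ij'-i'j)$ for elements $u=(B_1,B_2,i,j)$ of $\bf B$, one has $g(x,x')=\Omega(x_{(1)},x'_{(2)})+\Omega(x'_{(1)},x_{(2)})$ where $x_{(l)}$ denotes the $l$-th row. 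This makes the symmetry of $g$ transparent, and non-degeneracy follows because $\Omega$ is a non-degenerate pairing between the two copies of $\bf B$. With the standard quaternion matrices $I,J,K$ acting on the rows, condition (1), $g(Iv,Iw)=g(Jv,Jw)=g(Kv,Kw)=g(v,w)$, becomes a finite check: for each of $I,J,K$ one substitutes the explicit $2\times 2$ action on the row vectors into the formula for $g$ and uses cyclicity of trace together with $\sqrt{-1}^2=-1$; condition (2), $I^2=J^2=K^2=IJK=-1$, is the standard quaternion relation and holds for the chosen matrices by direct multiplication.

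Next I would turn to the moment map conditions. $G$-equivariance of $\ti\mu_1,\ti\mu_2,\ti\mu_3$ is immediate: each $\mu_l$ is a linear combination (with scalar coefficients) of the components $\mu_1,\mu_2,\mu_3$ of $\ti\mu$, and $\ti\mu$ itself is $G$-equivariant for the adjoint action on $\enD V$ (this is part of the setup, since $\ti\mu(g\cdot x)=g\ti\mu(x)g^{-1}$), so each $\ti\mu_l$ inherits equivariance. For the infinitesimal condition $\langle d\ti\mu_{l,x}(v),\xi\rangle=\omega_l(\xi_x,v)$ with $\omega_l(v,w)=g(\cdot v,w)$ (where $\cdot$ is $I$, $J$, $K$ respectively), I would first record that the analogous statement for the original moment map $\mu$ on $\bf B$ — namely $\langle d\mu_x(v),\xi\rangle=\Omega(\xi_x,v)$ — is exactly the momentum-map property already stated in Subsection \ref{section:P^2}, where $\langle A,\xi\rangle=\Tr(A\xi)$. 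Then, decomposing $\ti\mu$ as $\mu_1 x_0^2+\mu_2 x_0x_1+\mu_3 x_1^2$ and using bilinearity, each $\mu_l\colon\ti{\bf B}\to\enD V$ is a ``mixed'' version of $\mu$ evaluated on the two rows, e.g.\ $\mu_2$ involves the cross terms $[B_{11},B_{22}]+[B_{21},B_{12}]+i_1j_2+i_2j_1$. The key computation is therefore to check that pairing $d\mu_{l,x}(v)$ against $\xi$ via the trace reproduces $g$ applied to the appropriately rotated tangent vector; this is a bookkeeping exercise in which the coefficients $\sqrt{-1}$ and the signs in the definitions of $\ti\mu_1,\ti\mu_2,\ti\mu_3$ are precisely arranged so that $\mu_2\mapsto\omega_1$, $\mu_1+\mu_3\mapsto\omega_2$, $-\mu_1+\mu_3\mapsto\omega_3$ after the quaternionic twist.

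The main obstacle, such as it is, will be organizing this last check cleanly: one must match, term by term, the derivative of a quadratic map (so each term contributes twice, via the product rule) against the rotated version of $g$, and keep track of the interaction between the quaternion matrices acting on the row index and the commutator/bilinear structure acting on the $\enD V$ index. I would handle this by fixing notation once — writing $v=\begin{pmatrix}b_{11}&b_{12}&e_1&f_1\\ b_{21}&b_{22}&e_2&f_2\end{pmatrix}$ for a tangent vector and $\xi_x=\begin{pmatrix}[\xi,B_{11}]&[\xi,B_{12}]&\xi i_1&-j_1\xi\\ [\xi,B_{21}]&[\xi,B_{22}]&\xi i_2&-j_2\xi\end{pmatrix}$ for the image of $\xi\in\fg$ under the orbit map — and then computing $\langle d\mu_{2,x}(v),\xi\rangle$ and $g(Iv,\xi_x)$ separately and observing they agree by cyclicity of trace, and similarly for the other two. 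Since the excerpt explicitly says the result follows ``by a straightforward computation,'' the write-up can reasonably present the identification with $\Omega$, invoke the already-established momentum-map property of $\mu$, and then indicate that the three twisted identities follow by substituting the quaternion matrices and collecting terms, without grinding through every line.
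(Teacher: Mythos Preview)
Your proposal is correct and matches the paper's approach: the paper offers no proof beyond the sentence ``By a straightforward computation we get the following proposition,'' so the direct verification you outline---expressing $g$ via the symplectic form $\Omega$ on ${\bf B}$ applied to the two rows, checking the quaternion invariance and relations by substitution, and then matching the derivatives of the three components $\mu_1,\mu_2,\mu_3$ of $\ti\mu$ against $g(I\xi_x,\cdot)$, $g(J\xi_x,\cdot)$, $g(K\xi_x,\cdot)$ using the known momentum-map identity for $\mu$ on ${\bf B}$---is exactly the computation the paper has in mind. Your identification $g(x,x')=\Omega(x_{(1)},x'_{(2)})+\Omega(x'_{(1)},x_{(2)})$ and your description of $\mu_2$ as the cross term are both correct and make the bookkeeping clean.
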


This, together with Proposition \ref{hyper-sympl}, implies the
following corollary which can be used for checking smoothness of
the moduli space of framed perverse instantons:

\begin{Corollary}
Let $x\in \ti{\bf B}$ be a stable ADHM datum. Then $d\ti\mu_x$ is
surjective if and only if there exist no $(\xi _{1},\xi_2,\xi_3
)\in \fg\oplus \fg\oplus\fg -\{ (0,0,0) \}$ such that
$$\xi_{1,x}+I\xi_{2,x}+J\xi_{3,x}=0. $$
\end{Corollary}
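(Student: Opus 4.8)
The plan is to deduce the corollary directly from Proposition~\ref{hyper-sympl} together with the identification of $\ti\mu=(\ti\mu_1,\ti\mu_2,\ti\mu_3)$ as a hypersymplectic moment map on $\ti{\bf B}$ established in the preceding proposition. By Proposition~\ref{hyper-sympl} (equivalence of (1) and (3)), for a point $x\in\ti{\bf B}$ the differential $d\ti\mu_x$ is surjective if and only if the map $\fg\oplus\fg\oplus\fg\to T_x\ti{\bf B}$ sending $(\xi_1,\xi_2,\xi_3)$ to $I\xi_{1,x}+J\xi_{2,x}+K\xi_{3,x}$ is injective. So the whole content of the corollary is to translate this injectivity condition, using the specific quaternion basis $I,J,K$ chosen above, into the stated non-existence of $(\xi_1,\xi_2,\xi_3)\neq(0,0,0)$ with $\xi_{1,x}+I\xi_{2,x}+J\xi_{3,x}=0$.

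The key step is purely formal: since $I$ is invertible, the linear map $(\xi_1,\xi_2,\xi_3)\mapsto I\xi_{1,x}+J\xi_{2,x}+K\xi_{3,x}$ is injective if and only if the map $(\xi_1,\xi_2,\xi_3)\mapsto \xi_{1,x}+I^{-1}J\,\xi_{2,x}+I^{-1}K\,\xi_{3,x}$ is injective (compose with $I^{-1}$ on the target and relabel). With the standard quaternion relations $I^2=J^2=K^2=IJK=-1$ one has $I^{-1}=-I$, hence $I^{-1}J=-IJ=-K$ \emph{and} $I^{-1}K=-IK=J$; after absorbing signs into the (arbitrary) elements $\xi_2,\xi_3\in\fg$, injectivity of this map is equivalent to injectivity of $(\xi_1,\xi_2,\xi_3)\mapsto\xi_{1,x}+K\xi_{2,x}+J\xi_{3,x}$, and again relabeling $\xi_2\leftrightarrow\xi_3$ (or, alternatively, observing that $K=IJ$ acts the same way up to reindexing) this is injectivity of $(\xi_1,\xi_2,\xi_3)\mapsto\xi_{1,x}+I\xi_{2,x}+J\xi_{3,x}$. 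Equivalently stated: such injectivity fails precisely when there exists $(\xi_1,\xi_2,\xi_3)\neq(0,0,0)$ with $\xi_{1,x}+I\xi_{2,x}+J\xi_{3,x}=0$. This is exactly the condition in the corollary.

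Finally, the hypothesis that $x$ is stable is what guarantees we are in the setting where Proposition~\ref{hyper-sympl} applies cleanly: stability of the ADHM datum implies (by Proposition~\ref{rownowaznosc_stabilnosci} and the discussion of the $G=\GL(V)$-action) that the orbit map $\varphi_x$ has injective differential, so $d\varphi_x$ is automatically an injection and one does not separately need to impose condition (2)(a) of Proposition~\ref{hyper-sympl}; the remaining content of surjectivity of $d\ti\mu_x$ is then genuinely captured by the displayed equation. I expect the only subtlety—and hence the "main obstacle"—is bookkeeping the quaternion signs carefully enough to see that $\xi_{1,x}+I\xi_{2,x}+J\xi_{3,x}=0$ really is an equivalent reformulation of $I\xi_{1,x}+J\xi_{2,x}+K\xi_{3,x}=0$; everything else is an immediate citation of the two preceding propositions.
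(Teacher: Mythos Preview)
Your approach is exactly the one the paper intends: the corollary is meant to follow immediately from Proposition~\ref{hyper-sympl} (equivalence of (1) and (3)) together with the identification of $\ti\mu$ as a hypersymplectic moment map. The only content is the quaternion bookkeeping, and here you slip.

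Multiplying $I\xi_{1,x}+J\xi_{2,x}+K\xi_{3,x}$ by $I^{-1}=-I$ gives, as you compute, $\xi_{1,x}-K\xi_{2,x}+J\xi_{3,x}$, i.e.\ the triple of operators $(1,K,J)$ after absorbing the sign. Your next step, ``relabeling $\xi_2\leftrightarrow\xi_3$'', only turns $\xi_{1,x}+K\xi_{2,x}+J\xi_{3,x}$ into $\xi_{1,x}+J\xi_{2,x}+K\xi_{3,x}$; it does \emph{not} replace the operator $K$ by $I$. The operators $I,J,K$ are fixed, concrete endomorphisms of $T\ti{\bf B}$, not interchangeable symbols, so this relabeling does not yield the condition in the corollary. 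The fix is one line: multiply by $K^{-1}=-K$ instead of $I^{-1}$. Using $KI=J$, $KJ=-I$, $K^2=-1$ one gets
\[
-K\bigl(I\xi_{1,x}+J\xi_{2,x}+K\xi_{3,x}\bigr)=-J\xi_{1,x}+I\xi_{2,x}+\xi_{3,x},
\]
and now the substitution $(\eta_1,\eta_2,\eta_3)=(\xi_3,\xi_2,-\xi_1)$ honestly produces $\eta_{1,x}+I\eta_{2,x}+J\eta_{3,x}$, which is the displayed condition.

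A minor remark: your final paragraph about stability is not needed. Condition (3) of Proposition~\ref{hyper-sympl} already encodes injectivity of $d\varphi_x$ (it is the injectivity of a map from $\fg^3$), and so does the reformulated condition $\xi_{1,x}+I\xi_{2,x}+J\xi_{3,x}=0\Rightarrow(\xi_1,\xi_2,\xi_3)=0$. The equivalence therefore holds for any $x\in\ti{\bf B}$; the stability hypothesis is context, not logic.
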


\subsection{Relation to moduli spaces of framed modules}

Let $X$ be a smooth $n$-dimensional projective variety defined
over an algebraically closed field $k$. Let us fix an ample line
bundle $\O _X(1)$ and a coherent sheaf $F$ on $X$. Let us also fix
a polynomial $\delta\in \QQ[t]$ of degree $\le (n-1)$. When
writing $\delta$ as
$$\delta (m)={\delta _1} \frac{m^{n-1}}{(n-1)!}+{\delta _2} \frac{m^{n-2}}{(n-2)!}+\dots +\delta _n,$$
we will assume that the first non-zero coefficient is positive.

Let us recall a few definitions from \cite{HL1}. A \emph{framed
module} is a pair $(E, \alpha)$, where $E$ is a coherent sheaf and
$\alpha : E\to F$ is a homomorphism. Let us set
$\epsilon(\alpha)=0$ if $\alpha=0$ and $\epsilon(\alpha)=1$ if
$\alpha \ne 0$.  Then we define the \emph{Hilbert polynomial} of
$(E, \alpha)$ as $P(E,\alpha)=P(E)-\epsilon (\alpha) \cdot
\delta$. If $E$ has positive rank then we also define the
\emph{slope} of $(E, \alpha)$ as $\mu (E,\alpha)=(\deg
(E)-\epsilon (\alpha) \delta _1)\cdot \rk E$.

\begin{Definition}
A framed module $(E,\alpha)$ is called \emph{Gieseker
$\delta$-(semi)stable} if for all framed submodules
$(E',\alpha')\subset (E,\alpha)$ we have $\rk E\cdot
P(E',\alpha')(\le) \rk E' \cdot P(E, \alpha).$

If $E$ is torsion free than we say that $(E,\alpha)$ is
\emph{slope $\delta_1$-(semi)stable} if for all framed submodules
$(E',\alpha')\subset (E,\alpha)$ of rank $0<\rk E'<\rk E$ we have
$\mu(E',\alpha')(\le) \mu(E, \alpha).$
\end{Definition}

Let us assume that $F$ is a torsion free sheaf on a divisor
$D\subset X$. In the following we identify $F$ with its push
forward to $X$.

\begin{Lemma} \label{ss-pairs}
Let $E$ be a slope semistable torsion free sheaf on $X$ and let
$E|_D\simeq F$ be a framing. Then the corresponding framed module
$(E, \alpha)$, where $\alpha : E\to E|_D\simeq F$, is slope
$\delta_1$-stable for any small positive constant $\delta_1$.  In
particular, $(E,\alpha)$ is Gieseker $\delta$-stable for all
polynomials $\delta$ of degree $n-1$ with a small positive leading
coefficient.
\end{Lemma}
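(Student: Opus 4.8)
The plan is to verify slope $\delta_1$-stability of $(E,\alpha)$ by hand, directly from the slope semistability of $E$, and then to pass to Gieseker $\delta$-stability by the standard comparison between slope and Gieseker (semi)stability for framed modules (as in \cite{HL1}). The only input is semistability of $E$ together with a careful identification of the kernel of the framing map.

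First I would pin down $\ker\alpha$. Set $d=D\cdot \O_X(1)^{n-1}$, which is positive since $D$ is a nonzero effective divisor and $\O_X(1)$ is ample. Tensoring $0\to \O_X(-D)\to \O_X\to \O_D\to 0$ with the torsion free sheaf $E$ shows that $E(-D)\to E$ is injective and that $\ker(E\to E|_D)$ is its image; since $\alpha$ is this restriction map followed by the isomorphism $E|_D\simeq F$, we get $\ker\alpha\simeq E(-D)$ as a subsheaf of $E$. In particular $\alpha\ne 0$ (so $\epsilon(\alpha)=1$), and for a subsheaf $E'\subset E$ one has $\alpha|_{E'}=0$ exactly when $E'\subset E(-D)$. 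I would also record that $E(-D)=E\otimes\O_X(-D)$ is again slope semistable, with $\mu(E(-D))=\mu(E)-d$.

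Next I would fix $\delta_1$ with $0<\delta_1<(\rk E)\,d$ and take any proper nonzero framed submodule $(E',\alpha'=\alpha|_{E'})$ with $0<\rk E'<\rk E$, and split into two cases. If $\alpha'\ne 0$, then $\epsilon(\alpha')=\epsilon(\alpha)=1$, and combining $\mu(E')\le\mu(E)$ (semistability of $E$) with $\rk E'<\rk E$ and $\delta_1>0$ gives at once the strict inequality $\mu(E',\alpha')=\mu(E')-\delta_1/\rk E'<\mu(E)-\delta_1/\rk E=\mu(E,\alpha)$; here no smallness of $\delta_1$ is needed. If $\alpha'=0$, then $E'\subset E(-D)$, so semistability of $E(-D)$ gives $\mu(E')\le\mu(E)-d$, whereas $\mu(E,\alpha)=\mu(E)-\delta_1/\rk E>\mu(E)-d\ge\mu(E',\alpha')$ precisely because $\delta_1<(\rk E)\,d$. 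So the defining slope inequality is strict in every case, i.e.\ $(E,\alpha)$ is slope $\delta_1$-stable for all $\delta_1$ in this range.

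Finally, for the Gieseker statement with $\delta$ of degree $n-1$ and leading coefficient $\delta_1$ in this range, I would compare the reduced Hilbert polynomials $P(E',\alpha')/\rk E'$ and $P(E,\alpha)/\rk E$: their $m^n$-coefficients coincide for all torsion free sheaves of positive rank, and the next coefficient comparison is exactly the slope inequality already proved, so the strict inequality of reduced polynomials for $m\gg 0$ follows whenever $0<\rk E'<\rk E$. The boundary cases are direct: $\rk E'=0$ is impossible since $E$ is torsion free, and for $\rk E'=\rk E$ the quotient $E/E'$ is a nonzero torsion sheaf, which settles the $\alpha'\ne 0$ subcase, while if $\alpha'=0$ then $E'\subset E(-D)$ and one uses that $P(E)-P(E(-D))=P(F)$ has degree-$(n-1)$ leading coefficient $(\rk E)\,d/(n-1)!>\delta_1/(n-1)!$; alternatively one simply quotes \cite{HL1}. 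There is no real obstacle here; the only points requiring care are the identification $\ker\alpha\simeq E(-D)$ and the bookkeeping that fixes the threshold $\delta_1<(\rk E)(D\cdot\O_X(1)^{n-1})$.
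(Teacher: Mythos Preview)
Your proof is correct and follows essentially the same approach as the paper: identify $\ker\alpha=E(-D)$, split into the two cases $E'\subset\ker\alpha$ and $E'\not\subset\ker\alpha$, and use semistability of $E$ (hence of $E(-D)$) together with $\rk E'<\rk E$ to obtain the strict slope inequality. Your treatment is more careful about the threshold $\delta_1<(\rk E)\,d$ and about the passage to Gieseker stability (including the $\rk E'=\rk E$ boundary case), whereas the paper leaves the latter implicit; otherwise the arguments coincide.
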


\begin{proof}
Note that $\ker \alpha =E(-D)$. Let $E'\subset E$ be a subsheaf of
rank $r'<r=\rk E$. If $E'\subset \ker \alpha$ then
$$\mu (E',\alpha')=\mu (E')\le \mu (E)-Dc_1(\O _X(1))^{n-1}<\mu (E)-\delta_1=\mu (E,\alpha ).$$
If $E'\not\subset \ker \alpha$ then
$$\mu (E',\alpha')=\mu (E')-\frac{\delta_1}{r'}\le \mu
(E)-\frac{\delta_1}{r'}<\mu (E)-\frac{\delta_1}{r}=\mu (E,\alpha
),$$ which proves the lemma.
\end{proof}

Now \cite[Theorem 0.1]{HL1}, together with appropriate
modifications in positive characteristic (see \cite{La1} for the
details) imply the following corollary:

\begin{Corollary}
There exists a quasi-projective scheme $M(X; D, F, P)$ which
represents the moduli functor  $\M (X; D, F, P): \Sch/k\to \Sets$,
which to a $k$-scheme of finite type $S$ associates the set of
isomorphism classes of $S$-flat families of pairs $(E, E|_D\simeq
F)$, where $E$ is a slope semistable torsion free sheaf on $X$
with fixed Hilbert polynomial $P$. It can be constructed as an
open subscheme of the projective moduli scheme of Gieseker
$\delta$-stable framed modules $M_{\delta}^{s}(X; D, F,
P)=M_{\delta}^{ss}(X; D, F, P)$ for any polynomial $\delta$ of
degree $n-1$ with a small positive leading coefficient.
\end{Corollary}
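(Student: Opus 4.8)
The plan is to realize $M(X;D,F,P)$ as a Zariski-open subscheme of the projective moduli scheme $M_\delta^{ss}(X;D,F,P)$ of Gieseker $\delta$-semistable framed modules with Hilbert polynomial $P$. Existence of the latter, and the fact that the moduli functor of $\delta$-\emph{stable} framed modules is \emph{represented} by an open subscheme $M_\delta^{s}\subset M_\delta^{ss}$ (a $\delta$-stable framed module has no non-trivial automorphisms, so the moduli space is fine there), is precisely \cite[Theorem 0.1]{HL1} in characteristic zero; in positive characteristic the only ingredients that need replacing are the boundedness statements and the Grauert--M\"ulich-type restriction theorems underlying that construction, and these are supplied by \cite{La1}. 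So first I would fix once and for all a polynomial $\delta$ of degree $n-1$ with positive leading coefficient $\delta_1$, chosen \emph{small} in the precise sense that it works simultaneously for the whole family of slope semistable torsion free sheaves $E$ on $X$ with $P(E)=P$; this is legitimate because that family is bounded (Maruyama in characteristic zero, \cite{La1} in general), so only finitely many numerical types of potential destabilizing subsheaves occur.

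Granting such a $\delta$, Lemma \ref{ss-pairs} shows that for every slope semistable torsion free $E$ with $P(E)=P$ and every framing $\alpha\colon E\to E|_D\stackrel{\simeq}{\to}F$ the framed module $(E,\alpha)$ is Gieseker $\delta$-stable, hence defines a point of $M_\delta^{s}(X;D,F,P)$. The same numerical bookkeeping also gives the equality $M_\delta^{s}=M_\delta^{ss}$ asserted in the statement: by boundedness of the numerical types entering the semistability inequalities, strictly $\delta$-semistable framed modules with Hilbert polynomial $P$ can appear only for finitely many values of $\delta_1$, so shrinking $\delta_1$ below the least of these removes them, and $M_\delta^{s}(X;D,F,P)$ is already a projective fine moduli scheme.

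Next I would cut out $M(X;D,F,P)$ as the locus $U\subset M_\delta^{s}(X;D,F,P)$ of framed modules $(E,\alpha)$ such that (i) $E$ is torsion free, and (ii) the adjoint map $E|_D\to F$ (available because $F$ is an $\O_D$-module, so $\hom_X(E,F)=\hom_D(E|_D,F)$) is an isomorphism. Both conditions are open: purity of the fibre of a flat family is an open condition on the base (standard; see \cite{HL2}), and once $E$ is torsion free the sequence $0\to E(-D)\to E\to E|_D\to 0$ is exact and $E|_D$ is $S$-flat with the same Hilbert polynomial as $F$, so the adjoint map $E|_D\to F$ is an isomorphism iff it is surjective, and surjectivity is open because the cokernel is proper over the base. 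On $U$ the universal framed module restricts to a universal family of pairs $(E,E|_D\stackrel{\simeq}{\to}F)$ with $E$ torsion free, and conversely a $\delta$-semistable framed module lying in $U$ has slope semistable underlying sheaf: a subsheaf $E'\subsetneq E$ with $\mu(E')>\mu(E)$ would violate the leading-coefficient part of the $\delta$-semistability inequality once $\delta_1$ is below the positive values of $\mu(E')-\mu(E)$ occurring in the bounded family (a discrete set bounded away from $0$), exactly as in the proof of Lemma \ref{ss-pairs}. Hence $U$, with its induced scheme structure, represents $\M(X;D,F,P)$ and is quasi-projective as an open subscheme of a projective scheme.

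The genuinely non-formal points, where I expect to spend the real effort, are two. First, the positive-characteristic input: extracting from \cite{La1} both the boundedness that lets a single $\delta$ work for the whole family and the semistable-restriction statements on which \cite[Theorem 0.1]{HL1} relies in that setting. Second, the representability bookkeeping in (i)--(ii): one must check that restriction to $D$ commutes with arbitrary base change over $U$ (which is where torsion-freeness of $E$ is used), so that the restricted functor is honestly represented by $U$ rather than merely corepresented by it. Everything else --- the wall-and-chamber argument for $M_\delta^{s}=M_\delta^{ss}$ and the slope semistability of the underlying sheaf on $U$ --- is a routine reprise of the slope estimates already carried out in the proof of Lemma \ref{ss-pairs}.
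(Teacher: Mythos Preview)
Your proposal is correct and follows essentially the same route as the paper, which is far terser: the paper simply states that the corollary follows from Lemma~\ref{ss-pairs} together with \cite[Theorem~0.1]{HL1} and the positive-characteristic modifications in \cite{La1}, leaving all the bookkeeping you spell out (openness of torsion-freeness and of the framing being an isomorphism, the converse implication that $\delta$-stability forces slope semistability, and the wall-crossing argument for $M_\delta^{s}=M_\delta^{ss}$) implicit. Your more detailed write-up is a faithful unpacking of exactly what the paper intends.
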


Let $X$ be a surface and let $F$ be a semistable locally free
sheaf on a smooth irreducible curve $D\subset X$. Assume that $D$
is numerically proportional to the polarization $c_1(\O_X(1))$.
Then any torsion free sheaf $E$ on $X$ for which there exists a
framing $E|_D\simeq F$ is automatically slope semistable. So in
this case we have a quasi-projective moduli space for torsion free
sheaves with framing without any need to introduce the stability
condition.

This in particular implies that the moduli spaces of torsion free
sheaves $E$ on $\PP ^2$ with fixed rank $r$, second Chern class
and framing $E\simeq \O_{l_{\infty}}^r$ at the fixed line
$l_{\infty}$ can be considered as an open subscheme of the moduli
space of framed modules of \cite{HL1} and it is a fine moduli
space for the corresponding moduli functor (cf. \cite[Remark
2.2]{Na1}).

However, the situation becomes more subtle if we want to consider
moduli spaces of $(r,c)$-instantons $E$ on $\PP ^3$ with framing
$E\simeq \O_{l_{\infty}}^r$ at the fixed line $l_{\infty}\subset
\PP ^3$:

\begin{Proposition}
Let $\ti{E}$ be an $(r-1,c)$-instanton on $\PP ^3$ and let
$E|_{l_{\infty}}\simeq \O_{l_{\infty}}^r$ be a framing of
$E=\ti{E}\oplus \O_{\PP ^3}$. If $c=c_2(E)>r(r-1)$ then $E$ is an
$(r,c)$-instanton but the corresponding framed module $(E,\alpha)$
is not Gieseker $\delta$-semistable for any positive polynomial
$\delta$.
\end{Proposition}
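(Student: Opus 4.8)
The plan is to verify that $E$ is genuinely an $(r,c)$-instanton and then, for each admissible $\delta$, to exhibit an explicit framed submodule of $(E,\alpha)$ that violates the Gieseker inequality. For the first point: since $E=\ti E\oplus\O_{\PP ^3}$ with $\ti E$ an $(r-1,c)$-instanton, $E$ has rank $r$, $c_1(E)=0$ and $c_2(E)=c_2(\ti E)=c$; the vanishings $H^1(E(-2))=H^2(E(-2))=0$ follow from those for $\ti E$ together with $H^1(\O_{\PP ^3}(-2))=H^2(\O_{\PP ^3}(-2))=0$, and $E$ is trivial on $l_{\infty}$ since it carries a framing there. Thus $E$ is an $(r,c)$-instanton, and Riemann--Roch (using $\ch (E)=r-c[H]^2$ as in Section 2) gives $P(E)(m)=\chi (E(m))=r\binom{m+3}{3}-c(m+2)$. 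Here $\alpha\colon E\to E|_{l_{\infty}}\simeq\O_{l_{\infty}}^{r}$ is the (surjective) restriction map followed by the framing, so $\alpha\neq 0$.

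Next I would test $\delta$-semistability of $(E,\alpha)$ against two framed submodules. The first is $(\O_{\PP ^3},\alpha|_{\O_{\PP ^3}})$, the second summand: it has rank $1$, and $\alpha|_{\O_{\PP ^3}}$ is nonzero (it is the restriction $\O_{\PP ^3}\to\O_{l_{\infty}}$ included into $\O_{l_{\infty}}^{r}$ via the summand and the framing), so $\epsilon =1$. The second is $(\ker\alpha,0)$: since $\alpha$ is surjective onto a torsion sheaf, $\ker\alpha$ has rank $r$ and $\epsilon =0$, and $0\to\ker\alpha\to E\to E|_{l_{\infty}}\to 0$ gives $P(\ker\alpha)=P(E)-r(m+1)$. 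Using $P(\O_{\PP ^3})(m)=\binom{m+3}{3}$, a short computation shows that the first submodule violates the semistability inequality $rP(E',\alpha')\le\rk E'\cdot P(E,\alpha)$ precisely when $c(m+2)>(r-1)\delta(m)$ for $m\gg 0$, and the second precisely when $\delta(m)>r(m+1)$ for $m\gg 0$.

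The heart of the argument is then to show that, assuming $c>r(r-1)$, one of these two inequalities holds for every positive $\delta$ of degree $\le 2$. Write $\delta(m)=am^2+bm+d$ with $a\ge 0$ (positivity of $\delta$). If $(E,\alpha)$ were $\delta$-semistable, then the second submodule does not destabilize, i.e.\ $am^2+(b-r)m+(d-r)\le 0$ for $m\gg 0$; letting $m\to\infty$ forces $a=0$ and then $b\le r$. Also the first submodule does not destabilize, i.e.\ (with $a=0$) $(c-(r-1)b)m+(2c-(r-1)d)\le 0$ for $m\gg 0$, which forces $c\le (r-1)b$. Combining, $c\le (r-1)b\le r(r-1)$, contradicting $c>r(r-1)$. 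Hence $(E,\alpha)$ is $\delta$-unstable for every positive $\delta$, as claimed.

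There is no conceptual obstacle; the only thing demanding care is the bookkeeping in the last step — treating all degrees of $\delta$ uniformly, including the boundary cases $b=r$ and $c=(r-1)b$, which are absorbed into the ``eventually $\le 0$'' conditions, and keeping the normalization of $\delta$ consistent with that fixed earlier. The design of the two test submodules is what makes the argument go through: the rank-$1$ summand destabilizes in the ``small $\delta$'' regime, the kernel of the framing destabilizes in the ``large $\delta$'' regime, and the threshold separating them is exactly $c=r(r-1)$. (For $r=1$ the first submodule destabilizes for every $\delta$, since then $c(m+2)>0$, so that degenerate case is covered as well.)
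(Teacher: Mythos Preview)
Your proof is correct and follows essentially the same route as the paper: the two test submodules $\O_{\PP^3}\subset E$ and $\ker\alpha=I_{l_\infty}E\subset E$ yield the bounds $\delta(m)\ge\frac{c(m+2)}{r-1}$ and $\delta(m)\le r(m+1)$, which together force $c\le r(r-1)$. Your version is a bit more explicit (verifying the instanton axioms for $E$ and writing $\delta$ in coefficients), but the argument is the same.
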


\begin{proof}
Assume $(E,\alpha)$ is Gieseker $\delta$-semistable for some
positive polynomial $\delta$.  Then the stability condition for
$E'=I_{l_{\infty}}E\subset E$ gives $\frac{P(E')}{r}\le
\frac{P(E)-\delta}{r}$, i.e., $\delta \le
P(E)-P(E')=rP(\O_{l_{\infty}})$. Hence $$\delta (m)\le r(m+1)$$
for large $m$. On the other hand, we have $P(\O_{\PP
^3})-\delta\le \frac{P(E)-\delta}{r}$, which translates into
$$\delta (m)\ge \frac{c(m+2)}{r-1}.$$
Hence $c\le r(r-1)$.
\end{proof}

\medskip

Below we show that the moduli space of framed instantons on $\PP
^3$ can be constructed as an open subscheme of the moduli space of
framed modules but on a different variety. Before giving a precise
formulation of this result let us introduce some notation.

Let $\Lambda \simeq \PP^1$ be the pencil of hyperplanes passing
through $l_{\infty}=\{x_0=x_1=0\}$ in $\PP^3$. The coordinates of
this $\PP^1$ are denoted by $y_0, y_1$.  Let $X=\{(H,x): x\in
H\}\subset \PP^1\times \PP^3$ be the incidence variety.  It is
defined by the equation $y_1x_0=y_0x_1$. Let $p$ and $q$ denote
the corresponding projections of $X$ onto $\Lambda$ and $\PP^3$.
We will write $\O_X(a,b)$ for $p^*\O_{\PP^1}(a)\otimes
q^*\O_{\PP^3}(b).$ The projection $q:X\to \PP^3$ is the blow up of
$\PP ^3$ along the line $l_{\infty}$. The exceptional divisor of
$q$ will be denoted by $D$. It is easy to see that $\O_X(D)\simeq
\O_{\PP^3}(-1,1).$ Note that $X$ is equal to the projectivization
of $N=\O^2_{\PP^1}\oplus \O_{\PP^1} (1)$ on $\PP^1$. The relative
$\O_{\PP (N)}(1)$ for this projectivization is equal to
$q^*\O_{\PP^3}(1)$. We will denote this line bundle by $\O_X(1)$.

\begin{Theorem}
There exists a quasi-projective scheme $\M ^f(\PP ^3; r, c)$ which
represents the moduli functor $\ti \M ^f(\PP ^3; r,c): \Sch/k\to
\Sets$ given by
$$S\to\left\{
\begin{array}{c}
\hbox{Isomorphism classes of $S$-flat families } \\
\hbox{of framed $(r,c)$-instantons $E$ on $\PP ^3$.}\\
\end{array}
\right\}$$ It is isomorphic to $M(X;D, \O_D^r, P)$ for a suitably
chosen Hilbert polynomial $P$ and an arbitrary polarization.
\end{Theorem}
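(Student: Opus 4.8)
The plan is to exhibit an equivalence between framed $(r,c)$-instantons on $\PP^3$ and a certain class of framed sheaves on the incidence variety $X$, and then to invoke the existing moduli-theoretic machinery for framed modules (the Corollary quoted above, which builds $M(X;D,F,P)$ as an open subscheme of $M_\delta^{ss}(X;D,F,P)$). First I would set up the dictionary: given a framed instanton $E$ on $\PP^3$, the natural candidate on $X$ is $\ti E = q^*E$ (or a twist of it), equipped with the framing along $D$ coming from restricting $\ti E$ to $D\cong l_\infty\times\PP^1$ (note $D\cong\PP^1\times\PP^1$ via $q$ and $p$, and $\ti E|_D$ is pulled back from $E|_{l_\infty}\cong\O_{l_\infty}^r$, hence trivial, so it carries a canonical framing $\ti E|_D\cong\O_D^r$). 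Conversely, given a sheaf on $X$ trivial along $D$, I would push it forward by $q$; since $q$ is the blow-up of $\PP^3$ along $l_\infty$ with exceptional divisor $D$ and $E$ is trivial on $l_\infty$ (so in particular $q$-relatively acyclic after an appropriate twist), one recovers $E = q_*\ti E$ with $R^iq_*\ti E=0$ for $i>0$, and the framing descends. The key point is to choose the Hilbert polynomial $P$ on $X$ (with respect to $\O_X(1)=q^*\O_{\PP^3}(1)$) so that $P$ equals the Hilbert polynomial of $q^*E$; by the projection formula and $Rq_*\O_X=\O_{\PP^3}$ one gets $\chi(X,q^*E(m))=\chi(\PP^3,E(m))$, so $P$ is just the Hilbert polynomial of an $(r,c)$-instanton, which depends only on $r$ and $c$ by Riemann--Roch.

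Next I would verify that under this correspondence the instanton condition matches slope semistability of the framed module on $X$. By Lemma \ref{stab-inst} every instanton $E$ is slope semistable on $\PP^3$; I would check that $q^*E$ remains slope semistable on $X$ with respect to a suitable polarization (a small perturbation $\O_X(\epsilon, 1)$ of $q^*\O_{\PP^3}(1)$, or $\O_X(1)$ itself read as a nef class with the appropriate positivity argument), using that subsheaves of $q^*E$ push down to subsheaves of $E$ of no greater slope and that the extremal subsheaves (those concentrated near $D$) are controlled by the framing. Then Lemma \ref{ss-pairs} applies: the framed module $(q^*E,\alpha)$ with $\alpha:q^*E\to q^*E|_D\cong\O_D^r$ is slope $\delta_1$-stable, hence Gieseker $\delta$-stable, for every polynomial $\delta$ of degree $n-1=2$ with small positive leading coefficient. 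Conversely, a Gieseker $\delta$-stable framed module $(\ti E,\alpha)$ with Hilbert polynomial $P$ and $\ti E|_D\cong\O_D^r$ must have $\ti E$ torsion free with the right numerical invariants; I would argue that $\alpha$ surjective (forced by $\delta$-stability and the shape of $P$, exactly as in the surface case referenced after Lemma \ref{ss-pairs}) implies $\ker\alpha=\ti E(-D)$ and that $\ti E=q^*(q_*\ti E)$ with $q_*\ti E$ an instanton: the vanishing $H^1(\PP^3,(q_*\ti E)(-2))=H^2(\PP^3,(q_*\ti E)(-2))=0$ and triviality on a line would be extracted from the corresponding cohomological data on $X$ via the Leray spectral sequence for $q$ and base change.

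Having the bijection on points, I would upgrade it to an isomorphism of moduli functors: for an arbitrary base scheme $S$, an $S$-flat family of framed $(r,c)$-instantons $(\E,\Phi)$ on $\PP^3_S$ pulls back along $q_S:X_S\to\PP^3_S$ to an $S$-flat family $(q_S^*\E,\alpha)$ of framed modules on $X_S$ (flatness is preserved since $q$ is flat; the framing along $D_S$ is induced from $\Phi$), and conversely $q_{S*}$ applied to an $S$-flat family of Gieseker $\delta$-stable framed modules with the prescribed Hilbert polynomial produces an $S$-flat family of framed instantons, with $R^iq_{S*}=0$ for $i>0$ by cohomology and base change (the fibers being instantons, the higher direct images vanish and commute with base change). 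These two constructions are mutually inverse because $q_{S*}q_S^*=\mathrm{id}$ and, on the other side, the counit $q_S^*q_{S*}\ti\E\to\ti\E$ is an isomorphism whenever $\ti\E$ is fiberwise of the form $q^*(\text{instanton})$ — precisely the content of the pointwise analysis above applied in families. Therefore $\ti\M^f(\PP^3;r,c)$ is isomorphic, as a functor, to the open subfunctor of $\M(X;D,\O_D^r,P)$ cut out by the condition ``$q_*$ of the sheaf is an instanton,'' which I would show is represented by the open subscheme of $M(X;D,\O_D^r,P)$ where this holds; since being an instanton is an open condition (vanishing of $H^1,H^2$ of a twist and triviality on a line are open by semicontinuity, and $q_*$ commutes with base change here), this defines an open subscheme $\M^f(\PP^3;r,c)\subset M(X;D,\O_D^r,P)$, and in fact equality can be arranged since every Gieseker $\delta$-stable framed module with the chosen $P$ and framing $\O_D^r$ already has this property. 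The main obstacle I anticipate is the careful cohomology-and-base-change bookkeeping for $q$: one must pin down the precise twist of $\O_X(D)=\O_{\PP^3}(-1,1)$ needed so that instantons on $\PP^3$ correspond to sheaves on $X$ that are both $q$-acyclic and slope semistable for a polarization for which the moduli space of framed modules exists, and checking that Gieseker $\delta$-stability on $X$ forces $\alpha$ to be surjective (hence $\ker\alpha=\ti E(-D)$) requires the same numerical argument as in the proof of the Proposition preceding this theorem, adapted to the threefold $X$.
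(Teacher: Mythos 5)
Your proposal is essentially the paper's argument: pull the instanton back along the blow-up $q:X\to\PP^3$, frame it along the exceptional divisor $D$, identify the resulting functor with the Huybrechts--Lehn moduli functor $\M(X;D,\O_D^r,P)$ of slope-semistable framed modules, and use $q_{S*}$ as the inverse transformation. The two points you leave vague are exactly where the paper supplies concrete tools: slope semistability of $q^*E$ for an \emph{arbitrary} polarization is obtained by writing any ample class as $a\xi+bf$ in the nef cone of $X$ (so $\ti{H}^2=a^2\xi^2+2ab\,f\xi$) and combining your push-down argument with semistability of the restrictions of $E$ to the pencil of planes through $l_{\infty}$ (the fibres of $p$); and the non-circular descent criterion is a families version of Ishimura's theorem, which gives that the counit $q_S^*q_{S*}\ti{\E}\to\ti{\E}$ is an isomorphism as soon as $\ti{\E}|_{D\times S}$ is pulled back from $l_{\infty}\times S$ --- a hypothesis the framing provides for free, so the inverse is defined on all of $\M(X;D,\O_D^r,P)$ and no open subfunctor needs to be cut out.
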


\begin{proof}

Let $E$ be an instanton on $\PP^3$.

\begin{Lemma} \label{pull-back}
$q^*E$ is slope $\ti{H}$-semistable for any ample line bundle
$\ti{H}$ on $X$.
\end{Lemma}

\begin{proof}
Let us set $\xi =c_1(\O_X(1)).$ It is easy to see that $q^*E$ is
slope $\xi$-semistable as otherwise the push forward of the
destabilizing subsheaf would destabilize $E=q_*(q^*E)$ (see Lemma
\ref{stab-inst}).

Moreover, the restriction of $q^*E$ to a general fibre of $p$ is
isomorphic to the restriction of $E$ to a hyperplane in $\PP^3$
containing $l$, which is clearly semistable. So $q^*E$ is slope
$f\xi$-semistable (i.e., slope in the semistability condition is
computed as $c_1\cdot f\xi/\rk$).

The nef cone of $X$ is generated by divisors $\xi$ and
$f=p^*c_1(\O_{\PP^1}(1))$.  So we can write $\ti{H}=a\xi+bf$ for
some positive numbers $a$ and $b$.  Then
$\ti{H}^2=a^2\xi^2+2abf\xi $, so slope $\ti{H}$-semistability of
$q^*E$ follows from the above.
\end{proof}

In the proof we also need a generalization of Ishimura's
generalization \cite[Theorem 1]{Is} of Schwarzenberger's theorem.
For a moment let us switch to a different notation:

Let $X$ and $Y\subset X$ be smooth varieties and let $S$ be an
arbitrary noetherian $k$-scheme. Let $\pi: {\ti X}\to X$ be the
blow up of $X$ along $Y$. Let $E$ be the exceptional divisor and
let ${\ti \pi}=\pi|_E:E\to Y$.  Let us set $\pi_S=\pi\times \Id
_S: {\ti X}\times S\to X\times S$ etc.

\begin{Theorem} \emph{(cf. \cite[Theorem 1]{Is})} \label{Ishimura}
Let $\F$ be a coherent sheaf on ${\ti X}\times S$ such that
$\F|_{E\times S}\simeq {\ti \pi}^*\G$ for some locally free sheaf
$\G$ on $Y\times S$. Then the coherent sheaf $\E =\pi_{S*}\F$ is
locally free in an open neighborhood of $Y\times S$ and the
natural map $\pi_S^*\E\to \F$ is an isomorphism.
\end{Theorem}

\begin{proof}
The theorem can be proven in exactly the same way as \cite[Theorem
1]{Is} using the fact that cohomology commutes with flat base
extension.
\end{proof}

Coming back to the proof of the theorem we will show that the
functor $\M (l;r,c)$ is represented by the quasi-projective moduli
scheme $M(X; D, \O_D^r, P)$ (for a suitably chosen $P$ and an
arbitrary fixed polarization).

First let us note that by Lemma \ref{pull-back} there exists a
natural transformation of functors
$$\Phi: \ti \M ^f(\PP ^3;r,c)\to \M (X;D,\O_D^r, P)$$ given by sending a flat
$S$-family $(E_S,E|_{l\times S}\simeq \O _{l\times S}^r)$ of
framed $(r,c)$-instantons to the family $(q_S^*E _S,
q_S^*E_S|_{D\times S}\simeq \O _{D\times S}^r)$. To show the above
claim it is sufficient to prove that the transformation $\Phi$ is
an isomorphism of functors. First note that
$$q_{S*}q_S^*E_{S}\simeq E_S\otimes q_{S*}\O_{X\times S}\simeq E_S,$$
where the first isomorphism comes from the projection formula
(note that $E_S$ is locally free around $l\times S$) and the
second isomorphism follows since push-forward commutes with flat
base extension.  Similarly, we have
$$R^1q_{S*}(q_S^*E_{S}(-D\times S))\simeq E_S\otimes
R^1q_{S*}\O_{X\times S}(-D\times S)=0,$$ so
$q_{S*}(q_S^*E_S|_{D\times S})\simeq E_S|_{l\times S}$ and the
push-forward of $q_S^*E_S|_{D\times S}\simeq \O _{D\times S}^r$
gives an isomorphism $E_S|_{l\times S}\simeq \O _{l\times S}^r$.

Hence Theorem \ref{Ishimura} implies that the  natural
transformation
$$\Psi: \M (X;D,\O_D^r, P)\to \ti \M ^f(\PP ^3;r,c)$$ given by sending a flat
$S$-family $(F_S,F|_{D\times S}\simeq \O _{D\times S}^r)$ to the
family $(q_{S*} F_S, (q_{S*}F_S)|_{l\times S}\simeq \O _{l\times
S}^r)$ is inverse to $\Phi$.
\end{proof}

\section{Deformation theory and smoothness of moduli spaces of instantons}

In this section we give a very quick review of deformation theory
for framed perverse instantons. We sketch only a quite simple fact
from deformation theory used a few times throughout the paper
without going into long technical results showing, e.g., virtual
smoothness of the moduli space of stable perverse instantons.

Then we show that if $E_1$ and $E_2$ are locally free instantons
then $\ext ^2 (E_1,E_2)$ vanishes for low ranks and second Chern
classes. This implies that the moduli space of locally free
instantons embeds as a Lagrangian submanifold into the moduli
space of sheaves on a quartic. It also proves that the moduli
space of framed locally free $(r,c)$-instantons is smooth for low
values of $r$ and $c$.

\subsection{Deformation theory for framed perverse instantons}

\medskip

Let $(\C, \Phi)$ be a stable framed perverse instanton corresponding to
an ADHM datum $x\in \tilde{\bf B}$. Let $\varphi : G\to \BB$ be
the orbit map sending $g$ to $g x$.

\begin{Theorem} \label{deformation}
Let us consider the complex $K$
$$0\to  K^0=\fg \mathop{\longrightarrow}^{d\varphi_e} K^1=T_x \tilde{\bf B} \mathop{\longrightarrow}^{d \ti \mu _x}
K^2 =T_0(\enD (V)\otimes H^0(\O_{\PP ^1}(2)))\to 0$$ Then $H^i
(K)=0$ for $i\ne 1,2$, $H^1(K)={\ext}^1 (\C, J_{l_{\infty}}\otimes
\C)$ and $H^2 (K)={\ext}^2 (\C, \C).$ In particular, if ${\ext}^2
(\C, \C)=0$ then the moduli space $\M (\PP ^3; r,c)$ is smooth of
dimension $4cr$ at $[(\C, \Phi )]$.
\end{Theorem}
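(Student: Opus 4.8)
The plan is to separate the formal moduli‑theoretic input from the monad computation identifying $H^1(K)$ and $H^2(K)$ with Ext‑groups. \emph{Formal part.} By Theorem \ref{perv-moduli} the moduli space is the GIT quotient $\M(\PP^3;r,c)=\ti\mu^{-1}(0)/\!\!/_{\chi}G$, $G=\GL(V)$, and over the stable locus $\ti\mu^{-1}(0)^s\to\M(\PP^3;r,c)$ is an \'etale‑locally trivial principal $G$‑bundle. Since $\ti\mu$ is $G$‑equivariant and $\ti\mu(x)=0$, we have $\ti\mu\circ\varphi\equiv0$, so $K$ is a complex; $H^1(K)=\ker(d\ti\mu_x)/\im(d\varphi_e)$ is the Zariski tangent space of $\M(\PP^3;r,c)$ at $[(\C,\Phi)]$, and $\coker(d\ti\mu_x)=H^2(K)$ is an obstruction space for $\ti\mu^{-1}(0)$ at $x$, hence (the $G$‑action being free) for $\M(\PP^3;r,c)$ at $[x]$. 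The vanishing $H^i(K)=0$ for $i\notin\{1,2\}$ is automatic for $i\notin\{0,1,2\}$, and for $i=0$ holds because $H^0(K)=\ker d\varphi_e$ is the Lie algebra of the stabiliser of $x$, which is trivial by (the proof of) Proposition \ref{rownowaznosc_stabilnosci}; in positive characteristic one uses instead the triviality of the scheme‑theoretic stabiliser noted in Section \ref{ADHM}.

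\emph{Identification with Ext‑groups.} By Lemma \ref{perv-family}, over any base a family of framed perverse instantons is canonically the monad $V\otimes\O_{\PP^3}(-1)\mathop{\longrightarrow}^{\alpha}\fal W\otimes\O_{\PP^3}\mathop{\longrightarrow}^{\beta}V\otimes\O_{\PP^3}(1)$ ($\fal W=V\oplus V\oplus W$) attached to a family of ADHM data; applied over the dual numbers this identifies $H^1(K)$ with the first‑order framed deformations of $(\C,\Phi)$, i.e.\ with $\ext^1_{\PP^3}(\C,J_{l_\infty}\otimes\C)$ — the tangent space to the moduli problem, the $J_{l_\infty}$‑twist being the usual way the framing enters (a deformation respecting $\Phi$ restricts trivially to $l_\infty$). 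To see $H^2(K)\cong\ext^2(\C,\C)$ I would build the comparison from the sheaf‑Hom complex of the monad: on $\PP^3$ the line bundles $\O(j)$, $-2\le j\le2$, have cohomology only in $H^0$ and only for $j\ge0$, so $R\hom_{\PP^3}(\C,\C)$ is computed by the three‑term complex $L^{\bullet}$ in degrees $0,1,2$ with $L^0=\bigoplus_p\hom(\C_x^p,\C_x^p)$, $L^1=\hom(\C_x^{-1},\C_x^0)\oplus\hom(\C_x^0,\C_x^1)$, $L^2=\hom(\C_x^{-1},\C_x^1)$; twisting by $J_{l_\infty}$ through the triangle $J_{l_\infty}\otimes\C\to\C\to j_*Lj^*\C$, with $R\hom_{\PP^3}(\C,j_*Lj^*\C)=R\hom_{l_\infty}(\O_{l_\infty}^r,\O_{l_\infty}^r)$ concentrated in degree $0$ (as $l_\infty\simeq\PP^1$), and cancelling the acyclic part reflecting the redundancy of the monad presentation, one recovers $K$ with $d\varphi_e$, $d\ti\mu_x$ as differentials. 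The same triangle yields $\ext^2(\C,J_{l_\infty}\otimes\C)\simeq\ext^2(\C,\C)$ (why the statement uses the untwisted $\C$ there), $\ext^0(\C,J_{l_\infty}\otimes\C)=0$ (matching $H^0(K)=0$: $\C$ is simple and its identity is nonzero along $l_\infty$), and $\ext^3(\C,\C)=0$ (matching $K$ stopping in degree $2$). Carrying this comparison out precisely, robustly enough for the applications, is the technical heart and the step I expect to be the main obstacle; it parallels the deformation‑theory computations in \cite{Na1} and \cite{FJ}.

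\emph{Smoothness.} If $\ext^2(\C,\C)=0$ then $H^2(K)=0$, so $d\ti\mu_x$ is surjective and $\ti\mu^{-1}(0)$ is smooth at $x$ of dimension $\dim\ti{\bf B}-\dim(\enD V\otimes H^0(\O_{\PP^1}(2)))=(4c^2+4cr)-3c^2=c^2+4cr$; since $G=\GL(V)$ acts freely near $x$ and $\dim G=c^2$, the quotient $\M(\PP^3;r,c)$ is smooth at $[(\C,\Phi)]$ of dimension $4cr$. Equivalently, $H^0(K)=H^2(K)=0$ and $\dim K^0-\dim K^1+\dim K^2=c^2-(4c^2+4cr)+3c^2=-4cr$ force $\dim H^1(K)=4cr$.
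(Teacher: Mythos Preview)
Your overall architecture is right and you correctly isolate the key comparison lemma $\ext^2(\C,J_{l_\infty}\otimes\C)\simeq\ext^2(\C,\C)$ via the triangle $J_{l_\infty}\otimes\C\to\C\to\C\otimes\O_{l_\infty}$ together with $\ext^l(\C,\C\otimes\O_{l_\infty})=H^l(\O_{l_\infty}^{r^2})=0$ for $l=1,2$; this is exactly Lemma~\ref{porownanie} in the paper. Your treatment of $H^0(K)$ and of the smoothness/dimension count is the same as the paper's.

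Where you differ is in the identification of $K$ with an Ext complex. You propose to compute $R\hom(\C,\C)$ by the hypercohomology of $\Hom^\bullet(\C,\C)$, obtaining a complex $L'$ supported in degrees $0,1,2$ (using that $\O_{\PP^3}(j)$ has only $H^0$ for $-2\le j\le2$), and then to \emph{adjust} by the framing triangle and cancel an acyclic redundancy. As you yourself note, making that two-step passage from $L'$ to $K$ precise at the level of complexes is the delicate point: the triangle only gives a long exact sequence in cohomology, not a direct surgery on $L'$, and your $L'$ is much larger than $K$ (e.g.\ $\dim L'^0=2c^2+(2c+r)^2$, $\dim L'^2=10c^2$).

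The paper bypasses this by inserting the twist from the outset: it applies the same hypercohomology spectral sequence to $\D=\Hom^\bullet(\C,J_{l_\infty}\otimes\C)$. The point is that $J_{l_\infty}(j)$ has far fewer nonzero cohomology groups than $\O(j)$: for $-2\le j\le 2$ the only nonvanishing are $H^2(J_{l_\infty}(-2))\simeq k$, $H^0(J_{l_\infty}(1))\simeq k^2$, $H^0(J_{l_\infty}(2))\simeq k^7$. Hence the spectral sequence collapses to a three-term complex
\[
L^0=\hom(V,V),\qquad L^1=\bigl(\hom(\fal W,V)\oplus\hom(V,\fal W)\bigr)\otimes k^2,\qquad L^2=\hom(V,V)\otimes k^7
\]
of dimensions $c^2$, $8c^2+4cr$, $7c^2$. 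In coordinates one sees that $d^1_L$ restricts to an isomorphism on a $\hom(V,V)\otimes k^4$ summand; splitting that off leaves precisely $K^0=\fg$, $K^1=T_x\fal{\gr B}$, $K^2=\enD(V)\otimes H^0(\O_{\PP^1}(2))$ with differentials $d\varphi_e$ and $d\ti\mu_x$. So the ``cancelling the acyclic part'' that you flag as the main obstacle becomes a single concrete $4c^2$-dimensional split, rather than a comparison-via-triangle at the complex level. If you rewrite your argument with $J_{l_\infty}\otimes\C$ in the second slot from the beginning, the gap you identify closes itself.
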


\begin{proof}
We know that $\C$ is quasi-isomorphic to the following complex
$$0\to \C^{-1}:=V\otimes \O_{\PP ^3}(-1) \mathop{\to}^{\alpha}
\C^{0}:=\ti{W} \otimes \O_{\PP ^3} \mathop{\to}^{\beta} \C^1:=
V\otimes \O_{\PP ^3}(1)\to 0,$$ where $\dim V=c$ and $\dim
{\ti{W}}=r+2c$ (more precisely $\ti W =V\oplus V\oplus W$) and
$\alpha,\beta$ are defined by the ADHM datum $x$ as in
\ref{FJ-section}. Let us consider the complex $\D=\Hom^{\bullet}
(\C , J_{l_{\infty}}\otimes \C)$. Then we see that
$${\ext}^i(\C, J_{l_{\infty}}\otimes \C)=\HH^i (\PP ^3, \D ),$$ where $\HH^i(X, \D )$ denotes the $i$th
hypercohomology group of the complex $\D$. Let us consider a
spectral sequence
$$H^t(\PP^3, \D ^s)\Rightarrow \HH^{s+t}(\PP ^3, \D).$$
Using this spectral sequence we see that we have a complex
$$0\to L^{0} =H^2(\PP ^3,\D^{-2})\mathop{\to}^{d^0_L} L^{1}=H^0(\PP ^3, \D ^1) \mathop{\to}^{d^1_L} L^{2}=H^0(\PP ^3, \D ^2)\to 0$$
such that $H^1(L)=\HH ^1(\PP ^3, \D)$ and $H^2(L)=\HH ^2(\PP ^3,
\D)$. We have $L^0=\hom (V,V)\otimes H^2(\PP ^3,
J_{l_{\infty}}(-2))$, $L^1=(\hom (\ti W, V)\oplus \hom (V, \ti
W))\otimes H^0(\PP ^3, J_{l_{\infty}}(1))$ and $L^2 =\hom
(V,V)\otimes H^0(\PP ^3, J_{l_{\infty}}(2))$. Note that $H^2(\PP
^3, J_{l_{\infty}}(-2))\simeq k$ but $H^0(\PP ^3,
J_{l_{\infty}}(2))\simeq k^7$, so this is not yet the complex we
were looking for. However, if we write down everything in
coordinates we see that $d^1_L$ is an isomorphism on $\hom (V,V)
\otimes k^4$ and after splitting off the corresponding factors
from $L^1$ and $L^2$ we get exactly complex $K$. Obviously, we
need to write down everything in coordinates to check that the
obtained maps are essentially the same. We leave the details to
the reader. Now the theorem follows from the following lemma:

\begin{Lemma} \label{porownanie}
Let $\C$ be a framed perverse $(r, c)$-instanton on $\PP^3$. Then
$${\ext}^2(\C, \C)={\ext}^2(\C,J_{l_{\infty}}\otimes \C).$$
\end{Lemma}

\begin{proof}
We have a distinguished triangle
$$J_{l_{\infty}} \otimes \C \to \C \to \C\otimes \O_{l_{\infty}}\to J_{l_{\infty}} \otimes \C [1].$$
This triangle gives
$${\ext}^1(\C, \C\otimes \O_{l_{\infty}})\to {\ext}^2(\C, J_{l_{\infty}}\otimes \C)\to
{\ext}^2(\C, \C)\to {\ext}^2(\C, \C\otimes \O_{l_{\infty}}). $$
But ${\ext}^l(\C, \C\otimes \O
_{l_{\infty}})=h^l(\O_{l_{\infty}}^{r^2})=0$ for $l=1,2$, so we
get the required equality.
\end{proof}

This finishes proof of Theorem \ref{deformation}.
\end{proof}

\medskip

\begin{Remark}
Let $(\C, \Phi)$ be a stable framed perverse instanton. Then by a
standard computation  one can see that  the tangent space to $\M
(\PP ^3; r,c)$ at the point corresponding to $(\C, \Phi)$ is
isomorphic to ${\ext}^1 (\C, J_{l_{\infty}}\otimes \C)$. Moreover,
one can show that there exists an appropriate obstruction theory
with values in ${\ext}^2 (\C, \C)$ (cf. \cite[2.A.5]{HL2}).
\end{Remark}

\subsection{Smoothness of the moduli space of framed locally free
instantons}

\begin{Lemma}\label{rest-plane}
Let $E$ be a locally free instanton of rank $r=2$ or $r=3$. Then
for any plane $\Pi\subset \PP ^3$ the restriction $E_{\Pi}$ is
slope semistable.
\end{Lemma}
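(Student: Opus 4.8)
The plan is to deduce both cases from the single vanishing $H^0(\Pi,E_\Pi(-1))=0$ together with the fact that in rank $3$ the dual $E^{*}$ is again an instanton. First I record the reduction: tensoring the ideal sequence $0\to\O_{\PP^3}(-1)\to\O_{\PP^3}\to\O_\Pi\to 0$ with $E(-1)$ (which stays exact since $E$ is locally free) gives
\[0\to E(-2)\to E(-1)\to E_\Pi(-1)\to 0.\]
An instanton is trivial on a line, hence of trivial splitting type, so the earlier lemma gives $H^0(\PP^3,E(-1))=0$; combined with the defining vanishing $H^1(\PP^3,E(-2))=0$ the long exact sequence forces $H^0(\Pi,E_\Pi(-1))=0$. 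The only elementary input I then use is that a torsion free sheaf $G$ on $\Pi\cong\PP^2$ containing a rank-$1$ subsheaf of degree $\ge 1$ satisfies $H^0(G(-1))\ne 0$ (twisting the subsheaf by $\O_\Pi(-1)$ gives a rank-$1$ torsion free sheaf of degree $\ge 0$ on $\PP^2$, which has a nonzero section). Equivalently, $H^0(G(-1))=0$ rules out rank-$1$ subsheaves of positive degree in $G$.

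For $r=2$ this already finishes: $E_\Pi$ is locally free of rank $2$ with $c_1=0$, so if it were not slope semistable its maximal destabilising subsheaf would be a rank-$1$ subsheaf of integral, positive, hence $\ge 1$, degree, forcing $H^0(E_\Pi(-1))\ne 0$.

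For $r=3$ I first note that $E^{*}$ is again a rank-$3$ instanton: it is locally free, trivial on the same line, has $c_2(E^{*})=c_2(E)$, and Serre duality on $\PP^3$ gives $H^1(\PP^3,E^{*}(-2))\simeq H^2(\PP^3,E(-2))^{\vee}=0$ and $H^2(\PP^3,E^{*}(-2))\simeq H^1(\PP^3,E(-2))^{\vee}=0$ --- this is the one place where the hypothesis $H^2(E(-2))=0$, which is genuinely extra in rank $\ge 3$, enters. Applying the reduction of the first paragraph to $E^{*}$ gives $H^0(\Pi,(E_\Pi)^{*}(-1))=0$ as well, since $(E_\Pi)^{*}=(E^{*})_\Pi$. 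Now suppose $E_\Pi$ is not slope semistable and let $F$ be its maximal destabilising subsheaf; then $F$ is saturated and semistable with $\mu(F)>0$ and $\rk F\in\{1,2\}$, and the Harder--Narasimhan filtration of $E_\Pi$ is $0\subset F\subset E_\Pi$. If $\rk F=1$, then $\deg F\ge 1$ and we conclude as for $r=2$. If $\rk F=2$, then $E_\Pi/F$ is a torsion free rank-$1$ quotient of degree $-\deg F\le -1$, so $\mu_{\min}(E_\Pi)=-\deg F$ and hence $\mu_{\max}((E_\Pi)^{*})=-\mu_{\min}(E_\Pi)=\deg F\ge 1$; in particular $(E_\Pi)^{*}$ is not semistable. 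If the maximal destabilising subsheaf of $(E_\Pi)^{*}$ has rank $1$, it is a rank-$1$ subsheaf of degree $\ge 1$, contradicting $H^0((E_\Pi)^{*}(-1))=0$; if it has rank $2$, then the last Harder--Narasimhan piece of $(E_\Pi)^{*}$ has rank $1$ and degree $-2\deg F$, so $\mu_{\max}(E_\Pi)=-\mu_{\min}((E_\Pi)^{*})=2\deg F$, whereas $\mu_{\max}(E_\Pi)=\mu(F)=\deg F/2$, forcing $\deg F=0$ --- absurd. Therefore $E_\Pi$ is slope semistable.

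The one genuinely delicate point is the possible rank-$2$ destabilising subsheaf when $r=3$: it is invisible to the cohomology of $E_\Pi$ itself, and the device for handling it is to pass to the dual instanton $E^{*}$ (which is where the full strength of the definition of an instanton, in particular $H^2(E(-2))=0$, is needed) and turn such a subsheaf either into a rank-$1$ destabilising subsheaf of $(E_\Pi)^{*}$ or into a numerical contradiction via the standard identities $\mu_{\max}(M^{*})=-\mu_{\min}(M)$ and $\mu_{\min}(M^{*})=-\mu_{\max}(M)$ for locally free $M$. All remaining steps are routine manipulations with Chern classes, Serre duality and Harder--Narasimhan filtrations.
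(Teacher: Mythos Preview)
Your proof is correct and follows essentially the same route as the paper: derive $H^0(E_\Pi(-1))=0$ and $H^0(E_\Pi^*(-1))=0$ from the instanton conditions (the second via Serre duality, using $H^2(E(-2))=0$), and conclude that in ranks $2$ and $3$ these two vanishings rule out all destabilising subsheaves. Your treatment of the rank-$2$ destabiliser in the $r=3$ case is more elaborate than necessary: once you have the saturated rank-$1$ torsion free quotient $Q=E_\Pi/F$ with $\deg Q\le -1$, its dual $Q^*$ is a line bundle of degree $\ge 1$ injecting into $(E_\Pi)^*$, which directly contradicts $H^0((E_\Pi)^*(-1))=0$ and avoids the Harder--Narasimhan case split on the dual.
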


\begin{proof}
Let us note that we have a long exact cohomology sequences:
$$0=H^0(E(-1))\to H^0(E_{\Pi}(-1))\to H^1(E(-2))=0$$
and
$$0=H^0(E^*(-1))\to H^0(E^*_{\Pi}(-1))\to H^1(E^*(-2))\simeq (H^2(E(-2)))^*=0,$$
where the isomorphism in the second sequence comes from the Serre
duality. This implies that $E_{\Pi}(-1)$ and $E^*_{\Pi} (-1)$ have
no sections which  in ranks $2$ and $3$ implies semistability of
$E_{\Pi}$.
\end{proof}

\medskip

\begin{Lemma} \label{bound}
Let $E_i$  be a locally free $(r_i,c_i)$-instanton on $\PP ^3$,
where $i=1,2$. Then $\ext ^2(E_1, E_2 (-2))$ has dimension at most
$c_1c_2$.
\end{Lemma}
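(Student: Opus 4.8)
The statement to prove is that for locally free instantons $E_i$ of rank $r_i$ with $c_2(E_i)=c_i$ on $\PP^3$, one has $\dim\ext^2(E_1,E_2(-2))\le c_1c_2$. The natural strategy is to use the Barth--Atiyah-type monad presentation. By the Barth--Atiyah theorem (for symplectic instantons) and more generally since every instanton $E_i$ here is locally free and comes from an ADHM datum via Theorem~\ref{bijection-ADHM-perverse} and the Frenkel--Jardim description, $E_i$ is the cohomology $\H^0(\C_i)$ of a monad
$$\C_i=\bigl(V_i\otimes\O_{\PP^3}(-1)\mathop{\to}^{\alpha_i}\ti W_i\otimes\O_{\PP^3}\mathop{\to}^{\beta_i}V_i\otimes\O_{\PP^3}(1)\bigr)$$
with $\dim V_i=c_i$ and $\dim\ti W_i=r_i+2c_i$. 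The plan is to compute $\ext^*(E_1,E_2(-2))$ via the hypercohomology spectral sequence attached to the complex $\Hom^{\bullet}(\C_1,\C_2(-2))$ — exactly the technique already used in the proof of Theorem~\ref{deformation} and Lemma~\ref{porownanie}. First I would replace $E_1$ by the monad $\C_1$ (a quasi-isomorphism in $D^b(\PP^3)$) and $E_2$ by the sheaf; since $E_2$ is locally free the only cohomological contributions come from $\underline{\Hom}(\C_1^{-p},E_2(-2))=\underline{\Hom}(V_1\otimes\O(-p-(-1)),\dots)$ twisted appropriately, i.e. from $H^\bullet(\PP^3,E_2(-2-k))$ for $k\in\{-1,0,1\}$ corresponding to $\C_1^{1},\C_1^0,\C_1^{-1}$.

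The key vanishing inputs are: $H^p(\PP^3,E_2(q))=0$ for $p\ge 2$ when $q\ge -2$ (the instanton condition for $p=2$, $q=-2$, plus positivity; $H^3$ vanishes even lower since $H^3(E_2(q))$ is Serre-dual to $H^0(E_2^*(-4-q))$ which vanishes for $q\ge -3$ by slope semistability and triviality on a line from Lemma~\ref{stab-inst}); and $H^0(E_2(q))=0$ for $q\le -1$, $H^1(E_2(-2))=0$. Running the spectral sequence $E_1^{s,t}=\bigoplus H^t(\PP^3,\underline{\Hom}(\C_1^{-s},E_2(-2)))\Rightarrow \ext^{s+t}(\C_1,E_2(-2))$, the only terms that can survive into total degree $2$ are controlled, and $\ext^2$ is a subquotient of a sum whose dimension I would like to bound by $c_1c_2$. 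The natural candidate is $\ext^2(\C_1,E_2(-2))\hookleftarrow$ or $\twoheadleftarrow$ a term of the form $\hom(V_1,V_2)\otimes(\text{something 1-dimensional})$: indeed the spot $s=1$ (contribution of $\C_1^{1}=V_1\otimes\O(1)$), $t=1$ gives $\hom(V_1,H^1(\PP^3,E_2(-3)))$, and the spot $s=-1$, $t=3$ gives $\hom(V_1,H^3(\PP^3,E_2(-1)))$. Under the vanishing above $H^3(E_2(-1))=0$, so the only surviving contribution to $\ext^2$ is at most $\hom(V_1,H^1(\PP^3,E_2(-3)))$, and I would show $\dim H^1(\PP^3,E_2(-3))\le c_2$ — this should follow from Riemann--Roch together with vanishing of $H^0,H^2,H^3$ of $E_2(-3)$ (so that $h^1=-\chi(E_2(-3))$) and a direct computation of the Euler characteristic, or alternatively from the monad for $E_2$, where $H^1(E_2(-3))$ sits in the cohomology of $V_2\otimes H^0(\O(-2))\to\ti W_2\otimes H^0(\O(-3))\to V_2\otimes H^0(\O(-2))$-type terms giving exactly a $c_2$-dimensional space.

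Concretely: from $0\to\im\alpha_2\to\ker\beta_2\to E_2\to 0$ and $0\to\ker\beta_2\to\ti W_2\otimes\O\to\im\beta_2\to 0$ together with $0\to\im\beta_2\to V_2\otimes\O(1)\to 0$ (here $\beta_2$ surjective since $E_2$ locally free means the monad is a genuine monad) and $\im\alpha_2\simeq V_2\otimes\O(-1)$, twisting by $(-3)$ and chasing cohomology: $H^i(\O(-4))=0$ for $i=0,1,2$, $H^i(\O(-3))=0$ for $i=0,1,2$, $H^i(\O(-2))=0$ for all $i$, so $H^1(E_2(-3))$ injects into $H^2(\im\alpha_2(-3))=V_2\otimes H^2(\O(-4))$... let me instead note $H^2(\ker\beta_2(-3))\cong V_2\otimes H^2(\O(-4))\ne 0$ has dimension $c_2\cdot 3$; this route needs care. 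The cleaner route is Riemann--Roch, so: $\dim H^1(\PP^3,E_2(-3))=-\chi(E_2(-3))$ once one checks $H^0=H^2=H^3=0$ there ($H^0(E_2(-3))=0$ from slope semistability; $H^2(E_2(-3))$ and $H^3(E_2(-3))$ by Serre duality $=H^1(E_2^*(-1))^*$ and $H^0(E_2^*(-1))^*$, both vanishing since $E_2^*$ is also an instanton of trivial splitting type, so $H^0(E_2^*(-1))=0$, and $H^1(E_2^*(-1))=0$ because the instanton condition $H^1(E_2^*(-2))=0$ and trivial splitting type push the vanishing up), and then $-\chi(E_2(-3))=c_2$ by a Chern-character computation using $\ch(E_2)=r_2-c_2[H]^2$. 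The main obstacle I anticipate is precisely this bookkeeping: verifying all the auxiliary cohomology vanishings for the twists $E_2(-1),E_2(-3)$ (using that instantons have trivial splitting type and hence $E_2^*$ is again an instanton) and then extracting the sharp bound $c_1c_2$ rather than a weaker multiple of it from the spectral sequence, i.e. checking that no other $E_1^{s,t}$ term with $s+t=2$ contributes. Everything else is a routine application of the monad machinery already deployed in Theorem~\ref{deformation}.
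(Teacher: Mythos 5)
Your overall strategy --- replace $E_1$ by its monad $\C_1$ and run the hypercohomology spectral sequence for $\Hom^{\bullet}(\C_1,E_2(-2))$ --- is viable and close in spirit to the paper's argument, but the execution as written contains a concrete indexing error that, if followed literally, proves too much. Since $\Hom(\C_1^{-n},E_2(-2))$ sits in degree $n$, the term $\Hom(\C_1^{1},E_2(-2))=V_1^*\otimes E_2(-3)$ lives at $s=-1$ (so the $t=3$ spot there is $V_1^*\otimes H^3(E_2(-3))$), while the term at $s=1$ is $\Hom(\C_1^{-1},E_2(-2))=V_1^*\otimes E_2(-1)$, contributing $V_1^*\otimes H^1(E_2(-1))$ in total degree $2$. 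You have these two twists swapped. With your identifications all three candidate terms vanish ($H^1(E_2(-3))=0$ from the monad for $E_2$, $H^2(E_2(-2))=0$, $H^3(E_2(-1))=0$), which would give $\ext^2(E_1,E_2(-2))=0$ for all instantons --- false, and it would make the hypothesis $c_1c_2\le 6$ in Theorem \ref{ext-vanishing} pointless. (A quick sanity check that exposes the misplacement: your indexing would also force $\ext^3(E_1,E_2(-2))\simeq V_1^*\otimes H^2(E_2(-3))\ne 0$, contradicting Serre duality and semistability.) The auxiliary computation is also off: $\chi(E_2(-3))=+c_2$, not $-c_2$, and $H^2(E_2(-3))\ne 0$ in general, so $h^1(E_2(-3))$ is not $-\chi$. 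The correct surviving term is $V_1^*\otimes H^1(E_2(-1))$, and the display of the monad for $E_2$ gives $H^1(E_2(-1))$ as a quotient of $V_2$, hence of dimension at most $c_2$; this repairs your argument and yields the bound $c_1c_2$.

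For comparison, the paper avoids all of this bookkeeping by resolving \emph{both} sheaves: it forms $\C^{\bullet}=\Hom^{\bullet}(\C_1^{\bullet},\C_2^{\bullet})\otimes\O(-2)$, whose terms are direct sums of $\O(-4),\O(-3),\O(-2),\O(-1),\O$. Only $H^3(\O(-4))$ and $H^0(\O)$ are nonzero, so the entire first page reduces to two copies of $\hom(V_1,V_2)$ in total degrees $1$ and $2$, linked by a single differential; this gives the exact sequence $0\to\ext^1(E_1,E_2(-2))\to\hom(V_1,V_2)\to\hom(V_1,V_2)\to\ext^2(E_1,E_2(-2))\to 0$ and the bound is immediate, with no instanton vanishing needed beyond the existence of the monads. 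Your one-sided version works once the indexing is fixed, but it genuinely requires the extra vanishings for the twists of $E_2$ that the paper's symmetric version sidesteps.
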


\begin{proof}
Our assumption implies that $E_i$ is the cohomology of the
following monad $\C_i^{\bullet}$
$$0\to V_i\otimes \O_{\PP ^3}(-1) \mathop{\to}^{d^{-1}_{\C_i}}
{\ti{W}}_i\otimes \O_{\PP ^3} \mathop{\to}^{d^0_{\C_i}} V_i\otimes
\O_{\PP ^3}(1)\to 0,$$ where $\dim V_i=c_i$ and $\dim
{\ti{W_i}}=2c_i+r_i$. Let us consider the complex
$\C^{\bullet}=\Hom^{\bullet} (\C_1^{\bullet}, \C_2^{\bullet})$
defined by
$$\C ^i:= \bigoplus_k \Hom (\C_1^{k}, \C_2^{k+i})$$ with
$d(f):=d_{\C_2^{\bullet}}\circ f-(-1)^{\deg f}f\circ
d_{\C_1^{\bullet}}.$ Since $\C_i^{\bullet}$ are complexes of
locally free sheaves we see that
$${\ext}^p(E_1, E_2(-2))=\HH^p (\PP ^3, \C^{\bullet}\otimes \O_{\PP ^3}(-2)),$$ where $\HH^p$ denotes the $p$th
hypercohomology group. But then the spectral sequence
$$ H^t (\PP ^3, \C^{s}\otimes \O_{\PP ^3}(-2))\Rightarrow \HH^{s+t} (\PP ^3, \C^{\bullet}\otimes \O_{\PP ^3}(-2))$$
gives an exact sequence
$$ 0\to \ext ^1(E_1, E_2 (-2))\to \hom (V_1,V_2)\to \hom (V_1,V_2)
\to \ext ^2(E_1, E_2 (-2))\to 0.
$$
Clearly, this implies the required inequality.
\end{proof}

\medskip
The proof of the following theorem uses the method of proof of
\cite[Th\'eor\`eme 1]{LP}.

\begin{Theorem} \label{ext-vanishing}
 Let $E_i$  be a locally free $(r_i,c_i)$-instanton on
$\PP ^3$, where $i=1,2$. If $r_1, r_2\le 3$ and $c_1c_2\le 6$ then
$\ext ^2(E_1, E_2 )=0$.
\end{Theorem}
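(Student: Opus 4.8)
The plan is to reduce the vanishing of $\ext^2(E_1,E_2)$ to the vanishing of $\ext^2(E_1, E_2(-2))$, which is already controlled by Lemma \ref{bound} (it has dimension at most $c_1c_2\le 6$), together with Serre duality, which identifies $\ext^2(E_1,E_2)$ with $\ext^1(E_2,E_1(-4))^*$ (since $E_i$ are locally free and $\omega_{\PP^3}=\O(-4)$). The idea, following \cite[Th\'eor\`eme 1]{LP}, is to play off the twist $(-2)$ against $(-4)$: one studies the natural pairing or exact sequences relating $E_2(-2)$, $E_2$, and $E_2(-4)$ obtained by restriction to planes, and uses the monad presentation of the $E_i$ to get a handle on the relevant Ext-groups as cohomology of the Hom-complex of monads $\C^{\bullet}=\Hom^{\bullet}(\C_1^{\bullet},\C_2^{\bullet})$ twisted appropriately.

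First I would write out, as in the proof of Lemma \ref{bound}, the hypercohomology spectral sequence for $\C^{\bullet}$ (untwisted), which gives $\ext^p(E_1,E_2)$ in terms of $H^t(\PP^3,\C^s)$. The terms $\C^s$ are direct sums of $\O_{\PP^3}$, $\O_{\PP^3}(\pm1)$, $\O_{\PP^3}(\pm2)$ tensored with $\hom$-spaces between the $V_i$, $\ti W_i$; the only possibly nonzero cohomology contributing to $\ext^2(E_1,E_2)$ comes from $H^0$, $H^1$ and from $H^2(\O_{\PP^3}(-2))=0$, $H^3(\O_{\PP^3}(-2))=0$, $H^3(\O_{\PP^3}(-1))=0$ — so in fact all the ``bad'' $H^3$ terms vanish, and one is left needing control of a differential on $H^0$-terms. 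The key point will be that this differential, written out in coordinates using the explicit monad maps $\alpha,\beta$ (given in Section \ref{FJ-section}), is injective precisely when $c_1c_2\le 6$; more precisely, one shows the relevant map $\hom(V_1,V_2)\otimes H^0(\O(1))^{\oplus?} \to \hom(V_1,V_2)\otimes H^0(\O(2))$ (or a closely related map built from the commutators $[\ti B_1,\ti B_2]$) has no kernel in this range, using that an ADHM datum restricted to a general point of $\PP^1$ is regular (semistability + the instanton condition via Lemma \ref{rest-plane} / Lemma \ref{stab-inst}).

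The main obstacle I anticipate is precisely this injectivity/surjectivity analysis of the boundary map in the spectral sequence: unlike the twisted case of Lemma \ref{bound}, where the $H^0(J_{l_\infty}(\cdot))$ dimensions conspire to give a clean four-term exact sequence, here the untwisted cohomology groups $H^0(\O(1))$ and $H^0(\O(2))$ are ``too big'' and one must genuinely use the numerical hypothesis $c_1c_2\le 6$ (and $r_i\le 3$) together with regularity of the generic restriction to $\PP^1$ to rule out a kernel. I expect the argument to proceed by choosing a general plane $\Pi\supset$ (the line), restricting via Lemma \ref{rest-plane} to get semistable sheaves $E_{i,\Pi}$ on $\PP^2$, bounding $\ext^2$ on $\PP^3$ in terms of $\ext$-groups on $\PP^2$ through the sequence $0\to E_i(-1)\to E_i\to E_{i,\Pi}\to 0$, and then invoking a Riemann--Roch / Bogomolov-type inequality on $\PP^2$ (this is the mechanism of \cite{LP}) to conclude that the obstruction space is zero once $c_1c_2\le 6$. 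Filling in the bookkeeping between the $\PP^3$ Ext-groups and the $\PP^2$ ones, and checking the numerical threshold is exactly $6$, is the part that will require care but no new ideas.
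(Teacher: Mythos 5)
There is a genuine gap. You correctly identify the target --- playing $\ext^2(E_1,E_2)$ against the bound $\dim\ext^2(E_1,E_2(-2))\le c_1c_2\le 6$ from Lemma \ref{bound} --- but note first that Lemma \ref{bound} gives only an upper bound, not vanishing, so one cannot ``reduce to the vanishing of $\ext^2(E_1,E_2(-2))$''; the argument has to be a dimension count showing that $\ext^2(E_1,E_2)\ne 0$ would force $\dim\ext^2(E_1,E_2(-2))\ge 7$. The mechanism that produces this inequality is the step missing from your proposal. Neither of your two suggested substitutes supplies it: the differential in the hypercohomology spectral sequence of the untwisted complex $\Hom^{\bullet}(\C_1^{\bullet},\C_2^{\bullet})$ is a map between spaces of dimensions governed by $h^0(\O(1))=4$ and $h^0(\O(2))=10$, and there is no reason its injectivity or surjectivity should be governed by the inequality $c_1c_2\le 6$ or by regularity of a generic ADHM restriction; and restricting to a single general plane with a Bogomolov-type inequality on $\PP^2$ does not see the numerical threshold either.

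What the paper actually does (following Le Potier's \emph{m\'ethode des plans}) is to work with the whole family of planes at once: pushing the sequence $0\to\O(-1,-1)\to\O\to\O_Z\to 0$ on $\PP^3\times(\PP^3)^*$ tensored with $p_1^*\Hom(E_1,E_2(i))$ down to the dual space gives, for $i>-3$, a surjection of sheaves
$\ext^2(E_1,E_2(i-1))\otimes\O_{(\PP^3)^*}(-1)\to\ext^2(E_1,E_2(i))\otimes\O_{(\PP^3)^*}$,
because $R^2q_{2*}q_1^*\Hom(E_1,E_2(i))=0$ by Serre duality on each plane together with semistability of the restrictions (Lemma \ref{rest-plane}; this is where $r_i\le 3$ enters). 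The kernel $F_i$ is then a vector bundle on $(\PP^3)^*$ with total Chern class $(1-h)^a$, so if $\ext^2(E_1,E_2(i))\ne 0$ its rank $s_i=\dim\ext^2(E_1,E_2(i-1))-\dim\ext^2(E_1,E_2(i))$ must be at least $3$, since otherwise $c_{s_i+1}(F_i)\ne 0$. Applying this for $i=0$ and $i=-1$ yields $\dim\ext^2(E_1,E_2(-2))\ge 7$, contradicting Lemma \ref{bound}. This Chern-class obstruction on the dual $\PP^3$ --- ``each twist down costs at least $3$'' --- is the key idea, and it is absent from your plan.
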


\begin{proof}
Let $Z=\{(x,\Pi)\in \PP^3\times (\PP ^3)^* : x\in \Pi\}$ be the
incidence variety of planes containing a point in $\PP ^3$. Let
$p_1,p_2$ denote projections from $\PP^3\times (\PP ^3)^*$ onto
$\PP^3$ and $ (\PP ^3)^*$, respectively, and let us set
$q_1=p_1|_Z$ and $q_2=p_2|_Z$. On $ \PP^3\times (\PP ^3)^*$ we
have a short exact sequence
$$0\to \O_{ \PP^3\times (\PP ^3)^*}(-1,-1)\to \O_{ \PP^3\times (\PP ^3)^*}\to \O_{Z}\to 0.$$
Let us tensor this sequence with $p_1^*\Hom (E_1,E_2(i))$ and push
it down by $p_2$. Then we get an exact sequence
$$
\ext^2(E_1,E_2(i-1))\otimes \O_{(\PP ^3)^*}(-1)
\mathop{\to}^{\varphi_i} \ext^2(E_1,E_2(i))\otimes \O_{(\PP ^3)^*}
\to R^2q_{2*}q_1^*\Hom (E_1,E_2(i)).
$$
But for any plane $\Pi\subset \PP ^3$ the group  $\ext
^2((E_1)_{\Pi},(E_2)_{\Pi}(i))$ is Serre dual to $\hom
((E_2)_{\Pi},(E_1)_{\Pi}(-i-3)).$ By Lemma \ref{rest-plane} both
$(E_1)_{\Pi}$ and $(E_2)_{\Pi}$ are semistable of the same slope
so if $i>-3$ then $\hom ((E_2)_{\Pi},(E_1)_{\Pi}(-i-3))=0$. This
implies that $R^2q_{2*}q_1^*\Hom (E_1,E_2(i))=0$ for $i>-3$ and
hence for such $i$ we have a short exact sequence
$$0\to F_i=\ker \varphi _i\to \ext^2(E_1,E_2(i-1))\otimes \O_{(\PP ^3)^*}(-1)
\mathop{\to}^{\varphi_i} \ext^2(E_1,E_2(i))\otimes \O_{(\PP
^3)^*}\to 0.$$ Now $F_i$ is a vector bundle (again only for
$i>-3$). Let $s_i$ denotes its rank. If $s_i <3$ and
$\ext^2(E_1,E_2(i))\ne 0$ then $c_{s_i+1}(F_i)$ is non-zero which
contradicts the fact that $F_i$ is locally free. Therefore if
$\ext^2(E_1,E_2(i))\ne 0$ for some $i\ge -2$ then
$$s_i=
\dim \ext^2(E_1,E_2(i-1))-\dim \ext^2(E_1,E_2(i))\ge 3.$$ Applying
this inequality for $i=0$ and $i=-1$ we see that if $\ext ^2(E_1,
E_2 )\ne 0$ then $\ext ^2(E_1, E_2 (-2))$ has dimension at least
$7$. By Lemma \ref{bound} this contradicts our assumption on
$c_1c_2$.
\end{proof}

\begin{Corollary}\label{smoothness-for-low-charge}
Let $r\le 3$ and $c\le 2$. Then the moduli space of framed locally free
$(r,c)$-instantons is smooth of dimension $4cr$.
\end{Corollary}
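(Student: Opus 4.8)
The plan is to combine Theorem \ref{deformation} with the $\ext^2$-vanishing of Theorem \ref{ext-vanishing}, after observing that a framed locally free $(r,c)$-instanton is in particular a stable framed perverse instanton and that its deformation obstructions live in $\ext^2(\C,\C)$ where $\C$ is the monad complex representing it. So the whole corollary is really a matter of checking that the hypotheses line up: that $r\le 3$, $c\le 2$ forces $c_1c_2\le 6$ in the relevant application, and that locally free instantons sit inside the moduli space $\M(\PP^3;r,c)$ as a well-behaved locus.

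\medskip

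First I would recall that by Lemma \ref{existence} (together with Lemma \ref{stab-inst}) a locally free $(r,c)$-instanton $E$ with its framing $\Phi$ along $l_\infty$ is a framed perverse instanton; more precisely, by the Barth--Atiyah-type monad description (or directly by Lemma \ref{perv-family}) it is quasi-isomorphic to a monad $\C^\bullet = (V\otimes\O_{\PP^3}(-1)\to \ti W\otimes\O_{\PP^3}\to V\otimes\O_{\PP^3}(1))$ with $\dim V=c$, coming from an ADHM $(r,c)$-datum $x$ for $\PP^1$ via Theorem \ref{bijection-ADHM-perverse}. Since $E$ is locally free it corresponds to an FJ-regular, hence stable, ADHM datum, so $x\in\ti\mu^{-1}(0)^s$ and $[(\C,\Phi)]$ is a point of the fine moduli scheme $\M(\PP^3;r,c)$ of Theorem \ref{perv-moduli}. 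Moreover the set of such locally free instantons forms an open subset of $\M(\PP^3;r,c)$ (being locally free is an open condition on the cohomology of the monad), so smoothness of $\M(\PP^3;r,c)$ at these points is exactly what Corollary \ref{smoothness-for-low-charge} asserts, and the dimension statement will follow from the ``$4cr$'' in Theorem \ref{deformation}.

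\medskip

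Next, by Theorem \ref{deformation} the moduli space $\M(\PP^3;r,c)$ is smooth of dimension $4cr$ at $[(\C,\Phi)]$ as soon as $\ext^2(\C,\C)=0$. Since $\C$ is quasi-isomorphic to the sheaf object $E$ (the cohomology of the monad is concentrated in degree $0$ because $E$ is locally free, and $\H^1(\C)=0$), we have $\ext^2(\C,\C)=\ext^2_{\PP^3}(E,E)$. Now apply Theorem \ref{ext-vanishing} with $E_1=E_2=E$, $r_1=r_2=r\le 3$ and $c_1=c_2=c\le 2$: then $c_1c_2=c^2\le 4\le 6$, so the hypotheses of Theorem \ref{ext-vanishing} are satisfied and $\ext^2_{\PP^3}(E,E)=0$. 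Hence $\ext^2(\C,\C)=0$ and Theorem \ref{deformation} gives smoothness of dimension $4cr$ at every point corresponding to a framed locally free $(r,c)$-instanton, which is the assertion.

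\medskip

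The only genuine subtlety — and the step I would be most careful about — is the identification $\ext^2(\C,\C)=\ext^2_{\PP^3}(E,E)$ and the fact that ``framed locally free instantons'' really do form an open subscheme of $\M(\PP^3;r,c)$ on which Theorem \ref{deformation} applies verbatim; once one grants that $\C$ is represented by the monad complex of Lemma \ref{perv-family} and that $E=\H^0(\C)$ is locally free with $\H^1(\C)=0$, the spectral sequence argument collapses the hypercohomology of $\Hom^\bullet(\C,\C)$ to that of the sheaf $\Ext$-complex and the equality is immediate. Everything else is bookkeeping: matching $c_1c_2\le 6$ against $c\le 2$, and reading off the dimension $4cr$ from the Euler characteristic computation already carried out in Theorem \ref{deformation}.
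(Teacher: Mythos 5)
Your proof is correct and follows essentially the same route as the paper: apply Theorem \ref{ext-vanishing} with $E_1=E_2=E$ (noting $c_1c_2=c^2\le 4\le 6$) to get $\ext^2(E,E)=0$, then invoke Theorem \ref{deformation} to conclude smoothness of dimension $4cr$. The extra care you take in identifying the locally free instanton with its monad complex and checking $\ext^2(\C,\C)=\ext^2(E,E)$ is sound but left implicit in the paper's two-line argument.
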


\begin{proof}
Let $E$ be a locally free $(r,c)$-instanton. Then $\ext ^2(E,
E)=0$ and by Theorem \ref{deformation} the tangent space to the moduli space
is isomorphic to $\ext ^1 (E, J_{l_{\infty}}E)$, so the dimension is equal to
$4cr$.
\end{proof}

\bigskip
Let $M_{\PP ^3}(r,c)$ denotes the moduli space of Gieseker stable
locally free $(r,c)$-instantons on $\PP ^3$. In case of rank $r=2$
or $3$ Gieseker stability of instanton $E$ is equivalent to
$h^0(E)=h^0(E^*)=0$.

Let $S\subset \PP ^3$ be any smooth quartic with $\pic S=\ZZ$.
Let $M_S(r,4c)$ denotes the moduli space of
slope stable vector bundles on $S$ with rank $r$ and Chern classes
$c_1=0$ and $c_2=c \cdot h^2|_S=4c$ ($h$ stands for the class of a
hyperplane in $\PP ^3$).

Using an idea of A. Tyurin (see \cite[Section 9]{Be}) one can show
the following theorem:

\begin{Theorem} \label{lagrangian}
Let $r\le 3$ and $c\le 2$. Then $M_{\PP ^3}(r,c)$ is smooth and
the restriction $r: M_{\PP ^3}(r,c) \to M_S(r,4c)$ is a morphism
which induces an isomorphism of $M_{\PP ^3}(r,c)$ onto a
Lagrangian submanifold of $M_S(r,4c)$.
\end{Theorem}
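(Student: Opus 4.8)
The plan is to follow Tyurin's strategy as exposed in Beauville's article, adapting it to the framed-instanton setting via the results we have already established. First I would recall why $M_{\PP^3}(r,c)$ is smooth: by Theorem \ref{ext-vanishing}, for $r\le 3$ and $c\le 2$ we have $\ext^2(E,E)=0$ for any locally free $(r,c)$-instanton $E$, and for rank $2$ and $3$ Gieseker stability is equivalent to $h^0(E)=h^0(E^*)=0$, so the (unframed) moduli space $M_{\PP^3}(r,c)$ is smooth of dimension $\ext^1(E,E)$ at every point, since there are no obstructions and the trace map splits off scalars. I would record this as the first step.

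Next I would construct the restriction morphism. Fix a smooth quartic $S\subset\PP^3$ with $\pic S=\ZZ$. Since an instanton $E$ is slope semistable (Lemma \ref{stab-inst}) and of rank $\le 3$, its restriction $E|_S$ has $c_1=0$ and $c_2=4c$; one must check $E|_S$ is again slope stable with respect to the polarization $h|_S$. This is the first delicate point: I would argue that for a \emph{general} quartic $S$ the restriction of a fixed stable bundle is stable by a Mehta--Ramanathan type argument, but to get a well-defined morphism on the whole moduli space one wants a fixed $S$; here I would use that $r\le 3$ and $c\le 2$ are small, so that a destabilizing subsheaf of $E|_S$ would have to have very constrained Chern classes, and rule these out using $\pic S=\ZZ$ (so Chern classes of subsheaves on $S$ are multiples of $h|_S$) together with the vanishing $h^0(E(-1))=h^0(E^*(-1))=0$ restricted to $S$, much as in Lemma \ref{rest-plane}. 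Having established stability of $E|_S$, the restriction defines a morphism $r:M_{\PP^3}(r,c)\to M_S(r,4c)$.

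Then I would show $r$ is an immersion with Lagrangian image. The moduli space $M_S(r,4c)$ of stable bundles on the K3 surface $S$ carries Mukai's holomorphic symplectic form on its smooth locus, coming from the pairing $\ext^1(F,F)\times\ext^1(F,F)\to\ext^2(F,F)\cong\HH^0(S,\O_S)^{\vee}\cong k$ via Serre duality. The differential of $r$ at $[E]$ is the restriction map $\ext^1_{\PP^3}(E,E)\to\ext^1_S(E|_S,E|_S)$; injectivity follows from the exact sequence associated to $0\to E(-4)\to E\to E|_S\to 0$ together with $h^0(\Hom(E,E(-4)))=0$ and a dimension count, while the image being isotropic comes from the fact that the composite $\ext^1_{\PP^3}(E,E)\times\ext^1_{\PP^3}(E,E)\to\ext^2_S(E|_S,E|_S)\to k$ factors through $\ext^2_{\PP^3}(E,E\otimes\O_{\PP^3}(-4))$ or some such group which vanishes for the small numerical invariants in play; a half-dimension count $\dim M_{\PP^3}(r,c)=\frac12\dim M_S(r,4c)$ (both computed by Riemann--Roch: $4cr$ against $2c_2r - (r^2-1)(\chi(\O_S)-2)$-type formulas) then upgrades isotropic to Lagrangian. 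Finally injectivity of $r$ on points: a torsion-free sheaf on $\PP^3$ is determined by its restriction to a general hyperplane section surface when the invariants are small, or more safely, two instantons with isomorphic restriction to $S$ differ by a twist that must be trivial since $\pic S=\ZZ$; I would make this precise using $h^0(\Hom(E_1,E_2)(-4))=h^0(\Hom(E_2,E_1)(-4))=0$ to lift the isomorphism $E_1|_S\cong E_2|_S$ to $\PP^3$.

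The main obstacle, I expect, is the global stability of the restriction $E|_S$ for a \emph{fixed} quartic $S$ rather than a general one, and relatedly the injectivity of $r$ on closed points; these are exactly the places where one cannot simply invoke Mehta--Ramanathan (which gives stability only for general, possibly high-degree, hyperplane sections) and must instead exploit the smallness of $r$ and $c$ and the hypothesis $\pic S=\ZZ$ to rule out the finitely many numerically possible destabilizing subsheaves and non-trivial twists by hand. The isotropy/Lagrangian computation, by contrast, is a formal consequence of Serre duality on $S$ once the relevant $\ext^2$ groups on $\PP^3$ are known to vanish, which we have from Theorem \ref{ext-vanishing}.
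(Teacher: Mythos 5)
Your proposal follows the same Tyurin--Beauville route as the paper: smoothness from Theorem \ref{ext-vanishing}, stability of $E|_S$ from $\pic S=\ZZ$ together with vanishing of sections, immersion and isotropy from the restriction sequence and the vanishing of $\ext^2$ on $\PP^3$, and injectivity on closed points by lifting an isomorphism from $S$. Your identification of the two delicate points (stability of the restriction to a \emph{fixed} quartic, and injectivity on points) matches exactly where the paper does its work, in Lemma \ref{rest-quartic} and in the final lifting argument, with the Lagrangian statement delegated to \cite[9.1]{Be}.

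There is, however, a recurring slip in which cohomology group is supposed to do the work, and as written those steps would not close. (i) For stability of $E|_S$: since $\pic S=\ZZ$, a slope-destabilizing rank-one subsheaf of $E|_S$ has degree $\ge 0$, so the vanishings needed are $h^0(E|_S)=0$ and $h^0(E^*|_S)=0$ (the rank-two case for $r=3$ reduces to the latter via $\wedge^2 E\simeq E^*$); the $(-1)$ twists you propose only exclude subsheaves of \emph{positive} degree and would yield semistability, not stability. The correct vanishings follow from Gieseker stability of $E$, i.e.\ $h^0(E)=h^0(E^*)=0$, together with $H^1(E(-4))=0$ and $H^1(E^*(-4))\simeq (H^2(E))^*=0$. (ii) Injectivity of the differential of $r$ at $[E]$ is controlled by $\ext^1(E,E(-4))=H^1(\Hom(E,E)(-4))$, not by $h^0(\Hom(E,E)(-4))$; by Serre duality $\ext^1(E,E(-4))\simeq (\ext^2(E,E))^*$, which vanishes by Theorem \ref{ext-vanishing}. (iii) Likewise, surjectivity of $\hom(E_1,E_2)\to\hom((E_1)_S,(E_2)_S)$ requires $\ext^1(E_1,E_2(-4))\simeq(\ext^2(E_2,E_1))^*=0$; the $h^0$ vanishing you invoke gives only uniqueness of a lift, not its existence. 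In each instance the group that actually matters is the Serre dual of an $\ext^2$ killed by Theorem \ref{ext-vanishing}, so the repairs are immediate --- but these three computations \emph{are} the proof, so they need to be stated with the right degrees.
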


\begin{proof}
Smoothness of $M_{\PP ^3}(r,c)$ follows directly from Theorem
\ref{ext-vanishing}. To prove that the restriction map $r: M_{\PP
^3}(r,c) \to M_S(r,c)$ is a morphism we need the following lemma:

\begin{Lemma}\label{rest-quartic}
Let $E$ be a locally free instanton of rank $r=2$ or $r=3$. Assume
that $h^0(E)=h^0(E^*)=0$. Then $E$ is slope stable and for any
smooth quartic $S\subset \PP ^3$ with $\pic S=\ZZ$, the
restriction $E_{S}$ is slope stable.
\end{Lemma}

\begin{proof}
The (saturated) destabilizing subsheaf of $E$ has either rank $1$
and then it gives a section of $E$ or it has rank $2$ and then $E$
has rank $3$ and the determinant of the destabilizing subsheaf
gives a section of $\wedge ^2E\simeq E^*$. This proves the first
assertion. By the same argument to show the second assertion it is
sufficient to prove that $h^0(E_S)=h^0(E^*_S)=0$. But this follows
from sequences:
$$0=H^0(E)\to H^0(E_S)\to H^1(E(-4))=0$$
and
$$0=H^0(E^*)\to H^0(E^*_S)\to H^1(E^*(-4))\simeq (H^2(E))^*=0.$$
\end{proof}

\medskip

Let $E$ be a Gieseker stable locally free $(r,c)$-instanton. By
Lemma \ref{rest-quartic} and Theorem \ref{ext-vanishing} we know
that the restriction of $E$ to $S$ is slope stable and $\ext
^2(E,E)=0$. This implies that $r$ is an immersion at the point
$[E]$ (see \cite[9.1]{Be}). Therefore we only need to show that
$r$ is an injection.

To prove that let us take two Gieseker stable locally free
$(r,c)$-instantons $E_1$ and $E_2$. Then by Theorem
\ref{ext-vanishing} we have an exact sequence
$$\hom (E_1,E_2)\to \hom ((E_1)_S,(E_2)_S)\to \ext ^1(E_1,E_2(-S))\simeq (\ext ^2(E_2,E_1))^*=0.$$
This shows that we can lift any isomorphism $(E_1)_S\to (E_2)_S$
to an isomorphism of $E_1$ and $E_2$ and hence $r$ is injective.
\end{proof}

\medskip

\begin{Remark}
It is very tempting to conjecture that Theorem \ref{ext-vanishing}
holds for all pairs of locally free instantons (maybe with some
additional assumptions concerning stability of these bundles).
This would imply a well known conjecture on smoothness of the
moduli space of locally free instantons. Even then, an analogue of
Theorem \ref{lagrangian} does not immediately follow. But if one
restricted to the open subset of bundles for which all exterior
powers remain instantons then one could embed it into $M_S(r,4c)$
as a Lagrangian submanifold.

However, it seems that all these conjectures are just a wishful
thinking similar to the original conjecture on smoothness of the
moduli space of locally free instantons: there are very few known
results and all the methods work only for instantons of low
charge.
\end{Remark}

\medskip

\begin{Example}
For $r=2$ and $c=1$ the moduli space $M_{\PP ^3}(2,1)$
parameterizes only null-correlation bundles and it is known that
$M_{\PP ^3}(2,1)\simeq \PP ^5\setminus \Gr (2,4)$, where $\Gr
(2,4)$ is the Grassmannian of planes in $\AA ^4$ (see
\cite[Chapter II, Theorem 4.3.4]{OSS}). By the above theorem this
is a Lagrangian submanifold of the moduli space $M_S(2,4)$. Over
complex numbers $M_S(2,4)$ is known to have a smooth
compactification to a holomorphic symplectic variety (see
\cite{OG}). Note that Lagrangian fibrations $M_S(2,4)\to \PP ^5$
for some K3 surfaces $S$ were constructed by Beauville in
\cite[Proposition 9.4]{Be}. It is possible that the Lagrangian
submanifold $M_{\PP ^3}(2,1)$ extends to a section of some
Lagrangian fibration (possibly after deforming the
compactification) providing another example when this is possible
(see \cite{Sa} for the proof that some Lagrangian fibrations can
be deformed to Lagrangian fibrations with a section in case of
4-dimensional varieties).
\end{Example}

\section*{Acknowledgements}

A large part of this paper contains results from the un-submitted
PhD thesis of the first author written under supervision of the
second author. Some of the other results of this paper were
obtained by the second author while his stay at the Max Planck
Institut f\"ur Mathematik in Bonn in 2007. The rather misleading
report \cite{La2} which reported on work carried out in Bonn
contains some false counter-examples, as well as statements whose
proofs turned out to be much more complicated than expected. We
believe that the present article rights all of these wrongs.

The second author was partially supported by a grant of Polish
Ministry of Science and Higher Education (MNiSW).

\end{document}